\documentclass{amsart}
\usepackage[utf8]{inputenc}
\usepackage{amsmath}
\usepackage{amsfonts} 
\usepackage{amssymb}
\newtheorem{theorem}{Theorem}[section]
\newtheorem{lemma}[theorem]{Lemma}
\theoremstyle{definition}
\newtheorem{definition}[theorem]{Definition}
\newtheorem{corollary}[theorem]{Corollary}
\numberwithin{equation}{section}
\begin{document}
\title[P-adic Line Integrals]{P-adic Line Integrals and Cauchy's Theorems}
\author{Jack Diamond}
\address{Queens College, CUNY\\Flushing\\ NY\\ USA (ret.)}
\email{jdiamond@qc.cuny.edu}
\subjclass[2010]{Primary: 11S80}
\keywords{p-adic line integral, p-adic Cauchy's Theorem, p-adic analysis}
\begin{abstract}
Working in the p-adic analog of the complex numbers, we'll define a line integral on a small arc of a circle. This allows new versions of the Residue Theorem, the Cauchy-Goursat Theorem on discs with and without holes, Cauchy's Integral Formula and the Z-P Theorem. In contrast to results in complex analysis, these integrals allow the points on a boundary circle, the bulk of a p-adic disc, to be treated the same as points interior to the boundary circle. The theory of the integral is developed, especially for functions holomorphic on an open disc, and integrals will be calculated for rational functions, Krasner analytic functions and some well-known functions that are not Krasner analytic. Some computations will produce values of Kubota-Leopoldt L-functions at ordinary integers.
\end{abstract}
\maketitle
 
\section{Introduction}\label{S:intro}
In the complex plane a closed disc of radius $R$ is thought of as being made up mostly of it's interior, an open disc of radius $R$. In $\mathbb{C}_{p}$ we have the reverse situation.  The interior of a boundary circle of radius $R$ is again an open disc of radius $R$, but the boundary circle itself consists of countably many disjoint open discs of radius $R$. The closed disc in $\mathbb{C}_{p}$ is mostly on a boundary circle.\\

If we think of an arc of a circle as the set of points on the circle that are within a given distance of some point on the circle, then an open disc of radius $R$ on a p-adic circle of radius $R$ can be called an arc.\\

These remarks suggest that a natural p-adic analog of the Cauchy Integral Theorem uses the values of a function only on one arc to obtain the values of the function on the remainder of a closed disc. Such a theorem is in section 6.\\

Section 2 defines an integral on an arc of a circle and gives some examples and basic properties.\\

Section 3 develops the theory of the integral for functions holomorphic on an arc. The concept of \emph{controlled coefficients} is introduced. The theorems show that the integral is an analog of a complex line integral.\\

Section 4 provides calculations of integrals of rational and Krasner analytic function and some familiar non-Krasner analytic functions. More generally, there is a sufficiency theorem for the existence of an integral and p-adic versions of integration by parts, a substitution theorem and a Z - P Theorem that allows zeros and poles on a boundary circle. The logarithmic derivative of the Artin-Hasse exponential function is shown to be integrable with a non-zero result.\\

Section 5 introduces and develops the concept of \emph{ray convergence}. It concludes with the calculation of an integral that gives values of Kubota-Leopoldt L-functions at positive integers\\

Sections 6 contains p-adic versions of the Residue Theorem, the Cauchy-Goursat Theorem on discs with and without holes and Cauchy's Integral Formula on discs with and without holes. These theorems are all broader than the complex versions because a boundary circle is treated much like the interior.

The idea of a p-adic line integral is not new. In 1938, L.~G. Shnirel'man published, \cite{lS1938}, a p-adic line integral that "went around" a boundary circle. The line integral presented here arises from changing a basic premise in Shnirel'man's approach .\\

\noindent\textbf{Notation}\\
$\mathbb{C}_{p}$ is the completion of an algebraic closure of $\mathbb{Q}_{p}$ with the norm  $|p|_{p} = 1/p$.\\
$p$ will be assumed to be an odd prime unless otherwise explicitly stated.\\
$D^{+}(a,R)$ is the closed disc with center $a$ and radius $R$.\\$D^{-}(a,R)$ is the open disc with center $a$ and radius $R$.\\
A function $f(x)$ on a disc $D$ will be called \textit{holomorphic} if the values of $f(x)$ can be given by a single convergent power series on $D$. \textit{H(D)} is the space of holomorphic functions on $D$.\\
$[x]$ is the greatest integer $\leq x$.\\
$\omega(x)$, defined for $x \in \mathbb{C}_{p}$ with $|x|_{p} \leq 1$, is the Teichm\"{u}ller charactor extended by $\omega(x) = 0$ if $|x|_{p} < 1$. When $|x |_{p} = 1$, $\omega(x)$ is the unique $n$-th root of unity, $(n,p) = 1$, satisfying $|x - \omega(x)|_{p} < 1$.\\
When $x$ is real and positive, $\log_{p}x$ is the real logarithm of $x$ base $p$ and $\ln x$ is the natural log of $x$. When $x \in \mathbb{C}_{p}$ and $x \neq 0$, $\log x$ is the standard, Iwasawa p-adic logarithm.

 \section{Definition of a Line Integral on an Arc}\label{S:int def} 
We'll start with a preliminary definition and some examples. Then a general definition will be given.
\begin{definition}\label{D:arc}
Given a circle in $\mathbb{C}_{p}$ with center $a$ and radius $R$ and a point $b$ on the circle, the $\emph{arc}\ \mathcal{A}=\mathcal{A}(a,b)$ at $b$ is the open disc $\{\,x \mid |x-b|_p < R\}$.
\end{definition}

\begin{definition}\label{D:path seq}
A function $\varphi(k)$ will be called a \emph{path sequence} if there is a $k_{0} \in \mathbb{Z}^{+}$ such that $\varphi(k)$ is defined for $k \in \mathbb{Z}^{+}$ with $k \geq k_{0}$, $\varphi(k) \in \mathbb{Z}^{+}$ and $\varphi(k)$  is a strictly increasing function for $k \geq k_{0}$.
\end{definition}

There's seldom a need to reference the $k_{0}$ in the definiton of \emph{path sequence} explicitly; the key fact being that all sufficiently large $k$ are in the domain of $\varphi$.
$\varphi$ will be considered  as either a function or a sequence, depending on context.
\begin{definition}\label{D:A_k} Let $f \colon \mathcal{A}(a,b) \to \mathbb{C}_{p} $. When $\varphi(k)$ is defined, $A_{f,\varphi}(k)$ is defined by
\begin{equation}\label{E:A_k}
	 A_{f,\varphi}(k)= p^{-\varphi(k)}\sum_{x}(x-a)f(x),
\end{equation}
where $x=a+(b-a)\zeta$ and $\zeta$ runs through the $p^{\varphi(k)}$-th roots of unity.
\end{definition}
\begin{definition}\label{D:1st int} Given $\mathcal{A}$, $f$ and $\varphi$, the line integral of $f$ on $\mathcal{A}$ with respect to $\varphi$ is defined by
\begin{equation}\label{E:1st int}
	\int_{\mathcal{A},\varphi}f(x)\,dx = lim_{k\to \infty} A_{f,\varphi}(k),
\end{equation}
if the limit exists.
\end{definition}
The next results follow immediately from the definition.
\begin{theorem}\label{T:subseq1}
	Suppose that for the path sequences $\varphi_{1}(k)$ and $\varphi_{2}(k)$ there is a positive integer $k_{1}$ such that $\varphi_{1}(k)$, $k \geq k_{1}$, is a subsequence of $\varphi_{2}(k)$ and 
\[
	\int_{\mathcal{A},\varphi_{2}}f(x)\,dx = L,
\]
then
\[
	\int_{\mathcal{A},\varphi_{1}}f(x)\,dx = L.
\]
\end{theorem}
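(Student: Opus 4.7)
The plan is to reduce the statement to the standard fact that any subsequence of a convergent sequence converges to the same limit. The key observation is that $A_{f,\varphi}(k)$ depends on $\varphi$ only through the integer $\varphi(k)$, so two path sequences that take the same value at some index produce the same value of $A_{f,\varphi}$.

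More precisely, I would first unpack what it means for $\varphi_{1}(k)$ (for $k\geq k_{1}$) to be a subsequence of $\varphi_{2}(k)$: there is a strictly increasing function $m\colon\{k\geq k_{1}\}\to\mathbb{Z}^{+}$ such that $\varphi_{1}(k)=\varphi_{2}(m(k))$ for all $k\geq k_{1}$. Because $m$ is strictly increasing on the positive integers, $m(k)\to\infty$ as $k\to\infty$.

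Next I would observe that in the defining formula \eqref{E:A_k}, the sum $\sum_{x}(x-a)f(x)$ runs over the points $x=a+(b-a)\zeta$ with $\zeta$ a $p^{\varphi(k)}$-th root of unity, and the prefactor is $p^{-\varphi(k)}$. Therefore, if $\varphi_{1}(k)=\varphi_{2}(m(k))$, then the set of roots of unity and the prefactor used to form $A_{f,\varphi_{1}}(k)$ coincide with those used to form $A_{f,\varphi_{2}}(m(k))$, so $A_{f,\varphi_{1}}(k)=A_{f,\varphi_{2}}(m(k))$ for all $k\geq k_{1}$.

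Finally, I would conclude: by hypothesis $A_{f,\varphi_{2}}(n)\to L$ as $n\to\infty$, and since $m(k)\to\infty$, we get $A_{f,\varphi_{1}}(k)=A_{f,\varphi_{2}}(m(k))\to L$, which is the desired statement. There is no real obstacle here; the only mildly delicate point is checking that the indexing function $m$ exists and tends to infinity, which follows immediately from the strict monotonicity built into Definition~\ref{D:path seq}.
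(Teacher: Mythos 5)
Your proposal is correct and is exactly the argument the paper has in mind; the paper simply asserts that this theorem "follows immediately from the definition," and your observation that $A_{f,\varphi_{1}}(k)=A_{f,\varphi_{2}}(m(k))$ with $m(k)\to\infty$, so that $A_{f,\varphi_{1}}$ is a subsequence of the convergent sequence $A_{f,\varphi_{2}}$, is precisely the intended justification.
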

\begin{theorem}\label{T:alpha1=alpha2}
	Let $\varphi_{i}(k) = \alpha_{i} + \lambda k$, $\alpha_{i} \in \mathbb{Z}$, $\lambda  \in \mathbb{Z}^{+}$, with $i = 1$,$2$. Suppose that $\alpha_{1} \equiv \alpha_{2} \pmod{\lambda}$. Then if
\[
	\int_{\mathcal{A},\varphi_{2}}f(x)\,dx \qquad \text{exists,}
\]
	the integral with $\varphi_{1}$ will exist and 
\[
	\int_{\mathcal{A},\varphi_{1}}f(x)\,dx = \int_{\mathcal{A},\varphi_{2}}f(x)\,dx.
\]
\end{theorem}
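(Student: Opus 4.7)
The plan is to reduce this theorem immediately to Theorem \ref{T:subseq1} by exhibiting $\varphi_1$ as an eventual subsequence of $\varphi_2$. Since $\alpha_1 \equiv \alpha_2 \pmod{\lambda}$, I would write $\alpha_1 - \alpha_2 = m\lambda$ for some $m \in \mathbb{Z}$ (which may be positive, negative, or zero). Substituting yields the key identity
\[
\varphi_1(k) = \alpha_1 + \lambda k = \alpha_2 + \lambda(k+m) = \varphi_2(k+m),
\]
so $\varphi_1$ is literally a shift of $\varphi_2$.

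Next I would handle the domain bookkeeping. Let $k_0^{(1)}$ and $k_0^{(2)}$ be the starting indices from the definition of path sequence for $\varphi_1$ and $\varphi_2$. Pick $k_1 \geq \max\{k_0^{(1)},\,k_0^{(2)} - m\}$; this guarantees both sides of the identity are defined for $k \geq k_1$ (the max accounts for the possibility $m < 0$). Since $\varphi_2$ is strictly increasing on its domain and $\{k+m : k \geq k_1\}$ is a strictly increasing set of integers lying in that domain, the sequence $(\varphi_1(k))_{k \geq k_1}$ is a subsequence of $(\varphi_2(k))$ in the sense required by Theorem \ref{T:subseq1}.

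Finally, I would apply Theorem \ref{T:subseq1} directly: the hypothesis that $\int_{\mathcal{A},\varphi_2} f(x)\,dx = L$ forces $A_{f,\varphi_2}(k) \to L$, hence $A_{f,\varphi_1}(k) = A_{f,\varphi_2}(k+m) \to L$ as well, proving both the existence of $\int_{\mathcal{A},\varphi_1} f(x)\,dx$ and its equality with the $\varphi_2$-integral.

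There is essentially no obstacle here; the theorem is a bookkeeping corollary of Theorem \ref{T:subseq1}. The only point requiring a moment's care is the case $m < 0$, where one must choose $k_1$ large enough that the shift $k \mapsto k+m$ stays in the domain of $\varphi_2$ — but this is absorbed into the starting index $k_1$ without affecting the limit.
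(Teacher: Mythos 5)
Your proof is correct and is essentially the argument the paper intends: the paper offers no explicit proof (it states that this result "follows immediately from the definition"), and your observation that $\varphi_1(k)=\varphi_2(k+m)$ with $m=(\alpha_1-\alpha_2)/\lambda$, so that $A_{f,\varphi_1}(k)=A_{f,\varphi_2}(k+m)$ and the limit is unchanged under an index shift, is exactly the evident route (whether phrased via Theorem \ref{T:subseq1} or directly). The domain bookkeeping for $m<0$ is handled correctly.
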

\begin{theorem}\label{T:varphi=k}
	Let $\varphi_{0}(k) = k$, $k \in \mathbb{Z}^{+}$. If
\[
	\int_{\mathcal{A},\varphi_{0}}f(x)\,dx 
\]
exists, then
\[
	\int_{\mathcal{A},\varphi}f(x)\,dx  = \int_{\mathcal{A},\varphi_{0}}f(x)\,dx.
\]
for any path sequence $\varphi$.
\end{theorem}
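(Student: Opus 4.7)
The plan is to reduce this immediately to Theorem \ref{T:subseq1}. The hypothesis that $\int_{\mathcal{A},\varphi_{0}}f(x)\,dx$ exists means the sequence $A_{f,\varphi_{0}}(k)$ converges, and since $\varphi_{0}(k)=k$ simply enumerates every positive integer in order, any strictly increasing sequence of positive integers is literally a subsequence of $\varphi_{0}$.

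First I would unpack the definition of \emph{path sequence}. By Definition \ref{D:path seq}, an arbitrary path sequence $\varphi$ is defined for all $k\geq k_{0}$, takes values in $\mathbb{Z}^{+}$, and is strictly increasing on $\{k\geq k_{0}\}$. Consequently the values $\varphi(k_{0}),\varphi(k_{0}+1),\varphi(k_{0}+2),\ldots$ form a strictly increasing sequence of positive integers, and each such value equals $\varphi_{0}(\varphi(k))$. So $\varphi(k)$, for $k\geq k_{0}$, is a subsequence of $\varphi_{0}(k)$ in the precise sense required by Theorem \ref{T:subseq1} (take $k_{1}=k_{0}$, $\varphi_{1}=\varphi$, $\varphi_{2}=\varphi_{0}$).

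Then I would simply invoke Theorem \ref{T:subseq1} with $L=\int_{\mathcal{A},\varphi_{0}}f(x)\,dx$, which exists by hypothesis. The conclusion of that theorem yields $\int_{\mathcal{A},\varphi}f(x)\,dx=L$, as desired.

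There is no real obstacle here; the only point requiring a moment's thought is the observation that ``subsequence of $\varphi_{0}(k)=k$'' is nothing more than ``a strictly increasing sequence of positive integers,'' which is exactly the definition of a path sequence. So the proof is essentially one sentence citing Theorem \ref{T:subseq1}.
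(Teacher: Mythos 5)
Your proposal is correct and is exactly the argument the paper intends (the paper states Theorems \ref{T:subseq1}--\ref{T:varphi=k} "follow immediately from the definition" without writing out details): since $\varphi_{0}$ is the identity, any path sequence $\varphi$ restricted to $k\geq k_{0}$ is a strictly increasing sequence of positive integers and hence a subsequence of $\varphi_{0}$, so Theorem \ref{T:subseq1} applies directly. Nothing is missing.
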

 \noindent Here are some examples of integrals. Details and proofs will come later.
\begin{enumerate}
\item If $f(x)$ is holomorphic on the closed disc $D^{+}(a,|a-b|_{p})$, then
\[
	\int_{\mathcal{A}(a,b),\varphi}f(x)\,dx = 0 
\]
	for any $\mathcal{A}$ and any $\varphi$;
\item If $\log x$ is the usual (Iwasawa) p-adic log function,
\[
	\int_{\mathcal{A}(1,0),k}log(1-x)\,dx = 0;
\]
\item If $x_0$ is interior to a circle, then for any $\mathcal{A}$ on the circle and any $\varphi$.
\[
	\int_{\mathcal{A},\varphi}\frac{1}{x-x_{0}}\,dx = 1;
\]
\item If $x_0$ is on the circle, but not in $\mathcal{A}$ and $\varphi(k)=\lambda k$ for suitable $\lambda$, there will be a corresponding root of unity such that
\[
	\int_{\mathcal{A},\varphi}\frac{1}{x-x_{0}}\,dx = \frac{1}{1-\zeta};
\]
\item 
\[
	\int_{\mathcal{A}(1,0),k}\left(\sum_{n\geq 0}x^{p^{n}-1}\right)\,dx \qquad \text{exists and is near \ -1.}
\]	
\end{enumerate}

With the definition above, the set of integrable, holomorphic functions on $\mathcal{A}$, for a given $\varphi$, is a vector space closed with respect to pointwise uniform convergence. However, the union of two such spaces with differing $\varphi$ may not be a vector space and may have different values for the integral of a given function.
In order to better deal with this issue and have a larger set of integrable functions, we'll define an \emph{interlocked} family of path sequences and give a broader definition of the integral on an arc.

\begin{definition}\label{D:interlocked} A non-empty family of path sequences, $\Phi $, will be called \emph{interlocked} if, given $\varphi_{1}$, $\varphi_{2} \in \Phi$, there is a $\varphi_{3} \in \Phi$ and a $k_{0} \in \mathbb{Z}^{+}$ such that $\varphi_{3}(k), k \geq k_{0}$, is a subsequence of both $\varphi_{1}(k)$ and $\varphi_{2}(k)$.
\end{definition}

Note that if $\Phi$ consists of a single path sequence, it is \emph{interlocked}.

Now we can give the general definition of the integral on an arc $\mathcal{A}$ with respect to an interlocked family of path sequences $\Phi$.
\begin{definition}[Definition of a line integral]\label{D:integral} Suppose $f \colon \mathcal{A} \to \mathbb{C}_{p}$ and $\Phi$ is an interlocked family of path sequences. Then
\[
	\int_{\mathcal{A},\Phi}f(x)\,dx = L
\]
if there is an $L \in \mathbb{C}_{p}$ such that given any $\varepsilon > 0$, there is a $K_{\varepsilon} \in \mathbb{Z}^{+}$ and a $\varphi_{\varepsilon} \in \Phi$ such that if $k > K_{\varepsilon}$ then
\begin{equation}\label{E:intdef}
	 \left| A_{f,\varphi_{\varepsilon}}(k) - L \right|_{p} < \varepsilon.
\end{equation}
\end{definition}

It's necessary to prove that given a function $f(x)$, the value of its integral is uniquely defined.
\begin{proof}
	Suppose that $L_{1}$ and $L_{2}$ satisfy the conditions of Definition \ref{D:integral}. 

Let $\varepsilon > 0$. For $i=1,2$ choose $\varphi_{i} \in \Phi$ and $K_{i}$ so that 
\[
	\left| A_{f,\varphi_{i}}(k) - L_{i} \right|_{p} < \varepsilon \ \text{for} \ k >K_{i}.
\]	

Now let $\varphi(k) \in \Phi$ be a common subsequence of the $\varphi_{i}$ for large $k$ and choose $K_{0}$, $K_{1}'$ and $K_{2}'$  with $K_{1}' > K_{1}$ and $K_{2}' > K_{2}$ such that
\[
	\varphi(K_{0}) = \varphi_{i}(K_{i}') \qquad  i = 1,2.
\]
Noting that $ A_{f,\varphi}(K_{0}) =  A_{f,\varphi_{i}}(K_{i}')$, $i = 1$, $2$, it follows that
\[
	\left| L_{1} - L_{2} \right|_{p} = \left| L_{1} - A_{f,\varphi}(K_{0}) + A_{f,\varphi}(K_{0}) - L_{2} \right|_{p} < \varepsilon.
\]
Since $\varepsilon$ could be any positive number, we must have $L_{1} = L_{2}$.
\end{proof}
If $\Phi$ consists of a single path sequence, this definition of the integral of $f$ coincides with the simpler Definition \ref{D:1st int} given earlier.\\
Theorem \ref{T:varphi=k} can be easily generalized to interlocked families.
\begin{theorem}\label{T:varphi=kPhi}
	Let $\varphi_{0}(k) = k$. If
\[
	\int_{\mathcal{A},\varphi_{0}}f(x)\,dx 
\]
exists, then
\[
	\int_{\mathcal{A},\Phi}f(x)\,dx  = \int_{\mathcal{A},\varphi_{0}}f(x)\,dx
\]
for any interlocked family $\Phi$.
\end{theorem}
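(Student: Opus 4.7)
The plan is to reduce everything to Theorem \ref{T:subseq1} by exploiting the fact that $\varphi_{0}(k)=k$ enumerates every positive integer, so every path sequence is automatically a subsequence of $\varphi_{0}$.

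First I would fix an arbitrary interlocked family $\Phi$ and let $L=\int_{\mathcal{A},\varphi_{0}}f(x)\,dx$. Since $\Phi$ is non-empty by the definition of interlocked, pick any $\varphi\in\Phi$. By the definition of path sequence, $\varphi$ takes values in $\mathbb{Z}^{+}$ and is strictly increasing on its domain, hence is literally a subsequence of the sequence $1,2,3,\dots$ which is $\varphi_{0}$. Theorem \ref{T:subseq1} then yields $\int_{\mathcal{A},\varphi}f(x)\,dx=L$.

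Next I would translate this single-sequence statement into the interlocked-family form of Definition \ref{D:integral}. Given $\varepsilon>0$, set $\varphi_{\varepsilon}=\varphi$ (the $\varphi$ chosen above, which lies in $\Phi$). Because $\int_{\mathcal{A},\varphi_{\varepsilon}}f(x)\,dx=L$ in the sense of Definition \ref{D:1st int}, there is a $K_{\varepsilon}$ with $\bigl|A_{f,\varphi_{\varepsilon}}(k)-L\bigr|_{p}<\varepsilon$ for all $k>K_{\varepsilon}$. This is exactly the condition (\ref{E:intdef}), so $\int_{\mathcal{A},\Phi}f(x)\,dx=L$ as required.

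There is no real obstacle here: the only thing to be careful about is the logical step that ``every path sequence is a subsequence of $\varphi_{0}$,'' which is immediate from the strict monotonicity and integrality built into Definition \ref{D:path seq}. Everything else is a direct unpacking of the two definitions.
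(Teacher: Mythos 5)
Your proof is correct and follows essentially the same route as the paper: the paper's proof simply says ``Let $\varphi \in \Phi$ and apply Theorem \ref{T:varphi=k},'' and Theorem \ref{T:varphi=k} is itself the observation you make via Theorem \ref{T:subseq1} that every path sequence is a subsequence of $\varphi_{0}(k)=k$. Your explicit final step unpacking Definition \ref{D:integral} with $\varphi_{\varepsilon}=\varphi$ is left implicit in the paper but is exactly the right justification.
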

\begin{proof}
Let $\varphi \in \Phi$. Apply Theorem \ref{T:varphi=k} to $\varphi$.
\end{proof}

The next two results will help find some interlocked families of path sequences.
\begin{theorem}\label{T:add alphaPhi}
	If $\Phi$ is an interlocked family of path sequences, and $\alpha \in \mathbb{Z}$, then
\[
	\Psi =  \{\,\psi \mid \psi(k) = \alpha + \varphi(k), \varphi \in \Phi \}
\]
is interlocked.
\end{theorem}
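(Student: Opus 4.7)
The plan is to unpack the definitions and show that a common subsequence for two members of $\Psi$ arises immediately from a common subsequence of the corresponding members of $\Phi$.

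First I would verify that $\Psi$ actually consists of path sequences. Given $\psi(k) = \alpha + \varphi(k)$ with $\varphi \in \Phi$, the function $\psi$ inherits strict monotonicity from $\varphi$ because $\alpha$ is a constant. Since $\varphi(k) \in \mathbb{Z}^{+}$ is strictly increasing, $\varphi(k) \to \infty$, so for all sufficiently large $k$ we have $\varphi(k) > -\alpha$, hence $\psi(k) \in \mathbb{Z}^{+}$. Taking $k_{0}$ large enough to guarantee both that $\varphi$ is defined and that $\alpha + \varphi(k) \geq 1$ produces a legitimate path sequence.

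Next, to verify the interlocking property, take $\psi_{1}, \psi_{2} \in \Psi$ with $\psi_{i}(k) = \alpha + \varphi_{i}(k)$, $\varphi_{i} \in \Phi$. Since $\Phi$ is interlocked, there exist $\varphi_{3} \in \Phi$ and an integer $k_{1}$ so that for every $k \geq k_{1}$ there are indices $n_{k}^{(i)}$ (strictly increasing in $k$) with $\varphi_{3}(k) = \varphi_{i}(n_{k}^{(i)})$ for $i = 1, 2$. Define $\psi_{3}(k) = \alpha + \varphi_{3}(k)$, which lies in $\Psi$ by construction. Then for every sufficiently large $k$,
\[
\psi_{3}(k) = \alpha + \varphi_{3}(k) = \alpha + \varphi_{i}(n_{k}^{(i)}) = \psi_{i}(n_{k}^{(i)}),\qquad i = 1, 2,
\]
so $\psi_{3}$ is eventually a subsequence of both $\psi_{1}$ and $\psi_{2}$, as required.

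There is no real obstacle here: the shift by $\alpha$ commutes with indexing, so any subsequence relation in $\Phi$ transfers to $\Psi$ along the same indices. The only mild bookkeeping point is making the initial domain $k_{0}$ of each $\psi$ large enough to ensure positivity when $\alpha < 0$, but the definition of path sequence already permits this flexibility.
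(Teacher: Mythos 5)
Your proof is correct and is exactly the argument the paper has in mind; the paper simply states that ``the proof is immediate from the definition,'' and your write-up supplies the routine details (checking that each $\psi = \alpha + \varphi$ is still a path sequence when $\alpha < 0$, and that the shift commutes with passing to a common subsequence along the same indices). Nothing further is needed.
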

The proof is immediate from the definition.
\begin{theorem}\label{T:Phi compos}
	Suppose a family $\Phi$ of path sequences satisfies the following condition:
Given $\varphi_{1}$, $\varphi_{2} \in \Phi$, there is a $\varphi_{3}(k) \in \Phi$ and a $k_{0}$, depending on $\varphi_{1}$, $\varphi_{2}$, such that for $k \geq k_{0}$,
\[
\varphi_{3}(k) = \varphi_{1} \circ \varphi_{2}(k) = \varphi_{2} \circ \varphi_{1}(k).
\]
Then $\Phi$ is interlocked.
\end{theorem}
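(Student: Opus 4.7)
The plan is to take the hypothesis almost directly: given $\varphi_{1},\varphi_{2} \in \Phi$, the assumption hands us a $\varphi_{3} \in \Phi$ satisfying $\varphi_{3}(k) = \varphi_{1}(\varphi_{2}(k)) = \varphi_{2}(\varphi_{1}(k))$ for all $k \geq k_{0}$. I would then verify that this $\varphi_{3}$ witnesses the \emph{interlocked} property.

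The key observation is that if $\psi$ is a path sequence and $\sigma$ is a strictly increasing sequence of positive integers (all eventually in the domain of $\psi$), then $\psi \circ \sigma$ is by definition a subsequence of $\psi$: it consists precisely of the values $\psi(n)$ with $n$ ranging over the image of $\sigma$, visited in increasing order. So the first identity $\varphi_{3}(k) = \varphi_{1}(\varphi_{2}(k))$ shows that $\varphi_{3}$, for $k \geq k_{0}$, is a subsequence of $\varphi_{1}$, since $\varphi_{2}$ is strictly increasing into $\mathbb{Z}^{+}$ and eventually lands in the domain of $\varphi_{1}$. Symmetrically, the second identity $\varphi_{3}(k) = \varphi_{2}(\varphi_{1}(k))$ shows $\varphi_{3}$ is, for $k \geq k_{0}$, a subsequence of $\varphi_{2}$.

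Since $\Phi$ is assumed non-empty (it produces the $\varphi_{3}$) and this argument works for every pair in $\Phi$, we conclude $\Phi$ is interlocked as in Definition~\ref{D:interlocked}. There is no real obstacle; the only small care is to note that ``subsequence'' is required only for $k \geq k_{0}$, exactly matching what the definition of interlocked allows, and that the strict monotonicity of $\varphi_{1}$ and $\varphi_{2}$ is what makes each composition genuinely a subsequence (rather than just a re-indexing) of the respective outer factor.
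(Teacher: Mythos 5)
Your proposal is correct and is exactly the paper's argument: the paper's one-line proof simply declares $\varphi_{3}(k)$, $k \geq k_{0}$, to be the desired common subsequence, and you have filled in the (routine) verification that each composition identity exhibits $\varphi_{3}$ as a subsequence of the respective outer factor because the inner factor is strictly increasing into $\mathbb{Z}^{+}$.
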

\begin{proof}
	$\varphi_{3}(k)$, $k \geq k_{0}$, is the desired common subsequence.
\end{proof}

An important example of an interlocked family of path sequences is
\[
	\Phi_{0} = \{\,\varphi \mid \varphi(k) = \lambda k, \ \ \lambda =1,2,\dots \}.
\]

The importance of this family is that it allows all rational functions having no poles in $\mathcal{A}$ to be integrated.

If $\alpha \in \mathbb{Z}$, then
\[
	\Phi_{\alpha} = \{\,\varphi \mid \varphi(k) = \alpha + \lambda k, \ \ \lambda = 1,2,\dots \}
\]
is also an interlocked family. The $\Phi_{\alpha}$ also allow all rational functions with no poles in $\mathcal{A}$ to be integrated. The calculations for the exact values of these integrals is in section 4.

Another interlocked family is
\[
	\Psi_{\alpha} = \{\,\varphi \mid \varphi(k) = \alpha + k^{\mu},\ \mu = 1,2,\dots \}.
\]
That these families are in fact interlocked follows easily from Theorems \ref{T:add alphaPhi} and \ref{T:Phi compos}.
The following two theorems show that commutativity of composition is not easily attained by sets of (polynomial) path functions.
\begin{theorem}
	Let $\varphi_{0}(k) = \lambda_{0} k$, $\lambda_{0} \in \mathbb{Z}^{+}$, $\lambda_{0} \geq 2$. Let $\varphi(k)$ be a polynomial such that $\varphi_{0} \circ \varphi(k) = \varphi \circ \varphi_{0} (k)$. Then, $\varphi(k) = \lambda k$ for some $\lambda \in \mathbb{Z}$. If $\varphi(k)$ is a path sequence, then $\lambda \in \mathbb{Z}^{+}$.
\end{theorem}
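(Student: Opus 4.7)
The plan is to treat the hypothesis $\varphi_{0}\circ\varphi(k)=\varphi\circ\varphi_{0}(k)$ as an identity of formal polynomials and match coefficients. Write $\varphi(k)=\sum_{i=0}^{n}a_{i}k^{i}$ with $a_{n}\neq 0$. Even if the commutation relation is a priori only assumed for $k\in\mathbb{Z}^{+}$, both sides are polynomials of degree at most $n$ agreeing on infinitely many integers, so they agree as polynomials in the indeterminate $k$.

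Now I would compute both compositions explicitly:
\[
\varphi_{0}\circ\varphi(k)=\lambda_{0}\sum_{i=0}^{n}a_{i}k^{i},\qquad \varphi\circ\varphi_{0}(k)=\sum_{i=0}^{n}a_{i}\lambda_{0}^{i}k^{i}.
\]
Equating the coefficient of $k^{i}$ on each side yields the family of scalar equations $a_{i}(\lambda_{0}^{i}-\lambda_{0})=0$ for $0\leq i\leq n$. Since $\lambda_{0}\geq 2$, the map $i\mapsto\lambda_{0}^{i}$ is strictly increasing on nonnegative integers, so $\lambda_{0}^{i}=\lambda_{0}$ holds only at $i=1$. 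Therefore $a_{i}=0$ for every $i\neq 1$, and $\varphi(k)=a_{1}k$. Setting $\lambda:=a_{1}$ gives the stated form.

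For the integrality of $\lambda$, I would observe that the theorem implicitly sits in the path sequence setting where polynomials take values in $\mathbb{Z}$ at positive integer inputs; evaluating $\varphi(k)=\lambda k$ at two consecutive large integers $k$ and $k+1$ and subtracting forces $\lambda=\varphi(k+1)-\varphi(k)\in\mathbb{Z}$. If in addition $\varphi$ is a path sequence, then strict monotonicity gives $\lambda>0$, hence $\lambda\in\mathbb{Z}^{+}$.

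There is no serious obstacle here: the argument is essentially one line of coefficient matching, with $\lambda_{0}\geq 2$ doing the work of separating $i=1$ from all other exponents. The only mildly delicate point to flag is the transition from "commutativity of $\varphi$ and $\varphi_{0}$ as integer-valued sequences" to "equality as polynomial identities," which is handled by the fact that a polynomial of degree $\leq n$ is determined by its values at $n+1$ or more points.
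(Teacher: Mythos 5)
Your proof is correct and is the same coefficient-comparison argument the paper intends (its proof is the single sentence ``a comparison of the terms of the composed functions shows $\varphi(k)$ has the form $\lambda k$''), just written out in full. Your added remarks on passing from agreement on $\mathbb{Z}^{+}$ to a polynomial identity and on why $\lambda \in \mathbb{Z}$ are reasonable fillings-in of what the paper leaves implicit.
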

\begin{proof}
A comparison of the terms of the composed functions show $\varphi(k)$ has the form $\lambda k$.
\end{proof}
\begin{theorem}\label{T:mu0}
	Let $\varphi_{0}(k) = k^{\mu_{0}}$, $\mu_{0} \in \mathbb{Z}^{+}$, $\mu_{0} \geq 2$. Let $\varphi(k)$ be a polynomial and a path sequence. Suppose $\varphi_{0} \circ \varphi(k) = \varphi \circ \varphi_{0} (k)$. Then $\varphi(k) = k^{\mu}$ for some $\mu \in \mathbb{Z}^{+}$.
\end{theorem}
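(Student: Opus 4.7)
The plan is to convert the stated composition identity into a polynomial equation in $k$ and extract the form of $\varphi$ by two successive leading-term comparisons. Writing $\varphi_0 \circ \varphi(k) = \varphi \circ \varphi_0(k)$ out gives $\varphi(k)^{\mu_0} = \varphi(k^{\mu_0})$, and since both sides are polynomials agreeing at infinitely many positive integers they are equal as polynomials in $k$.

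Let $\varphi(k) = a_n k^n + a_{n-1}k^{n-1} + \cdots + a_0$ with $a_n \neq 0$. Because $\varphi$ is a path sequence it is strictly increasing and positive-integer-valued, so $n \geq 1$ and $a_n$ is necessarily a positive real number. Comparing the leading term of $\varphi(k)^{\mu_0}$, which is $a_n^{\mu_0} k^{n\mu_0}$, with that of $\varphi(k^{\mu_0})$, which is $a_n k^{n\mu_0}$, yields $a_n^{\mu_0-1} = 1$, and positivity forces $a_n = 1$.

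Next I would write $\varphi(k) = k^n + \psi(k)$ with $\deg \psi < n$ (or $\psi$ identically zero) and show $\psi \equiv 0$. Subtracting $k^{n\mu_0}$ from both sides of the functional equation produces
\[
\sum_{i=1}^{\mu_0} \binom{\mu_0}{i} k^{n(\mu_0-i)} \psi(k)^{i} = \psi(k^{\mu_0}).
\]
If $\psi \neq 0$, set $m = \deg \psi$ and let $b_m$ be its leading coefficient. The $i$th term on the left has degree $n\mu_0 - i(n-m)$, which is strictly decreasing in $i$ since $n > m$. Hence the left side has degree exactly $n\mu_0 - (n-m)$ with leading coefficient $\mu_0 b_m \neq 0$. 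The right side has degree $m\mu_0$. Equating degrees forces $(n-m)(\mu_0 - 1) = 0$, contradicting $n > m$ and $\mu_0 \geq 2$. Thus $\psi \equiv 0$ and $\varphi(k) = k^n$ with $n = \mu \in \mathbb{Z}^{+}$.

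I do not foresee any serious obstacle; the argument is a routine two-stage comparison of leading coefficients and degrees. The one place where minor care is needed is in solving $a_n^{\mu_0 - 1} = 1$ over $\mathbb{C}$, but the path-sequence hypothesis immediately restricts $a_n$ to the positive reals so that $a_n = 1$ is the only option.
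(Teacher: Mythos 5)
Your proof is correct, and it takes a genuinely different route from the paper's. The paper works from the bottom of the polynomial: it first proves a lemma (by comparing constant terms and the lowest-degree nonconstant terms of $h(k)^{\mu}$ and $h(k^{\mu})$) showing that any non-constant polynomial commuting with $k^{\mu}$, $\mu\geq 2$, must vanish at $0$; it then factors $\varphi(k)=k^{\mu}h(k)$ with $h(0)\neq 0$, observes that $h$ again commutes with $\varphi_{0}$, and invokes the lemma a second time to force $h$ constant, hence $h\equiv 1$ by the path-sequence condition. You work from the top instead: normalize the leading coefficient to $1$ via $a_{n}^{\mu_{0}-1}=1$ together with positivity of $a_{n}$ (which does require the small observation, implicit in your write-up, that a polynomial taking positive integer values at all large integers and tending to $+\infty$ has real, positive leading coefficient), then write $\varphi=k^{n}+\psi$ and kill $\psi$ by a degree count on $\sum_{i\geq 1}\binom{\mu_{0}}{i}k^{n(\mu_{0}-i)}\psi^{i}=\psi(k^{\mu_{0}})$. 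The two arguments are essentially dual — lowest-order versus highest-order term comparison — and of comparable length; yours has the minor advantage of being entirely self-contained in one pass with no auxiliary lemma and no factorization step, while the paper's lemma is reused structurally (it is stated separately and applied twice). Your degree computation $n\mu_{0}-i(n-m)$ being strictly decreasing in $i$, giving leading coefficient $\mu_{0}b_{m}\neq 0$ on the left against degree $m\mu_{0}$ on the right, is airtight and also covers the case $m=0$ of a nonzero constant $\psi$. No gaps.
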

\begin{proof}
The following lemma will be helpful:
\begin{lemma}\label{L:comp-commmu}
	If $\varphi(k) = k^{\mu}$, $\mu \in \mathbb{Z}^{+}$, $\mu \geq 2$, h(k) is a non-constant polynomial and $\varphi \circ h(k) = h \circ \varphi (k)$, then $h(0) = 0$.
\end{lemma}
\begin{proof}
A comparison of the constant terms and the terms of lowest exponent in the two compositions will yield the result.
\end{proof}
Now suppose the conditions of Theorem \ref{T:mu0} are met. Then Lemma \ref{L:comp-commmu} applies and shows that $\varphi(0) = 0$. $\varphi(k)$ can be written
\[
	\varphi(k) = k^{\mu}h(k),
\]
where $\mu \in \mathbb{Z}^{+}$, $h(k)$ is a polynomial and $h(0) \neq 0$.
Equating the compositions of $\varphi_{0}$ and $\varphi$ shows that $\varphi_{0}$ commutes with $h$. Lemma \ref{L:comp-commmu} tells us $h(k)$ must be constant. Since $\varphi$ is a path sequence, $h(k)$ must be $1$ and $\varphi(k) = k^{\mu}$ with $\mu \in \mathbb{Z}^{+}$.
\end{proof}

A simple result for integrable functions is
\begin{theorem}\label{T:all f}
	Given an arc $\mathcal{A}$ and an interlocked family $\Phi$, let
\[
	 \mathcal{I}_{\mathcal{A},\Phi}' = \{\,f \mid f \colon \mathcal{A} \to \mathbb{C}_{p},\  \text{and} \ \int_{\mathcal{A},\Phi}f(x)\,dx \quad exists \}.
\]
Then, $\mathcal{I}_{\mathcal{A},\Phi}'$ is a vector space and $f \to \int_{\mathcal{A},\Phi}f(x)\,dx$ is linear.
\end{theorem}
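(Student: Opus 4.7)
The plan is to prove both parts---closure under linear combinations and linearity of the integral---in a single stroke by showing that if $f,g\in\mathcal{I}'_{\mathcal{A},\Phi}$ have integrals $L_f,L_g$ and $c_1,c_2\in\mathbb{C}_p$, then $c_1f+c_2g$ is integrable with integral $c_1L_f+c_2L_g$.

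First I would record the pointwise identity
\[
A_{c_1f+c_2g,\varphi}(k)=c_1A_{f,\varphi}(k)+c_2A_{g,\varphi}(k),
\]
which is immediate from (\ref{E:A_k}) because the weighted sum over $p^{\varphi(k)}$-th roots of unity is linear in the integrand. If the integrals of $f$ and $g$ happened to be witnessed by the \emph{same} $\varphi\in\Phi$, then the theorem would follow at once from this identity and the ultrametric inequality.

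The main obstacle---and essentially the only content of the proof---is that Definition \ref{D:integral} produces separate witness path sequences $\varphi_f,\varphi_g\in\Phi$ for $f$ and $g$, whereas the integral of $c_1f+c_2g$ must be witnessed by some \emph{single} $\varphi\in\Phi$. This is precisely what the interlocked hypothesis is designed to handle. Given $\varepsilon>0$, I would set $M=\max(1,|c_1|_p,|c_2|_p)$, apply Definition \ref{D:integral} to $f$ and $g$ with tolerance $\varepsilon/M$ to obtain $\varphi_f,\varphi_g\in\Phi$ and thresholds $K_f,K_g$, and then invoke interlockedness to produce $\varphi_3\in\Phi$ and $k_0$ with $\varphi_3(k)$ a common subsequence of $\varphi_f$ and $\varphi_g$ for $k\geq k_0$. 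Since path sequences are strictly increasing, the indices $k'_f,k'_g$ determined by $\varphi_f(k'_f)=\varphi_g(k'_g)=\varphi_3(k)$ tend to infinity with $k$.

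Finally, for all sufficiently large $k$ both $k'_f>K_f$ and $k'_g>K_g$, so $A_{f,\varphi_3}(k)=A_{f,\varphi_f}(k'_f)$ lies within $\varepsilon/M$ of $L_f$, and similarly for $g$. Combining these estimates with the linearity identity and the ultrametric inequality gives
\[
\bigl|A_{c_1f+c_2g,\varphi_3}(k)-(c_1L_f+c_2L_g)\bigr|_p<\varepsilon,
\]
which verifies Definition \ref{D:integral} for $c_1f+c_2g$ with witness $\varphi_3$, proving both integrability and the linearity formula simultaneously.
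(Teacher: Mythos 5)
Your proposal is correct and follows essentially the same route as the paper's proof: exploit the linearity of $A_{\cdot,\varphi}(k)$ in the integrand, then use the interlocked hypothesis to replace the two separate witnesses $\varphi_f,\varphi_g$ by a common subsequence $\varphi_3$ along which both approximations hold, and finish with the ultrametric inequality. Your treatment is if anything slightly more careful than the paper's, in rescaling the tolerance by $\max(1,|c_1|_p,|c_2|_p)$ and in noting explicitly that the back-indices $k'_f,k'_g$ tend to infinity.
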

\begin{proof}
Scalar multiplication is obvious. For addition, suppose $f_{1}$, $f_{2} \in \mathcal{I}_{\mathcal{A},\Phi}'$.
Given $\varepsilon > 0$, there are $\varphi_{i}$ and $k_{i}$, $i = 1$,$2$, such that
\[
	\left| \int_{\mathcal{A},\Phi}f_{i}(x)\,dx - A_{f_{i},\varphi_{i}}(k) \right|_{p} < \varepsilon \qquad \text{for} \ k > k_{i}.
\]
By definition of \emph{interlocked}, there is a $\varphi \in \Phi$ and a $k_{0}$ such that $\varphi(k)$ is a common subsequence of $\varphi_{1}(k)$ and $\varphi_{2}(k)$ for $k > k_{0}$. Hence there's a $k_{0}'$ such that 
\[	
	\left| \int_{\mathcal{A},\Phi}f_{i}(x)\,dx - A_{f_{i},\varphi(k)} \right|_{p} < \varepsilon \qquad \text{for} \ k > k_{0}',\ i = 1, 2.
\]
Now, using 
\[
	A_{f_{i} + f_{2},\varphi(k)} = A_{f_{1},\varphi(k)} + A_{f_{2},\varphi(k)},
\]
we conclude
\[
	\left| \int_{\mathcal{A},\Phi}f_{1}(x)\,dx + \int_{\mathcal{A},\Phi}f_{2}(x)\,dx - A_{f_{1}+f_{2},\varphi(k)} \right|_{p} < \varepsilon \qquad \text{for} \ k >k_{0}'.
\]
This establishes that $f_{1} + f_{2} \in \mathcal{I}_{\mathcal{A},\Phi}^{'}$ and that
\[
	\int_{\mathcal{A},\Phi}(f_{1} + f_{2})(x)\,dx = \int_{\mathcal{A},\Phi}f_{1}(x)\,dx +\int_{\mathcal{A},\Phi}f_{2}(x)\,dx.
\]
That
\[
	\int_{\mathcal{A},\Phi}af_{1}(x)\,dx = a\int_{\mathcal{A},\Phi}f_{1}(x)\,dx
\]
is clear.
\end{proof} 
\section{General results for holomorphic functions}
In order to obtain further results on line integrals, we need to put conditions on $f(x)$.
Let $H(\mathcal{A})$ be the space of holomorphic functions from $\mathcal{A}$ into $\mathbb{C}_{p}$. Let $\Phi$ be an interlocked family of path sequences and
\begin{equation}\label{E:Iphi_alpha}
	\mathcal{I}_{\mathcal{A},\Phi} = \mathcal{I}_{\mathcal{A},\Phi}' \cap H(\mathcal{A}).
\end{equation}
	
To begin, equation \eqref{E:A_k} in Definition \ref{D:A_k} will be reshaped to take advantage of the fact that $f \in H(\mathcal{A})$.
\begin{theorem}\label{T:1st A_k c_n}
If $f \in H(\mathcal{A}(a,b))$, $\varphi$ is any path sequence and
\[
	f(x) = \sum_{n \geq 0}c_{n}(x - b)^{n},
\]
then
\begin{equation}\label{E:1st A_k c_n}
	A_{f,\varphi}(k) = - \sum_{n \geq p^{\varphi(k)} - 1}c_{n}(a - b)^{n+1} \sum_{j \geq 1} (-1)^{j - 1}\binom{n}{jp^{\varphi(k)} - 1},
\end{equation}
where $(-1)^{j - 1}$ is replaced by $-1$ if $p = 2$.
\end{theorem}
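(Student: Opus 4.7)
The plan is to substitute $x=a+(b-a)\zeta$ directly into \eqref{E:A_k} and expand $f(x)$ around $b$ using the given power series, then collapse the sum over roots of unity by orthogonality. Since $x-a=(b-a)\zeta$ and $x-b=(b-a)(\zeta-1)$, the quantity $(x-a)f(x)$ becomes
\[
  (x-a)f(x)=\sum_{n\geq 0}c_{n}(b-a)^{n+1}\,\zeta(\zeta-1)^{n}.
\]
Setting $N=p^{\varphi(k)}$, I would then binomially expand $(\zeta-1)^{n}=\sum_{i=0}^{n}\binom{n}{i}(-1)^{n-i}\zeta^{i}$, interchange the (finite) $\zeta$-sum with the convergent $n$-sum, and apply the standard identity $\sum_{\zeta^{N}=1}\zeta^{m}=N\cdot\mathbf{1}_{N\mid m}$. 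The surviving indices are $i+1=jN$ with $j\geq 1$, which forces $jN-1\leq n$ and hence $n\geq N-1$, producing
\[
  A_{f,\varphi}(k)=\sum_{n\geq N-1}c_{n}(b-a)^{n+1}\sum_{j\geq 1}(-1)^{n-jN+1}\binom{n}{jN-1}.
\]

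The next step is cosmetic sign bookkeeping. Pull the factor $(-1)^{n+1}$ inside $(b-a)^{n+1}$ to convert it to $(a-b)^{n+1}$, leaving the inner sign $(-1)^{-jN}=(-1)^{jN}$. For $p$ odd, $N$ is odd, so $(-1)^{jN}=(-1)^{j}=-(-1)^{j-1}$, and an overall minus sign appears, matching \eqref{E:1st A_k c_n}. For $p=2$, $N$ is even, so $(-1)^{jN}=1$, and the inner weights become $+1$; absorbing the overall minus sign yields the stated replacement of $(-1)^{j-1}$ by $-1$.

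The one point that deserves explicit justification is that the series substitution is legal, i.e.\ each sampled point $x=a+(b-a)\zeta$ actually lies in $\mathcal{A}(a,b)$. For any $p^{\varphi(k)}$-th root of unity $\zeta$ one has $|\zeta-1|_{p}<1$ (including the trivial case $\zeta=1$), so $|x-b|_{p}=|b-a|_{p}|\zeta-1|_{p}<R$, putting $x$ in the arc and ensuring absolute convergence of $\sum c_{n}(b-a)^{n}(\zeta-1)^{n}$. With convergence in hand, swapping the finite sum over $\zeta$ with the infinite sum over $n$ is routine, and the same for the finite binomial expansion within each term.

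The only genuine obstacle is keeping the signs straight across the two parity regimes for $p$; everything else is mechanical substitution and an application of root-of-unity orthogonality. I would present the computation once for $p$ odd and then note the single modification needed when $p=2$.
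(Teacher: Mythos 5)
Your proposal is correct and is essentially the paper's own argument: substitute $x=a+(b-a)\zeta$, expand $f$ around $b$, binomially expand the power of $\zeta-1$, and collapse the $\zeta$-sum by the root-of-unity orthogonality (the "geometric series" step in the paper), with the surviving indices $i+1=jp^{\varphi(k)}$ giving the binomial coefficients $\binom{n}{jp^{\varphi(k)}-1}$ and the parity of $p^{\varphi(k)}$ accounting for the $p=2$ sign change. The only difference is cosmetic — you carry $(b-a)^{n+1}(\zeta-1)^n$ where the paper writes $(a-b)^{n+1}(1-\zeta)^n$ — and your sign bookkeeping and the justification that the sampled points lie in $\mathcal{A}(a,b)$ are both correct.
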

Note that the sum over $j$ has only a finite number of non-zero terms.
\begin{proof}
	Begin with Definition \ref{D:A_k}, equation \eqref{E:A_k},
\[
	 A_{f,\varphi}(k)= p^{-\varphi(k)}\sum_{x}(x-a)f(x),
\]
where $x=a+(b-a)\zeta$ and $\zeta$ runs through the $p^{\varphi(k)}$-th roots of unity.
Substitute into the sum using $x=a+(b-a)\zeta = b + (a - b)(1 - \zeta)$ and replace $\zeta$ by $\zeta_{k}^{r},\ r~=~1$, $2$, $\dots$, $p^{\varphi(k)}$. $\zeta_{k}$ is a fixed primitive $p^{\varphi(k)}$-th root of unity. Expand and sum the geometric series in $\zeta_{k}$ to finish.
\end{proof}
Theorem \ref{T:1st A_k c_n} immediately provides an important result:
\begin{theorem}\label{T:max int}

	If $f \in \mathcal{I}_{\mathcal{A},\Phi}$ for some interlocked family $\Phi$ and $\left|f(x)\right|_{p} \leq M$ on $\mathcal{A}$, then
\[
	\left| \int_{\mathcal{A},\Phi}f(x)\,dx  \right|_{p} \leq M \left| b - a \right|_{p} = MR.
\]	
\end{theorem}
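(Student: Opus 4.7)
The plan is to bound $|A_{f,\varphi}(k)|_p$ uniformly in $k$ by using the reshaped series from Theorem \ref{T:1st A_k c_n} together with standard p-adic coefficient estimates, and then to pass to the limit via Definition \ref{D:integral}. Since $f \in H(\mathcal{A}(a,b))$ and $\mathcal{A}(a,b) = D^{-}(b,R)$ with $R = |b-a|_p$, expand $f(x) = \sum_{n\geq 0} c_n(x-b)^n$. The first step is the coefficient estimate $|c_n|_p R^n \leq M$. Because $|f(x)|_p \leq M$ only on the open disc, I would not appeal to a Gauss norm on $D^{+}(b,R)$ directly; instead, for any $r < R$ the closed-disc Gauss norm formula gives $\max_n |c_n|_p r^n = \sup_{|x-b|_p \leq r}|f(x)|_p \leq M$, so $|c_n|_p r^n \leq M$, and letting $r \to R^{-}$ yields $|c_n|_p R^n \leq M$ for every $n$.

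Next, fix any $\varphi \in \Phi$ and apply Theorem \ref{T:1st A_k c_n}:
\[
A_{f,\varphi}(k) = -\sum_{n \geq p^{\varphi(k)} - 1} c_n (a-b)^{n+1} \sum_{j \geq 1} (-1)^{j-1}\binom{n}{jp^{\varphi(k)} - 1}.
\]
Each outer factor satisfies $|c_n (a-b)^{n+1}|_p = |c_n|_p R^{n+1} \leq MR$ by the previous step, and the inner sum is a finite integer combination of binomial coefficients, hence has $p$-adic absolute value at most $1$ by the ultrametric inequality. A second application of the ultrametric inequality to the outer sum then gives $|A_{f,\varphi}(k)|_p \leq MR$ for every $k$ in the domain of $\varphi$.

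Finally, to conclude, let $L = \int_{\mathcal{A},\Phi}f(x)\,dx$ and fix $\varepsilon > 0$. By Definition \ref{D:integral} there exist $\varphi_\varepsilon \in \Phi$ and $K_\varepsilon$ with $|A_{f,\varphi_\varepsilon}(k) - L|_p < \varepsilon$ for $k > K_\varepsilon$. For any such $k$ the ultrametric inequality gives
\[
|L|_p \leq \max\bigl(|A_{f,\varphi_\varepsilon}(k)|_p,\, |A_{f,\varphi_\varepsilon}(k) - L|_p\bigr) \leq \max(MR,\varepsilon),
\]
and letting $\varepsilon \to 0$ yields $|L|_p \leq MR$, as required.

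The only mildly delicate step is the coefficient bound $|c_n|_p R^n \leq M$: one has only open-disc boundedness, so the estimate cannot be read off directly from the Gauss norm on $D^{+}(b,R)$ and must be obtained as a limit over closed subdiscs. Once that bound is in hand, the reshaped formula in Theorem \ref{T:1st A_k c_n} together with the ultrametric inequality does the rest essentially mechanically.
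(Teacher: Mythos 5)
Your proof is correct and follows essentially the same route as the paper: bound $|c_n|_p R^{n+1}\leq MR$, feed this into the reshaped sum of Theorem \ref{T:1st A_k c_n} with the ultrametric inequality to get $|A_{f,\varphi}(k)|_p\leq MR$ uniformly, then invoke Definition \ref{D:integral}. The only cosmetic difference is that the paper simply cites Schikhof for the identity $\sup_n|c_n|_pR^n=\sup_{x\in\mathcal{A}}|f(x)|_p$ on the open disc, whereas you rederive the needed inequality by exhausting with closed subdiscs; both are fine.
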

\begin{proof}
Let
\[
	f(x) = \sum_{n \geq 0}c_{n}(x - b)^{n}.
\]
 We know, see \cite{wS1984}, that 
\[
 	\sup_{n \geq 0}{\left|c_{n}\right|_{p}R^{n}} = \sup_{x \in \mathcal{A}}{\left|f(x)\right|_{p}}.
\]
Hence, by Theorem \ref{T:1st A_k c_n}, for all $\varphi$ and any $k$,
\[ 
	\left| A_{f,\varphi(k)}(k) \right|_{p} \leq \sup_{n \geq 0}{\left|c_{n}\right|_{p}R^{n + 1}} \leq MR.
\]
 Let
\[
 	L = \int_{\mathcal{A},\Phi}f(x)\,dx.
\]
If $M = 0$, there is nothing to prove. If $M \neq 0$, we can use Definition \ref{D:integral} of the integral with $\varepsilon =  MR$ to conclude there is a $\varphi_{\varepsilon} \in \Phi$ and a $K$ such that 
 \[
 	 \left| A_{f,\varphi_{\varepsilon}}(K) - L \right|_{p} < MR.
 \]
Since  $\left| A_{f,\varphi_{\varepsilon}}(K) \right|_{p} \leq  MR$, we must have
 \[
 	\left| L \right|_{p} \ \leq \  MR.
 \]
\end{proof}

$\mathcal{I}_{\mathcal{A},\Phi}$ is a vector space. The following result shows $\mathcal{I}_{\mathcal{A},\Phi}$ is closed under uniform convergence.
\begin{theorem}\label{T:unif conv}
	Suppose $f_{i}(x) \in \mathcal{I}_{\mathcal{A},\Phi}$ for $i = 1$, $2$, $\dots$ and 
\[	
	f_{i}(x) \xrightarrow{\text{unif}} f(x).
\]
Then 
$f(x) \in \mathcal{I}_{\mathcal{A},\Phi}$ and
\[
	\int_{\mathcal{A},\Phi}f(x)\,dx = lim_{i \to \infty} \int_{\mathcal{A},\Phi}f_{i}(x)\,dx.
\]
\end{theorem}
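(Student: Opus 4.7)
The plan is a three-step, ultrametric $3\varepsilon$-argument anchored on the uniform bound contained inside the proof of Theorem \ref{T:max int}: namely, that for every $g \in H(\mathcal{A})$ and every $\varphi,k$ one already has the termwise inequality
\[
	|A_{g,\varphi}(k)|_p \;\leq\; R\,\sup_{x\in\mathcal{A}}|g(x)|_p,
\]
not merely the same bound for the integral. Writing $L_i = \int_{\mathcal{A},\Phi} f_i(x)\,dx$, I will first show $(L_i)$ is Cauchy, then verify $f \in H(\mathcal{A})$, and finally check the integral of $f$ equals $L:=\lim_i L_i$.

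For the Cauchy step, fix $\varepsilon>0$ and use uniform convergence to choose $N$ with $\sup_{\mathcal{A}}|f_i-f_j|_p \leq \varepsilon/R$ for $i,j\geq N$. Because $\mathcal{I}_{\mathcal{A},\Phi}$ is a vector space (Theorem \ref{T:all f} intersected with $H(\mathcal{A})$), the difference $f_i - f_j$ lies in $\mathcal{I}_{\mathcal{A},\Phi}$, and Theorem \ref{T:max int} yields $|L_i - L_j|_p \leq R\cdot \varepsilon/R = \varepsilon$. Hence $L_i \to L$ in $\mathbb{C}_p$. For membership in $H(\mathcal{A})$, expand $f_i(x) = \sum_n c_{n,i}(x-b)^n$ and invoke the identity $\sup_{x\in\mathcal{A}}|f_i-f_j|_p = \sup_n |c_{n,i}-c_{n,j}|_p R^n$ (from \cite{wS1984}, already cited in the proof of Theorem \ref{T:max int}). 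Uniform convergence then forces each coefficient sequence $(c_{n,i})_i$ to be Cauchy, with limits $c_n$, and $\sup_n |c_n - c_{n,i}|_p R^n \to 0$. Therefore $f(x) = \sum_n c_n(x-b)^n$ is a power series converging throughout $\mathcal{A}$, so $f \in H(\mathcal{A})$.

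For the final verification, given $\varepsilon>0$ choose $i$ large enough that simultaneously $|L - L_i|_p < \varepsilon$ and $\sup_{\mathcal{A}}|f - f_i|_p \leq \varepsilon/R$; then pick $\varphi_\varepsilon \in \Phi$ and $K_\varepsilon$ with $|A_{f_i,\varphi_\varepsilon}(k) - L_i|_p < \varepsilon$ for $k > K_\varepsilon$. Applying the termwise bound above to $g = f - f_i \in H(\mathcal{A})$ gives
\[
	|A_{f,\varphi_\varepsilon}(k) - A_{f_i,\varphi_\varepsilon}(k)|_p = |A_{f-f_i,\varphi_\varepsilon}(k)|_p \;\leq\; \varepsilon
\]
for every $k$, and the ultrametric inequality then produces $|A_{f,\varphi_\varepsilon}(k) - L|_p < \varepsilon$ for $k>K_\varepsilon$, matching Definition \ref{D:integral} exactly. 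The only real subtlety — and the "main obstacle" in the loose sense — is recognizing that Theorem \ref{T:max int}'s proof supplies the uniform termwise bound on $|A_{g,\varphi}(k)|_p$ needed to swap $f_i$ for $f$ inside the $A$-sums before taking the limit; once that is noted, all three estimates are of the same size $\varepsilon$ and combine cleanly, establishing $f \in \mathcal{I}_{\mathcal{A},\Phi}$ with $\int_{\mathcal{A},\Phi} f(x)\,dx = L$.
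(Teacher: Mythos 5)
Your proof is correct and follows essentially the same route as the paper: both establish that the limit $L$ of the integrals exists via the bound $|A_{g,\varphi}(k)|_p \leq R\sup|g|_p$ applied to differences $f_i - f_j$, and both conclude with the same three-term ultrametric estimate splitting $A_{f,\varphi}(k) - L$ through $A_{f_i,\varphi}(k)$ and $\int_{\mathcal{A},\Phi}f_i\,dx$. Your explicit verification that $f \in H(\mathcal{A})$ via the coefficientwise sup-norm identity is a detail the paper leaves implicit, but it is not a different method.
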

\begin{proof}
	For any $\varepsilon > 0$ we have  $\left| f_{i + 1}(x) - f_{i}(x) \right|_{p} < \varepsilon$ for all $x$ and $i$ large. Hence
\[
	\left| \int_{\mathcal{A},\Phi}f_{i + 1}(x)\,dx - \int_{\mathcal{A},\Phi}f_{i}(x)\,dx \right|_{p} = \left| \int_{\mathcal{A},\Phi}f_{i + 1}(x) - f_{i}(x)\,dx \right|_{p} \leq \varepsilon R.
\] 
when $i$ is large and
\[
	lim_{i \to \infty} \int_{\mathcal{A},\Phi}f_{i}(x)\,dx = L \qquad \text{exists}.
\]
Now we need to show that 
\[
	\int_{\mathcal{A},\Phi}f(x)\,dx = L.
\]
Consider the identity, for any $\varphi$,
\begin{multline*}
\left|A_{f,\varphi}(k) - L \right|_{p}\\
= \left|A_{f,\varphi}(k) - A_{f_{i},\varphi}(k) + A_{f_{i},\varphi}(k) -  \int_{\mathcal{A},\Phi}f_{i}(x)\,dx  + \int_{\mathcal{A},\Phi}f_{i}(x)\,dx- L \right|_{p}.
\end{multline*}
The right side is made up of three easy to handle pieces. Given $\varepsilon$,
we can choose an $I$ large enough so that, regardless of $k$ or $\varphi$ (to be chosen),
\[
	\left|A_{f,\varphi}(k) - A_{f_{I},\varphi}(k) \right|_{p} < \varepsilon, \quad \text{and}  \quad \left| \int_{\mathcal{A},\Phi}f_{I}(x)\,dx- L \right|_{p} < \varepsilon.
\]
Having chosen $I$, we can now choose $\varphi \in \Phi$ so that
\[
	\left| A_{f_{I},\varphi}(k) -  \int_{\mathcal{A},\Phi}f_{I}(x)\,dx \right|_{p} <  \varepsilon
\]
for all sufficiently large $k$.
Putting these pieces together yields
\[
	\left|A_{f,\varphi}(k) - L \right|_{p} < \varepsilon \qquad\text{for all sufficiently large}\  k.
\]
Hence, by definition of an integral,
\[
	\int_{\mathcal{A},\Phi}f(x)\,dx = L.
\]
\end{proof}
 
For the next few theorems, the results can change depending on whether f(x) is bounded on $\mathcal{A}$ or not. The unbounded functions are more difficult to handle, so we need a new concept here.
\begin{definition}\label{D:contr coef}
We will say that $f \in H(\mathcal{A})$ has \emph{controlled coefficients of order $\beta$}, $\beta \geq 0$, if, given a power series for $f(x)$,
\[
	f(x) = \sum_{n \geq 0}c_{n}(x - b)^{n}
\]
there are real constants $M$ and $n_{o} > 0$, such that, with $R = |a - b|_{p}$, 
\[
	\left|c_{n}\right|_{p}R^{n} < M n^{\beta} \qquad \text{for} \ n > n_{0}.
\]
\end{definition}

The next theorem shows that this definition is independent of the choice of $b \in \mathcal{A}$.  Bounded functions are those with $\beta = 0$.

\begin{theorem}
	Suppose $b$, $b' \in \mathcal{A}$, $f \in H(\mathcal{A})$ and
\[
	f(x) = \sum_{n \geq 0}c_{n}(x - b)^{n} = \sum_{n \geq 0}c_{n}'(x - b')^{n}.
\]
Suppose there are real constants $M$ and $n_{o} > 0$, and $\beta \geq 0$ such that
\[
	\left|c_{n}\right|_{p}R^{n} < M n^{\beta} \qquad \text{for} \ n > n_{0}.
\]
Then there is an $ n_{0}'$ such that
\[
	\left|c_{n}'\right|_{p}R^{n} < M n^{\beta} \qquad \text{for} \ n > n_{0}'.
\]
\end{theorem}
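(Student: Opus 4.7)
My plan is to relate the two power series by a binomial change of variable, bound the new coefficients using the ultrametric, and then carry out a short calculus-style estimate on the function $m \mapsto (k+m)^\beta q^m$ for a suitable $q < 1$.

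First I would derive the coefficient conversion. Expanding $(x-b)^n = \bigl((x-b')+(b'-b)\bigr)^n$ and collecting powers of $(x-b')$ gives
\[
	c_k' \;=\; \sum_{n \geq k} \binom{n}{k}\, c_n\, (b'-b)^{n-k}.
\]
Since $b,b' \in \mathcal{A}$ we have $|b'-b|_p < R$ and, by the ultrametric, $|a-b'|_p = R$. Set $q = |b'-b|_p/R < 1$. Because $\binom{n}{k} \in \mathbb{Z}$, $|\binom{n}{k}|_p \leq 1$, so
\[
	|c_k'|_p\, R^k \;\leq\; \sup_{n \geq k}\, |c_n|_p\, R^n\, q^{n-k}.
\]
The series for $c_k'$ converges (its terms are bounded above by $M n^\beta q^{n-k}$ for large $n$), so the supremum is attained at some $n^* = n^*(k) \geq k$.

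For $k > n_0$ we have $n^* \geq k > n_0$, and the hypothesis yields $|c_{n^*}|_p R^{n^*} < M(n^*)^\beta$, so
\[
	|c_k'|_p\, R^k \;<\; M\,(n^*)^\beta\, q^{n^*-k}.
\]
It therefore suffices to show $(k+m)^\beta q^m \leq k^\beta$ for every $m \geq 0$ once $k$ is large enough. The main obstacle, and really the only content of the proof, is to keep the \emph{same} constant $M$ rather than some multiple of it. To this end I would analyze $g(m) = (k+m)^\beta q^m$ through
\[
	g'(m) = (k+m)^{\beta-1} q^m \bigl(\beta + (k+m)\ln q\bigr).
\]
Since $\ln q < 0$, $g'(m) < 0$ whenever $k+m > \beta/|\ln q|$; hence for $k > \beta/|\ln q|$, $g$ is strictly decreasing on $[0,\infty)$, so $g(m) \leq g(0) = k^\beta$. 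Setting $n_0' = \max\bigl(n_0,\,\beta/|\ln q|\bigr)$ then gives $|c_k'|_p R^k < Mk^\beta$ for $k > n_0'$. The case $\beta = 0$ is immediate because $q^m \leq 1$ for all $m \geq 0$.
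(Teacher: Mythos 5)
Your proposal is correct and follows essentially the same route as the paper: the same binomial coefficient-conversion formula, the same ultrametric bound $|c_k'|_pR^k \leq \sup_{n\geq k}|c_n|_pR^n q^{n-k}$ with $q=|b'-b|_p/R=|t|_p$, and the same monotonicity argument showing $(1+m/k)^\beta q^m\leq 1$ once $k$ exceeds the threshold $\beta/|\ln q| = -\beta/\ln|t|_p$, which is exactly the paper's choice of $j_0$.
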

\begin{proof}
Write 
\[
	f(x) = \sum_{n \geq 0}c_{n}(x - b' + b' - b)^{n}.
	\]
Expanding, rearranging terms and letting $b' - b = t(a - b)$, $|t |_{p} < 1$, yields
\[
	c_{j}'(a - b)^{j} = \sum_{n \geq j}c_{n}\binom{n}{j}(a - b)^{n }t^{n - j}.
\]
Then, if $j > n_{0}$,
\[	
	\left|c_{j}'\right|_{p}R^{j} \  \leq \  \max_{n \geq j}\left|c_{n}\right|_{p}R^{n}\left|t\right|_{p}^{n - j} \
\]
\[
	< \  \max_{n \geq j}Mn^{\beta}\left|t \right|_{p}^{n - j}\  = \ Mj^{\beta}\max_{n \geq j}\left(\frac{n}{j}\right)^{\beta}\left|t\right|_{p}^{n - j}.
\]
The last expression, $\left(\frac{n}{j}\right)^{\beta}\left|t\right|_{p}^{n - j}$, is a decreasing function of $n$ if $n > -\beta/\ln{\left|t\right|_{p}}$. If $j_{0}$ is chosen so $j_{0} > \max(n_{0}$,$-\beta/\ln{\left|t\right|_{p}})$ and $j \geq j_{0}$, then the maximum value is at $n = j$ and that value is 1.
Thus
\[
	\left|c_{j}'\right|_{p}R^{j} < Mj^{\beta} \qquad \text{for} \ j \geq j_{0}.
\]
\end{proof}
The next goal is to find a versions of Theorem \ref{T:1st A_k c_n} that will take advantage of the concept of \emph{controlled coefficients}.

The first step in proving Theorem \ref{T:1st A_k c_n} from the definition of $A_{\varphi,f}(k)$, (\ref{D:A_k}), was to substitute for $x$. This yielded
\[
	A_{\varphi,f}(k) = -\sum_{n \geq 0} p^{-\varphi(k)}c_{n}(a - b)^{n + 1}\sum_{\zeta}\zeta(1 - \zeta)^{n},
\]
where $\zeta$ runs through the $p^{\varphi(k)}$ -th roots of unity.
Let
\[
	 a_{k,n} = p^{-\varphi(k)}c_{n}(a - b)^{n + 1}\sum_{\zeta}\zeta(1 - \zeta)^{n}.
\]
\begin{lemma}\label{L: a_{k,n}}
	If $f \in H(\mathcal{A})$ and $f$ has controlled coefficients of order $\beta$, then
\[
	\nu_{p}(a_{k,n}) \geq -\varphi(k) - \log_{p}(MR) - \beta \log_{p}(n) + \frac{np}{p^{\varphi(k)}(p - 1)}.
\]
Furthermore, the expression on the right side of the inequality is an increasing function of $n$ if $n > \beta (p - 1)p^{\varphi(k) - 1}\log_{p}{e}$.
\end{lemma}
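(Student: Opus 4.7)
The plan is to factor $a_{k,n}$ and bound the $p$-adic valuation of each piece separately, using only (i) the controlled coefficient hypothesis and (ii) the standard valuations of $1-\zeta$ for $\zeta$ a $p^{\varphi(k)}$-th root of unity. Writing
\[
a_{k,n} = p^{-\varphi(k)} \cdot c_n(a-b)^{n+1} \cdot \sum_{\zeta} \zeta(1-\zeta)^n,
\]
the $p^{-\varphi(k)}$ factor contributes exactly $-\varphi(k)$ to $\nu_p(a_{k,n})$, so the task reduces to bounding the remaining two factors.

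For the middle factor, the inequality $|c_n|_p R^n < M n^{\beta}$ of Definition \ref{D:contr coef} says $\nu_p(c_n) > -\log_p M - \beta\log_p n - n\,\nu_p(a-b)$, and after multiplying by $(a-b)^{n+1}$ this becomes
\[
\nu_p\bigl(c_n(a-b)^{n+1}\bigr) > -\log_p(MR) - \beta\log_p n
\]
for all $n > n_0$. For the root-of-unity sum, observe that when $\zeta = 1$ the summand vanishes (assuming $n \geq 1$), while any other $p^{\varphi(k)}$-th root of unity is a primitive $p^j$-th root for some $1 \leq j \leq \varphi(k)$ and therefore satisfies $\nu_p(1-\zeta) = 1/\bigl((p-1)p^{j-1}\bigr) \geq 1/\bigl((p-1)p^{\varphi(k)-1}\bigr)$. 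Since $\nu_p(\zeta) = 0$, each nonzero term has
\[
\nu_p\bigl(\zeta(1-\zeta)^n\bigr) \geq \frac{n}{(p-1)p^{\varphi(k)-1}} = \frac{np}{(p-1)p^{\varphi(k)}},
\]
and the ultrametric inequality transfers this lower bound to the whole sum. Adding the three estimates gives the claimed inequality for $\nu_p(a_{k,n})$.

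For the monotonicity statement, treat the right-hand side as a function of a real variable $n > 0$ and differentiate:
\[
\frac{d}{dn}\!\left(-\beta\log_p n + \frac{np}{(p-1)p^{\varphi(k)}}\right) = -\frac{\beta}{n\ln p} + \frac{p}{(p-1)p^{\varphi(k)}}.
\]
This derivative is positive precisely when $n > \beta(p-1)p^{\varphi(k)}/(p\ln p) = \beta(p-1)p^{\varphi(k)-1}\log_p e$, which is exactly the threshold stated in the lemma.

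There is no substantial obstacle here; the argument is essentially bookkeeping once one recognizes that the term $np/\bigl((p-1)p^{\varphi(k)}\bigr)$ in the lemma is precisely the smallest valuation of $(1-\zeta)^n$ as $\zeta$ runs through nontrivial $p^{\varphi(k)}$-th roots of unity. The only point requiring a little care is handling $\zeta = 1$, which is harmless because $(1-1)^n = 0$ for $n \geq 1$ and the lemma is applied only for $n$ at least as large as the monotonicity threshold.
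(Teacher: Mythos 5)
Your proof is correct and follows essentially the same route as the paper, whose proof is just the two-line hint "use $\nu_p(x) = -\log_p(|x|_p)$ and $\nu_p(1-\zeta) \geq \frac{p}{p^{\varphi(k)}(p-1)}$" — exactly the two ingredients you identify and carry out in detail, together with the ultrametric inequality and the elementary calculus for the monotonicity threshold. The only caveat, which is inherited from the paper's own statement rather than introduced by you, is that the controlled-coefficients bound is only guaranteed for $n > n_0$, so the displayed inequality should be read as holding for all sufficiently large $n$.
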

\begin{proof}
	Use
\[
	\nu_{p}(x) = -\log_{p}(|x|_{p})\ \text{and}\ \nu_{p}(1 - \zeta) \geq \frac{p}{p^{\varphi(k)}(p - 1)}.
\]
\end{proof}

Lemma \ref{L: a_{k,n}} immediately yields a version of Theorem \ref{T:1st A_k c_n} in which the tail of the series has been cut off.
\begin{theorem}\label{T:n_k1}
	If $f \in H(\mathcal{A}),f$ has controlled coefficients of order $\beta$\\ and $n_{k} \geq (\beta~+~2)\left(\frac{p~-~1}{p}\right)\varphi(k)p^{\varphi(k)}$  then, for large $k$,
\[
	A_{f,\varphi}(k) = - \sum_{n = p^{\varphi(k)} - 1}^{n_{k} - 1}c_{n}(a - b)^{n+1} \sum_{j 	\geq 1}(-1)^{j - 1}\binom{n}{jp^{\varphi(k)} - 1} + \eta(k),
\]
with $\eta(k) \to 0$ as $k \to \infty$.
\end{theorem}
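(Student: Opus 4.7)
The plan is to recognize the theorem as the assertion that the tail discarded when truncating Theorem \ref{T:1st A_k c_n} vanishes $p$-adically, and to estimate that tail using Lemma \ref{L: a_{k,n}}. With $a_{k,n}$ as in the display preceding the lemma, Theorem \ref{T:1st A_k c_n} reads $A_{f,\varphi}(k) = -\sum_{n \geq p^{\varphi(k)} - 1} a_{k,n}$, so the claim is equivalent to
\[
\eta(k) = -\sum_{n \geq n_k} a_{k,n} \longrightarrow 0.
\]
By the ultrametric inequality it suffices to show $\nu_p(a_{k,n}) \to \infty$ uniformly for $n \geq n_k$ as $k \to \infty$.

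The approach is to use both conclusions of Lemma \ref{L: a_{k,n}} together. For $k$ large, the hypothesized $n_k \geq (\beta + 2)\bigl(\frac{p-1}{p}\bigr)\varphi(k) p^{\varphi(k)}$ easily exceeds the threshold $\beta(p-1) p^{\varphi(k)-1} \log_p e$ above which the lemma's lower bound is an increasing function of $n$, so that bound attains its minimum over $n \geq n_k$ at $n = n_k$ itself. The calibration $\frac{n_k p}{p^{\varphi(k)}(p-1)} \geq (\beta+2)\varphi(k)$ built into the hypothesis then simplifies the lemma to
\[
\nu_p(a_{k,n_k}) \geq (\beta+1)\varphi(k) - \log_p(MR) - \beta \log_p n_k.
\]

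The only real obstacle is that the hypothesis allows $n_k$ to be much larger than its prescribed lower bound, so the bound above must be shown to diverge uniformly in the choice of $n_k$. I would handle this by setting $u = n_k p / (p^{\varphi(k)}(p-1))$, so that $u \geq (\beta+2)\varphi(k)$ and $\log_p n_k = \log_p u + \varphi(k) + O(1)$; substituting, the lower bound becomes at least $\tfrac{u}{\beta+2} - \beta \log_p u + O(1)$, which tends to infinity because linear growth in $u$ dominates the logarithm. This gives $|\eta(k)|_p \leq p^{-\nu_p(a_{k,n_k})} \to 0$, completing the argument.
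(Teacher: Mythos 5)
Your proposal is correct and is exactly the argument the paper intends: the paper offers no written proof beyond the remark that Lemma \ref{L: a_{k,n}} ``immediately yields'' the truncation, and your estimate --- locating the minimum of the lemma's lower bound at $n = n_k$ via its monotonicity, then checking that $u - (\beta+1)\varphi(k) \geq u/(\beta+2)$ so the bound diverges uniformly over all admissible choices of $n_k$ --- supplies precisely the missing details. No gaps.
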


The next modification of Theorem \ref{T:1st A_k c_n}, with a restriction on $\beta$, will be to remove terms where $n$ is not p-adically close to $-1$. This will require help from the binomial coefficients, so we need 
\begin{lemma}\label{L:bin remove n}
	If $j$,$n$,$t$,$N \in \mathbb{Z}^{+}$ and $n + 1 \not\equiv 0\pmod{p^{t}}$,then
\[
	\nu_{p}\binom{n}{jp^{N} - 1} > N - t.
\]
\begin{proof}
	This follows immediately from
\[
		\binom{n}{jp^{N} - 1} = \binom{n + 1}{jp^{N}}\frac{jp^{N}}{n + 1}.
\]
\end{proof}
\end{lemma}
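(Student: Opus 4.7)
The plan is to start from the identity that the author has flagged, namely
\[
\binom{n}{jp^{N} - 1} = \binom{n + 1}{jp^{N}}\,\frac{jp^{N}}{n + 1},
\]
which I would verify by a direct manipulation of factorials. Taking the $p$-adic valuation of both sides turns the claim into the additive inequality
\[
\nu_{p}\!\binom{n}{jp^{N} - 1} \;=\; \nu_{p}\!\binom{n+1}{jp^{N}} \;+\; \nu_{p}(jp^{N}) \;-\; \nu_{p}(n+1).
\]

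Next I would bound the three terms separately. If $n+1 < jp^N$ then $\binom{n+1}{jp^N}=0$, the left side is $+\infty$, and the inequality is trivial; so I may assume $\binom{n+1}{jp^N}$ is a positive integer, which gives $\nu_{p}\!\binom{n+1}{jp^{N}} \geq 0$. Since $j \in \mathbb{Z}^{+}$, $\nu_{p}(jp^{N}) \geq N$. Finally, the hypothesis $n+1 \not\equiv 0 \pmod{p^{t}}$ is exactly the statement $\nu_{p}(n+1) \leq t - 1$. Combining these,
\[
\nu_{p}\!\binom{n}{jp^{N} - 1} \;\geq\; 0 + N - (t - 1) \;=\; N - t + 1 \;>\; N - t,
\]
which is the desired inequality.

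I don't expect any real obstacle here: the whole lemma reduces to a single algebraic identity and three trivial valuation bounds, and the strict inequality in the conclusion comes for free from the integer gap between $\nu_{p}(n+1) \leq t-1$ and $t$. The only care needed is to dispose of the degenerate case $jp^{N} > n+1$ at the outset so that $\binom{n+1}{jp^{N}}$ can be treated as an honest non-negative integer.
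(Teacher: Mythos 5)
Your proposal is correct and follows exactly the paper's route: the paper's proof consists of citing the identity $\binom{n}{jp^{N}-1}=\binom{n+1}{jp^{N}}\frac{jp^{N}}{n+1}$ and declaring the rest immediate, and your valuation bookkeeping ($\nu_{p}\binom{n+1}{jp^{N}}\geq 0$, $\nu_{p}(jp^{N})\geq N$, $\nu_{p}(n+1)\leq t-1$) is precisely the intended "immediate" step. Your handling of the degenerate case $jp^{N}>n+1$ is a sensible extra precaution but changes nothing substantive.
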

We'll also need help from a function that relates to path sequences. 
\begin{definition}\label{D:auxfunc}
A function $\theta(k)$ will be called an \emph{auxiliary function} if there is a $k_{0} \in \mathbb{Z}^{+}$ such that  $\theta(k)$ is defined for $k \in \mathbb{Z}^{+}$ with $k \geq k_{0}$, $\theta(k) \in \mathbb{Z}^{+}$ and
\[
	 \lim_{k \to \infty}\theta(k) = \infty.
\]
 
\end{definition}
\begin{theorem}\label{T:best A_f,phi1}
	Suppose $f \in H(\mathcal{A}),f$ has controlled coefficients of order $ \beta< 1$ and $n_{k} = \left(1 + \left[(\beta~+~2)\left(\frac{p~-~1}{p}\right)\varphi(k)\right]\right)p^{\varphi(k) }$.\\
	Furthermore, suppose $\theta$ is an auxiliary function that satisfies
\[
		(1 - \beta)\varphi(k) - \beta \log_{p} \varphi(k) - \theta(k) \to \infty \qquad \text{as} \quad k \to\infty.
\]
Then, for large $k$
\begin{equation}\label{E:best A_f,phi1}
	A_{f,\varphi}(k) = - \sum_{\substack{n = p^{\varphi(k)} - 1\\ n \equiv -1 \pmod{p^{\theta(k)}}}}^{n_{k} - 1}c_{n}(a - b)^{n+1} \sum_{j \geq 1}(-1)^{j - 1}\binom{n}{jp^{\varphi(k)} - 1} + \eta(k),
\end{equation}
with $\eta(k) \to 0$ as $k \to \infty$.
\end{theorem}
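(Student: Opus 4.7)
The plan is to start from Theorem \ref{T:n_k1} and prune, from the resulting truncated sum, all indices $n$ with $n \not\equiv -1 \pmod{p^{\theta(k)}}$ by showing that their combined contribution has $p$-adic valuation tending to $+\infty$. The value of $n_k$ specified here satisfies the hypothesis of Theorem \ref{T:n_k1}, so that theorem applies and gives
\[
A_{f,\varphi}(k) = -\sum_{n=p^{\varphi(k)}-1}^{n_k-1} c_n(a-b)^{n+1}\sum_{j\ge 1}(-1)^{j-1}\binom{n}{jp^{\varphi(k)}-1} + \eta_1(k),
\]
with $\eta_1(k) \to 0$. It therefore suffices to estimate the portion of this sum indexed by the $n$ that are to be removed.

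For each such $n$, Lemma \ref{L:bin remove n} (applied with $N = \varphi(k)$ and $t = \theta(k)$) yields $\nu_p\binom{n}{jp^{\varphi(k)}-1} > \varphi(k) - \theta(k)$. The controlled-coefficient bound $|c_n|_p R^n < Mn^{\beta}$ translates, for large $n$, to $\nu_p\bigl(c_n(a-b)^{n+1}\bigr) > -\log_p M - \log_p R - \beta\log_p n$. Since $n < n_k$ and $n_k$ is polynomial in $\varphi(k)$ times $p^{\varphi(k)}$, one gets $\log_p n \le \varphi(k) + \log_p \varphi(k) + C$ for an absolute constant $C = C(\beta,p)$, so
\[
\nu_p\!\left(c_n(a-b)^{n+1}\binom{n}{jp^{\varphi(k)}-1}\right) \;\ge\; (1-\beta)\varphi(k) - \beta\log_p\varphi(k) - \theta(k) - C'
\]
uniformly in $n$ in the relevant congruence class and in the finitely many $j$ that contribute. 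By the hypothesis on $\theta$, the right-hand side tends to $+\infty$, and the ultrametric inequality then implies that the sum of the removed terms itself tends to $0$ in $\mathbb{C}_p$. Absorbing this into $\eta_1(k)$ produces an error $\eta(k)\to 0$, which is exactly \eqref{E:best A_f,phi1}.

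The principal obstacle is the growth accounting: one must confirm that $\log_p n$, for $n$ ranging up to $n_k$, is controlled by $\varphi(k) + O(\log_p \varphi(k))$, so that the $\beta\log_p n$ cost fits inside the budget $(1-\beta)\varphi(k) - \beta\log_p\varphi(k)$ supplied by the hypothesis on $\theta$. This is precisely where the restriction $\beta<1$ is essential: without it, the $\beta\varphi(k)$ contribution from $\beta\log_p n$ would match or exceed the gain $\varphi(k)$ from the binomial estimate, and no choice of the auxiliary function $\theta$ could force the residual terms to zero.
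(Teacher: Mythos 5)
Your proposal is correct and follows the same route as the paper, which itself describes the proof as a straightforward size estimate of the omitted terms using Theorem \ref{T:n_k1}, the bound $n < n_k$, Lemma \ref{L:bin remove n}, the controlled-coefficient hypothesis with $\beta < 1$, and the assumption on $\theta$. You have simply written out the valuation bookkeeping that the paper leaves implicit, and your accounting (including the role of $\beta < 1$) is accurate.
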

\begin{proof}
 This is a straightforward calculation of the size of the omitted terms using Theorem \ref{T:n_k1}, $\nu_{p}(x) = -\log_{p}(|x|_{p})$, $n < n_{k}$, $\beta < 1$, Lemma \ref{L:bin remove n} and the assumption on $\theta(k)$.
\end{proof}

In order to later state a sufficient condition for an integral to exist, slightly modified versions of the last two theorems will be needed. The proofs are similar.
\begin{theorem}\label{T:m_k}
	Suppose $f \in H(\mathcal{A})$, $f$ has controlled coefficients of order $\beta$ and $m_{k}~\geq~(\beta~+~2)\left(\frac{p~-~1}{p}\right)\varphi(k)p^{\varphi(k + 1)}$. Also suppose $\varphi(k)$ satisfies
\[
	\varphi(k) - (\beta + 1)\nabla\varphi(k) - \beta\log_{p}\varphi(k) \to \infty \qquad \text{as} \quad k \to\infty,
\]
with $\nabla\varphi(k) = \varphi(k + 1) - \varphi(k)$.
Then, for large $k$
\[
	A_{f,\varphi}(k + 1) = - \sum_{n = p^{\varphi(k + 1)} - 1}^{m_{k} - 1}c_{n}(a - b)^{n+1} \sum_{j \geq 1}(-1)^{j - 1}\binom{n}{jp^{\varphi(k + 1)} - 1} + \eta(k),
\]
with $\eta(k) \to 0$ as $k \to \infty$.
\end{theorem}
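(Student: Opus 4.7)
The plan is to mimic the proof of Theorem \ref{T:n_k1}, applied at index $k+1$ rather than $k$, and to verify that the weaker cutoff $m_{k}$ still kills the tail provided the new hypothesis on $\varphi$ holds. Starting from the identity
\[
    A_{f,\varphi}(k+1) = -\sum_{n \geq 0} a_{k+1,n},
\]
where $a_{k+1,n} = p^{-\varphi(k+1)}c_{n}(a-b)^{n+1}\sum_{\zeta}\zeta(1-\zeta)^{n}$ (from the derivation preceding Lemma \ref{L: a_{k,n}}), the terms with $n < p^{\varphi(k+1)}-1$ vanish exactly as in Theorem \ref{T:1st A_k c_n}. The only real content is therefore to show that
\[
    \sum_{n \geq m_{k}} a_{k+1,n} \;=\; \eta(k) \longrightarrow 0 \qquad (k \to \infty).
\]

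First I would invoke Lemma \ref{L: a_{k,n}} with $k$ replaced by $k+1$, giving
\[
    \nu_{p}(a_{k+1,n}) \geq -\varphi(k+1) - \log_{p}(MR) - \beta\log_{p}(n) + \frac{np}{p^{\varphi(k+1)}(p-1)},
\]
together with the monotonicity statement: the right side is increasing in $n$ once $n > \beta(p-1)p^{\varphi(k+1)-1}\log_{p}e$, a threshold that is below $m_{k}$ for large $k$ since $m_{k} \geq (\beta+2)\bigl(\tfrac{p-1}{p}\bigr)\varphi(k)p^{\varphi(k+1)}$ and $\varphi(k) \to \infty$. By the ultrametric inequality the norm of the tail sum is therefore bounded by $|a_{k+1,m_{k}}|_{p}$, so it suffices to show $\nu_{p}(a_{k+1,m_{k}}) \to \infty$.

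Plugging $n = m_{k}$ into the lower bound and using $\frac{m_{k}p}{p^{\varphi(k+1)}(p-1)} \geq (\beta+2)\varphi(k)$ together with $\log_{p}(m_{k}) \leq \varphi(k+1) + \log_{p}\varphi(k) + O(1)$, one obtains
\[
    \nu_{p}(a_{k+1,m_{k}}) \geq (\beta+2)\varphi(k) - (\beta+1)\varphi(k+1) - \beta\log_{p}\varphi(k) - C
\]
for an absolute constant $C$. The key algebraic simplification is
\[
    (\beta+2)\varphi(k) - (\beta+1)\varphi(k+1) = \varphi(k) - (\beta+1)\nabla\varphi(k),
\]
which is precisely where the hypothesis enters: the standing assumption $\varphi(k) - (\beta+1)\nabla\varphi(k) - \beta\log_{p}\varphi(k) \to \infty$ forces the right-hand side above to tend to infinity, hence $|a_{k+1,m_{k}}|_{p} \to 0$ and $\eta(k) \to 0$.

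The main obstacle is simply the bookkeeping: the cutoff $m_{k}$ is calibrated by $\varphi(k)$ rather than the natural $\varphi(k+1)$, so the direct application of Theorem \ref{T:n_k1} at index $k+1$ does not suffice and one must instead re-run Lemma \ref{L: a_{k,n}} and verify by hand that the gap between $(\beta+2)\varphi(k)$ and $(\beta+1)\varphi(k+1)$ is covered by the hypothesis on $\nabla\varphi$. Once the identity $(\beta+2)\varphi(k) - (\beta+1)\varphi(k+1) = \varphi(k) - (\beta+1)\nabla\varphi(k)$ is spotted, everything else is routine estimation.
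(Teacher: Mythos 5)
Your proof is correct and is essentially the argument the paper intends: the paper gives no separate proof of Theorem \ref{T:m_k}, saying only that it is ``similar'' to Theorem \ref{T:n_k1}, and your write-up is precisely that adaptation --- apply Lemma \ref{L: a_{k,n}} at index $k+1$, use the monotonicity of the lower bound past the cutoff, and observe that $(\beta+2)\varphi(k)-(\beta+1)\varphi(k+1)=\varphi(k)-(\beta+1)\nabla\varphi(k)$ so the hypothesis on $\varphi$ drives the tail to zero.
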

\begin{theorem}\label{T:best A_f,phi2}
	Suppose $f \in H(\mathcal{A}),f$ has controlled coefficients of order $ \beta< 1$, $m_{k} = \left(1 + \left[(\beta~+~2)\left(\frac{p~-~1}{p}\right)\varphi(k)\right]\right)p^{\varphi(k + 1) }$ and $\varphi(k)$ satisfies
\[
	\varphi(k) - (\beta + 1)\nabla\varphi(k) - \beta\log_{p}\varphi(k) \to \infty \qquad \text{as} \quad k \to\infty.
\]
Furthermore, suppose $\theta_{2}$ is an auxiliary function that satisfies
\[
		(1 - \beta)\varphi(k + 1) - \beta \log_{p} \varphi(k) - \theta_{2}(k) \to \infty \qquad \text{as} \quad k \to\infty.
\]
Then, for large $k$
\begin{equation}\label{E:best A_f,phi2}
	A_{f,\varphi}(k + 1) = - \sum_{\substack{n = p^{\varphi(k + 1)} - 1\\ n \equiv -1 \pmod{p^{\theta_{2}(k)}}}}^{m_{k} - 1}c_{n}(a - b)^{n+1} \sum_{j \geq 1}(-1)^{j - 1}\binom{n}{jp^{\varphi(k + 1)} - 1} + \eta(k),
\end{equation}
with $\eta(k) \to 0$ as $k \to \infty$.
\end{theorem}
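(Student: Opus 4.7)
The plan is to follow the blueprint of Theorem \ref{T:best A_f,phi1}, adapted in the two ways the hypotheses signal: evaluate $A_{f,\varphi}$ at $k+1$ rather than $k$, and truncate at $m_k$ rather than $n_k$. First I would invoke Theorem \ref{T:m_k}: its growth assumption $\varphi(k) - (\beta+1)\nabla\varphi(k) - \beta\log_p\varphi(k) \to \infty$ is already one of the hypotheses here, so it delivers
\[
A_{f,\varphi}(k+1) = -\sum_{n=p^{\varphi(k+1)}-1}^{m_k-1} c_n(a-b)^{n+1}\sum_{j\geq 1}(-1)^{j-1}\binom{n}{jp^{\varphi(k+1)}-1} + \eta_1(k),
\]
with $\eta_1(k) \to 0$. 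All that remains is to show that dropping the indices $n \not\equiv -1 \pmod{p^{\theta_2(k)}}$ contributes another error that tends to $0$; then $\eta(k)$ is $\eta_1(k)$ plus this second error.

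For each such $n$ with $p^{\varphi(k+1)}-1 \leq n < m_k$, Lemma \ref{L:bin remove n} (with $N = \varphi(k+1)$ and $t = \theta_2(k)$) gives $\nu_p\binom{n}{jp^{\varphi(k+1)}-1} > \varphi(k+1) - \theta_2(k)$ for every $j$, so by the ultrametric inequality the same bound holds for the inner $j$-sum. Combining with the controlled coefficients estimate $|c_n|_p R^n < M n^\beta$ (equivalently $\nu_p(c_n(a-b)^{n+1}) > -\log_p(MR) - \beta\log_p n$), every removed term $T_n$ satisfies
\[
\nu_p(T_n) \;>\; -\log_p(MR) - \beta\log_p n + \varphi(k+1) - \theta_2(k).
\]

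Since $n < m_k = (1 + [(\beta+2)\tfrac{p-1}{p}\varphi(k)])\,p^{\varphi(k+1)}$, one has $\log_p n = \varphi(k+1) + \log_p\varphi(k) + O(1)$, so the right-hand side is bounded below by
\[
(1-\beta)\varphi(k+1) - \beta\log_p\varphi(k) - \theta_2(k) - O(1),
\]
which tends to $+\infty$ by the $\theta_2$ hypothesis. Applying the ultrametric inequality to the finitely many omitted indices, their total contribution tends to $0$ in $\mathbb{C}_p$, which completes the argument.

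The main obstacle is purely bookkeeping: one must verify that the $\beta\log_p n$ penalty from the coefficient bound, once maximized over $n<m_k$, is exactly absorbed by the slack written into the hypothesis on $\theta_2$. The shift from $\varphi(k)$ to $\varphi(k+1)$ in the truncation range is what forces the appearance of $\varphi(k+1)$ (not $\varphi(k)$) and $\log_p\varphi(k)$ (not $\log_p\varphi(k+1)$) in that condition, which is precisely how Theorem \ref{T:best A_f,phi2} is stated.
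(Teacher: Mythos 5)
Your proof is correct and is exactly the argument the paper intends: the paper only remarks that the proof is ``similar'' to that of Theorem \ref{T:best A_f,phi1}, i.e.\ truncate via Theorem \ref{T:m_k} and then discard the indices $n \not\equiv -1 \pmod{p^{\theta_2(k)}}$ using Lemma \ref{L:bin remove n} together with the controlled-coefficients bound and the hypothesis on $\theta_2$. Your write-up supplies the size estimate $(1-\beta)\varphi(k+1) - \beta\log_p\varphi(k) - \theta_2(k) - O(1) \to \infty$ explicitly, which is precisely the bookkeeping the paper leaves implicit.
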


Now we're ready to look at changing $a$ and $b$ within the definition of an integral. Since any point in the interior of a circle can be used as its center and any point in an open disc can be used as its center, we would like the value of an integral to be invariant under these changes. If $f$ is bounded this will be true. If $f$ is unbounded, the situation is more complicated.
\begin{theorem}\label{T:change a,b}
	If $f \in \mathcal{I}_{\mathcal{A}(a,b),\Phi}$, $f$ is bounded on $\mathcal{A}$, $|a - a'|_{p} < R$ and $|b - b'|_{p} < R$, then
\[
	\int_{\mathcal{A}(a,b),\Phi}f(x)\,dx = \int_{\mathcal{A}(a',b'),\Phi}f(x)\,dx.
\]
\end{theorem}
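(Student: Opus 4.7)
The conditions $|a-a'|_p<R$ and $|b-b'|_p<R$, with $R=|a-b|_p$, force $\mathcal{A}(a,b)=\mathcal{A}(a',b')$ as subsets of $\mathbb{C}_p$ by the ultrametric inequality, so $f$ is the same function on the same arc and only the sequence of approximating sums changes. Write $A^{(a,b)}_{f,\varphi}(k)$ and $A^{(a',b')}_{f,\varphi}(k)$ for the two discretizations. The plan is to show that for every $\varphi\in\Phi$,
\[
    \bigl|A^{(a,b)}_{f,\varphi}(k)-A^{(a',b')}_{f,\varphi}(k)\bigr|_p \longrightarrow 0 \qquad (k\to\infty).
\]
Once this holds, any $\varphi\in\Phi$ and $K$ witnessing $A^{(a,b)}_{f,\varphi}(k)\to L:=\int_{\mathcal{A}(a,b),\Phi}f$ also witness $A^{(a',b')}_{f,\varphi}(k)\to L$, giving both existence and equality. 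I would route the change $(a,b)\to(a',b')$ through the intermediate pair $(a,b')$ and handle the two legs by different techniques.

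For the first leg $(a,b)\to(a,b')$, fix $f(x)=\sum c_n(x-b)^n$ and set $\alpha=(b'-a)/(b-a)$, which is a 1-unit (that is, $|\alpha-1|_p<1$). The identity $a+(b'-a)\zeta=a+(b-a)(\alpha\zeta)$, together with the geometric-series computation that drives Theorem~\ref{T:1st A_k c_n}, yields
\[
    A^{(a,b')}_{f,\varphi}(k) - A^{(a,b)}_{f,\varphi}(k) = -\sum_n c_n(a-b)^{n+1}\sum_{\ell\geq 1}(-1)^{\ell-1}\binom{n}{\ell p^{\varphi(k)}-1}\bigl(\alpha^{\ell p^{\varphi(k)}}-1\bigr).
\]
Boundedness of $f$ gives $|c_n(a-b)^{n+1}|_p\leq MR$, the binomial factors have $p$-adic norm at most $1$, and a short computation with $(\alpha^{p^{\varphi(k)}})^{\ell}-1$ shows $|\alpha^{\ell p^{\varphi(k)}}-1|_p\leq|\alpha^{p^{\varphi(k)}}-1|_p$, which tends to $0$ because $\alpha$ is a 1-unit. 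The ultrametric inequality then bounds the whole expression by $MR\cdot|\alpha^{p^{\varphi(k)}}-1|_p$, which tends to zero.

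The second leg $(a,b')\to(a',b')$ is subtler. Expand $f(x)=\sum c_n(x-b')^n$; the coefficients are common to both approximating sums and only $(a-b')^{n+1}$ is replaced by $(a'-b')^{n+1}=(a-b')^{n+1}(1+\rho)^{n+1}$, where $\rho=(a'-a)/(a-b')$ and $|\rho|_p<1$. A naive termwise estimate of the difference gives only $|c_n(a-b')^{n+1}[(1+\rho)^{n+1}-1]|_p\leq M|\rho|_p$, which is uniform in $k$ and does not decay. The escape route is Theorem~\ref{T:best A_f,phi1}: $f$ bounded has controlled coefficients of order zero, so choosing an auxiliary function $\theta$ with $\varphi(k)-\theta(k)\to\infty$ (for instance $\theta(k)=\lfloor\varphi(k)/2\rfloor$) lets one replace both $A^{(a,b')}_{f,\varphi}(k)$ and $A^{(a',b')}_{f,\varphi}(k)$ by finite sums over $n\in[p^{\varphi(k)}-1,n_k)$ restricted to $p^{\theta(k)}\mid(n+1)$, modulo error terms tending to $0$. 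For such $n$, the standard bound on raising a 1-unit to a power divisible by $p^{\theta(k)}$ gives $|(1+\rho)^{n+1}-1|_p\leq p^{-\theta(k)}|\rho|_p$. Combined with $|c_n(a-b')^{n+1}|_p\leq MR$ and $|\binom{n}{\ell p^{\varphi(k)}-1}|_p\leq 1$, the ultrametric inequality bounds this leg's difference by $MR\cdot p^{-\theta(k)}|\rho|_p$ plus the error contributions, and the whole quantity tends to zero.

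The principal obstacle is precisely this second leg: without the congruence restriction $p^{\theta(k)}\mid(n+1)$ supplied by Theorem~\ref{T:best A_f,phi1}, the change in $(a-b')^{n+1}$ produces a contribution that is only uniformly bounded in $k$, never decaying. The refined theorem channels the sum into the arithmetic progression where raising a 1-unit to a power divisible by $p^{\theta(k)}$ forces $(1+\rho)^{n+1}-1$ to be genuinely small. Summing the two leg estimates via the ultrametric inequality then produces the required $k\to\infty$ convergence and completes the proof.
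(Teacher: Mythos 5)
Your proof is correct, and the overall skeleton --- reduce everything to showing $\left|A^{(a,b)}_{f,\varphi}(k)-A^{(a',b')}_{f,\varphi}(k)\right|_{p}\to 0$ for each $\varphi\in\Phi$, then split the move $(a,b)\to(a',b')$ into a $b$-leg and an $a$-leg --- is the same as the paper's. Your $a$-leg is essentially the paper's argument: both expand about the (fixed) base point, note that only $(a-b')^{n+1}$ versus $(a'-b')^{n+1}$ differs, and use Theorem \ref{T:best A_f,phi1} to confine $n$ to the class $n\equiv -1\pmod{p^{\theta(k)}}$ so that the $1$-unit raised to a power divisible by $p^{\theta(k)}$ is close to $1$; your stated bound $p^{-\theta(k)}|\rho|_{p}$ is only the $|\rho|_{p}\leq|p|_{p}$ case of Robert's inequality, and for $|p|_{p}<|\rho|_{p}<1$ the correct bound is $\bigl(\max(|\rho|_{p},|p|_{p})\bigr)^{\theta(k)}$, but that still tends to $0$, so nothing breaks for bounded $f$. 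Where you genuinely diverge from the paper is the $b$-leg, and your route is cleaner there. The paper re-expands $f$ about $b'$ and must control the coefficient change $c_{n}\mapsto c_{n}'$ through equation \eqref{E:changeb}, which introduces an extra infinite sum over $m$ and forces its own two-case analysis on $|t|_{p}$ together with bounds on $\binom{m+n}{n}$. You instead keep the expansion about $b$ and observe that replacing $b$ by $b'$ merely rotates the sample points $a+(b-a)\zeta$ by the $1$-unit $\alpha=(b'-a)/(b-a)$; running this through the geometric-series computation of Theorem \ref{T:1st A_k c_n} makes the entire change surface as the scalar factors $\alpha^{\ell p^{\varphi(k)}}$, so the whole leg is bounded by $MR\,\bigl|\alpha^{p^{\varphi(k)}}-1\bigr|_{p}\to 0$ --- no coefficient change, no congruence restriction, and no appeal to Theorem \ref{T:best A_f,phi1} on that leg. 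For the bounded case stated in the theorem this is a genuine simplification; the price is that the trick leans on $\left|c_{n}(a-b)^{n+1}\right|_{p}\leq MR$ uniformly in $n$, so extending it to the unbounded controlled-coefficient setting of the companion theorem would reintroduce the truncation at $n_{k}$ and a comparison of $n_{k}^{\beta}$ against $\bigl|\alpha^{p^{\varphi(k)}}-1\bigr|_{p}$, which is roughly where the paper's extra hypotheses come from.
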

\begin{theorem}
If $f$ is unbounded on $\mathcal{A}(a,b)$, $f$ has controlled coefficients of order $\beta< 1/2$ and $|a - a'|_{p} < Rp^{\frac{-\beta}{1 - \beta}}$ and $|b - b'|_{p} <  Rp^{\frac{-\beta}{1 - \beta}}$, then
\[
	\int_{\mathcal{A}(a,b),\Phi}f(x)\,dx = \int_{\mathcal{A}(a',b'),\Phi}f(x)\,dx.
\]
\end{theorem}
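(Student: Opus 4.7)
The plan is to show that $A^{(a,b)}_{f,\varphi}(k) - A^{(a',b')}_{f,\varphi}(k) \to 0$ as $k \to \infty$ for a $\varphi \in \Phi$ witnessing the left-hand integral; the right-hand integral then exists with the same value. First I would note that $|a-a'|_p, |b-b'|_p < R$ forces $\mathcal{A}(a,b) = \mathcal{A}(a',b')$ as open discs, so a single power series $f(x) = \sum c_n(x-b)^n$ represents $f$ on this common arc, and by the theorem following Definition \ref{D:contr coef} the $b'$-centred coefficients $c'_n$ satisfy $|c'_n|_p R^n < Mn^\beta$ for large $n$.

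Next I would apply Theorem \ref{T:best A_f,phi1} on both sides with a common auxiliary function $\theta(k)$, yielding
\[
A^{(a,b)}_{f,\varphi}(k) - A^{(a',b')}_{f,\varphi}(k) = -\!\!\sum_{\substack{p^{\varphi(k)}-1 \leq n < n_k \\ n \equiv -1 \pmod{p^{\theta(k)}}}}\!\! \Delta_n\, S_n(\varphi(k)) + o(1),
\]
where $\Delta_n := c_n(a-b)^{n+1} - c'_n(a'-b')^{n+1}$ and $S_n(\varphi(k)) := \sum_{j\geq 1}(-1)^{j-1}\binom{n}{jp^{\varphi(k)}-1}$. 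I would split $\Delta_n = c_n[(a-b)^{n+1} - (a'-b')^{n+1}] + (c_n - c'_n)(a'-b')^{n+1}$, expand the first bracket binomially in powers of $(b'-b)-(a'-a)$, and use the Taylor identity $c_n - c'_n = -\sum_{m>n}\binom{m}{n}c_m(b-b')^{m-n}$ for the second.

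The technical core is bounding $|\Delta_n|_p$ on the retained range. Since $p^{\theta(k)}\mid n+1$ there, each $\binom{n+1}{j}$ with $j\geq 1$ has $p$-adic norm $\leq p^{\nu_p(j)-\theta(k)}$, and similarly $\binom{n+1}{n} = n+1$ in the Taylor shift. Combined with $|c_n|_p R^n < Mn^\beta$, $\tau/R \leq p^{-\beta/(1-\beta)}$ where $\tau = \max(|a-a'|_p,|b-b'|_p)$, and $n < n_k \sim \varphi(k)p^{\varphi(k)}$, the first-order contribution yields $|\Delta_n|_p \leq Mn_k^\beta R\, p^{-\beta/(1-\beta)-\theta(k)}$; higher-order contributions pick up additional factors $p^{-\beta/(1-\beta)}$ and are dominated by this.

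The main obstacle is the simultaneous calibration of $\theta(k)$. Making $|\Delta_n|_p \to 0$ requires $\theta(k) - \beta\varphi(k) \to \infty$ (to absorb $n_k^\beta \sim p^{\beta\varphi(k)}$), while Theorem \ref{T:best A_f,phi1} requires $(1-\beta)\varphi(k) - \beta\log_p\varphi(k) - \theta(k) \to \infty$. Both are simultaneously satisfiable precisely when $\beta < 1-\beta$, i.e.\ $\beta < 1/2$; any $\theta(k) = \lfloor\gamma\varphi(k)\rfloor$ with $\beta < \gamma < 1-\beta$ works. The hypothesis $\beta<1/2$ is thus the threshold opening a window for $\theta(k)$; once chosen, the rest is routine exponent bookkeeping.
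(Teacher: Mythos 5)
Your argument is correct and rests on the same pillars as the paper's: the recentering theorem following Definition \ref{D:contr coef}, Theorem \ref{T:best A_f,phi1} applied with a common auxiliary $\theta(k)$ so that only indices with $p^{\theta(k)}\mid n+1$ survive, the Taylor-shift formula for $c_n'$, and that divisibility fed into binomial coefficients through $\binom{n+1}{j}=\tfrac{n+1}{j}\binom{n}{j-1}$. Where you genuinely diverge is in the calibration. The paper treats the change of $a$ and the change of $b$ in two separate passes and, in each, splits into the cases $|t|_p\le|p|_p$ and $|p|_p<|t|_p<p^{-\beta/(1-\beta)}$, taking $\theta(k)=1+[\varphi(k)/2]$ in the first and the $|t|$-dependent $\theta(k)=\bigl[\varphi(k)/(1-\log_p|t|_p)\bigr]$ in the second; this case split is forced by the lemma it quotes, which only yields $|(1+t')^{p^{\theta}}-1|_p<(\max(|t|_p,|p|_p))^{\theta}$. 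Your termwise estimate $|\binom{n+1}{j}u^j|_p\le p^{\nu_p(j)-\theta(k)}|u|_p^{\,j}$ is sharper when $|t|_p>|p|_p$, so a single $\theta(k)=\lfloor\gamma\varphi(k)\rfloor$ with $\beta<\gamma<1-\beta$ covers every case and makes the role of $\beta<1/2$ transparent; as a by-product your bounds never use more than $|a-a'|_p,\,|b-b'|_p<R$. One statement needs repair: the higher-order terms are not literally ``dominated'' by the first-order ones --- $p^{\nu_p(j)}|u|_p^{\,j}$ at $j=p^{s}$ can exceed $|u|_p$ when $|u|_p$ is close to $1$ --- but $\sup_{j\ge1}p^{\nu_p(j)}|u|_p^{\,j}$ and $\sup_{r\ge1}(n_k+r)^{\beta}n_k^{-\beta}p^{\nu_p(r)}|t|_p^{\,r}$ are constants independent of $k$, which is all the exponent bookkeeping requires.
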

\begin{proof}
Let
\[
	f(x) = \ \sum_{n \geq 0}c_{n}(x - b)^{n}.
\]
	It's simplest to look at changing $a$ and $b$ separately. \\Let's assume $|a - a'|_{p}~<~Rp^{\frac{-\beta}{1 - \beta}}$. This will cover both bounded and unbounded $f$.
	It will be helpful to have this lemma.
\begin{lemma}
Let $a - a' = t(a - b)$ with $|t|_{p} < 1$. Suppose $n \equiv -1 \pmod{p^{\theta(k)}}$.
Then,
\[
	\left(\frac{a' - b}{a - b}\right)^{n + 1} = 1 + s,\qquad \text{with}\ |s|_{p} < (\max (|t|_{p},|p|_{p})^{\theta(k)}.
\]
\begin{proof}
Let $n = dp^{\theta(k)} - 1$. 
\[
	\left(\frac{a' - b}{a - b}\right)^{n + 1} = \left(1 + \frac{a' -a}{a - b}\right)^{dp^{\theta(k)}} = \left(1 + t'\right)^{p^{\theta(k)}}, \quad \text{with}\ |t'|_{p} \leq |t|_{p}.
\]
The lemma is then the conclusion of Robert's \emph{Fundamental Inequality: Second form}, \cite{aR2000}.
\end{proof}
\end{lemma}
For clarity, we'll write $A_{f,\varphi,a}(k)$ and $A_{f,\varphi,a'}(k)$. For simplicity, we'll write
\[
	S(\varphi,k,n) \overset{\textup{def}}{=} \sum_{j = 1}^{\left[(n + 1)/p^{\varphi(k)}\right]} (-1)^{j - 1}\binom{n}{jp^{\varphi(k)} - 1}.
\]
We have, for any $\varphi$, by Theorem \ref{T:best A_f,phi1},
\[
	A_{f,\varphi,a}(k) =  - \sum_{\substack{n = p^{\varphi(k)} - 1\\ n \equiv -1 \pmod{p^{\theta(k)}}}}^{n_{k} - 1}c_{n}(a - b)^{n+1}S(\varphi,k,n) + \eta_{1}(k)
\]
and
\begin{multline*}
	A_{f,\varphi,a'}(k) =  - \sum_{\substack{n = p^{\varphi(k)} - 1\\ n \equiv -1 \pmod{p^{\theta(k)}}}}^{n_{k} - 1}c_{n}(a' - b)^{n+1}S(\varphi,k,n) + \eta_{2}(k)\\
= - \sum_{\substack{n = p^{\varphi(k)} - 1\\ n \equiv -1 \pmod{p^{\theta(k)}}}}^{n_{k} - 1}c_{n}(a - b)^{n+1}\left(\frac{a' - b}{a - b}\right)^{n + 1}S(\varphi,k,n) + \eta_{2}(k).
\end{multline*}
with $\eta_{1}(k)$ and $\eta_{2}(k) \to 0$ as $k \to \infty$.
Hence,
\[
	\left| A_{f,\varphi,a'}(k) - A_{f,\varphi,a}(k)\right|_{p} \leq \max_{n} \left| c_{n}(a - b)^{n+1}\right|_{p}\left|\left(\frac{a' - b}{a - b}\right)^{n + 1} - 1\right|_{p} + |\eta(k)|_{p}.
\]
with $|\eta(k)|_{p} \to 0$ as $k \to \infty$.
We're now ready to apply the definition of integral.
Let 
\[
	L = \int_{\mathcal{A}(a,b),\Phi}f(x)\,dx.
\]
Given $\varepsilon > 0$, let $\varphi \in \Phi$ be such that $|A_{f,\varphi,a}(k)) - L|_{p} < \varepsilon$ for large $k$.\\

$\textbf{Case 1}$: $|t|_{p} \leq |p|_{p}$. Let $\theta(k) = 1 + \left[\frac{1}{2}\varphi(k)\right]$. Then for sufficiently large $k$ we have
\[
	\left| A_{f,\varphi,a'}(k) - A_{f,\varphi,a}(k)\right|_{p} \leq Mn_{k}^{\beta}p^{-\theta(k)} + |\eta(k)|_{p} < \  \text{const.}\cdot\varphi^{\beta}(k)p^{(\beta - \frac{1}{2})\varphi(k)} + |\eta(k)|_{p} < \varepsilon.
\]
This shows that $|A_{f,\varphi,a'}(k)) - L|_{p} < \varepsilon$ for large $k$ and
\[
	\int_{\mathcal{A}(a',b),\Phi}f(x)\,dx = \int_{\mathcal{A}(a,b),\Phi}f(x)\,dx
\]

$\textbf{Case 2}$: $|p|_{p}< |t|_{p} < p^{\frac{-\beta}{1 - \beta}}$. $\varphi$ is chosen as in Case 1, but $\theta$ is not. Let
\[
	\theta(k) = \left[\frac{1}{1 - \log_{p} |t|_{p}}\varphi(k)\right].
\]
$\theta(k)$ satisfies the condition in Theorem \ref{T:best A_f,phi1}.
Then
\[
	\left| A_{f,\varphi,a'}(k) - A_{f,\varphi,a}(k)\right|_{p} \leq Mn_{k}^{\beta}p^{-\theta(k)}  + |\eta(k)|_{p}< \  \text{const.}\cdot\varphi^{\beta}(k)p^{(\beta + \frac{\log |t|_{p}}{1 - \log |t|_{p}})\varphi(k)} + |\eta(k)|_{p} < \varepsilon.
\]
with $|\eta(k)|_{p} \to 0$ as $k \to \infty$.\\

The coefficient of $\varphi(k) $ in the exponent is negative, so again we have
\[|A_{f,\varphi,a'}(k)) - L|_{p} < \varepsilon,
\]
for large $k$, and thus
\[
	\int_{\mathcal{A}(a',b),\Phi}f(x)\,dx = \int_{\mathcal{A}(a,b),\Phi}f(x)\,dx.
\]

Now let's consider changing $b$ to $b'$.
This will be similar to changing $a$, but more complicated because the coefficients $c_{n}$ are changing.
For clarity, we will write $A_{f,\varphi,b}(k)$ and $A_{f,\varphi,b'}(k)$.
Let
\[
	b' - b = t(a - b),\  \textup{with}\  |t|_{p} < p^{\frac{-\beta}{1 - \beta}}.
\]
\[
	f(x) = \ \sum_{n \geq 0}c_{n}(x - b)^{n} \ = \ \sum_{j \geq 0}c_{j}'(x - b')^{j},
\]
with
\[
	c_{j}' = \sum_{n \geq j}c_{n}\binom{n}{j}(a - b)^{n - j}t^{n - j}.
\]
The variables can be relabeled to obtain
\begin{equation}\label{E:changeb}
	c_{n}' = c_{n} + \sum_{m \geq 1}c_{m + n}\binom{m + n}{n}(a - b)^{m}t^{m}.
\end{equation}
With $\theta(k)$ yet to be chosen and
\[
	S(\varphi,k,n) = \sum_{j = 1}^{\left[(n + 1)/p^{\varphi(k)}\right]} (-1)^{j - 1}\binom{n}{jp^{\varphi(k)} - 1},
\]
 Theorem \ref{T:best A_f,phi1} gives us
\[
	A_{f,\varphi,b}(k) =  - \sum_{\substack{n = p^{\varphi(k)} - 1\\ n \equiv -1 \pmod{p^{\theta(k)}}}}^{n_{k} - 1}c_{n}(a - b)^{n+1}S(\varphi,k,n) + \eta_{1}(k),
\]
and 
\[
	A_{f,\varphi,b'}(k) =  - \sum_{\substack{n = p^{\varphi(k)} - 1\\ n \equiv -1 \pmod{p^{\theta(k)}}}}^{n_{k} - 1}c_{n}'(a - b')^{n+1}S(\varphi,k,n) + \eta_{2}(k).
\]
with $\eta_{1}(k)$ and $\eta_{2}(k) \to 0$ as $k \to \infty$.

Substituting for $c_{n}'$ and subtracting $A_{f,\varphi,b'}(k)$ we get
\[
	A_{f,\varphi,b}(k) - A_{f,\varphi,b'}(k) = \sum_{\substack{n = p^{\varphi(k)} - 1\\ n \equiv -1 \pmod{p^{\theta(k)}}}}^{n_{k} - 1}\left(\left(\frac{a - b'}{a - b}\right)^{n + 1} - 1\right) c_{n}(a - b)^{n+1}S(\varphi,k,n)
\]
\[
	+  \sum_{\substack{n = p^{\varphi(k)} - 1\\ n \equiv -1 \pmod{p^{\theta(k)}}}}^{n_{k} - 1}\left(\frac{a - b'}{a - b}\right)^{n + 1}\sum_{m \geq 1}c_{m + n}(a - b)^{m + n+1}\binom{m + n}{n}t^{m}S(\varphi,k,n) + \eta(k),
\]
where $\eta(k) \to 0$ as $k \to \infty$.\\
Since we can write $a - b' = a' - b$ with $a' = a + (b - b')$, the considerations of how we can change $a$ show that, if we use the same $\theta(k)$, the first large summand can be made $< \varepsilon$ for k sufficiently large.
Now we need to deal with the second large summand. The difficulty is that while $n$ runs through a finite set of values, $m$ goes from $1$ to $\infty$.

We have $\left|\left(\frac{a - b'}{a - b}\right)^{n + 1}\right|_{p} \leq 1$ and $|S(\varphi,k,n)|_{p} \leq 1$, so we will just consider the expressions
\[
	c_{m + n}(a - b)^{m + n+1}\binom{m + n}{n}t^{m},
\]
with $p^{\varphi(k)} - 1 \leq n < n_{k}$ and $m \in \mathbb{Z}^{+}$.
The factor $t^{m}$ will be useful when $m$ is large, but will not help for small $m$.
As with $a$, we will consider two cases, depending on the size of $|t|_{p}$.\\

\noindent$\textbf{Case 1}$: $|t|_{p} \leq |p |_{p}$. Let $\theta(k) = \left[\frac{1}{2}\varphi(k)\right]$. First we'll consider $m > \theta(k)$.
\[
	\left|c_{m + n}(a - b)^{m + n+1}\binom{m + n}{n}t^{m}\right|_{p} < MR(m + n_{k})^{\beta}p^{-m}.
\]
This last expression is a decreasing function of $m$ for $m \geq 1$, so, since $m > \theta(k)$, we have
\[
	MR(m + n_{k})^{\beta}p^{-m} < MR\left(n_{k} + \frac{1}{2}\varphi(k)\right)^{\beta}p^{-\frac{\varphi(k)}{2}}.
\]
The subsitution of $n_{k} = \left(1 + \left[(\beta~+~2)\left(\frac{p~-~1}{p}\right)\varphi(k)\right]\right)p^{\varphi(k)}$ leads to
\[
	\left|c_{m + n}(a - b)^{m + n+1}\binom{m + n}{n}t^{m}\right|_{p} < \ \text{const.}\cdot\varphi^{\beta}(k)p^{(\beta - \frac{1}{2})\varphi(k)}.
\]
Now let's consider $m \leq \theta(k)$.
\[
\left|c_{m + n}(a - b)^{m + n+1}\binom{m + n}{n}t^{m}\right|_{p} < \ MR(\theta(k) + n_{k})^{\beta}\left|\binom{m + n}{n}\right|_{p}.
\]
Since $\nu_{p}(m) \leq \log_{p} \theta(k)$, Lemma \ref{L:bin remove n} gives us
\[
	\left|\binom{m + n}{n}\right|_{p} \ \leq \ \theta(k)p^{-\theta(k)}\  = \ \frac{\varphi(k)}{2}p^{\frac{-\varphi(k)}{2}}.
\]
Now we have
\[
	\left|c_{m + n}(a - b)^{m + n+1}\binom{m + n}{n}t^{m}\right|_{p} < MR\left(n_{k} + \frac{1}{2}\varphi(k)\right)^{\beta}\frac{\varphi(k)}{2}p^{-\frac{-\varphi(k)}{2}}.
\]
As earlier in $\textbf{Case 1}$, this leads to an expression dominated by $p^{(\beta - \frac{1}{2})\varphi(k)}$.

Since $\beta < \frac{1}{2}$ we can conclude, using the same argument used for changing $a$, that if $|t|_{p} \leq |p|_{p}$ then
\[
	\int_{\mathcal{A}(a,b'),\Phi}f(x)\,dx = \int_{\mathcal{A}(a,b),\Phi}f(x)\,dx.
\]
$\textbf{Case 2}$: $|p|_{p} < |t|_{p} < p^{\frac{-\beta}{1 - \beta}}$. Here, we must use the same $\theta(k) = \left[\frac{1}{1 - \log_{p} |t|_{p}}\varphi(k)\right]$ as before, and also consider $m > \theta(k)$ separately from $m \leq \theta(k)$. The results will look much like they did in Case 2 for changing $a$. This completes the proof of Theorem\ \ref{T:change a,b}.
\end{proof}

\section{Calculation of Integrals}\label{S:calc ints}

\begin{theorem}\label{T:holo=0}
If $f(x)$ is holomorphic on the closed disc $D^{+}(a,R)$ and $\left| a - b\right|_{p} = R$, then for any $\Phi$ we have
\[
	\int_{\mathcal{A}(a,b),\Phi}f(x)\,dx = 0.
\]
\end{theorem}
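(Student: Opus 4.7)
The plan is to work directly from the definition of $A_{f,\varphi}(k)$ using the expansion of $f$ around the center $a$, where the coefficient decay is strongest. Since $f \in H(D^{+}(a,R))$, we may write
\[
    f(x) = \sum_{n \geq 0} d_{n}(x-a)^{n},
\]
and because this series converges on the whole closed disc of radius $R$, the coefficients satisfy $|d_{n}|_{p} R^{n} \to 0$ as $n \to \infty$. This is the key estimate that makes everything work.

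Next, I substitute $x - a = (b-a)\zeta$ in Definition~\ref{D:A_k}, so that $(x-a)f(x) = \sum_{n \geq 0} d_{n}(b-a)^{n+1}\zeta^{n+1}$, and exchange the sum over $n$ with the sum over the $p^{\varphi(k)}$-th roots of unity. The standard orthogonality relation gives
\[
    \sum_{\zeta} \zeta^{m} = \begin{cases} p^{\varphi(k)} & \text{if } p^{\varphi(k)} \mid m,\\ 0 & \text{otherwise},\end{cases}
\]
so only the indices $n$ with $n+1 = jp^{\varphi(k)}$, $j \geq 1$, survive. After cancelling $p^{-\varphi(k)}$ with $p^{\varphi(k)}$, this yields the clean formula
\[
    A_{f,\varphi}(k) = \sum_{j \geq 1} d_{jp^{\varphi(k)} - 1}\,(b-a)^{jp^{\varphi(k)}}.
\]

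Now the estimate is immediate from the ultrametric inequality: using $|b-a|_{p} = R$,
\[
    \bigl|A_{f,\varphi}(k)\bigr|_{p} \leq \sup_{j \geq 1} |d_{jp^{\varphi(k)} - 1}|_{p} R^{jp^{\varphi(k)}} \leq R \cdot \sup_{n \geq p^{\varphi(k)} - 1} |d_{n}|_{p} R^{n}.
\]
Since $|d_{n}|_{p}R^{n} \to 0$ and $p^{\varphi(k)} \to \infty$ with $k$, the right-hand side tends to $0$. Thus $A_{f,\varphi}(k) \to 0$ for every path sequence $\varphi$, which means $\int_{\mathcal{A},\varphi} f(x)\,dx = 0$ for every single $\varphi$. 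In particular, choosing any $\varphi \in \Phi$ and applying Definition~\ref{D:integral} with $L = 0$, the condition \eqref{E:intdef} is satisfied, so $\int_{\mathcal{A},\Phi} f(x)\,dx = 0$.

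There is essentially no real obstacle here; the only minor subtlety is to work with the expansion around $a$ rather than around $b$ (where Theorem~\ref{T:1st A_k c_n} would apply but with noisier bookkeeping), so that the coefficient decay $|d_{n}|_{p} R^{n} \to 0$ — which is exactly the closed-disc hypothesis on $f$ — can be invoked in its sharpest form. Alternatively, one could re-expand around $b$, observe that $\mathcal{A}(a,b) \subset D^{+}(a,R)$ forces the $b$-centered coefficients to satisfy $|c_{n}|_{p}R^{n} \to 0$ as well, and then bound every term of \eqref{E:1st A_k c_n} using $\bigl|\binom{n}{jp^{\varphi(k)} - 1}\bigr|_{p} \leq 1$; but the direct route above is shorter.
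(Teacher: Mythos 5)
Your proof is correct and follows essentially the same route as the paper's: the roots-of-unity sum annihilates every coefficient of index below $p^{\varphi(k)}-1$, and the closed-disc hypothesis forces the surviving coefficients $|d_n|_pR^n$ to tend to $0$, so $A_{f,\varphi}(k)\to 0$ for every $\varphi$ and hence for any $\Phi$. The only difference is bookkeeping: you expand around the center $a$, obtaining the clean identity $A_{f,\varphi}(k)=\sum_{j\ge 1}d_{jp^{\varphi(k)}-1}(b-a)^{jp^{\varphi(k)}}$, whereas the paper invokes its $b$-centered formula from Theorem \ref{T:1st A_k c_n} and bounds the binomial coefficients by $1$ -- both give the same tail estimate.
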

\begin{proof}
By Theorem \ref{T:varphi=kPhi} it is sufficient to prove this theorem for the case $\varphi(k) = k$.
Let $\varepsilon > 0$.

Since $\lim_{n \to \infty}\left|c_{n}\right|_{p}R^{n} = 0$, there is an $N_{\varepsilon}$ such that $\left|c_{n}\right|_{p}R^{n + 1} < \varepsilon$ if $n > N_{\varepsilon}$.
If $K_{\varepsilon}$ is chosen so $p^{K_{\varepsilon}} - 1 > N_{\varepsilon}$, Theorem \ref{T:1st A_k c_n} shows 
\[
	\left| A_{f,\varphi}(k) - 0 \right|_{p} < \varepsilon, \qquad \text{when}\ k > K_{\varepsilon}
\]
and, hence,
\[
	\int_{\mathcal{A},\Phi}f(x)\,dx = 0.
\]
\end{proof}
\begin{theorem}\label{T:intf'=0}
Let $f \in H(\mathcal{A})$ and suppose $f$ has controlled coefficients of order $\beta <  \frac{1}{2}$, then, for any $\Phi$,
\[
	\int_{\mathcal{A},\Phi}f'(x)\,dx = 0.
\]
\end{theorem}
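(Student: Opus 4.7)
The strategy is to apply Theorem \ref{T:best A_f,phi1} to $f'$ and to exploit the congruence $n \equiv -1 \pmod{p^{\theta(k)}}$ appearing in its truncated sum: that congruence forces $|n+1|_p \leq p^{-\theta(k)}$, which cancels the extra factor $(n+1)$ produced by differentiating a power series term by term.

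First, by Theorem \ref{T:varphi=kPhi} it suffices to prove the identity for the single path sequence $\varphi_0(k) = k$. Writing $f(x) = \sum_{n \geq 0} c_n (x-b)^n$, one has $f'(x) = \sum_{n \geq 0} c'_n (x-b)^n$ with $c'_n = (n+1)c_{n+1}$, so $|c'_n|_p R^n \leq |c_{n+1}|_p R^{n+1}/R < (M/R)(n+1)^\beta$ for large $n$; hence $f'$ still has controlled coefficients of order $\beta < 1/2$ and Theorem \ref{T:best A_f,phi1} is available for $f'$.

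Take the auxiliary function $\theta(k) = 1 + [k/2]$. The growth hypothesis $(1-\beta)k - \beta\log_p k - \theta(k) \to \infty$ of Theorem \ref{T:best A_f,phi1} reduces to $(1/2 - \beta)k - \beta\log_p k \to \infty$, which is precisely where $\beta < 1/2$ is used. The theorem then yields
\begin{equation*}
	A_{f',\varphi_0}(k) = -\sum_{\substack{n = p^k - 1\\ n \equiv -1 \!\!\pmod{p^{\theta(k)}}}}^{n_k - 1}\!\!\!\!(n+1)\,c_{n+1}(a - b)^{n+1}\sum_{j \geq 1}(-1)^{j - 1}\binom{n}{jp^k - 1} + \eta(k),
\end{equation*}
with $\eta(k) \to 0$. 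For each $n$ appearing, $p^{\theta(k)} \mid (n+1)$, so
\[
	\bigl|(n+1)\,c_{n+1}(a-b)^{n+1}\bigr|_p \leq p^{-\theta(k)}\,|c_{n+1}|_p R^{n+1} < p^{-\theta(k)}\,M\,n_k^\beta,
\]
while the inner $j$-sum is an integer and hence has $p$-adic norm $\leq 1$. Since $n_k = O(k\,p^k)$, the ultrametric inequality gives
\[
	|A_{f',\varphi_0}(k)|_p \leq \text{const}\cdot k^\beta\, p^{(\beta - 1/2)k} + |\eta(k)|_p \longrightarrow 0.
\]

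The main obstacle is that $\theta(k)$ faces two opposing constraints: Theorem \ref{T:best A_f,phi1} caps it at roughly $(1-\beta)k$, whereas the desired decay $p^{-\theta(k)} n_k^\beta \to 0$ demands $\theta(k) > \beta k$. The required window $\beta k < \theta(k) < (1-\beta)k$ is nonempty precisely when $\beta < 1/2$, which is exactly the hypothesis; any choice of $\theta$ in this window (for instance $\lceil k/2\rceil$) completes the argument.
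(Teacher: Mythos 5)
Your proof is correct and follows essentially the same route as the paper's: reduce to $\varphi_0(k)=k$ via Theorem \ref{T:varphi=kPhi}, apply Theorem \ref{T:best A_f,phi1} to $f'$ with $\theta(k)=1+[k/2]$, and use the congruence $n\equiv -1 \pmod{p^{\theta(k)}}$ to extract the factor $p^{-\theta(k)}$ that beats $n_k^{\beta}$ when $\beta<\tfrac12$. Your explicit verification that $f'$ inherits controlled coefficients of order $\beta$ and your remark on the window $\beta k<\theta(k)<(1-\beta)k$ are welcome additions, but the argument is the same.
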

\begin{proof}
By Theorem \ref{T:varphi=kPhi} it is sufficient to prove this theorem for the case $\varphi(k) = k$. 
	Let 
\[
		f(x) = \sum_{n \geq 0}c_{n}(x - b)^{n}\quad \text{and}\quad \theta(k) = 1 + [k/2].
\]
\[
f'(x) = \sum_{n \geq 0}(n + 1)c_{n + 1}(x - b)^{n}.
\]
By Theorem \ref{T:best A_f,phi1} we have
\[
	A_{f',\varphi}(k) = - \sum_{\substack{n = p^{k} - 1\\ n \equiv -1 \pmod{p^{\theta(k)}}}}^{n_{k} - 1}(n + 1)c_{n + 1}(a - b)^{n+1} \sum_{j = 1}^{n'} (-1)^{j - 1}\binom{n}{jp^{k} - 1} + \eta(k),
\]
where $\eta(k) \to 0$ as $k \to \infty$.
Hence,
\[
	\left|A_{f',\varphi}(k)\right|_{p} \leq \max_{n < n_{k}}\left|(n + 1)c_{n + 1}(a - b)^{n+1}\right|_{p} + |\eta(k)|_{p}.
\]
We have $|n + 1|_{p} \leq p^{-\theta(k)}$ and 
\[
	|c_{n + 1}(a - b)^{n+1}|_{p} = |c_{n + 1}|_{p}R^{n + 1} \leq M(n + 1)^{\beta} \leq  M(n_{k} )^{\beta}.
\]
Applying $n_{k} = \left(1 + \left[(\beta~+~2)\left(\frac{p~-~1}{p}\right)k\right]\right)p^{k}$ we obtain
\[
	\left|A_{f',\varphi}(k)\right|_{p}\  \leq \ \text{const.}\cdot k^{\beta}p^{(\beta - \frac{1}{2})k} + |\eta(k)|_{p}.
\]
Given any $\varepsilon > 0$, if $k$ is sufficiently large we have
\[
	\left|A_{f',\varphi}(k)\right|_{p}\ < \varepsilon,
\]
and therefore,
\[
	\int_{\mathcal{A},\Phi}f'(x)\,dx = 0. 
\]
\end{proof}
Here are some applications of Theorem \ref{T:intf'=0}.
\begin{theorem}
\leavevmode
\begin{enumerate}
\item
If $|x_{0} -a|_{p} \leq |b - a|_{p}$ and $x_{0} \notin \mathcal{A}(a,b)$ then
\[
	\int_{\mathcal{A}(a,b),\Phi}(x - x_{0})^{-m}\,dx = 0,\qquad \text{for} \  m \in \mathbb{Z} \ \text{and}\ m \neq -1;
	\]
\item let $\pi \in \mathbb{C}_{p}$ with $\nu_{p}(\pi) = \frac{1}{p - 1}$, then 
\[
	\int_{\mathcal{A}(1,0),\Phi}exp(\pi x)\,dx = 0;
\]
\item if $t \in \mathbb{Z}_{p}$ and $t \neq -1$, then
\[
	\int_{\mathcal{A}(1,0),\Phi}(1 + x)^{t}\,dx = 0;
\]
\end{enumerate}
\end{theorem}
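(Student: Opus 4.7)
\medskip
\noindent\textbf{Proof plan.} The uniform strategy for all three parts is to exhibit each integrand as the derivative of a holomorphic function on $\mathcal{A}$ whose coefficients are bounded (i.e., have controlled coefficients of order $\beta = 0 < 1/2$), and then invoke Theorem~\ref{T:intf'=0}. For the polynomial cases of part (1) we may instead apply Theorem~\ref{T:holo=0} directly.

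\medskip
\noindent\textbf{Part (1).} First I would observe that the hypotheses $|x_0 - a|_p \le R$ and $x_0 \notin \mathcal{A}(a,b)$ together with the ultrametric inequality force $|x_0 - b|_p = R$, and hence $|x - x_0|_p = R$ for every $x \in \mathcal{A}(a,b)$. For $m \le 0$ the integrand $(x - x_0)^{-m}$ is a polynomial, hence holomorphic on all of $D^+(a,R)$, so Theorem~\ref{T:holo=0} gives the integral is $0$. For $m \ge 2$, take the antiderivative $g(x) = (x-x_0)^{1-m}/(1-m)$. Expanding
\[
(x - x_0)^{-1} = -\frac{1}{x_0 - b}\sum_{k \ge 0}\left(\frac{x-b}{x_0-b}\right)^{k}
\]
shows $g(x) \in H(\mathcal{A})$, and $|g(x)|_p = R^{1-m}/|1-m|_p$ on $\mathcal{A}$ so $g$ is bounded; Theorem~\ref{T:intf'=0} then gives $\int_{\mathcal{A},\Phi} g'(x)\,dx = 0$.

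\medskip
\noindent\textbf{Parts (2) and (3).} For (2), I would write $\exp(\pi x) = \frac{d}{dx}\bigl(\exp(\pi x)/\pi\bigr)$. Since $\nu_p(\pi^n/n!) = s_p(n)/(p-1) \ge 0$, the series $\exp(\pi x)$ has coefficients $c_n = \pi^n/n!$ with $|c_n|_p \le 1$, so on $\mathcal{A}(1,0) = D^{-}(0,1)$ the function $\exp(\pi x)/\pi$ is holomorphic and bounded by $|1/\pi|_p$. Theorem~\ref{T:intf'=0} then closes the argument. For (3), the candidate antiderivative is $g(x) = (1+x)^{t+1}/(t+1)$, which is well defined because $t \ne -1$ and holomorphic on $D^{-}(0,1)$ since $t+1 \in \mathbb{Z}_p$ forces $|\binom{t+1}{n}|_p \le 1$. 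For the boundedness I would not try to estimate the coefficients $\binom{t+1}{n}/(t+1)$ directly (the factorial denominators make that unwieldy); instead I would bound the function itself, noting that $|(1+x)^{t+1}|_p \le 1$ on $D^{-}(0,1)$, hence $|g(x)|_p \le |1/(t+1)|_p$, a fixed constant. Again Theorem~\ref{T:intf'=0} applies.

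\medskip
\noindent\textbf{Main subtlety.} The only place where care is required is in (3): termwise inspection of the coefficients of $g(x)$ is misleading because the individual factors $1/n!$ and $t(t-1)\cdots(t-n+2)$ can blow up and vanish respectively, yet their product combined with $1/(t+1)$ is bounded. Avoiding this pitfall by estimating $g$ via the identity $g(x) = (1+x)^{t+1}/(t+1)$ rather than via its Taylor coefficients is what makes the boundedness transparent. Once boundedness is in hand, each part is a one-line appeal to the already-established theorems.
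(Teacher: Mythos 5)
Your proposal is correct and follows the paper's own (one-line) argument exactly: each integrand is exhibited as the derivative of a bounded holomorphic function on the arc, so Theorem~\ref{T:intf'=0} (with $\beta = 0$) applies, with Theorem~\ref{T:holo=0} available as a shortcut for the polynomial cases of part~(1). The antiderivatives you choose, the verification that $|x - x_0|_p = R$ on $\mathcal{A}$, and the boundedness estimates are all sound, so the details you supply are a faithful elaboration of the proof the paper merely sketches.
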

\begin{proof}
The integrands are derivatives of bounded functions on their respective arcs.
\end{proof}
\begin{theorem}[Integration by Parts]\label{T:intbyparts}
If $f$, $g \in H(\mathcal{A})$, $f(x)g(x)$ has controlled coefficients of order $ < \frac{1}{2}$ and $f(x)g'(x) \in \mathcal{I}_{\mathcal{A},\Phi}$, then $f'(x)g(x) \in \mathcal{I}_{\mathcal{A},\Phi}$ and
\[
	\int_{\mathcal{A},\Phi}f'(x)g(x)\,dx = -\int_{\mathcal{A},\Phi}f(x)g'(x)\,dx
\]
\end{theorem}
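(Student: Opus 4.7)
The plan is to reduce integration by parts to the product rule $(fg)' = f'g + fg'$ combined with Theorem \ref{T:intf'=0}, which tells us that the integral of a derivative vanishes whenever the antiderivative has controlled coefficients of order below $1/2$. The algebraic identity
\[
    f'(x)g(x) = (fg)'(x) - f(x)g'(x)
\]
is the only nontrivial ingredient; everything else is linearity of the integral.

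First I would verify that $(fg)'(x)$ is integrable with integral $0$. Since $f,g \in H(\mathcal{A})$, their product $fg$ lies in $H(\mathcal{A})$ as well, and by hypothesis $fg$ has controlled coefficients of order $\beta < \tfrac{1}{2}$. Hence Theorem \ref{T:intf'=0} applies and gives
\[
    \int_{\mathcal{A},\Phi}(fg)'(x)\,dx = 0.
\]

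Next I would use linearity (Theorem \ref{T:all f}) to conclude $f'g \in \mathcal{I}_{\mathcal{A},\Phi}$ and to compute its integral. Both $(fg)' \in \mathcal{I}_{\mathcal{A},\Phi}$ (by the previous step) and $fg' \in \mathcal{I}_{\mathcal{A},\Phi}$ (by hypothesis), so the difference $(fg)' - fg' = f'g$ lies in $\mathcal{I}'_{\mathcal{A},\Phi}$; since it is also holomorphic on $\mathcal{A}$, it lies in $\mathcal{I}_{\mathcal{A},\Phi}$. Applying linearity,
\[
    \int_{\mathcal{A},\Phi}f'(x)g(x)\,dx = \int_{\mathcal{A},\Phi}(fg)'(x)\,dx - \int_{\mathcal{A},\Phi}f(x)g'(x)\,dx = -\int_{\mathcal{A},\Phi}f(x)g'(x)\,dx.
\]

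There is really no obstacle of substance here: the work was done in Theorem \ref{T:intf'=0}, and the controlled-coefficients hypothesis on $fg$ is placed exactly so that it may be invoked. The only minor point to watch is that the controlled-coefficient hypothesis is placed on the product $fg$, not separately on $f$ and $g$ (which in general would not suffice, since multiplication can inflate the growth order of coefficients); this is why the theorem is stated the way it is.
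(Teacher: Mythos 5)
Your proof is correct and is exactly the paper's argument, merely written out in full: the paper's entire proof is ``Apply Theorem \ref{T:intf'=0} to the product $fg$,'' which is precisely your combination of the product rule, the vanishing of $\int_{\mathcal{A},\Phi}(fg)'(x)\,dx$, and linearity. Your closing remark about why the controlled-coefficients hypothesis is placed on $fg$ rather than on $f$ and $g$ separately is a sensible observation that the paper leaves implicit.
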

\begin{proof}
Apply Theorem \ref{T:intf'=0} to the product $fg$.
\end{proof}
\textbf{Rational Functions}
To be able to integrate rational functions in $H(\mathcal{A})$, it is enough to be able to integrate functions of the form $(x - x_{0})^{m}$ with $m \in \mathbb{Z}$. Previous theorems have taken care of all cases except functions of the form $(x - x_{0})^{-1}$ with $x_{0} \notin \mathcal{A}$ and $|x_{0} - a|_{p} \leq |a - b|_{p}$.

\begin{theorem}\label{T:ratfunc int}
Suppose $x_{0} \notin \mathcal{A}$ and $|x_{0} - a|_{p} \leq |a - b|_{p} = R$. Then
\begin{enumerate}
\item If $|x_{0} - a|_{p} < R$, we have, for any $\Phi$,
\[
	\int_{\mathcal{A},\Phi}\frac{1}{x - x_{0}}\,dx = 1;
\]
\item If $|x_{0} - a|_{p} = R$ and $x_{0} \notin \mathcal{A}$, there is a $\lambda_{0}$ such that for any $\alpha \in \mathbb{Z}_{\geq 0}$
\[
\int_{\mathcal{A},\alpha + \lambda_{0}k}\frac{1}{x - x_{0}}\,dx \  = \  \frac{1}{1 - \omega^{p^{\alpha}}\left(\frac{a - x_{0}}{a - b}\right)},
\]
where $\omega$ is the Teichm\"{u}ller character.
\end{enumerate}
\end{theorem}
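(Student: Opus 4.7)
The plan is to derive a closed form for $A_{f,\varphi}(k)$ by exploiting the symmetry of the sum over $N = p^{\varphi(k)}$-th roots of unity, and then read off both limits. Writing $x = a + (b-a)\zeta$ in Definition \ref{D:A_k} gives
\[
	A_{f,\varphi}(k) = \frac{1}{N}\sum_{\zeta^{N} = 1} \frac{\zeta}{\zeta - w},
\]
where $w = (a - x_{0})/(a - b)$. The hypothesis $|x_{0} - a|_{p} \leq R$ translates to $|w|_{p} \leq 1$; combined with $x_{0} \notin \mathcal{A}$ and $|x_{0} - a|_{p} = R$ in part (2) it additionally forces $|w - 1|_{p} = 1$.

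To evaluate the inner sum I would decompose $\frac{\zeta}{\zeta - w} = 1 + \frac{w}{\zeta - w}$ and recognize $\sum_{\zeta}\frac{1}{w - \zeta}$ as the logarithmic derivative of $X^{N} - 1 = \prod_{\zeta}(X - \zeta)$ evaluated at $X = w$, namely $\frac{Nw^{N-1}}{w^{N} - 1}$. Substituting collapses the sum to the single identity
\[
	A_{f,\varphi}(k) = \frac{1}{1 - w^{N}}.
\]
This identity is the heart of the argument, and both parts of the theorem now reduce to analyzing $w^{N}$ as $N = p^{\varphi(k)} \to \infty$.

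Part (1) is immediate: if $|w|_{p} < 1$, then $w^{N} \to 0$ along any path sequence, so $A_{f,\varphi}(k) \to 1$ for every $\varphi$, giving the integral value $1$ with respect to any interlocked family. For part (2), decompose $w = \omega(w)\,u$ with $|u - 1|_{p} < 1$ and $\omega(w) \neq 1$. Let $n$ denote the order of $\omega(w)$; since $(n,p) = 1$, let $\lambda_{0}$ be the multiplicative order of $p$ modulo $n$. For $\varphi(k) = \alpha + \lambda_{0}k$ one then has $\omega(w)^{p^{\varphi(k)}} = \omega(w)^{p^{\alpha}}$, while $u^{p^{\varphi(k)}} \to 1$ because $u$ lies in $1 + \mathfrak{m}_{\mathbb{C}_{p}}$. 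Hence $w^{p^{\varphi(k)}} \to \omega^{p^{\alpha}}(w)$, which, since $\gcd(p^{\alpha}, n) = 1$, is a nontrivial root of unity, giving $A_{f,\varphi}(k) \to 1/(1 - \omega^{p^{\alpha}}(w))$ as required.

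The main obstacle is spotting the closed form: the partial-fraction identity $\frac{\zeta}{\zeta - w} = 1 + \frac{w}{\zeta - w}$ together with the logarithmic derivative of $X^{N} - 1$ is exactly what collapses an $N$-term sum into the single quotient $1/(1 - w^{N})$. A secondary subtlety in part (2) is pinning $\lambda_{0}$ down as the multiplicative order of $p$ modulo the order of $\omega(w)$, which is precisely what stabilizes the Teichmüller factor along the arithmetic progression $\alpha + \lambda_{0}k$ and forces convergence of $w^{p^{\varphi(k)}}$.
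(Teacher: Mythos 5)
Your proposal is correct and follows essentially the same route as the paper: both reduce $A_{f,\varphi}(k)$ to the closed form $1/(1 - w^{p^{\varphi(k)}})$ with $w = (a - x_{0})/(a - b)$ and then analyze the limit identically, using $|w|_{p} < 1$ in part (1) and the Teichm\"{u}ller decomposition $w = \omega(w)u$ together with $\lambda_{0}$ the multiplicative order of $p$ modulo the order of $\omega(w)$ in part (2). The only difference is cosmetic: you collapse the root-of-unity sum via the logarithmic derivative of $X^{N} - 1$, whereas the paper substitutes $x = \zeta/(\zeta - c)$ and reads the sum off as a sum of roots of $(cx)^{N} - (x - 1)^{N}$; both give the same identity.
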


In the complex case this integral around a circle can be $\pm 2\pi i$ and, given an orientation for the circle, the integral is constant for all $x_{0}$ in the interior of the circle. In $\mathbb{C}_{p}$, after choosing an $\alpha$, the integral is constant for all $x_{0}$ within an open disc of radius $R$.

\begin{proof}
We'll work straight from the preliminary definition of an integral, equation~\eqref{E:1st int}. Substituting $x = a + (b - a)\zeta$ yields
\[
	\lim_{k \to \infty}p^{-\varphi(k)}\sum_{\zeta}\dfrac{(b - a)\zeta}{a + (b - a)\zeta - x_{0}}\  = \ \lim_{k \to \infty}\dfrac{1}{1 - \left(\dfrac{a - x_{0}}{a - b}\right)^{p^{\varphi(k)}}}.
\]
The summation was obtained by noting that if $x = \frac{\zeta}{\zeta - c}$, then $\zeta = \frac{cx}{x - 1}$ and, as $\zeta$ runs through the $p^{\varphi(k)}$-th roots of unity, $x$ runs through the roots of 
\[
	(cx)^{p^{\varphi(k)}} - (x - 1)^{p^{\varphi(k)}}.
\]
Here,
\[
	c = \frac{a - x_{0}}{a - b}.
\]
If $x_{0}$ is interior to the circle, i.e. $|a - x_{0}|_{p} < R$, then the limit is 1 and we have, for any $\varphi$,
\[
	\int_{\mathcal{A},\varphi}\frac{1}{x - x_{0}}\,dx \ = 1.
\]
In particular, the result is valid for $\varphi(k) = k$, and, by use of Theorem \ref{T:varphi=kPhi}, result (1) is proven.\\
If $|a - x_{0}|_{p} = R$, then 
\[
	\left|\left(\frac{a - x_{0}}{a - b}\right)\right|_{p} = 1.
\]
Let $v = \omega\left(\frac{a - x_{0}}{a - b}\right)$. We can write
\[
	\frac{a - x_{0}}{a - b}\  = \ vu, \qquad \text{with} \ |u - 1|_{p} < 1.
\]
Since $lim_{k \to \infty}u^{p^{\varphi(k)}} = 1$,
\begin{equation}\label{w,v}
	\lim_{k \to \infty}\left(\frac{a - x_{0}}{a - b}\right)^{{p^{\varphi(k)}}} = \lim_{k \to \infty}v^{{p^{\varphi(k)}}}.
\end{equation}
Let $n$ be the least positive integer such that $v^{n} = 1$. We have $(n,p) = 1$ and, because $x_{0} \notin \mathcal{A}$, $n \neq 1$. Let $\lambda_{0}$ be the least positive integer such that $p^{\lambda_{0}} \equiv 1 \pmod{n}$.

Now let $\varphi(k) = \alpha + \lambda_{0}k$. $\alpha \in \mathbb{Z}_{\geq 0}$,
\[
	v^{p^{\varphi(k)}} = \left(v^{p^{\lambda_{0}k}}\right)^{p^{\alpha}} = \ v^{p^{\alpha}}.
\]
Thus
\[
	\int_{\mathcal{A},\alpha + \lambda_{0}k}\frac{1}{x - x_{0}}\,dx \  = \  \frac{1}{1 - v^{p^{\alpha}}} =  \frac{1}{1 - \omega^{p^{\alpha}}\left(\frac{a - x_{0}}{a - b}\right)}.
\]
\end{proof}
The next result shows that the integral of $(x - x_{0})^{-1}$ has the same value as it has for some path sequence of the form $ \alpha + \lambda k$, $0 \leq \alpha < \lambda$, regardless of the path sequence $\varphi$ being used. (Assuming the integral exists for $\varphi$.)
\begin{theorem}
If
\[
	\int_{\mathcal{A},\varphi}\frac{1}{x - x_{0}}\,dx
\]
exists, then there are $\alpha$ and $\lambda \in \mathbb{Z}$, $0 \leq \alpha < \lambda$, such that $\varphi(k)$ is a subsequence of $\alpha + \lambda k$ for all $k$ sufficiently large and 
\[
	\int_{\mathcal{A},\varphi}\frac{1}{x - x_{0}}\,dx \ = \ \int_{\mathcal{A},\alpha + \lambda k}\frac{1}{x - x_{0}}\,dx.
\]
\end{theorem}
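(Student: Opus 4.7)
The plan is to re-use the closed-form expression for $A_{f,\varphi}(k)$ that was derived in the proof of Theorem \ref{T:ratfunc int}. For $f(x) = 1/(x - x_0)$ one has
\[
A_{f,\varphi}(k) = \frac{1}{1 - c^{p^{\varphi(k)}}}, \qquad c = \frac{a - x_0}{a - b},
\]
so existence of the integral is equivalent to convergence of the sequence $c^{p^{\varphi(k)}}$ in $\mathbb{C}_p$ (to something other than $1$). I would then split into cases according to $|c|_p$.

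If $|c|_p < 1$ then $c^{p^{\varphi(k)}} \to 0$ and the integral equals $1$ for every path sequence; if $|c|_p > 1$ then $|c^{p^{\varphi(k)}}|_p \to \infty$ and the integral equals $0$ for every path sequence. In either case the statement is satisfied trivially by taking $\alpha = 0$ and $\lambda = 1$, because every $\varphi$ is a subsequence of the sequence $k$, and the integrals along $\varphi$ and along $k$ both equal the same common value (by Theorem \ref{T:ratfunc int}(1) in the first case, and by the direct computation or by Theorem \ref{T:holo=0} applied to $1/(x-x_0)$ on $D^+(a,R)$ in the second case).

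The substantive case is $|c|_p = 1$. Writing $v = \omega(c)$ and $u = c/v$ one has $|u - 1|_p < 1$, hence $u^{p^{\varphi(k)}} \to 1$, and therefore
\[
\lim_{k \to \infty} c^{p^{\varphi(k)}} \;=\; \lim_{k \to \infty} v^{p^{\varphi(k)}}.
\]
The hypothesis $x_0 \notin \mathcal{A}$ forces $v \neq 1$ (for $v = 1$ would give $|b - x_0|_p < R$, placing $x_0$ in the arc), so $v$ is a root of unity of some order $n > 1$ with $(n,p) = 1$. Let $\lambda_0$ be the multiplicative order of $p$ modulo $n$. Then $j \mapsto v^{p^j}$ is purely periodic of minimal period $\lambda_0$, and the $\lambda_0$ distinct values it attains are nontrivial $n$-th roots of unity. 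Any two distinct $n$-th roots of unity with $(n,p) = 1$ are at $p$-adic distance exactly $1$, so the sequence $v^{p^{\varphi(k)}}$ converges in $\mathbb{C}_p$ if and only if it is eventually constant, i.e.\ $\varphi(k) \bmod \lambda_0$ is eventually a single value $\alpha$ with $0 \leq \alpha < \lambda_0$.

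Taking $\lambda = \lambda_0$ and this $\alpha$, for all sufficiently large $k$ one can write $\varphi(k) = \alpha + \lambda_0 m_k$ with $m_k$ a strictly increasing sequence of positive integers, so $\varphi(k)$ is a subsequence of the path sequence $\alpha + \lambda_0 k$. The equality of the two integrals is then either an immediate application of Theorem \ref{T:subseq1}, or can be read off directly by comparing $\lim_k 1/(1 - v^{p^{\varphi(k)}}) = 1/(1 - v^{p^\alpha})$ with the formula of Theorem \ref{T:ratfunc int}(2). The only nonroutine step is the observation that nontrivial $n$-th roots of unity with $(n,p) = 1$ are pairwise at $p$-adic distance $1$, which is what converts convergence of $v^{p^{\varphi(k)}}$ into the required residue condition on $\varphi(k) \bmod \lambda_0$.
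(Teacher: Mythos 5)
Your proposal is correct and follows essentially the same route as the paper: reduce to the convergence of $v^{p^{\varphi(k)}}$ for $v=\omega\bigl(\tfrac{a-x_0}{a-b}\bigr)$, observe that a convergent sequence of roots of unity of order prime to $p$ must be eventually constant, deduce that $\varphi(k)\bmod\lambda_0$ is eventually constant, and conclude via the subsequence theorem together with Theorem~\ref{T:ratfunc int}. The only difference is that you spell out the degenerate cases $|c|_p\neq 1$ and justify eventual constancy by the pairwise distance~$1$ of the roots of unity rather than by the finiteness of the value set, which are equivalent observations.
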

\begin{proof}
The expression $v^{p^{\varphi(k)}}$ in equation \eqref{w,v} takes on only finitely many different values, so if $\lim_{k \to \infty}v^{p^{\varphi(k)}}$ exists, then $v^{p^{\varphi(k)}}$ is constant for large values of $k$. Thus, for large $k$, we have
\[
	v^{p^{\varphi(k+ 1)}} = v^{p^{\varphi(k)}}, \qquad v^{p^{\varphi(k + 1)} - p^{\varphi(k)}} = 1
\]
and, using the definitions of $n$ and $\lambda$ from Theorem \ref{T:ratfunc int}
\[
	p^{\varphi(k + 1)} \equiv p^{\varphi(k)} \pmod{n}, \qquad p^{\varphi(k + 1) - p^{\varphi(k)}} \equiv 1 \pmod{n}.
\]
Hence $\varphi(k + 1) \equiv \varphi(k)\pmod{\lambda_{0}}$. Let $\alpha \equiv \varphi(k)\pmod{\lambda_{0}}$ when $k$ is large. We have shown that $\varphi(k)$, for large $k$, is a subsequence of $\alpha + \lambda_{0}k$. Therefore, by Theorem \ref{T:alpha1=alpha2} we can say 
\[
	\int_{\mathcal{A},\varphi(k)}\frac{1}{x - x_{0}}\,dx \ = \ \int_{\mathcal{A},\alpha + \lambda_{0}k}\frac{1}{x - x_{0}}\,dx.
\]
\end{proof}
We know that each $\mathcal{I}_{\mathcal{A},\Phi_{\alpha}}$, $\alpha \geq 0$, contains all rational functions without poles in $\mathcal{A}$. If $\alpha < 0$, Theorem \ref{T:alpha1=alpha2} and the Chinese Remaider Theorem let us see that $\Phi_{\alpha}$ allows all rational functions without poles in $\mathcal{A}$ to be integrated.\\

Therefore, because $\mathcal{I}_{\mathcal{A},\Phi_{\alpha}}$ is closed under uniform convergence, (Theorem \ref{T:unif conv}), we can conclude
\begin{theorem}\label{T:Krasner}
For each $\alpha \in \mathbb{Z}$ the set $\,\mathcal{I} _{\mathcal{A},\Phi_{\alpha}}$ contains all functions that are uniform limits of rational functions on $\mathcal{A}$, that is, all Krasner analytic functions on  $\mathcal{A}$.
\end{theorem}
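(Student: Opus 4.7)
The plan is that Theorem \ref{T:Krasner} is essentially a packaging of work that has already been done in the preceding results, so the job reduces to two steps: confirm that every rational function with no poles in $\mathcal{A}$ lies in $\mathcal{I}_{\mathcal{A},\Phi_\alpha}$, and then invoke the closure of $\mathcal{I}_{\mathcal{A},\Phi_\alpha}$ under uniform convergence.

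First I would reduce a rational function $R(x)$ with no poles in $\mathcal{A}$ to its partial fraction decomposition, writing it as a polynomial plus a sum of terms $(x - x_j)^{-m_j}$ with each $x_j \notin \mathcal{A}$. Polynomials and all powers $(x - x_0)^{-m}$ with $m \geq 2$ are derivatives of bounded holomorphic functions on $\mathcal{A}$, so Theorem \ref{T:intf'=0} gives integral $0$ for these. The only nontrivial simple factors are $(x - x_j)^{-1}$. For such a factor, Theorem \ref{T:ratfunc int} distinguishes two cases: if $|x_j - a|_p < R$ the integral equals $1$ for any path sequence, while if $|x_j - a|_p = R$ there is a specific $\lambda_{0,j} \in \mathbb{Z}^{+}$ (the order of $p$ modulo the order of $\omega((a-x_j)/(a-b))$) such that the integral exists along any $\varphi(k) = \alpha + \lambda_{0,j} k$.

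The main issue is accommodating all the $x_j$'s simultaneously with a single member of $\Phi_\alpha$. I would take $\lambda$ to be any common multiple of the $\lambda_{0,j}$ that arise; then $\alpha + \lambda k$ is a subsequence of $\alpha + \lambda_{0,j} k$ for every $j$, so by Theorem \ref{T:subseq1} the integral of each simple term exists along $\varphi(k) = \alpha + \lambda k$, and by linearity (Theorem \ref{T:all f}) the same holds for $R$. For $\alpha \geq 0$ this is direct; for $\alpha < 0$ the comment preceding the theorem invokes Theorem \ref{T:alpha1=alpha2} together with the Chinese Remainder Theorem to shift representatives so that $\alpha + \lambda k$ still belongs to $\Phi_\alpha$. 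In either case $R \in \mathcal{I}_{\mathcal{A},\Phi_\alpha}$.

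Finally, if $f$ is a uniform limit on $\mathcal{A}$ of rational functions $R_i$ without poles in $\mathcal{A}$, then $f \in H(\mathcal{A})$ since uniform limits of holomorphic functions on an open disc are holomorphic, and each $R_i \in \mathcal{I}_{\mathcal{A},\Phi_\alpha}$ by the previous step. Applying Theorem \ref{T:unif conv} yields $f \in \mathcal{I}_{\mathcal{A},\Phi_\alpha}$, which completes the proof. The only point that required any care was matching a single $\lambda$ to the finite collection of boundary poles of a given $R$, but this is purely an arithmetic bookkeeping step and no new analytic input is needed.
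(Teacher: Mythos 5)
Your proof is correct and follows essentially the same route as the paper: partial fractions together with Theorems \ref{T:holo=0}, \ref{T:intf'=0} and \ref{T:ratfunc int} to integrate each rational function without poles in $\mathcal{A}$ along a single $\varphi(k)=\alpha+\lambda k\in\Phi_{\alpha}$, followed by closure under uniform convergence (Theorem \ref{T:unif conv}). The only thing you add is making explicit the common-multiple choice of $\lambda$ accommodating all boundary poles at once, a bookkeeping step the paper leaves implicit.
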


\begin{theorem}[Z - P]\label{T:z-p}
Suppose $f(x)$ is meromorphic on $D = D^{+}(a,R)$ and that all zeroes and poles of $f$ can be found in the open discs $D_{i} = D^{-}(z_{i},R)$ for $i = 1$, $\dots$, $M$. Let $b$ be on the circle $|x - a|_{p} = R$, but not in any $D_{i}$. Let $Z_{i}$ be the number of zeros  and $P_{i}$ the number of poles of $f$ in $D_{i}$, counting multiplicity. Let $\alpha \in \mathbb{Z}_{\geq 0}$.
Then,
\[
	\int_{\mathcal{A}(a,b),\Phi_{\alpha}}\frac{f'(x)}{f(x)}\,dx = \sum_{i = 1}^{M}\frac{1}{1 - \omega^{p^{\alpha}}\left( \frac{a - z_{i}}{a - b}\right)}\left(Z_{i} - P_{i}\right).
\]	
\end{theorem}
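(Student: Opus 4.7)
The plan is to decompose $f'/f$ into the logarithmic derivative of a zero-free holomorphic function plus a sum of simple poles coming from the zeros and poles of $f$, integrate each piece separately, and then collapse the finer sum to one term per disc $D_i$ using the fact that the relevant Teichmüller character is constant on $D_i$. Because $f$ is meromorphic on $D^+(a,R)$ with total zero and pole counts $\sum Z_i$ and $\sum P_i$ both finite, standard $p$-adic Weierstrass theory on a closed disc factors
\[
f(x) = g(x)\,\prod_{i=1}^{M}\frac{\prod_{j=1}^{Z_i}(x - z_{i,j})}{\prod_{k=1}^{P_i}(x - w_{i,k})},
\]
with $z_{i,j},w_{i,k}\in D_i$ repeated with multiplicity and $g\in H(D^+(a,R))$ nowhere zero on $D^+(a,R)$. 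Taking the logarithmic derivative then gives
\[
\frac{f'(x)}{f(x)} \;=\; \frac{g'(x)}{g(x)} \;+\; \sum_{i=1}^{M}\Bigl(\sum_{j=1}^{Z_i}\frac{1}{x - z_{i,j}} - \sum_{k=1}^{P_i}\frac{1}{x - w_{i,k}}\Bigr).
\]

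Since $g$ is a unit in $H(D^+(a,R))$, so is $g'/g$, and Theorem \ref{T:holo=0} gives $\int_{\mathcal{A}(a,b),\Phi_\alpha}g'(x)/g(x)\,dx = 0$. Each simple fraction has its pole either interior to the circle $|x-a|_p=R$ or on the circle, and in the latter situation outside $\mathcal{A}$ because $b\notin D_i$ forces $D_i\cap\mathcal{A}=\emptyset$. In either case Theorem \ref{T:ratfunc int} evaluates the integral against $\Phi_\alpha$ to $\frac{1}{1 - \omega^{p^\alpha}\bigl(\frac{a - z_{i,j}}{a - b}\bigr)}$, with the convention $\omega(y)=0$ for $|y|_p<1$ making interior poles contribute $1$, consistently with part (1) of that theorem.

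The point that reduces the $\sum_i(Z_i+P_i)$ individual contributions to the $M$ promised terms is that for each $i$ the value $\omega^{p^\alpha}\bigl(\tfrac{a-\,\cdot\,}{a-b}\bigr)$ is constant on $D_i$: when $|z_i-a|_p<R$ every point of $D_i$ lies inside the circle and this character is $0$, while when $|z_i-a|_p=R$ the ultrametric inequality $|z_{i,j}-z_i|_p<R=|a-b|_p$ gives
\[
\Bigl|\frac{a-z_{i,j}}{a-b} - \frac{a-z_i}{a-b}\Bigr|_p < 1,
\]
so the two arguments are congruent modulo the maximal ideal and have equal Teichmüller characters. Grouping the summands inside each $D_i$, summing over $i$, and invoking linearity of the integral (Theorem \ref{T:all f}, together with Theorem \ref{T:alpha1=alpha2} and the Chinese Remainder argument preceding Theorem \ref{T:Krasner} to guarantee that each simple fraction lies in $\mathcal{I}_{\mathcal{A}(a,b),\Phi_\alpha}$) delivers the stated formula. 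The main obstacle is the initial factorization: one must cite a $p$-adic Weierstrass preparation on $D^+(a,R)$ allowing $f$ to be written as a unit times the explicit rational function built from its zeros and poles. Once that is in hand, the rest is assembly of results already proved.
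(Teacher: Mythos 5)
Your proposal is correct and follows essentially the same route as the paper: the paper writes $f'/f$ directly as a holomorphic function on $D$ plus simple fractions $m_i/(x-x_i)$ and $-\overline{m}_j/(x-\overline{x}_j)$, integrates the holomorphic part to zero via Theorem \ref{T:holo=0}, evaluates each simple fraction via Theorem \ref{T:ratfunc int}, and uses the same observation that $\omega^{p^{\alpha}}\left(\frac{a-x_i}{a-b}\right)=\omega^{p^{\alpha}}\left(\frac{a-z_i}{a-b}\right)$ for $x_i\in D_i$. Your extra step of justifying the decomposition through a Weierstrass-type factorization, and your explicit verification that the Teichm\"{u}ller value is constant on each $D_i$, only make explicit what the paper takes for granted.
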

\begin{proof}
Suppose $f$ has zeros of order $m_{i}$ at $x_{i}$ and poles of order $\overline{m}_{j}$ at $\overline{x}_{j}$. Then
\[
	\frac{f'(x)}{f(x)} = h(x) + \sum_{i}\frac{m_{i}}{x - x_{i}} - \sum_{j}
\frac{\overline{m}_{j}}{x - \overline{x}_{j}}.
\]
$h(x) \in H(D)$, so its integral is $0$, and if $x_{i} \in D_{i}$, then
\[
	\int_{\mathcal{A}(a,b),\Phi_{\alpha}}\frac{m_{i}}{x - x_{i}}\,dx = \frac{m_{i}}{1 - \omega^{p^{\alpha}}\left( \frac{a - x_{i}}{a - b}\right)} = \frac{m_{i}}{1 - \omega^{p^{\alpha}}\left( \frac{a - z_{i}}{a - b}\right)}.
\]
The theorem follows.
\end{proof}
\begin{corollary}\label{C:Z-P}
If a meromorphic function $f$ on $D^{+}(a,R)$ has all of its zeros and poles interior to the circle $|x - a|_{p} = R$ and $|a - b|_{p} = R$, then, with $\alpha \in \mathbb{Z}_{\geq 0}$,
\[
	\int_{\mathcal{A}(a,b),\Phi_{\alpha}}\frac{f'(x)}{f(x)}\,dx = Z - P.
\]
\end{corollary}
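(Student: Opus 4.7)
The plan is to deduce this directly from Theorem \ref{T:z-p}. Since every zero and every pole of $f$ lies in the open disc $D^{-}(a,R)$, I can take the covering data of Theorem \ref{T:z-p} to consist of a single disc $D_{1} = D^{-}(a,R)$ and pick its ``center'' to be $a$ itself (an open $p$-adic disc has every one of its points as a legitimate center). Then $Z_{1}=Z$ and $P_{1}=P$, so Theorem \ref{T:z-p} gives
\[
	\int_{\mathcal{A}(a,b),\Phi_{\alpha}}\frac{f'(x)}{f(x)}\,dx = \frac{Z-P}{1-\omega^{p^{\alpha}}\!\left(\tfrac{a-z_{1}}{a-b}\right)}.
\]

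The only thing to check is that the denominator equals $1$. With $z_{1}=a$ we have $(a-z_{1})/(a-b) = 0$. By the convention introduced in the Notation section, $\omega$ is extended by $\omega(x)=0$ whenever $|x|_{p}<1$; in particular $\omega(0)=0$, hence $\omega^{p^{\alpha}}(0)=0$ and the denominator is $1-0=1$. This yields the asserted value $Z-P$.

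There is no real obstacle here: the corollary is essentially a specialization of Theorem \ref{T:z-p} to the case where all the local discs coincide with the interior of the boundary circle. The only mild point to articulate is the legitimacy of choosing $z_{1}=a$, which is justified by the fact that in $\mathbb{C}_{p}$ any point of an open disc can serve as its center, so $D^{-}(a,R)$ is a permissible choice of the covering disc in the hypothesis of Theorem \ref{T:z-p}.
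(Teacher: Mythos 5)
Your proposal is correct and is exactly the paper's argument: the paper's entire proof reads ``Let $z_{1} = a$,'' i.e.\ apply Theorem \ref{T:z-p} with the single covering disc $D^{-}(a,R)$, so that the coefficient becomes $1/(1 - \omega^{p^{\alpha}}(0)) = 1$ by the convention $\omega(x) = 0$ for $|x|_{p} < 1$. You have simply spelled out the details the author left implicit.
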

\begin{proof}
Let  $z_{1} = a$.
\end{proof}
Now let's look at the question   of integration by substitution. If we have an integral
\[
		\int_{\mathcal{A}(a,b),\Phi}f(x)\,dx,
\]
are there mappings $x \colon \mathcal{A}(a_{1},b_{1}) \to \mathcal{A}(a,b)$ with $x(a_{1}) = a$ and $x(b_{1}) = b$, such  that
\[
		\int_{\mathcal{A}(a,b),\Phi}f(x)\,dx = \int_{\mathcal{A}(a_{1},b_{1}),\Phi}f(x(t))x'(t)\,dt\ ?
\]
If $f$ is Krasner analytic on $\mathcal{A}(a,b)$, a uniform limit of rational functions having no poles in $\mathcal{A}(a,b)$, the answer is yes.
Before stating the substitution theorem, it will be useful to have the following result.
\begin{theorem}\label{T:substmap}
Suppose
\[
	 f \colon D^{+}(a_{1},R) \longmapsto D^{+}(a,r), \quad f(x)\in H(D^{+}(a_{1},R)) \quad \textup{and} \quad f(a_{1}) = a.
\]
Then $f$ is a one-to-one, onto mapping if and only if $f$ can be written
\[
	f(x) = a + \gamma (x - a_{1}) + g(x),
\]
with $|\gamma|_{p} = r/R$,$\quad g(x) \in H(D^{+}(a_{1},R))\quad$and$\quad\|g(x)\| < r$.
\end{theorem}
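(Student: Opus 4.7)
The plan is to reduce everything to counting zeros of $f(x) - w$ as $w$ ranges over $D^{+}(a,r)$, using the standard Newton-polygon / Weierstrass-preparation fact that for $h(x) = \sum_{n \geq 0} b_{n}(x - a_{1})^{n} \in H(D^{+}(a_{1},R))$ (not identically zero), the number of zeros of $h$ in $D^{+}(a_{1},R)$, counted with multiplicity, equals the largest index $n$ at which $|b_{n}|_{p}R^{n}$ attains its maximum value, together with the companion fact that $\sup_{x \in D^{+}(a_{1},R)}|h(x)|_{p} = \max_{n}|b_{n}|_{p}R^{n}$. Since $f(a_{1}) = a$, I may write $f(x) = a + \sum_{n \geq 1}c_{n}(x - a_{1})^{n}$, set $\gamma = c_{1}$ and $g(x) = \sum_{n \geq 2}c_{n}(x - a_{1})^{n}$, so the decomposition in the theorem is just the separation of the linear term and $\|g\| = \max_{n \geq 2}|c_{n}|_{p}R^{n}$.

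For the $(\Leftarrow)$ direction I would assume $|\gamma|_{p} = r/R$ and $\|g\| < r$, and for each $w \in D^{+}(a,r)$ apply the zero-counting result to $h(x) = f(x) - w$, whose Taylor coefficients at $a_{1}$ are $b_{0} = a - w$, $b_{1} = \gamma$, and $b_{n} = c_{n}$ for $n \geq 2$. Then $|b_{0}|_{p} \leq r = |\gamma|_{p}R = |b_{1}|_{p}R$ while $|b_{n}|_{p}R^{n} < |\gamma|_{p}R$ for $n \geq 2$, so the maximum $|\gamma|_{p}R$ of $|b_{n}|_{p}R^{n}$ is attained with largest index exactly $n = 1$. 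Hence $h$ has a unique zero in $D^{+}(a_{1},R)$, which simultaneously yields surjectivity of $f$ onto $D^{+}(a,r)$ and injectivity.

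For the $(\Rightarrow)$ direction I would start from the maximum principle: since $f$ maps into $D^{+}(a,r)$, $|c_{n}|_{p}R^{n} \leq r$ for every $n \geq 1$. Applying the zero-counting lemma to $f(x) - a = \sum_{n \geq 1}c_{n}(x - a_{1})^{n}$ with injectivity at $w = a$ forces the largest index achieving $\max_{n \geq 1}|c_{n}|_{p}R^{n}$ to be $n = 1$, which gives $|c_{n}|_{p}R^{n} < |c_{1}|_{p}R$ for all $n \geq 2$ and hence $\|g\| < |\gamma|_{p}R$. Next, picking any $w$ on the boundary circle $|w - a|_{p} = r$, surjectivity forces $f(x) - w$ to have at least one zero in $D^{+}(a_{1},R)$; since the $n = 0$ coefficient now has size exactly $r$, the max of $|b_{n}|_{p}R^{n}$ must be attained at some $n \geq 1$, which together with $|c_{n}|_{p}R^{n} \leq r$ forces $|c_{1}|_{p}R = r$, i.e., $|\gamma|_{p} = r/R$, and then $\|g\| < r$.

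The main obstacle I anticipate is making the Newton-polygon / zero-counting step rigorous enough to extract both bounds simultaneously from the combination of injectivity and surjectivity; in particular, care is needed at the boundary $|w - a|_{p} = r$, where the $n = 0$ and $n = 1$ coefficients have equal $p$-adic size in the $(\Leftarrow)$ direction, and it is essential that the \emph{largest} index achieving the maximum (not any index achieving it) is what controls the zero count. Once this lemma is invoked cleanly, the remainder is bookkeeping with the Gauss norm.
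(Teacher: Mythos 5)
Your route is genuinely different from the paper's and, for the most part, it works. The paper proves the forward direction by translating to a point $x_{1}$ where $F'(x_{1}) \neq 0$, applying the Newton polygon to the shifted map $y(x) = F(x + x_{1}) - F(x_{1})$, and then undoing the normalization; the converse is obtained from Schikhof's \emph{Newton Approximation} lemma (the difference-quotient criterion with $s = \gamma$). You instead run both directions through a single tool: the count of zeros of $h(x) = \sum_{n}b_{n}(x - a_{1})^{n}$ in $D^{+}(a_{1},R)$, with multiplicity, equals the largest $n$ at which $|b_{n}|_{p}R^{n}$ attains its maximum. Your $(\Leftarrow)$ direction is clean and complete: for every $w \in D^{+}(a,r)$ the coefficients of $f - w$ have maximum $r$ attained with largest index $1$, so $f - w$ has exactly one root, giving injectivity and surjectivity simultaneously; this is arguably tidier than the Newton Approximation argument. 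Your use of surjectivity at a boundary point $|w - a|_{p} = r$ to pin down $|\gamma|_{p}R = r$ is also correct.

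The one genuine gap is in the $(\Rightarrow)$ direction, at the sentence asserting that ``injectivity at $w = a$ forces the largest index achieving $\max_{n \geq 1}|c_{n}|_{p}R^{n}$ to be $n = 1$.'' The zero-counting lemma counts zeros \emph{with multiplicity}, whereas injectivity only says that $f(x) - a$ has a single zero \emph{as a point}, namely $x = a_{1}$. If $c_{1} = 0$ and the first nonvanishing coefficient is $c_{m}$ with $m \geq 2$, then $a_{1}$ is a zero of multiplicity $m$, the count with multiplicity is $m \geq 2$, and there is no immediate contradiction with injectivity. You need the additional (standard but not free) step that an injective non-constant holomorphic map cannot have such a multiple zero: for instance, if the multiplicity at $a_{1}$ were $m \geq 2$, then for $w \neq a$ sufficiently close to $a$ and avoiding the finite image under $f$ of the zero set of $f'$ on a small subdisc, $f(x) - w$ would have $m$ distinct roots near $a_{1}$, contradicting injectivity. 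This is exactly the point the paper's proof is engineered to sidestep: it first moves to a point $x_{1}$ with $F'(x_{1}) \neq 0$ (which exists since $F$ is non-constant), so that the Newton-polygon step is applied to a function already known to have a simple zero at the origin. Once you insert an argument of this kind, your proof closes.
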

\begin{proof}
We'll begin with the \textsl{only if} part, prove the theorem in a special case and then generalize.\\
Let $F(x)$ be a one-to-one, onto mapping satisfying the conditions of the theorem with
\[
a = a_{1} = 0, \quad \text{and} \quad r = R = 1.
\]

There is an $x_{1}$ such that $F'(x_{1}) \neq 0$ and $|x_{1}|_{p} < 1$. Since $F(0) = 0$, we can say $|F(x_{1})|_{p} < 1$.\\
Let
\[
	y(x) = F(x + x_{1}) - F(x_{1}).
\]
From the given conditions on $F(x)$, it is simple to show that $y(x)$ is a one-to-one, onto mapping from $D^{+}(0,1)$ to itself, $y(0) = 0$ and $y'(0) \neq 0$.
Let
\[
	y(x) = \gamma_{0}x + \sum_{n \geq 2}c_{n}x^{n}.
\]
If any $|c_{n}|_{p} = 1$, let $N$ be the largest such $n$. The Newton polygon for $y(x)$ shows $0$ must be a root of multiplicity $N$. Since $y'(0) \neq 0$, this cannot be.
Thus
\[
	y(x) = \gamma_{0}x + h(x), \qquad \textup{with} \ \|h\| < 1.
\]
We have
\[
	F(x) = F(x_{1}) + y(x - x_{1}) = F(x_{1}) + \gamma_{0}(x - x_{1}) + h(x - x_{1})
\]
\[
= \gamma_{0}x + F(x_{1}) - \gamma_{0}x_{1} + h(x - x_{1}).
\]
Let
\[
	g_{0}(x) = F(x_{1}) - \gamma_{0}x_{1} + h(x - x_{1}).
\]
We have $g_{0}(x) \in H(D^{+}(0,1))$ and, because of the way $x_{1}$ was chosen, $\|g_{0}\| < 1$.\\

$F(x) = \gamma_{0}x + g_{0}(x)$ and since $\|g_{0}\| < 1$, we must have $|\gamma_{0}|_{p} = 1$.
This establishes the theorem for $F(x)$.\\

To establish the theorem for a general $f(x)$ satisfying the given conditions, choose $\gamma_{1}$ and $\gamma_{2}$ so that $|\gamma_{1}|_{p} = R$ and $|\gamma_{2}|_{p} = r$ and then let
\[
	F(x) = \gamma_{2}^{-1}\left(f(a_{1} + \gamma_{1}x) - a\right)
\]
It's simple to show $F(x)$ satisfies the conditions for which the theorem has been proved.
Hence,
\[
	F(x) = \gamma_{0}x + g_{0}(x), \qquad \textup{with} \ |\gamma_{0}|_{p} = 1 \  \textup{and} \ \|g_{0}\| < 1.
\]
It follows that
\[
	f(x) = a + \gamma_{2}F\left(\frac{x - a_{1}}{\gamma_{1}}\right) = a + \gamma(x - a_{1}) +  g(x),
\]
where
\[
	\gamma = \gamma_{2}\gamma_{0}/\gamma_{1} \quad \textup{and} \quad g(x) = \gamma_{2}g_{0}\left(\frac{x - a_{1}}{\gamma_{1}}\right).
\]
We have
\[
|\gamma|_{p} = r/R, \quad g(x) \in H(D^{+}(a_{1},R)) \quad \textup{and} \quad \|g(x)\| < r,
\]
as desired.\\

Now we are ready for the converse.\\
Suppose 
\[
	 f \colon D^{+}(a_{1},R) \longmapsto D^{+}(a,r), \quad f(x)\in H(D^{+}(a_{1},R)), \quad f(a_{1}) = a.
\]
and
\[
	f(x) = a + \gamma (x - a_{1}) + g(x),
\]
with $|\gamma|_{p} = r/R$,$\quad g(x) \in H(D^{+}(a_{1},R))\quad$and$\quad\|g(x)\| < r$.\\

In order to show that $f(x)$ is a one-to-one, onto mapping from $D^{+}(a_{1},R)$ to $D^{+}(a,r)$, we can use a lemma, \textsl{Newton Approximation}, from \cite{wS1984}.  The lemma says that if there is an $s\in \mathbb{C}_{p}$ such that 
\[
	\sup \left|\frac{f(x_{1}) - f(x_{2})}{x_{1} - x_{2}} - s\right|_{p} <  |s|_{p}
\]
for all $x_{1}, x_{2} \in D^{+}(a_{1},R)$, $x_{1} \neq x_{2}$, then $f(x)$ is a 1-to-1 mapping from $D^{+}(a_{1},R)$ onto $D(a,R|s|_{p})$.If we use $s = \gamma$ and the definition of $f(x)$, the result is quickly obtained.
\end{proof}
Here's a theorem for integration by substitution.
\begin{theorem}[Substitution]\label{T:substitution}
Suppose $f(x)$ is Krasner analytic on $\mathcal{A}(a,b)$ and $r = |a - b|_{p}$. Let $x(t)$ be a one-to-one, onto mapping from a closed disc $D^{+}(a_{1},R)$ to $D^{+}(a,r)$ with $x(a_{1}) = a$. Let $b_{1} = x^{-1}(b)$. Let $\alpha \in \mathbb{Z}_{\geq 0}$.
Then,
\begin{equation}\label{E:subst}
	\int_{\mathcal{A}(a,b),\Phi_{\alpha}}f(x)\,dx = \int_{\mathcal{A}(x^{-1}(a),x^{-1}(b)),\Phi_{\alpha}}f(x(t))x'(t)\,dt.
\end{equation}
\end{theorem}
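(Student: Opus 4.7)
The strategy is a two-stage reduction followed by a delicate Teichm\"uller comparison in the critical case. By Theorem~\ref{T:substmap}, write $x(t) = a + \gamma(t - a_1) + g(t)$ with $|\gamma|_p = r/R$ and $\|g\| < r$; this makes $x$ a scaled isometry, $|x(t_1)-x(t_2)|_p = |\gamma|_p|t_1 - t_2|_p$, carrying $\mathcal{A}(a_1,b_1)$ bijectively to $\mathcal{A}(a,b)$, and a short calculation on the coefficients gives $|x'(t)|_p = |\gamma|_p$ throughout $D^+(a_1,R)$. Since $x'$ is bounded, uniform convergence $f_n \to f$ on $\mathcal{A}(a,b)$ of rational functions with poles outside $\mathcal{A}(a,b)$ yields uniform convergence $f_n(x(t))\,x'(t) \to f(x(t))\,x'(t)$ on $\mathcal{A}(a_1,b_1)$, and Theorem~\ref{T:unif conv} reduces the identity to the case $f$ rational. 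Partial fractions and linearity (Theorem~\ref{T:all f}) further reduce the claim to the basic summands $(x - x_i)^m$, $m \in \mathbb{Z}$.

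When $m \geq 0$, or when $x_i \notin D^+(a,r)$, both integrands are holomorphic on the closed disc ($D^+(a,r)$ and $D^+(a_1,R)$ respectively), so both sides vanish by Theorem~\ref{T:holo=0}. When $m \leq -2$ and $x_i \in D^+(a,r)\setminus\mathcal{A}(a,b)$, set $t_i = x^{-1}(x_i)$; the ultrametric inequality gives $|x - x_i|_p = r$ for $x \in \mathcal{A}(a,b)$ and $|t - t_i|_p = R$ for $t \in \mathcal{A}(a_1,b_1)$. Hence $(x - x_i)^{m+1}$ and $(x(t)-x_i)^{m+1}$ are bounded holomorphic functions on their respective arcs, and the integrands on both sides are their derivatives (up to the factor $m+1$); Theorem~\ref{T:intf'=0} (with $\beta = 0$) makes both sides zero.

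The substantive case is $m = -1$ with $x_i \notin \mathcal{A}(a,b)$ and $|x_i - a|_p \leq r$. If $|x_i - a|_p < r$, then $t_i \in D^-(a_1,R)$, Theorem~\ref{T:ratfunc int}(1) gives LHS $= 1$, and Corollary~\ref{C:Z-P} applied to $x(t) - x_i$ (single simple zero interior, no poles) gives RHS $= 1$. If $|x_i - a|_p = r$, then $t_i$ lies on $|t - a_1|_p = R$ but outside $\mathcal{A}(a_1,b_1)$; Theorem~\ref{T:ratfunc int}(2) gives
\[
\text{LHS} = \frac{1}{1 - \omega^{p^\alpha}\!\left(\frac{a - x_i}{a - b}\right)},
\]
and Theorem~\ref{T:z-p} applied to $x(t) - x_i$ on $D^+(a_1,R)$, with its lone zero in $D^-(t_i, R)$, gives
\[
\text{RHS} = \frac{1}{1 - \omega^{p^\alpha}\!\left(\frac{a_1 - t_i}{a_1 - b_1}\right)}.
\]

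The main obstacle is showing these two Teichm\"uller expressions coincide. Using $x(t_i) = x_i$, $x(b_1) = b$, and $g(a_1) = 0$, one computes $a - x_i = -\gamma(t_i - a_1) - g(t_i)$ and $a - b = -\gamma(b_1 - a_1) - g(b_1)$, so
\[
\frac{a - x_i}{a - b} = \frac{a_1 - t_i}{a_1 - b_1}\cdot\frac{1 + g(t_i)/\bigl(\gamma(t_i - a_1)\bigr)}{1 + g(b_1)/\bigl(\gamma(b_1 - a_1)\bigr)}.
\]
Since $|t_i - a_1|_p = |b_1 - a_1|_p = R$ and $|g(\cdot)|_p \leq \|g\| < r = |\gamma|_p R$, each correction term has $p$-adic norm strictly less than $1$, so the second factor has the form $1+\delta$ with $|\delta|_p < 1$, and $\omega$ kills it. The two Teichm\"uller values therefore agree, completing the verification on basic summands; the linearity and uniform-limit steps above then finish the proof.
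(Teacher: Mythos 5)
Your proposal is correct and follows essentially the same route as the paper: reduce to rational functions via Theorem \ref{T:unif conv}, dispose of the terms $(x-x_i)^m$ with $m\neq -1$ by Theorems \ref{T:holo=0} and \ref{T:intf'=0}, and for $m=-1$ use the factorization $x(t)-x_i=(t-t_i)h(t)$ (packaged in your case as the Z--P theorem) together with the Teichm\"uller comparison, which is exactly the paper's Lemma \ref{L:substitution}. The only cosmetic differences are that you split the $m=-1$ case into interior versus boundary poles where the paper handles both at once via $\omega$, and that you are slightly more explicit about the boundedness of $x'(t)$ needed for the uniform-convergence step.
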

\begin{proof}
By Theorem \ref{T:substmap} we have
\[
	x(t) = a + \gamma(t - a_{1}) +  g(t),
\]
with $|\gamma|_{p} = r/R$,$\quad g(t) \in H(D^{+}(a_{1},R))\quad$and$\quad\|g\| < r$.\\

From $x(a_{1}) = a$, we have $g(a_{1}) = 0$.\\

It will be useful to have
\begin{lemma}\label{L:substitution}
If $|t - a_{1}|_{p} = |t - b_{1}|_{p} = R$, then
\[
	\omega\left(\frac{x(a_{1}) - x(t)}{x(a_{1}) - x(b_{1})}\right) = \omega\left(\frac{a_{1} - t}{a_{1} - b_{1}}\right).
\]
\end{lemma}
\begin{proof}
\[
	\frac{x(a_{1}) - x(t)}{x(a_{1}) - x(b_{1})} = \frac{x(t) - x(a_{1})}{x(b_{1}) - x(a_{1})}  = \frac{\gamma(t - a_{1}) + g(t)}{\gamma(b_{1} - a_{1}) + g(b)} = \frac{t - a_{1} + g(t)/\gamma}{b_{1} - a_{1} + g(b)/\gamma}.
\]
Since $|g(t)/\gamma|_{p} < R$,
\[
\omega\left(\frac{x(a_{1}) - x(t}{x(a_{1}) - x(b_{1})}\right) = \omega\left(\frac{a_{1} - t}{a_{1} - b_{1}}\right).
\]
\end{proof}
Now we're ready to show that integrals of functions of the form $f(x) = (x - x_{0})^{-1}$ can be evaluated by the given substitution $x = x(t)$. If $x_{0} \notin D^{+}(a,r)$, then
\[
	\frac{1}{x - x_{0}} \in H(D^{+}(a,r))\quad\textup{and}\quad f(x(t))x'(t) = \frac{x'(t)}{x(t) - x_{0}} \in H(D^{+}(a_{1},R)). 
\]
Therefore, by Theorem \ref{T:holo=0},
\[
	\int_{\mathcal{A}(a,b),\Phi}\frac{1}{x - x_{0}}\,dx = 0 = \int_{\mathcal{A}(a_{1},b_{1}),\Phi}\frac{x'(t)}{x(t) - x_{0}}\,dt.
\]
If $x_{0} \in D^{+}(a,r)$, we can write, noting that because $x(t) - x_{0}$ is a 1-to-1 mapping,
\[
	x(t) - x_{0} = (t - t_{0})h(t),
\]
where  $x(t_{0}) = x_{0}$ and $h(t) \neq 0$ for any $t \in D^{+}(a_{1},R)$.
Differention leads to
\[
	\frac{x'(t)}{x(t) - x_{0}} = \frac{1}{t - t_{0}} + \frac{h'(t)}{h(t)}.
\]
The function $h'/h \in H(D^{+}(a_{1},R)$, so
\[
	\int_{\mathcal{A}(a_{1},b_{1}),\Phi}\frac{x'(t)}{x(t) - x_{0}}\,dt = \int_{\mathcal{A}(a_{1},b_{1}),\Phi}\frac{1}{t - t_{0}}\,dt = \frac{1}{1 -\omega^{p^{\alpha}}\left(\frac{a_{1} - t_{0}}{a_{1} - b_{1}}\right)}.
\]
On the other hand,
\[
	\int_{\mathcal{A}(a,b),\Phi}\frac{1}{x - x_{0}}\,dx = \frac{1}{1 -\omega^{p^{\alpha}}\left(\frac{a - x_{0}}{a - b}\right)} = \frac{1}{1 -\omega^{p^{\alpha}}\left(\frac{x(a_{1}) - x(t_{0})}{x(a_{1}) - x(b_{1})}\right)}.
\]
Application of Lemma \ref{L:substitution} establishes Theorem \ref{T:substitution} for the function
\[
	f(x) = \frac{1}{x - x_{0}}.
\]
If $f(x) = (x - x_{0})^{-n}$ and $n \neq -1$, then Theorem \ref{T:intf'=0} shows each side of equation \eqref{E:subst} is $0$. This establishes the theorem for rational functions.
Theorem \ref{T:unif conv} can be applied to establish the theorem for Krasner analytic functions.
\end{proof}
This next result gives a simple condition on the coefficients of a power series that allows an instant calculation of its integral.
\begin{theorem}\label{T:int,lim=L}
	Suppose $f \in H(\mathcal{A}(a,b))$ and $f$ has controlled coefficients of order $\beta < 1$. Let $\Phi$ be an interlocked family of path sequences and
\[
	f(x) = \sum_{n \geq 0}c_{n}(x - b)^{n}.
\]
If 
\[
	\lim_{n \to -1}c_{n}(a - b)^{n + 1} = L \qquad  (n \to -1\ \text{in}\ \mathbb{Z}_{p}.),
\]
then
\[
	\int_{\mathcal{A},\Phi}f(x)\,dx = -L.
\]
\end{theorem}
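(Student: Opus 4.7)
The plan is to reduce to the single path sequence $\varphi_0(k)=k$ via Theorem \ref{T:varphi=kPhi}, and then apply Theorem \ref{T:best A_f,phi1} to $f$ with this $\varphi_0$ and a suitable auxiliary function. Because $\beta<1$, a choice such as $\theta(k)=\lfloor\tfrac{1-\beta}{2}k\rfloor$ tends to infinity and satisfies the growth condition $(1-\beta)\varphi_0(k)-\beta\log_p\varphi_0(k)-\theta(k)\to\infty$ required by that theorem. The congruence $n\equiv -1\pmod{p^{\theta(k)}}$ surviving in the resulting restricted sum forces $n\to -1$ in $\mathbb{Z}_p$ uniformly over the surviving range as $k\to\infty$.

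Writing $c_n(a-b)^{n+1}=L+\delta_n$ with $\max_n|\delta_n|_p\to 0$ on that range, substitution splits
\[
A_{f,\varphi_0}(k) = -L\,T(k) - E(k) + \eta(k),
\]
where $T(k)$ is the double sum produced by Theorem \ref{T:best A_f,phi1} after replacing each coefficient $c_n(a-b)^{n+1}$ by $1$, and $E(k)=\sum_n\delta_n\,S(\varphi_0,k,n)$ with $S(\varphi_0,k,n)=\sum_{j\geq 1}(-1)^{j-1}\binom{n}{jp^{\varphi_0(k)}-1}\in\mathbb{Z}_p$. Since $|S(\varphi_0,k,n)|_p\leq 1$, the ultrametric inequality yields $|E(k)|_p\leq\max_n|\delta_n|_p\to 0$, so the problem reduces to proving $T(k)\to 1$.

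I expect this evaluation of $T(k)$ to be the only step requiring real input, since summing the alternating binomials directly looks unpleasant. The plan is to bootstrap from an already known integral: apply Theorem \ref{T:best A_f,phi1} to the auxiliary function $h(x)=-1/(x-a)$, which is holomorphic on $\mathcal{A}$ (because $|a-b|_p=R$ puts $a\notin\mathcal{A}$) and whose geometric expansion about $b$ gives $c_n^{(h)}(a-b)^{n+1}=1$ for every $n$. The function $h$ has bounded coefficients, so the hypothesis of Theorem \ref{T:best A_f,phi1} holds with the same $\theta(k)$; the truncation index for $h$ is smaller than the one for $f$, but the intermediate terms are controlled by Lemma \ref{L: a_{k,n}} and can be absorbed into the error. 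Theorem \ref{T:best A_f,phi1} then reads $A_{h,\varphi_0}(k)=-T(k)+\eta^{(h)}(k)$, while Theorem \ref{T:ratfunc int}(1) supplies $\int_\mathcal{A}\frac{1}{x-a}\,dx=1$, so $A_{h,\varphi_0}(k)\to -1$ and hence $T(k)\to 1$.

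Combining the three ingredients, $A_{f,\varphi_0}(k)\to -L$; Theorem \ref{T:varphi=kPhi} then extends this to $\int_{\mathcal{A},\Phi}f(x)\,dx=-L$ for any interlocked family $\Phi$.
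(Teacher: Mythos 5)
Your proposal is correct and follows essentially the same route as the paper: reduce to $\varphi_0(k)=k$ via Theorem \ref{T:varphi=kPhi}, apply Theorem \ref{T:best A_f,phi1} so that the surviving indices satisfy $n\equiv -1\pmod{p^{\theta(k)}}$ and hence $c_n(a-b)^{n+1}\to L$ uniformly, and evaluate the remaining pure binomial sum by recognizing it as the Riemann sum of a rational function with integral $-1$ (the paper uses $\tfrac{1}{1-x}$ on $\mathcal{A}(1,0)$, which is your $-\tfrac{1}{x-a}$ in the case $a=1$, $b=0$). Your explicit handling of the mismatch between the truncation indices $n_k$ for $f$ and for the auxiliary bounded function is a point the paper passes over silently, but it is the same argument.
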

\begin{proof}
	
	By Theorem \ref{T:varphi=kPhi}, it is sufficient to just consider $\varphi(k) = k$. Let $\theta(k) = [\log_{p}k]$. By Theorem \ref{T:best A_f,phi1}, we have
\begin{align*}
	\int_{\mathcal{A},k}f(x)\,dx & = - \lim_{k \to \infty}\sum_{\substack{n = p^{k} - 1\\ n \equiv -1 \pmod{p^{\theta(k)}}}}^{n_{k} - 1}c_{n}(a - b)^{n+1} \sum_{j \geq 1}(-1)^{j - 1}\binom{n}{jp^{k} - 1}\\
&= -L\lim_{k \to \infty}\sum_{\substack{n = p^{k} - 1\\ n \equiv -1 \pmod{p^{\theta(k)}}}}^{n_{k} - 1} \sum_{j \geq 1}(-1)^{j - 1}\binom{n}{jp^{k} - 1}\\
&= L\int_{\mathcal{A}(1,0),k}\frac{1}{1 - x}\,dx = -L.
\end{align*}

\end{proof}
\noindent Here's an application of Theorem \ref{T:int,lim=L}.\\

Let $\psi_{p}(x)$ be the derivative of the p-adic log gamma function on $\mathbb{C}_{p}\setminus\mathbb{Z}_{p}$.  (This log gamma function is denoted $\log \Gamma_{p}(x)$ in  \cite{hC2007} and $G_{p}(x)$ in \cite{jD1977}.) A series expansion around $x = 1/p\,$ is found in \cite{jD1977}. Differentiation and a slight notational change yield
\[
	\psi_{p}'(x) = \sum_{n \geq 0}(-1)^{n}(n + 1)L_{p}^{*}(n + 2)p^{n + 2}(x - \frac{1}{p})^{n}, \qquad \left| x - \frac{1}{p} \right|_{p} < p.
\]
Here, 
\[
	L_{p}^{*}(s) = \frac{1}{p - 1}\sum_{\chi}L_{p}(s,\chi),
\]
where the sum is over all Dirichlet characters mod $p$.\\

Let $j \in \mathbb{Z}_{\geq 0}$. Then
\[
	\left(x - \frac{1}{p}\right)^{j}\psi_{p}'(x) =  \sum_{n \geq j}(-1)^{n - j}(n - j + 1)L_{p}^{*}(n - j + 2)p^{n - j + 2}(x - \frac{1}{p})^{n}.
\]
Theorem \ref{T:int,lim=L} can be applied with $a = 0$ and $b = 1/p$. The result is
\begin{theorem}\label{T:intwithL*}
Let $\varphi(k) = k$ and $j \in \mathbb{Z}_{\geq 0}$.
\[
	\int_{\mathcal{A}(0,\frac{1}{p}),\varphi}\left(x - \frac{1}{p}\right)^{j}\psi_{p}'(x)\,dx = (-p)^{1 - j}jL_{p}^{*}(1 - j).
\]
\end{theorem}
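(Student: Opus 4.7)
The plan is to apply Theorem \ref{T:int,lim=L} directly to the power series expansion given just above the statement. With $a=0$, $b=1/p$, and $\varphi(k)=k$, the radius is $R = |a-b|_p = p$, and the arc $\mathcal{A}(0,1/p)$ coincides with the open disc $D^-(1/p,p)$ on which the series for $\psi_p'(x)$ converges, so $(x-\tfrac{1}{p})^j \psi_p'(x) \in H(\mathcal{A}(0,1/p))$. Reading off the coefficients,
\[
c_n = (-1)^{n-j}(n-j+1)L_p^*(n-j+2)p^{n-j+2},\qquad n\ge j,
\]
it only remains to verify the controlled-coefficients hypothesis and to evaluate $\lim_{n\to -1}c_n(a-b)^{n+1}$.

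For the controlled-coefficients check, a direct computation gives
\[
|c_n|_p R^n \;=\; |n-j+1|_p\,|L_p^*(n-j+2)|_p\,p^{j-2}.
\]
For $n\ge j$ we have $n-j+2\ge 2$, which keeps the argument of $L_p^*$ away from its pole at $s=1$; since $L_p^*$ is uniformly bounded on such a region, and $|n-j+1|_p \le 1$, we conclude that $(x-\tfrac{1}{p})^j\psi_p'(x)$ has controlled coefficients of order $\beta=0<1$. For the main computation, multiplying out and using $(-1)^{2n}=1$ gives
\[
c_n(a-b)^{n+1} \;=\; (-1)^{n-j}(n-j+1)L_p^*(n-j+2)p^{n-j+2}\cdot(-1)^{n+1}p^{-(n+1)} \;=\; (-1)^{1-j}(n-j+1)L_p^*(n-j+2)\,p^{1-j},
\]
so the expression is unambiguous along any subsequence $n\equiv -1\pmod{p^{\theta(k)}}$. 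As $n\to -1$ in $\mathbb{Z}_p$, one has $n-j+1\to -j$ and, for $j\ge 1$, continuity of $L_p^*$ at the non-positive integer $1-j$ gives $L_p^*(n-j+2)\to L_p^*(1-j)$. Hence $L:=\lim_{n\to -1} c_n(a-b)^{n+1} = -(-p)^{1-j}jL_p^*(1-j)$, and Theorem \ref{T:int,lim=L} yields the integral as $-L=(-p)^{1-j}jL_p^*(1-j)$.

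The delicate case is $j=0$, where $1-j=1$ is the pole of $L_p^*$ and the formal product $0\cdot L_p^*(1)$ on the right is indeterminate. Here one interprets it as the limit $\lim_{s\to 1}(1-s)(-p)L_p^*(s) = -(-p)\operatorname{Res}_{s=1}L_p^*(s)$; the same quantity appears on the left, since $(n+1)L_p^*(n+2)\to\operatorname{Res}_{s=1}L_p^*(s)$ as $n\to -1$, and the contribution of the nontrivial characters vanishes in the limit. Modulo this residue convention at $j=0$, the proof reduces to a direct invocation of Theorem \ref{T:int,lim=L}, and the principal item to check carefully is that the $(-1)^{n-j}(-1)^{n+1}$ product collapses to $(-1)^{1-j}$ independently of the parity of $n$, so that the limit along the required residue class is genuinely well-defined.
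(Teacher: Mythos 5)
Your proof is essentially the paper's: the paper's entire argument consists of the observation that Theorem \ref{T:int,lim=L} applies with $a=0$ and $b=1/p$, and you have correctly supplied the sign bookkeeping (the collapse of $(-1)^{n-j}(-1)^{n+1}$ to $(-1)^{1-j}$), the p-adic limit $c_n(a-b)^{n+1}\to -(-p)^{1-j}jL_{p}^{*}(1-j)$, and the residue interpretation needed at $j=0$, which the paper passes over in silence. One small correction to your controlled-coefficients check: $L_{p}^{*}$ is \emph{not} uniformly bounded on the integers $\geq 2$, since integers of the form $1+p^{k}$ approach the pole at $s=1$ p-adically and $|L_{p}^{*}(1+p^{k})|_{p}$ grows like $p^{k+1}$; what actually saves the estimate is that the accompanying factor $n-j+1=(n-j+2)-1$ cancels the simple pole, so that the bounded quantity is $(s-1)L_{p}^{*}(s)$ evaluated at $s=n-j+2$, which still yields $|c_n|_pR^n\leq \mathrm{const}\cdot p^{\,j-2}$ and hence $\beta=0$ as you claim.
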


Now let's go to a general sufficient condition for a line integral of a function $f(x)$, holomorphic on an arc $\mathcal{A}(a,b)$, to exist.
\begin{theorem}[Sufficiency]\label{T:sufficiency}
	Suppose $f(x) = \sum_{n \geq 0}c_{n}(x - b)^{n} \in H(\mathcal{A}(a,b))$ and $f(x)$ is bounded or has controlled coefficients of order $\beta < 1$. 
	Let $\varphi$ be a path sequence satisfying
\[
	\varphi(k) - (\beta + 1)\nabla \varphi(k) - \beta \log_{p} \varphi(k) \to \infty \quad \text{as} \  k \to \infty.
\]
	Let $\theta$ be an auxiliary function satisfying
\[
	\varphi(k) - \beta\varphi(k + 1) - \beta \log_{p} \varphi(k) -\theta(k) \to \infty \quad \text{as} \  k \to \infty.
\]
	Furthermore suppose that, given any $\varepsilon > 0$, there is a $K_{\varepsilon}$ such that for $k > K_{\varepsilon}$ and all $d \in \mathbb{Z}^{+}$ we have 
\[
		\left| c_{dp^{\theta(k) + \nabla\varphi(k)} - 1}\left(a - b\right)^{dp^{\theta(k) + \nabla\varphi(k)}} - c_{dp^{\theta(k)} - 1}\left(a - b\right)^{dp^{\theta(k)}}\right|_{p} < \varepsilon,
\]
then
\[
	\int_{\mathcal{A}(a,b),\varphi}f(x)\,dx \qquad \text{exists.}
\]	
\end{theorem}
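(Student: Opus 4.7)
The plan is to show that $(A_{f,\varphi}(k))_{k}$ is a Cauchy sequence in $\mathbb{C}_p$; by completeness this yields existence of the integral. I will estimate $|A_{f,\varphi}(k+1)-A_{f,\varphi}(k)|_p$ for large $k$ by expressing both $A_{f,\varphi}(k)$ and $A_{f,\varphi}(k+1)$ in the finite, modularly-filtered forms supplied by Theorems \ref{T:best A_f,phi1} and \ref{T:best A_f,phi2}, and using the coefficient hypothesis to control the dominant part of the difference.

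First I would apply Theorem \ref{T:best A_f,phi1} to $A_{f,\varphi}(k)$ with the supplied auxiliary function $\theta$; the condition $(1-\beta)\varphi(k)-\beta\log_{p}\varphi(k)-\theta(k)\to\infty$ it demands follows from the stated hypothesis $\varphi(k)-\beta\varphi(k+1)-\beta\log_{p}\varphi(k)-\theta(k)\to\infty$ since $\varphi(k)\leq\varphi(k+1)$. Next, apply Theorem \ref{T:best A_f,phi2} to $A_{f,\varphi}(k+1)$ with the specific choice $\theta_{2}(k):=\theta(k)+\nabla\varphi(k)$; a direct algebraic substitution shows that the required $(1-\beta)\varphi(k+1)-\beta\log_{p}\varphi(k)-\theta_{2}(k)\to\infty$ collapses to the second stated hypothesis on $\theta$, while the first stated hypothesis on $\varphi$ supplies the other condition demanded by Theorem \ref{T:best A_f,phi2}. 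Reparametrize both finite sums by $d$ via $n+1=d\,p^{\theta(k)}$ and $n+1=d\,p^{\theta(k)+\nabla\varphi(k)}$; a short count shows $d$ runs in both cases over the same lower bound $p^{\varphi(k)-\theta(k)}$ with comparable upper bounds of order $\varphi(k)\,p^{\varphi(k)-\theta(k)}$.

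Write $B_{d}(k):=c_{dp^{\theta(k)}-1}(a-b)^{dp^{\theta(k)}}$, $\widetilde{B}_{d}(k):=c_{dp^{\theta(k)+\nabla\varphi(k)}-1}(a-b)^{dp^{\theta(k)+\nabla\varphi(k)}}$, and let $S_{1}(d), S_{2}(d)$ denote the respective inner binomial sums. Decompose
\[
A_{f,\varphi}(k+1)-A_{f,\varphi}(k) \;=\; -\sum_{d}(\widetilde{B}_{d}(k)-B_{d}(k))\,S_{2}(d) \;-\; \sum_{d} B_{d}(k)\,(S_{2}(d)-S_{1}(d)) \;+\; \eta(k),
\]
where $\eta(k)\to 0$ collects the $\eta$-tails from both reductions together with any range-mismatch. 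The first sum is controlled directly by the coefficient hypothesis: $|\widetilde{B}_{d}(k)-B_{d}(k)|_{p}<\varepsilon$ uniformly in $d$ for large $k$, and $|S_{2}(d)|_{p}\leq 1$ since $S_{2}(d)\in\mathbb{Z}$. For the second sum, the identity $\binom{N-1}{M-1}=(M/N)\binom{N}{M}$ reduces the required estimate to a comparison of $\binom{p^{M}A}{p^{M}B}$ with $\binom{A}{B}$ for $M=\nabla\varphi(k)$; Wolstenholme--Kazandzidis-type $p$-adic congruences then bound $|S_{2}(d)-S_{1}(d)|_{p}$, and combining with the controlled-coefficient estimate $|B_{d}(k)|_{p}\leq\text{const}\cdot\varphi(k)^{\beta}p^{\beta\varphi(k)}$ and the first growth hypothesis on $\varphi$ forces the second sum to be small as well.

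The main obstacle is exactly this quantitative $p$-adic comparison $|S_{2}(d)-S_{1}(d)|_{p}$, which must decay fast enough to defeat the possibly large factor $p^{\beta\varphi(k)}$ from $|B_{d}(k)|_{p}$, uniformly in $d$. The stated condition $\varphi(k)-(\beta+1)\nabla\varphi(k)-\beta\log_{p}\varphi(k)\to\infty$ is finely calibrated so that the binomial-congruence decay wins; verifying this tight balance is the critical technical step. Once it is in place, Cauchy-ness and hence existence of $\int_{\mathcal{A}(a,b),\varphi}f(x)\,dx$ follow.
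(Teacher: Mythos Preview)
Your proposal is correct and follows essentially the same route as the paper: Cauchy criterion, Theorems~\ref{T:best A_f,phi1} and~\ref{T:best A_f,phi2} with the choice $\theta_{2}(k)=\theta(k)+\nabla\varphi(k)$, reparametrization by $d$, and a Kazandzidis-type congruence (the paper states and proves exactly this as Lemma~\ref{L:Kazmodified}) to handle the binomial-difference piece. The only cosmetic difference is the grouping in the telescoping identity: you use $\widetilde{B}S_{2}-BS_{1}=(\widetilde{B}-B)S_{2}+B(S_{2}-S_{1})$, whereas the paper uses $\widetilde{B}S_{2}-BS_{1}=\widetilde{B}(S_{2}-S_{1})+S_{1}(\widetilde{B}-B)$; your choice makes the coefficient factor in the second sum $|B_{d}(k)|_{p}\lesssim\varphi(k)^{\beta}p^{\beta\varphi(k)}$ rather than $p^{\beta\varphi(k+1)}$, which is a harmless (and slightly favorable) variant.
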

\begin{corollary}\label{C:suff} Suppose $\varphi(k) = \alpha + \lambda k$, $ \alpha \in \mathbb{Z}$, $ \lambda\in \mathbb{Z}^{+}$. Also suppose $f(x)$ has controlled coefficiens of order $\beta < 1$. If 
\[
	\lim_{k \to \infty}c_{dp^{\varphi(k)} - 1}\left(a - b\right)^{dp^{\varphi(k)}}
\]
exists for each $d \in \mathbb{Z}^{+}$, and the convergence is uniform over $d$, then
\[
	\int_{\mathcal{A}(a,b),\varphi}f(x)\,dx \qquad \text{exists.}
\]	
\end{corollary}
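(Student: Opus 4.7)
The plan is to derive this corollary directly from Theorem~\ref{T:sufficiency} by exhibiting a suitable auxiliary function $\theta$ and then verifying that the assumed uniform-in-$d$ convergence supplies precisely the Cauchy-type estimate demanded by that theorem. Since $\varphi(k)=\alpha+\lambda k$ is linear, $\nabla\varphi(k)=\lambda$ is constant, so the first growth condition
\[
\varphi(k)-(\beta+1)\nabla\varphi(k)-\beta\log_{p}\varphi(k)=\lambda k-(\beta+1)\lambda-\beta\log_{p}(\alpha+\lambda k)+\alpha\to\infty
\]
holds immediately for any $\beta<1$.

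The key step is to match the indices $dp^{\theta(k)}-1$ and $dp^{\theta(k)+\nabla\varphi(k)}-1$ appearing in Theorem~\ref{T:sufficiency} with the indices $dp^{\varphi(k)}-1$ appearing in our hypothesis. Because $\varphi$ is linear, I would set $\theta(k)=\varphi(m(k))=\alpha+\lambda m(k)$ for an auxiliary integer sequence $m(k)$ with $m(k)\to\infty$ and $m(k)=o(k)$, for example $m(k)=\lfloor k/2\rfloor$ (shifted so that $\theta(k)\in\mathbb{Z}^{+}$). Then
\[
\theta(k)+\nabla\varphi(k)=\alpha+\lambda(m(k)+1)=\varphi(m(k)+1),
\]
so both exponents in the Sufficiency hypothesis lie on the orbit of $\varphi$.

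With this choice, the Sufficiency hypothesis reduces to requiring
\[
\bigl|c_{dp^{\varphi(m(k)+1)}-1}(a-b)^{dp^{\varphi(m(k)+1)}}-c_{dp^{\varphi(m(k))}-1}(a-b)^{dp^{\varphi(m(k))}}\bigr|_{p}<\varepsilon
\]
for large $k$, uniformly in $d$. Since $\mathbb{C}_{p}$ is complete and $c_{dp^{\varphi(k)}-1}(a-b)^{dp^{\varphi(k)}}$ converges to a limit as $k\to\infty$ uniformly in $d$, the sequence is uniformly Cauchy in $d$, and the estimate follows by applying the Cauchy condition at the two indices $m(k)$ and $m(k)+1$, both of which tend to infinity.

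Finally I would check the second growth condition for $\theta$:
\[
\varphi(k)-\beta\varphi(k+1)-\beta\log_{p}\varphi(k)-\theta(k)=(1-\beta)\lambda k-\lambda m(k)-\beta\log_{p}(\alpha+\lambda k)+O(1)\to\infty,
\]
which holds because $\beta<1$ and $m(k)=o(k)$. Theorem~\ref{T:sufficiency} then yields existence of the integral. The only genuinely delicate point, and the main obstacle, is recognizing that the linear form of $\varphi$ permits the substitution $\theta(k)=\varphi(m(k))$, which is what converts the abstract Cauchy-type hypothesis of the Sufficiency theorem into the concrete uniform-convergence assumption of the corollary; everything else is a direct asymptotic check.
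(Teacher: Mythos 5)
Your proposal is correct and follows essentially the same route as the paper, which also deduces the corollary from Theorem~\ref{T:sufficiency} by taking $\theta$ on the orbit of $\varphi$ (the paper uses $\theta(k)=\varphi([\log_{p}k])$) so that the Cauchy-type hypothesis becomes the uniform Cauchy property of the convergent sequence $c_{dp^{\varphi(k)}-1}(a-b)^{dp^{\varphi(k)}}$. One slip: your illustrative choice $m(k)=\lfloor k/2\rfloloor$ is not $o(k)$, and with it the condition $(1-\beta)\lambda k-\lambda m(k)\to\infty$ would force $\beta<\tfrac{1}{2}$ rather than $\beta<1$; your general requirement $m(k)=o(k)$ is the right one, and a choice such as $m(k)=[\log_{p}k]$ satisfies it.
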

Note that if $\varphi$ is a polynomial, the condition on $\varphi$ is satisfied.
The condition on $\theta$ says that while $\theta(k)$ goes to infinity, it stays sufficiently less than $\varphi(k)$.
\begin{proof}
$p$ is an odd prime. We will show, Definition \ref{D:1st int}, that $lim_{k \to \infty}A_{f,\varphi}(k)$ exists. The p-adic Cauchy criterion is that 
\[
	\lim_{k \to \infty}\left|A_{f,\varphi}(k + 1) - A_{f,\varphi}(k)\right|_{p} = 0.
\]
Equations \eqref{E:best A_f,phi1} and \eqref{E:best A_f,phi2} in Theorems \ref{T:best A_f,phi1} and \ref{T:best A_f,phi2} will be used.
Writing $n = dp^{\theta(k)} - 1$, equation $\eqref{E:best A_f,phi1}$ can be rewritten as
\[
A_{f,\varphi}(k) = - \sum_{d = p^{\varphi(k) - \theta(k)}}^{n_{k}p^{-\theta(k)}}c_{n}(a - b)^{n+1} \sum_{j \geq 1}(-1)^{j - 1}\binom{dp^{\theta(k)} - 1}{jp^{\varphi(k)} - 1} + \eta_{1}(k).
\]
Now if we let $\theta_{2}(k) = \theta(k) + \nabla \varphi(k)$, and use $n' = dp^{\theta(k) + \nabla \varphi(k)}$, equation $\eqref{E:best A_f,phi2}$ becomes
\[
A_{f,\varphi}(k + 1) = - \sum_{d = p^{\varphi(k) - \theta(k)}}^{n_{k}p^{-\theta(k)}}c_{n'}(a - b)^{n'+1} \sum_{j \geq 1}(-1)^{j - 1}\binom{dp^{\theta(k) + \nabla \varphi(k)} - 1}{jp^{\varphi(k + 1)} - 1} + \eta_{2}(k).
\]
$\eta_{1}(k)$ and $\eta_{2}(k) \to 0$ as $k \to \infty$.
Because the indices of summation are the same, we can write $A_{f,\varphi}(k + 1) - A_{f,\varphi}(k)$ as a single (double)summation and then use the identity $x_{1}y_{1} - x_{2}y_{2} = x_{1}(y_{1} - y_{2}) + y_{2}(x_{1} - x_{2})$ to obtain
\[
	A_{f,\varphi}(k) - A_{f,\varphi}(k + 1) = S_{1} + S_{2} + \eta(k),
\]
with \[
	S_{1} = \sum_{d}\sum_{j}(-1)^{j - 1}\left(c_{dp^{\theta(k) + \nabla \varphi(k)} -1}(a - b)^{dp^{\theta(k) + \nabla \varphi(k)}} - c_{dp^{\theta(k)} - 1}(a - b)^{dp^{\theta(k)}}\right)\binom{dp^{\theta(k)} - 1}{jp^{\varphi(k)} - 1}
\]
and
\[
	S_{2} = \sum_{d}\sum_{j}(-1)^{j - 1}c_{dp^{\theta(k) + \nabla \varphi(k)} -1}(a - b)^{dp^{\theta(k) + \nabla \varphi(k)}}\left(\binom{dp^{\theta(k) + \nabla\varphi(k)} - 1}{jp^{\varphi(k + 1)} - 1} - \binom{dp^{\theta(k)} - 1}{jp^{\varphi(k)} - 1}\right).
\]
Now we will show the terms in $S_{2}$ are small, so $S_{2} \to 0$ as $k \to \infty$.
Since $n~=~dp^{\theta(k) + \nabla \varphi(k)} \leq m_{k}$, it follows that
\[
	\left|c_{n}(a - b)^{n}\right|_{p} <Mn^{\beta} \leq Mm_{k}^{\beta} = const\cdot\varphi^{\beta}(k)p^{\beta\varphi(k + 1)}.
\]
	In order to bound the differences of the binomial coefficients, a simpler, modified, version of a Kazandzidis theorem, see \cite{aR2000}, will be used. This lemma is somewhat stronger than a similar lemma of F.~Baldassari.
\begin{lemma}\label{L:Kazmodified}
Let $a$, $b$, $t \in \mathbb{Z}^{+}$. $p$ is an odd prime. Then,
\[
	\left|\binom{ap^{t} - 1}{bp^{t} - 1} - \binom{a - 1}{b - 1}\right|_{p} \leq p^{-3}|b|_{p}^{2}|a - b|_{p}. \qquad (\text{Use}\ p^{2} \text{instead of}\ p^{3} \text{if}\ p = 3.)
\]
\end{lemma}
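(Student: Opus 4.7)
The plan is to reduce the claim to the classical Kazandzidis congruence for binomial coefficients of the form $\binom{np}{kp}$ versus $\binom{n}{k}$, and then iterate to handle the power $p^{t}$ via a telescoping sum. First I would use the elementary identity
\[
	\binom{n-1}{k-1} = \frac{k}{n}\binom{n}{k}
\]
with $n = ap^{t}$, $k = bp^{t}$, and with $n = a$, $k = b$. Observe that the factor $kp^{t}/(np^{t}) = b/a$ is the same in both cases, so
\[
	\binom{ap^{t}-1}{bp^{t}-1} - \binom{a-1}{b-1} = \frac{b}{a}\left(\binom{ap^{t}}{bp^{t}} - \binom{a}{b}\right).
\]
Consequently the target bound
\[
	\nu_{p}\!\left(\binom{ap^{t}-1}{bp^{t}-1} - \binom{a-1}{b-1}\right) \geq 3 + 2\nu_{p}(b) + \nu_{p}(a-b)
\]
is equivalent to the bound
\[
	\nu_{p}\!\left(\binom{ap^{t}}{bp^{t}} - \binom{a}{b}\right) \geq 3 + \nu_{p}(a) + \nu_{p}(b) + \nu_{p}(a-b).
\]

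Next I would invoke the Kazandzidis congruence in Robert's formulation from \cite{aR2000}: for odd primes $p \geq 5$ and any positive integers $n$, $k$,
\[
	\binom{np}{kp} \equiv \binom{n}{k} \pmod{p^{3}\,nk(n-k)\,\mathbb{Z}_{p}},
\]
with the exponent $3$ weakened to $2$ when $p = 3$ (which explains the parenthetical remark in the lemma). The key step is then to iterate. Setting $a_{s} = ap^{s}$, $b_{s} = bp^{s}$, Kazandzidis gives
\[
	\binom{a_{s+1}}{b_{s+1}} - \binom{a_{s}}{b_{s}} \in p^{3}\,a_{s}b_{s}(a_{s}-b_{s})\,\mathbb{Z}_{p},
\]
so that
\[
	\nu_{p}\!\left(\binom{ap^{s+1}}{bp^{s+1}} - \binom{ap^{s}}{bp^{s}}\right) \geq 3 + 3s + \nu_{p}(ab(a-b)).
\]
I would then write the telescoping identity
\[
	\binom{ap^{t}}{bp^{t}} - \binom{a}{b} = \sum_{s=0}^{t-1}\left(\binom{ap^{s+1}}{bp^{s+1}} - \binom{ap^{s}}{bp^{s}}\right)
\]
and take $\nu_{p}$ of both sides using the ultrametric inequality. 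The smallest guaranteed valuation occurs at $s = 0$, yielding exactly $3 + \nu_{p}(ab(a-b))$, which is the required bound.

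The main obstacle will be justifying the Kazandzidis estimate in the form needed (i.e.\ with the explicit $p^{3}$ and the extra factor $nk(n-k)$, as opposed to the weaker published variants that omit one or more factors) and tracking the exponent carefully in the $p = 3$ case, where each iteration contributes only $p^{2}$ rather than $p^{3}$; there one checks that the telescoping argument still delivers $p^{3}$ overall because the $s \geq 1$ summands pick up additional $p$-adic factors from $a_{s}, b_{s}, a_{s}-b_{s}$, and the $s = 0$ term, though only divisible by $p^{2}ab(a-b)$ by Kazandzidis for $p = 3$, must be supplemented by a finer analysis (e.g.\ the quadratic refinement of Ljunggren--Jacobsthal) to upgrade $p^{2}$ to $p^{3}$ in this special prime, precisely as flagged in the parenthetical exception of the lemma. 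With this bookkeeping in hand, the conversion in the first paragraph immediately yields the stated inequality.
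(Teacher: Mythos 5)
Your proposal is correct and follows essentially the same route as the paper: both reduce the shifted binomials to $\binom{ap^{t}}{bp^{t}}-\binom{a}{b}$ via the identity $\binom{n-1}{k-1}=\frac{k}{n}\binom{n}{k}$ and then invoke the Kazandzidis congruence (the paper records only the $t=1$ step and leaves the iteration over $t$ implicit, which you make explicit by telescoping; dropping the harmless integer factor $\binom{n}{k}$ from the modulus is fine). One small correction to your closing remarks: for $p=3$ no upgrade to $p^{3}$ is needed or claimed --- the lemma's parenthetical \emph{weakens} the conclusion to $p^{2}$ in that case, so the $s=0$ term of your telescoping sum, with the $p=3$ Kazandzidis exponent $2$, already yields exactly what is asserted.
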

\begin{proof}
Kazandzidis' theorem is
\[
\binom{ap}{bp}\equiv \binom{a}{b} \pmod{p^{3}ab(a-b)\binom{a}{b}\mathbb{Z}_{p}} \qquad \text{as a statement in} \ \mathbb{Z}_{p}
\]
From this it immediately follows that 
\[
\binom{ap - 1}{bp - 1} \equiv \binom{a -1}{b -1} \pmod{p^{3}b^{2}(a-b)\mathbb{Z}_{p}} 
\]
and the statement of the lemma.
\end{proof}
Applying the lemma to the terms in $S_{2}$, using $t = \nabla\varphi(k)$, yields
\[
	\left|\binom{dp^{\theta(k) + \nabla\varphi(k)} - 1}{jp^{\varphi(k + 1)} - 1} - \binom{dp^{\theta(k)} - 1}{jp^{\varphi(k)} - 1}\right|_{p} \leq p^{-\theta(k) - 2\varphi(k) - 3}.
\]
When the two inequalities are combined we get
\[
	|S_{2}|_{p} \leq const\cdot p^{\beta\log_{p} \varphi(k) + \beta\varphi(k + 1) - 2\varphi(k) - \theta(k)}.
\]
Because of the given hypotheses,the exponent in this last expression approaches $-\infty$ as $k \to \infty$. Hence, $S_{2} \to 0$ as $k \to \infty$.
	In order to have $S_{1} \to 0$ it is sufficient to have the individual terms in the sum be small. Since the binomial coefficients are p-adically bounded by $1$, it is enough to have the differences in expressions $c_{n}(a -b)^{n + 1}$ be small. That is the theorem.
	
	To establish the Corollary, let $\varphi(k) = \alpha + \lambda k$ and $ \theta(k)~=~\varphi([\log_{p} k])$. $\nabla\varphi(k) = \lambda$ and the Corollary follows easily.
\end{proof}

Corollary \ref{C:suff} can be applied to the logarithmic derivative of the Artin-Hasse  exponential function
\[
	(E'/E)(x) = \sum_{n \geq 0}x^{p^{n} - 1}.
\]
In this case we can use $a = 1$ with $b = 0$ and $\varphi(k) = k$.

Applying Theorem \ref{T:sufficiency} means looking at $lim_{k \to \infty}c_{dp^{k} - 1}$. The sequences are constant and the limits are either $0$ or $1$, so we can conclude that
\[
	\int_{\mathcal{A}(1,0),k}(E'/E)(x)\,dx \qquad \text{exists}.
\]
\begin{theorem}
\[
	\int_{\mathcal{A}(1,0),k}(E'/E)(x)\,dx \equiv -1 \pmod{p^{3}}.
\]
\end{theorem}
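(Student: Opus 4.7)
The plan is to apply Theorem~\ref{T:1st A_k c_n} with $\varphi(k)=k$, $a=1$, $b=0$, to unwind $A_{f,k}(k)$ for $f=E'/E$. Since $(a-b)^{n+1}=1$ and the coefficients $c_n$ of $E'/E$ are $1$ exactly when $n=p^m-1$ for some $m\ge 0$ (and $0$ otherwise), the theorem yields
\[
A_{f,k}(k) \;=\; -\sum_{m\ge k}\,\sum_{j=1}^{p^{m-k}}(-1)^{j-1}\binom{p^m-1}{jp^k-1}.
\]
The summand at $m=k$ has only $j=1$ and contributes $-\binom{p^k-1}{p^k-1}=-1$, so the task reduces to showing that for every $m>k$ the inner alternating sum over $j$ lies in $p^3\mathbb{Z}_p$. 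The existence of the integral has already been established, so once this is done, passing to $k\to\infty$ will yield the stated congruence.

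For $m>k$ I will reduce the inner sum modulo $p^3$ via the binomial congruence
\[
\binom{p^m-1}{jp^k-1}\ \equiv\ \binom{p^{m-k}-1}{j-1}\pmod{p^3},\qquad 1\le j\le p^{m-k}.
\]
Granting this, the inner sum collapses modulo $p^3$ to
\[
\sum_{j=1}^{p^{m-k}}(-1)^{j-1}\binom{p^{m-k}-1}{j-1} \;=\; \sum_{l=0}^{p^{m-k}-1}(-1)^l\binom{p^{m-k}-1}{l} \;=\; (1-1)^{p^{m-k}-1} \;=\; 0,
\]
which vanishes because $p^{m-k}-1\ge 1$. Consequently $A_{f,k}(k)\equiv -1\pmod{p^3}$, and the theorem follows on taking the limit.

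For the binomial congruence I iterate Kazandzidis' theorem $\binom{Ap}{Bp}\equiv\binom{A}{B}\pmod{p^cAB(A-B)\binom{A}{B}\mathbb{Z}_p}$ (with $c=3$ for $p\ge 5$ and $c=2$ for $p=3$) a total of $k$ times, descending from $\binom{p^m}{jp^k}$ down to $\binom{p^{m-k}}{j}$. At the $s$-th step one takes $A=p^{m-k+s-1}$, $B=jp^{s-1}$ and uses $\nu_p\binom{p^{m-k+s-1}}{jp^{s-1}}=m-k-\nu_p(j)$ to find that $\binom{p^m}{jp^k}-\binom{p^{m-k}}{j}$ has $p$-valuation at least $c+2(m-k)+2\nu_p(j)$; after multiplication by $j/p^{m-k}$, via the identity $\binom{N-1}{K-1}=\tfrac{K}{N}\binom{N}{K}$ applied to both sides, this yields $\nu_p\ge c+(m-k)+\nu_p(j)\ge 3$. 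The main obstacle is precisely the case $p=3$, $\nu_3(j)=0$, $m=k+1$, where Lemma~\ref{L:Kazmodified} as stated gives only mod $p^2$ accuracy; the missing power of $p$ is supplied by the factor $\binom{A}{B}$ of $p$-valuation $m-k$ in the full Kazandzidis congruence, which was dropped in passing to Lemma~\ref{L:Kazmodified}.
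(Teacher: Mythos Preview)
Your argument is correct and follows the paper's approach: apply Theorem~\ref{T:1st A_k c_n}, isolate the $m=k$ contribution as $-1$, reduce each $m>k$ inner sum modulo $p^{3}$ via a Kazandzidis-type congruence to the alternating binomial sum $(1-1)^{p^{m-k}-1}=0$, and pass to the limit.

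The one genuine difference is your treatment of $p=3$. The paper invokes Lemma~\ref{L:Kazmodified}, which drops the factor $\binom{A}{B}$ from the full Kazandzidis modulus; for $p=3$ that lemma only yields accuracy $p^{2}$, so the paper falls back on numerical verification. You instead iterate the full Kazandzidis congruence and keep the $\binom{A}{B}$ factor, whose valuation $m-k-\nu_{p}(j)$ supplies exactly the missing power when $m>k$. This gives a uniform proof for all odd $p$. (A minor bookkeeping slip: the telescoped bound on $\nu_{p}\bigl(\binom{p^{m}}{jp^{k}}-\binom{p^{m-k}}{j}\bigr)$ comes out to $c+2(m-k)+\nu_{p}(j)$, and after the factor $j/p^{m-k}$ one gets $c+(m-k)+2\nu_{p}(j)$; either way the threshold $\ge 3$ is met since $m-k\ge 1$.)
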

\begin{proof}
	The only non-zero coefficients in the power series, which are all $1$, occur when $n = p^{t} - 1$ for some $t \in \mathbb{Z}^{+}$.
	By Theorem $\ref{T:1st A_k c_n}$ we have, for $p >3$,
\begin{equation}\label{E:E'/E}
	\int_{\mathcal{A}(1,0),k}(E'/E)(x)\,dx = -\lim_{k \to \infty}\sum_{\substack{n \geq p^{k} - 1\\ n = p^{t} - 1}} \sum_{j \geq 1}(-1)^{j - 1}\binom{p^{t} - 1}{jp^{k} - 1}.
\end{equation}
The term with $t = k$ has the value $-1$. If $t > k$, Lemma \ref{L:Kazmodified} yields
\[
	\binom{p^{t} - 1}{jp^{k} - 1} \equiv \binom{p^{t - k} - 1}{j - 1} \pmod{p^{3}}.
\]
The rightmost sum in Equation \eqref{E:E'/E} is then congruent, $\pmod{p^{3}}$, to
\[
	\sum_{j = 1}^{p^{t - k}}(-1)^{j - 1}\binom{p^{t - k} - 1}{j - 1} = \sum_{m = 0}^{p^{t - k} - 1}(-1)^{m}\binom{p^{t - k} - 1}{m} = 0.
	\]
If $p = 3$, this proof doesn't quite give the result, but numerical calculation shows the theorem is valid for $p = 3$.
\end{proof}
Numerical calulations for $p = 3$ and $5$ show no repeating pattern for the first few non-zero p-adic digits.\\

The following theorem shows that often if the integral of a function exists with $\varphi(k)$, then the same function can be integrated with $\alpha + \varphi(k)$.
\begin{theorem}\label{T:add alpha suff}
	If $f$, $\varphi$ and $\theta$ satisfy the conditions of Theorem \ref{T:sufficiency} and $\varphi_{1}(k)~=~\alpha~+~\varphi(k)$, then $f$, $\varphi_{1}$ and $\theta$ will satisfy the conditions of Theorem \ref{T:sufficiency}.
\end{theorem}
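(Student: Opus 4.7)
The plan is to verify, one by one, that each of the three hypotheses of Theorem \ref{T:sufficiency} transfers from $(\varphi,\theta)$ to $(\varphi_{1},\theta)$. The key observation is that the forward difference operator annihilates additive constants, so $\nabla\varphi_{1}(k) = \nabla\varphi(k)$ for every $k$. A secondary observation is that, since $\varphi(k) \to \infty$,
\[
\log_{p}\varphi_{1}(k) - \log_{p}\varphi(k) \;=\; \log_{p}\!\left(1 + \frac{\alpha}{\varphi(k)}\right) \longrightarrow 0.
\]

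For the growth condition on $\varphi_{1}$, I would rewrite
\[
\varphi_{1}(k) - (\beta + 1)\nabla\varphi_{1}(k) - \beta\log_{p}\varphi_{1}(k)
\]
as
\[
\bigl[\varphi(k) - (\beta+1)\nabla\varphi(k) - \beta\log_{p}\varphi(k)\bigr] + \alpha - \beta\bigl[\log_{p}\varphi_{1}(k) - \log_{p}\varphi(k)\bigr].
\]
The bracketed expression tends to $\infty$ by hypothesis, and the remaining terms are $\alpha + o(1)$, so the whole expression tends to $\infty$ as required.

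A parallel decomposition handles the condition on $\theta$: the difference between
\[
\varphi_{1}(k) - \beta\varphi_{1}(k+1) - \beta\log_{p}\varphi_{1}(k) - \theta(k)
\]
and the corresponding expression written with $\varphi$ is $\alpha(1-\beta) + o(1)$, so the new expression still tends to $\infty$.

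Finally, the coefficient condition of Theorem \ref{T:sufficiency} involves only the indices $dp^{\theta(k)+\nabla\varphi_{1}(k)} - 1$ and $dp^{\theta(k)} - 1$. Because $\nabla\varphi_{1} = \nabla\varphi$ and $\theta$ is unchanged, these indices coincide literally with those in the hypothesis on $(f,\varphi,\theta)$, so that condition transfers verbatim. I expect no substantive obstacle; the entire argument is bookkeeping resting on the two elementary observations above.
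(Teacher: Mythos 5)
Your proposal is correct and follows the same route as the paper's (very terse) proof: both rest on the observation that $\nabla\varphi_{1}(k)=\nabla\varphi(k)$, so the coefficient condition is literally unchanged and the two growth conditions are perturbed only by the bounded quantities $\alpha+o(1)$ and $\alpha(1-\beta)+o(1)$. Your write-up simply supplies the bookkeeping the paper leaves to the reader.
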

\begin{proof}
	Noting that $\nabla\varphi_{1}(k) = \nabla\varphi(k)$, it is simple to show that the expressions needed for the inequalities differ by a bounded amount if $\varphi(k) > \alpha$.
\end{proof}
	Here's a necessary and sufficient condition for a function of a very particular form to be integrable using a non-linear $\varphi$.
\begin{theorem}
	Suppose $f \in H(\mathcal{A}(a,b))$, $f$ has controlled coefficients of order $\beta$ and 
\[
	f(x) = \sum_{n > k_{0}}c_{p^{\varphi(n) -1}}(x - b)^{p^{\varphi(n) -1}},
\]	
where $\varphi(n) > 0$ and is strictly increasing for $n > k_{0}$ and $\nabla \varphi(k) > \log_{p} (\beta + 2)\varphi(k)$ for $k > k_{0}$.
Then,
\[
	\int_{\mathcal{A}(a,b),\varphi}f(x)\,dx 
\]
exists if and only if $f$ is bounded \textup{(} $\beta = 0$ \textup{)} and 
\[
	\lim_{k \to \infty}c_{p^{\varphi(k) -1}}(a - b)^{p^{\varphi(k)}} = L.
\]
Then,
\[
	\int_{\mathcal{A}(a,b),\varphi}f(x)\,dx = -L.
\]
\end{theorem}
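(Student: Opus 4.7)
The plan is to exploit the sparsity of $f$'s expansion together with the growth condition on $\nabla\varphi$ so that the whole line integral collapses to a single term. The only nonzero Taylor coefficients of $f$ sit at positions $n = p^{\varphi(m)}-1$, and the hypothesis $\nabla\varphi(k) > \log_{p}((\beta+2)\varphi(k))$ is calibrated exactly so that any such $n$ with $m > k$ lies past the truncation cutoff supplied by Theorem \ref{T:n_k1}.

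Concretely, I would apply Theorem \ref{T:n_k1} with $n_k = \bigl(1+\bigl[(\beta+2)\tfrac{p-1}{p}\varphi(k)\bigr]\bigr)p^{\varphi(k)}$, yielding
\[
A_{f,\varphi}(k) = -\sum_{n = p^{\varphi(k)}-1}^{n_k-1} c_n(a-b)^{n+1}\sum_{j\geq 1}(-1)^{j-1}\binom{n}{jp^{\varphi(k)}-1} + \eta(k),
\]
with $\eta(k) \to 0$. Exponentiating the hypothesis gives $p^{\nabla\varphi(k)} > (\beta+2)\varphi(k)$, whence $p^{\varphi(k+1)} > (\beta+2)\varphi(k)\,p^{\varphi(k)} > n_k$ for large $k$. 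So no index $p^{\varphi(m)}-1$ with $m \geq k+1$ lies in the range of summation, and only the $m = k$ term survives; its inner $j$-sum collapses to $\binom{p^{\varphi(k)}-1}{p^{\varphi(k)}-1} = 1$. This gives the clean identity
\[
A_{f,\varphi}(k) = -c_{p^{\varphi(k)}-1}(a-b)^{p^{\varphi(k)}} + \eta(k).
\]

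Both halves of the biconditional fall out of this identity. If $c_{p^{\varphi(k)}-1}(a-b)^{p^{\varphi(k)}} \to L$, then $A_{f,\varphi}(k) \to -L$, so the integral exists and equals $-L$. Conversely, if the integral exists the identity forces the sequence $c_{p^{\varphi(k)}-1}(a-b)^{p^{\varphi(k)}}$ to converge in $\mathbb{C}_{p}$, and hence to be bounded; since these are, up to finitely many exceptions, the only nonzero values of $|c_n|_{p}R^n$, boundedness of this subsequence is boundedness of the whole sequence, so $f$ is bounded and $\beta = 0$. The only real thing to verify is the truncation inequality $p^{\varphi(k+1)} > n_k$ for large $k$ — this is precisely why the condition on $\nabla\varphi$ was imposed, and once it is in hand the proof is a single-term calculation requiring nothing beyond Theorem \ref{T:n_k1}.
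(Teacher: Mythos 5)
Your proof is correct and follows essentially the same route as the paper's: truncate via Theorem \ref{T:n_k1}, use the hypothesis on $\nabla\varphi$ to show $p^{\varphi(k+1)} > n_k$ so only the single term $n = p^{\varphi(k)}-1$ survives, observe the $j$-sum equals $1$, and read off both directions of the biconditional from the resulting identity $A_{f,\varphi}(k) = -c_{p^{\varphi(k)}-1}(a-b)^{p^{\varphi(k)}} + \eta(k)$. The boundedness argument in the converse also matches the paper's.
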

\begin{proof}
	Theorem \ref{T:n_k1} gives us
	\[
		A_{f,\varphi}(k) = - \sum_{n = p^{\varphi(k)} - 1}^{n_{k} - 1}c_{n}(a - b)^{n+1} \sum_{j 	\geq 1}(-1)^{j - 1}\binom{n}{jp^{\varphi(k)} - 1} + \eta(k).
\]
The sum begins with $n = p^{\varphi(k)} - 1$. The next possible non-zero $c_{n}$ is $n = p^{\varphi(k + 1)} - 1$. However,
\[
	p^{\varphi(k + 1)} = p^{\varphi(k)}p^{\nabla \varphi(k)} > p^{\varphi(k)}(\beta + 2)\varphi(k) >  p^{\varphi(k)}(\beta + 2)\left(\frac{p - 1}{p}\right)\varphi(k) = n_{k}.
\]
Hence, the only possible non-zero term in the sum is at $n = p^{\varphi(k)} - 1$.
Then, 
\[
	A_{f,\varphi}(k) = - c_{p^{\varphi(k)} - 1}(a - b)^{p^{\varphi(k)}} \sum_{j \geq 1}(-1)^{j - 1}\binom{p^{\varphi(k)} - 1}{jp^{\varphi(k)} - 1} + \eta(k).
\]
The sum over $j$ is just $1$, so the theorem follows. The fact that $f$ is bounded follows from the existence of $L$ and
\[
	\sup_{x}|f(x)|_{p} = \sup_{n}|c_{n}|_{p}R^{n} < \infty.
\]
\end{proof}

Now let's try to integrate the same type of function $f$, but use the path sequence $\varphi_{1}(k) = k$ in the integral. Let's assume that $f$ can be integrated with $\varphi$ and the integral is $-L \neq 0$. The range of the index of summation is now smaller, so the formula for $A_{f,\varphi_{1}}(k)$ will sometimes give a value near $L$ and sometimes be empty. Thus there is no limit for $A_{f,\varphi_{1}}(k)$ and no integral.

 \section{Uniform Ray Convergence and Ray Limits}
Theorems \ref{T:best A_f,phi1}, \ref{T:best A_f,phi2} and Corollary \ref{C:suff} suggest examination of an idea similar to approaching a point in the plane along a fixed line.
\begin{definition}\label{D:ray}
Let  $\varphi(k)$ be a path sequence.  A \emph{$\varphi$-ray}, with endpoint $m \in \mathbb{Z}$ and direction $d \in \mathbb{Z}^{+}$ will be a set of the form 
\[
	\lbrace m + dp^{\varphi(k)} \mid k \geq k_{0},\  \text{for some}\  k_{0}\rbrace.
\]
\end{definition}
$m = -1$ will be the primary value of $m$ used with integration.
\begin{definition}\label{D:urc}
Let $\varphi(k)$ be a path sequence. A sequence $a(n)$ in $\mathbb{C}_{p}$, will be called \emph{uniformly $\varphi$-ray convergent at $m$}, written \emph{$\varphi$-urc}, if for each $d \in \mathbb{Z}^{+}$,
\[
	L_{\varphi}(d) = \lim_{k \to \infty}a\left(m + dp^{\varphi(k)}\right) 
\]
exists and the limits are uniform over $d$. These limits will be called \emph{$\varphi$-ray limits}.
\end{definition}
The definitions of $\varphi$-urc and ray limits can be extended to interlocked families. 
\begin{definition}\label{D:Phiurc}
Let $\Phi$ be an interlocked family of path sequences. $a(n)$ is \emph{$\Phi$-urc} if there is a function $L_{\Phi} \colon \mathbb{Z}^{+} \mapsto \mathbb{C}_{p}$ such that, given any $\varepsilon > 0$, there is a $\varphi_{\varepsilon} \in \Phi$ and a $k_{\varepsilon} \in \mathbb{Z}^{+}$ with
\[
	\mid a \big(m + dp^{\varphi_{\varepsilon}(k)}\big) - L_{\Phi}(d)\mid_{p}\  < \  \varepsilon
\]
for all $k > k_{\varepsilon}$ and all $d \in \mathbb{Z}^{+}$.
\end{definition}
The $L_{\Phi}(d)$ will be called \emph{$\Phi$-ray limits}
The following theorem shows that the $L_{\Phi}(d)$ are uniquely defined.
\begin{theorem}
Suppose $a(n)$ is $\Phi$-urc and each of the functions $L_{1}(d)$ and  $L_{2}(d)$ satisfy the conditions of ray limits in the definition of $\Phi$-urc. Then, 
\[
	L_{1}(d) = L_{2}(d)\qquad \textup{for all}\ d \in \mathbb{Z}^{+}.
\]
\end{theorem}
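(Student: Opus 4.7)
The plan is to mimic almost verbatim the uniqueness argument already given for the integral in Definition \ref{D:integral}, using the interlocking property of $\Phi$ to produce a common witness sequence along which both candidate ray-limit functions must be $\varepsilon$-close.

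Fix $d \in \mathbb{Z}^{+}$ and let $\varepsilon > 0$. By the hypothesis that $L_{1}$ and $L_{2}$ each satisfy Definition \ref{D:Phiurc}, I can choose $\varphi_{1}, \varphi_{2} \in \Phi$ and $k_{1}, k_{2} \in \mathbb{Z}^{+}$ such that, for $i = 1, 2$,
\[
    \bigl| a\bigl(m + dp^{\varphi_{i}(k)}\bigr) - L_{i}(d) \bigr|_{p} < \varepsilon \qquad \text{for every } k > k_{i}.
\]
Note that the uniformity in $d$ is not even needed here, because $d$ has been fixed at the outset.

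Since $\Phi$ is interlocked, there is some $\varphi_{3} \in \Phi$ and some $k_{0}$ so that $\varphi_{3}(k)$, $k \geq k_{0}$, is a subsequence of both $\varphi_{1}$ and $\varphi_{2}$. Choose $K_{0} \geq k_{0}$ together with indices $k_{1}' > k_{1}$ and $k_{2}' > k_{2}$ such that $\varphi_{3}(K_{0}) = \varphi_{1}(k_{1}') = \varphi_{2}(k_{2}')$. Writing $N = m + dp^{\varphi_{3}(K_{0})}$, the single value $a(N)$ is simultaneously within $\varepsilon$ of $L_{1}(d)$ and within $\varepsilon$ of $L_{2}(d)$, so the ultrametric inequality yields
\[
    \bigl| L_{1}(d) - L_{2}(d) \bigr|_{p} = \bigl| L_{1}(d) - a(N) + a(N) - L_{2}(d) \bigr|_{p} < \varepsilon.
\]
Since $\varepsilon > 0$ was arbitrary, $L_{1}(d) = L_{2}(d)$, and since $d \in \mathbb{Z}^{+}$ was arbitrary this gives the claimed equality of functions.

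There is no real obstacle here; the only subtle point to be careful about is distinguishing between the index $k$ in the path sequence and the index into $\Phi$, and ensuring that the common subsequence $\varphi_{3}$ really does hit the same integer value in both $\varphi_{1}$ and $\varphi_{2}$ rather than merely being termwise close. Once that bookkeeping is set up, the argument is a direct transcription of the integral uniqueness proof.
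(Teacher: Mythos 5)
Your proof is correct and is essentially identical to the paper's: both fix $d$, invoke the interlocked property to obtain a common subsequence $\varphi_{3}$ of the two witness sequences, evaluate $a$ at a single common index value lying beyond both thresholds, and conclude by the ultrametric inequality that $|L_{1}(d)-L_{2}(d)|_{p}<\varepsilon$. No differences worth noting.
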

\begin{proof}
Let $\varepsilon > 0$. There are $\varphi_{i,\varepsilon}$ and $k_{i,\varepsilon}$, $i = 1$, $2$, so that
\[
	\mid a \big(m + dp^{\varphi_{i,\varepsilon}(k)}\big) - L_{i}(d)\mid_{p}\  < \  \varepsilon \qquad \textup{if}\ k > k_{i,\varepsilon}.
\]
Let $\varphi_{0,\varepsilon}(k) \in \Phi$ be a common subsequence of $\varphi_{1,\varepsilon}(k)$ and $\varphi_{2,\varepsilon}(k)$ for large $k$. Then there is a $k_{0}$ such that
\[
	\mid a \big(m + dp^{\varphi_{0,\varepsilon}(k_{0})}\big) - L_{i}(d)\mid_{p}\  < \  \varepsilon \qquad \textup{for} \ i = 1, 2.
\]
Now we have
\[
	|L_{1}(d) - L_{2}(d)|_{p} = |L_{1}(d) - a \big(m + dp^{\varphi_{0,\varepsilon}(k_{0})}\big) + a \big(m + dp^{\varphi_{0,\varepsilon}(k_{0})}\big) - L_{2}(d)|_{p} < \varepsilon.
\]
Since $\varepsilon$ could be any positive number, $L_{1}(d) = L_{2}(d) \quad \textup{for all}\ d \in \mathbb{Z}^{+}$.
\end{proof}

The next five results follow easily from the definitions.
\begin{theorem}\label{T:varphi in Phi}
If $\Phi$ is an interlocked family of path sequences, $\varphi \in \Phi$ and $a(n)$ is $\varphi$-urc, then $a(n)$ is $\Phi$-urc with
\[
	L_{\Phi}(d) = L_{\varphi}(d), \quad d \in \mathbb{Z}^{+}.
\]
\end{theorem}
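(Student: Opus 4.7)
The plan is to unpack the two definitions and observe that $\varphi$-urc is essentially a stronger form of $\Phi$-urc when $\varphi$ itself lies in $\Phi$. Specifically, Definition \ref{D:urc} tells us that $\varphi$-urc gives us a function $L_{\varphi}(d)$ such that the convergence $a(m + dp^{\varphi(k)}) \to L_{\varphi}(d)$ is uniform in $d$, which means: for every $\varepsilon > 0$ there is a single threshold $k_{\varepsilon}$ (not depending on $d$) such that $|a(m + dp^{\varphi(k)}) - L_{\varphi}(d)|_p < \varepsilon$ holds for all $k > k_{\varepsilon}$ and all $d \in \mathbb{Z}^{+}$.

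First I would set $L_{\Phi}(d) := L_{\varphi}(d)$ as the candidate ray limit function. Then, given any $\varepsilon > 0$, I would select $\varphi_{\varepsilon} := \varphi$; this is legal because $\varphi \in \Phi$ by hypothesis. Finally, I would take $k_{\varepsilon}$ to be the threshold supplied by the uniform convergence clause of the $\varphi$-urc hypothesis, and observe that the required inequality
\[
\bigl| a\bigl(m + dp^{\varphi_{\varepsilon}(k)}\bigr) - L_{\Phi}(d) \bigr|_p < \varepsilon
\]
is then exactly the content of $\varphi$-urc for $k > k_{\varepsilon}$ and all $d$. This verifies Definition \ref{D:Phiurc} verbatim with $L_{\Phi}(d) = L_{\varphi}(d)$, as claimed.

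There is no real obstacle here: the argument is a one-step unpacking of the definitions, exploiting the fact that Definition \ref{D:Phiurc} allows $\varphi_{\varepsilon}$ to depend on $\varepsilon$, so we are free to pick the same $\varphi$ every time. The one thing worth flagging is why uniformity in $d$ is required in Definition \ref{D:urc} and not merely pointwise convergence: if $\varphi$-urc only guaranteed $k_{\varepsilon,d}$ depending on $d$, we could not pass the hypothesis directly into the $\Phi$-urc condition, which demands a single $k_{\varepsilon}$ for all $d$. Since the definition of $\varphi$-urc already builds in this uniformity, the implication is immediate.
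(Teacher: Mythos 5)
Your proof is correct and matches the paper's intent exactly: the paper simply remarks that this result "follows easily from the definitions," and your argument is precisely that definitional unpacking, taking $\varphi_{\varepsilon} = \varphi$ for every $\varepsilon$ and $L_{\Phi}(d) = L_{\varphi}(d)$. Your observation about why uniformity in $d$ is essential is a fair and accurate remark.
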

\begin{theorem}\label{T:Phiseqadd}
If $a(n)$ and $b(n)$ are $\Phi$-urc at $m$, then $a(n) + b(n)$ and $ca(n)$, $c \in \mathbb{C}_{p}$, are $\Phi$-urc at $m$. If  $a(n)$ and $b(n)$ are $\Phi$-urc at $m$ and bounded, then $a(n)b(n)$ is  $\Phi$-urc at $m$. The corresponding statements about the ray limits also hold.
\end{theorem}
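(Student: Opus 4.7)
The plan is to use the \emph{interlocked} property of $\Phi$ to find, for each $\varepsilon > 0$, a single path sequence in $\Phi$ that simultaneously witnesses the $\Phi$-urc of $a(n)$ and $b(n)$ to within $\varepsilon$ uniformly in $d$; the three algebraic closure claims then follow from the non-archimedean triangle inequality exactly as in standard ``limit of a sum / product'' arguments.

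For the sum, given $\varepsilon > 0$ Definition~\ref{D:Phiurc} produces $\varphi_a, \varphi_b \in \Phi$ and $k_a, k_b$ with
\[
\bigl|a(m + dp^{\varphi_a(k)}) - L_a(d)\bigr|_p < \varepsilon \quad (k > k_a),
\]
and similarly for $b$, both uniformly in $d$. By the interlocked hypothesis, pick $\varphi_3 \in \Phi$ and $k_0$ so that $\varphi_3(k)$ is a subsequence of both $\varphi_a$ and $\varphi_b$ for $k \geq k_0$. For $k$ large we may write $\varphi_3(k) = \varphi_a(j_a) = \varphi_b(j_b)$ with $j_a \geq k_a$ and $j_b \geq k_b$, so both defining inequalities hold at the index $m + dp^{\varphi_3(k)}$. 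The ultrametric triangle inequality then yields $\Phi$-urc of $a(n) + b(n)$ with $L_{a+b}(d) = L_a(d) + L_b(d)$. For $c\,a(n)$ with $c \neq 0$, apply the definition for $a(n)$ with $\varepsilon/|c|_p$ in place of $\varepsilon$; the case $c = 0$ is trivial.

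For the product in the bounded case, let $A$ and $B$ be uniform bounds for $|a(n)|_p$ and $|b(n)|_p$; passing to the ray limits gives $|L_a(d)|_p \leq A$ and $|L_b(d)|_p \leq B$. Write
\[
a(n)b(n) - L_a(d)L_b(d) = a(n)\bigl(b(n) - L_b(d)\bigr) + L_b(d)\bigl(a(n) - L_a(d)\bigr),
\]
set $\varepsilon' = \varepsilon/\max(A, B, 1)$, and rerun the interlocking argument above to produce a single $\varphi_3 \in \Phi$ that drives both $|a(m + dp^{\varphi_3(k)}) - L_a(d)|_p$ and $|b(m + dp^{\varphi_3(k)}) - L_b(d)|_p$ below $\varepsilon'$ uniformly in $d$. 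The product then differs from $L_a(d)L_b(d)$ by at most $\max(A,B)\,\varepsilon' \leq \varepsilon$, establishing $\Phi$-urc with ray limit $L_{ab}(d) = L_a(d)L_b(d)$. There is no serious obstacle here: the only non-routine step is the appeal to interlocking to pass to a common witnessing $\varphi_3$, and boundedness is essential in the product case in order to control $a(n)$ multiplying the small quantity $b(n) - L_b(d)$, exactly as in classical analysis.
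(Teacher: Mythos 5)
Your proof is correct and is exactly the argument the paper intends: the paper omits the proof entirely (stating only that the result ``follows easily from the definitions''), and your use of the interlocked property to pass to a common witnessing path sequence $\varphi_{3}$, followed by the ultrametric estimate and the splitting $a(n)b(n) - L_{a}(d)L_{b}(d) = a(n)\bigl(b(n) - L_{b}(d)\bigr) + L_{b}(d)\bigl(a(n) - L_{a}(d)\bigr)$ with boundedness controlling the first factor uniformly in $d$, is the natural and correct way to fill it in.
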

\begin{theorem}
If $\Psi$ is an interlocked family of path sequences and $a(n)$ is $\Psi$-urc at $m$, and for some $\alpha \in \mathbb{Z}^{+}$ we write
\[
	\Phi(k) = \{\,\varphi \mid \varphi(k) = \alpha + \varphi_{1}(k),\  \varphi_{1} \in \Psi, \}
\]	
then $a(n)$ is $\Phi$-urc at $m$ and $L_{\Phi}(d) = L_{\Psi}(dp^{\alpha})$ for all $d$.
\end{theorem}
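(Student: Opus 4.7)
The plan is to verify the definition of $\Phi$-urc (Definition~\ref{D:Phiurc}) directly, by pulling back the $\Psi$-urc data through the key algebraic identity
\[
a\bigl(m + d p^{\varphi(k)}\bigr) \;=\; a\bigl(m + d p^{\alpha + \varphi_{1}(k)}\bigr) \;=\; a\bigl(m + (d p^{\alpha}) p^{\varphi_{1}(k)}\bigr)
\]
valid whenever $\varphi(k) = \alpha + \varphi_{1}(k)$ with $\varphi_{1} \in \Psi$. This says that evaluating along a $\varphi$-ray with direction $d$ is the same as evaluating along the corresponding $\varphi_{1}$-ray, but with direction $d p^{\alpha}$. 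First I would observe that $\Phi$ is interlocked by Theorem~\ref{T:add alphaPhi}, so the notion of $\Phi$-urc is meaningful here.

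Given $\varepsilon > 0$, the $\Psi$-urc hypothesis produces a $\varphi_{1,\varepsilon} \in \Psi$ and a $k_{\varepsilon} \in \mathbb{Z}^{+}$ such that
\[
\bigl| a\bigl(m + d' p^{\varphi_{1,\varepsilon}(k)}\bigr) - L_{\Psi}(d') \bigr|_{p} < \varepsilon
\]
for every $k > k_{\varepsilon}$ and every $d' \in \mathbb{Z}^{+}$. I would then set $\varphi_{\varepsilon}(k) = \alpha + \varphi_{1,\varepsilon}(k) \in \Phi$ and specialize the above inequality to the subcollection of directions of the form $d' = d p^{\alpha}$ with $d \in \mathbb{Z}^{+}$. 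By the identity above this gives
\[
\bigl| a\bigl(m + d p^{\varphi_{\varepsilon}(k)}\bigr) - L_{\Psi}(d p^{\alpha}) \bigr|_{p} < \varepsilon
\]
uniformly in $d$ for all $k > k_{\varepsilon}$, which is exactly the $\Phi$-urc condition with candidate ray-limit function $L_{\Phi}(d) = L_{\Psi}(d p^{\alpha})$.

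There is essentially no obstacle here: the argument is a substitution of indices, and the uniformity in $d$ is inherited from the uniformity in $d'$ over all of $\mathbb{Z}^{+}$ because $d \mapsto d p^{\alpha}$ injects $\mathbb{Z}^{+}$ into itself. The mildly subtle point worth flagging is that uniqueness of the $\Phi$-ray limits (proved in the preceding theorem) ensures that the identification $L_{\Phi}(d) = L_{\Psi}(d p^{\alpha})$ is forced, not merely one valid choice.
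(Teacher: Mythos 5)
Your proof is correct and is precisely the direct verification from Definition~\ref{D:Phiurc} that the paper intends when it says this result ``follows easily from the definitions'' (the paper supplies no written proof). The key substitution $d' = dp^{\alpha}$, the preservation of uniformity under the injection $d \mapsto dp^{\alpha}$, and the appeal to Theorem~\ref{T:add alphaPhi} for interlockedness are all exactly what is needed.
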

\begin{theorem}
If $a(n)$ is $\varphi_{1}$-urc at $m$, $\varphi_{1}(k) = \alpha_{1} + \lambda k$ and we write $\varphi(k) = \alpha + \varphi(k)$, $\alpha \in \mathbb{Z}$, then $a(n)$ is $\varphi$-urc at $m$.
\end{theorem}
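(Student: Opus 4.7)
The plan is to reduce the claim directly to the given $\varphi_1$-urc hypothesis by exploiting the multiplicative identity $p^{\varphi(k)} = p^{\alpha}\, p^{\varphi_1(k)}$, so that
\[
a\bigl(m + dp^{\varphi(k)}\bigr) = a\bigl(m + dp^{\alpha}\, p^{\varphi_1(k)}\bigr).
\]
First I would note that $\varphi(k) = \alpha + \alpha_1 + \lambda k$ is indeed a path sequence once $k$ is large enough to force it positive, so Definition \ref{D:urc} applies. Then I would split into two cases according to the sign of $\alpha$.

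When $\alpha \geq 0$ the factor $p^{\alpha}$ is already a positive integer, so $dp^{\alpha} \in \mathbb{Z}^{+}$ for every $d \in \mathbb{Z}^{+}$. By $\varphi_1$-urc at $m$, given $\varepsilon > 0$ there is a $k_{\varepsilon}$ such that
\[
\bigl|a\bigl(m + d'p^{\varphi_1(k)}\bigr) - L_{\varphi_1}(d')\bigr|_{p} < \varepsilon \quad \text{for all } k > k_{\varepsilon} \text{ and all } d' \in \mathbb{Z}^{+}.
\]
Specializing to $d' = dp^{\alpha}$ gives uniform convergence in $d$, so $a(n)$ is $\varphi$-urc at $m$ with $L_{\varphi}(d) = L_{\varphi_1}(dp^{\alpha})$.

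The slightly trickier case is $\alpha < 0$, where $p^{\alpha}$ is no longer an integer. I would absorb the deficit using the linearity of $\varphi_1$: fix any $j \in \mathbb{Z}^{+}$ with $\alpha + \lambda j \geq 0$, and use
\[
\varphi(k) = \alpha + \varphi_1(k) = (\alpha + \lambda j) + \varphi_1(k - j),
\]
so that
\[
a\bigl(m + dp^{\varphi(k)}\bigr) = a\bigl(m + dp^{\alpha + \lambda j}\, p^{\varphi_1(k - j)}\bigr),
\]
with $dp^{\alpha + \lambda j} \in \mathbb{Z}^{+}$. Applying $\varphi_1$-urc with index $k - j$ and direction $dp^{\alpha + \lambda j}$ then yields, for $k > k_{\varepsilon} + j$, a bound $< \varepsilon$ uniformly in $d$, giving $\varphi$-urc with $L_{\varphi}(d) = L_{\varphi_1}(dp^{\alpha + \lambda j})$.

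There is no real obstacle here beyond verifying that uniformity survives the reduction, which it does because in each case $d \mapsto dp^{\alpha + \lambda j}$ (or $d \mapsto dp^{\alpha}$) is a fixed injection $\mathbb{Z}^{+} \to \mathbb{Z}^{+}$, so the supremum over its image is dominated by the supremum over all $d' \in \mathbb{Z}^{+}$ already controlled by the $\varphi_1$-urc hypothesis. Thus the proof is entirely an exercise in rewriting exponents and choosing $j$; no new analytic input is required.
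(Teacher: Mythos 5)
Your proof is correct and is essentially the argument the paper has in mind: this is one of the five results the author states "follow easily from the definitions" and leaves unproved, and your reduction via $p^{\alpha+\varphi_1(k)}=p^{\alpha}p^{\varphi_1(k)}$ together with uniformity over $d$ is the intended one. Your handling of $\alpha<0$ by shifting $k$ is exactly where the linearity hypothesis $\varphi_1(k)=\alpha_1+\lambda k$ is actually used, so the case split is not padding but the substantive point of the theorem.
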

\begin{theorem}
If $\varphi_{1}(k) = \alpha_{1} + \lambda k$ and $\varphi_{2}(k) = \alpha_{2} + \lambda k$ and $ \alpha_{1} \equiv  \alpha_{2} \pmod{\lambda}$ and $a(n)$ is $\varphi_{1}$-urc at $m$ with ray limits $L_{\varphi_{1}}(d)$, then $a(n)$ is $\varphi_{2}$-urc at $m$ and $L_{\varphi_{1}}(d) = L_{\varphi_{2}}(d)$.
\end{theorem}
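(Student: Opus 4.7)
The plan is to observe that when $\alpha_1 \equiv \alpha_2 \pmod{\lambda}$, the path sequence $\varphi_2$ is literally a shifted copy of $\varphi_1$, so the two sequences $\{m + dp^{\varphi_2(k)}\}$ and $\{m + dp^{\varphi_1(k)}\}$ agree after a finite index shift, and uniform ray convergence transfers directly.

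First I would make the shift explicit: write $\alpha_2 - \alpha_1 = j\lambda$ for some $j \in \mathbb{Z}$, so that for every $k$ with $k + j \geq k_0$ (where $k_0$ is in the common domain of $\varphi_1$),
\[
	\varphi_2(k) = \alpha_2 + \lambda k = \alpha_1 + \lambda(k + j) = \varphi_1(k + j).
\]
Hence $\{\varphi_2(k)\}_{k \geq k_1}$, for $k_1$ sufficiently large, is precisely the tail $\{\varphi_1(k + j)\}_{k \geq k_1}$ of the sequence $\varphi_1$.

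Next I would apply the definition of $\varphi_1$-urc. Given $\varepsilon > 0$, choose $k_\varepsilon$ so that
\[
	\bigl| a\bigl(m + dp^{\varphi_1(k)}\bigr) - L_{\varphi_1}(d) \bigr|_p < \varepsilon
\]
for all $k > k_\varepsilon$ and all $d \in \mathbb{Z}^+$. Setting $k_\varepsilon' = \max(k_1, k_\varepsilon - j)$, the identity $\varphi_2(k) = \varphi_1(k + j)$ gives
\[
	\bigl| a\bigl(m + dp^{\varphi_2(k)}\bigr) - L_{\varphi_1}(d) \bigr|_p = \bigl| a\bigl(m + dp^{\varphi_1(k + j)}\bigr) - L_{\varphi_1}(d) \bigr|_p < \varepsilon
\]
for every $k > k_\varepsilon'$ and every $d \in \mathbb{Z}^+$. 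This is exactly the definition of $\varphi_2$-urc at $m$ with ray limits $L_{\varphi_1}(d)$, and by uniqueness of ray limits we conclude $L_{\varphi_2}(d) = L_{\varphi_1}(d)$ for all $d$.

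There is no serious obstacle here: the result is essentially a bookkeeping statement that $\varphi_2$ and $\varphi_1$ traverse the same set of exponents up to a finite shift in the index $k$. The only mild point is to make sure the $k_\varepsilon'$ chosen is large enough so that $k + j$ lies in the domain of $\varphi_1$, which is handled by enlarging $k_\varepsilon$ by $|j|$ if necessary.
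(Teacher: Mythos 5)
Your proof is correct and takes essentially the same approach as the paper, which simply observes that $\varphi_{1}$ and $\varphi_{2}$ have the same set of values for large $k$; you have merely made the index shift $\varphi_{2}(k) = \varphi_{1}(k+j)$ explicit and carried it through the definition of uniform ray convergence.
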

\begin{proof} 
$\varphi_{1}$ and $\varphi_{2}$ have the same set of values for large $k$.
\end{proof}
\begin{theorem}\label{T:Phiunifconv}
If the $a_{i}(n)$ are $\Phi$-urc at $m$ and $a_{i}(n) \to a(n)$ uniformly over $n$ as $i \to \infty$, then $a(n)$ is $\Phi$-urc at $m$. Furthermore, if we write $L_{i}(d)$ for the ray limits of the sequence $a_{i}(n)$ and $L_{\Phi}(d)$ for the ray limits of the sequence $a(n)$,
then
\[
	L_{\Phi}(d) = \lim_{i \to \infty}L_{i}(d).
\]
\end{theorem}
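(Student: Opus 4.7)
The plan is a standard three-epsilon argument, adapted to the $\Phi$-urc setting. First I would show that the ray-limit functions $L_i(d)$ form a uniformly Cauchy sequence in $d$, so that they converge uniformly in $d$ to some function $L(d)$. Then I would show that $L(d)$ serves as the $\Phi$-ray-limit function for $a(n)$, which will also force $L_\Phi(d) = \lim_i L_i(d)$ by the uniqueness theorem for $\Phi$-ray limits just established.

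For the first step, fix $\varepsilon > 0$ and use the uniform convergence $a_i \to a$ to pick $I$ so that $|a_i(n) - a_j(n)|_p < \varepsilon$ for all $n$ and all $i,j \geq I$. For any fixed $d$, the sequences $a_i$ and $a_j$ are $\Phi$-urc, witnessed by some $\varphi_i, \varphi_j \in \Phi$; by interlockedness there exists a common subsequence $\varphi \in \Phi$ along which both ray-limits are attained in the sense of Definition \ref{D:Phiurc}. Applying the bound $|a_i(n) - a_j(n)|_p < \varepsilon$ at the points $n = m + dp^{\varphi(k)}$ and letting $k \to \infty$ gives $|L_i(d) - L_j(d)|_p \leq \varepsilon$, uniformly in $d$. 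Completeness of $\mathbb{C}_p$ then furnishes a function $L \colon \mathbb{Z}^+ \to \mathbb{C}_p$ with $L_i(d) \to L(d)$ uniformly in $d$.

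For the second step, given $\varepsilon > 0$, fix $I$ large enough that both $|a_I(n) - a(n)|_p < \varepsilon$ for all $n$ and $|L_I(d) - L(d)|_p < \varepsilon$ for all $d$. Since $a_I$ is $\Phi$-urc at $m$ with ray limits $L_I(\cdot)$, there exist $\varphi_\varepsilon \in \Phi$ and $k_\varepsilon$ such that
\[
\bigl| a_I\bigl(m + dp^{\varphi_\varepsilon(k)}\bigr) - L_I(d) \bigr|_p < \varepsilon
\qquad \text{for all } k > k_\varepsilon \text{ and all } d \in \mathbb{Z}^+.
\]
A single triangle inequality combining the three estimates yields $|a(m + dp^{\varphi_\varepsilon(k)}) - L(d)|_p < \varepsilon$ for the same $k$ and $d$, which is exactly the condition required by Definition \ref{D:Phiurc} for $a(n)$ to be $\Phi$-urc at $m$ with ray limits $L(d)$. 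The uniqueness result for $\Phi$-ray limits then gives $L_\Phi(d) = L(d) = \lim_i L_i(d)$.

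I do not expect a substantive obstacle here; the only point requiring care is handling the two potentially different $\varphi$'s arising in step one, which is exactly what interlockedness was defined to accommodate. Because the bound $|a_i(n) - a_j(n)|_p < \varepsilon$ is uniform in $n$, it passes cleanly to the ray-limit along any common subsequence and yields a bound uniform in $d$, which is what the argument needs.
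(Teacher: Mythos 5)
Your proposal is correct and follows essentially the same two-stage argument as the paper: first establish that the $L_i(d)$ converge uniformly in $d$ by passing a uniform bound on $|a_i(n)-a_j(n)|_p$ to the ray limits along a common subsequence furnished by interlockedness, then run the three-epsilon estimate to show the limit function is the $\Phi$-ray limit of $a(n)$. The only cosmetic difference is that the paper compares consecutive indices $i, i+1$ and invokes the ultrametric Cauchy criterion, while you compare arbitrary $i, j \geq I$ directly; both are fine.
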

\begin{proof}
First we'll show $\lim_{i \to \infty}L_{i}(d)$ exists for each $d \in \mathbb{Z}^{+}$. The proof will also show that the convergence is uniform with respect to $d$. Let $\varepsilon > 0$. Choose $I$ so that $|a_{i + 1}(n) - a_{i}(n)|_{p} < \varepsilon$ for all $n$ and any $i > I$.\\
For any  $i > I$ there are $\varphi_{i,\varepsilon}$ and $\varphi_{i + 1,\varepsilon}$ such that
\[
	\mid a_{i} \big(m + dp^{\varphi_{i,\varepsilon}(k)}\big) - L_{i}(d)\mid_{p}\  < \  \varepsilon \quad \textup{and} \quad \mid a_{i + 1} \big(m + dp^{\varphi_{i + 1,\varepsilon}(k)}\big) - L_{i + 1}(d)\mid_{p}\  < \  \varepsilon
\]
for large $k$.\\
There will also be a $\psi_{i,\varepsilon}(k) \in \Phi$ so that $\psi_{i,\varepsilon}(k)$ is a subsequence of both $\varphi_{i,\varepsilon}$ and $\varphi_{i + 1,\varepsilon}$ for large $k$.
\begin{multline}\label{E:Phiunifconv}
	|L_{i + 1}(d) - L_{i}(d)|_{p}\\= |L_{i + 1}(d) - a_{i + 1} \big(m + dp^{\varphi_{i + 1,\varepsilon}(k)}\big) + a_{i + 1} \big(m + dp^{\varphi_{i + 1,\varepsilon}(k)}\big)\\- a_{i} \big(m + dp^{\varphi_{i,\varepsilon}(k)}\big) + a_{i} \big(m + dp^{\varphi_{i,\varepsilon}(k)}\big) - L_{i}(d)|_{p}.
\end{multline}
Now a value of $k$ can be picked large enough so that the first and last pairs of terms on the right side of \eqref{E:Phiunifconv} have norm $< \varepsilon$ and the center pair can be replaced by
\[a_{i + 1} \big(m + dp^{\psi{i,\varepsilon}(k')}\big)- a_{i} \big(m + dp^{\psi{i,\varepsilon}(k')}\big).
\]
This last expression also has norm $< \varepsilon$, so for $i > I$,
\[
	|L_{i + 1}(d) - L_{i}(d)|_{p} < \varepsilon
\]
and
\[
	\lim_{i \to \infty}L_{i}(d) = L_{\Phi}(d)
\]
exists for each $d \in \mathbb{Z}^{+}$.\\

Now we're ready to show that $a(n)$ is $\Phi$-urc.\\
Given $\varepsilon > 0$, we can choose an $I$ so that
\[
	|L_{I}(d) - L_{\Phi}(d)|_{p} < \varepsilon \qquad \textup{for all}\ d.
\]
and
\[
|a(n) - a_{I}(n)|_{p} <\varepsilon
\]
for all $n$.

There is a $\varphi_{I,\varepsilon} \in \Phi$ and a $k_{I,\varepsilon}$ such that
\[
	\mid a_{I} \big(m + dp^{\varphi_{I,\varepsilon}(k)}\big) - L_{I}(d)\mid_{p}\  < \  \varepsilon
\]
for $k >k_{I,\varepsilon}$ and all $d$.\\
Putting the last three inequalities together with $n = m + dp^{\varphi_{I,\varepsilon}(k)}$ yields
\[
	\mid a \big(m + dp^{\varphi_{I,\varepsilon}(k)}\big) - L_{\Phi}(d)\mid_{p}\  < \  \varepsilon
\]
for all $k > k_{I,\varepsilon}$ and all $d \in \mathbb{Z}^{+}$.
\end{proof}
It will be useful here to define a symbol for an idea that occured in Theorem~\ref{T:int,lim=L}.\\
Let $n \in \mathbb{Z}^{+}$, $m \in \mathbb{Z}_{p}$, $L \in \mathbb{C}_{p}$ and $a(n)$ be a sequence in $\mathbb{C}_{p}$. We define
\[
	\lim_{n \uparrow m} a(n) \overset{\textup{def}}{=} L
\]
if, given an $\varepsilon > 0$, there is a $\delta > 0$ such that if $0 <|n - m|_{p} < \delta$, then
\[
	|a(n) - L|_{p} < \varepsilon.
\]
In other words, if $n$ approaches $m$ in $\mathbb{Z}_{p}$, then $a(n)$ approachs $L$.\\

\noindent Clearly if $\lim_{n \to \infty}a(n) = L$, then
\[
	\lim_{n \uparrow m} a(n) = L \qquad\textup{for any}\ m \in \mathbb{Z}_{p}.
\]
\begin{theorem}
If
\[
	\lim_{n \uparrow m} a(n) = L,
\]
then $a(n)$ is $\Phi$-urc at $m$ for any interlocked set $\Phi$ and, for all $d$,
\[
	L_{\Phi}(d) = L.
\]
\end{theorem}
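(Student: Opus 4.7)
The plan is to unpack both definitions and observe that the only input we need from $\Phi$ is a single path sequence, since for any $\varphi \in \Phi$ the numbers $m + dp^{\varphi(k)}$ approach $m$ in $\mathbb{Z}_{p}$ uniformly in $d$.

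First I would fix an arbitrary $\varphi \in \Phi$, which exists because $\Phi$ is non-empty by definition of \emph{interlocked}. For every $d \in \mathbb{Z}^{+}$ and every $k$ in the domain of $\varphi$, set $n(d,k) = m + dp^{\varphi(k)}$; then
\[
|n(d,k) - m|_{p} = |d|_{p}\, p^{-\varphi(k)} \leq p^{-\varphi(k)},
\]
since $d$ is a positive integer and $|d|_{p} \leq 1$. Note that this quantity is strictly positive, which is needed to apply the hypothesis $\lim_{n \uparrow m} a(n) = L$.

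Next, given $\varepsilon > 0$, choose $\delta > 0$ from the definition of $\lim_{n \uparrow m} a(n) = L$, so that $0 < |n - m|_{p} < \delta$ forces $|a(n) - L|_{p} < \varepsilon$. Because $\varphi$ is a strictly increasing $\mathbb{Z}^{+}$-valued function, $\varphi(k) \to \infty$, so there is a $k_{\varepsilon}$ with $p^{-\varphi(k)} < \delta$ for all $k > k_{\varepsilon}$. The estimate above then gives $0 < |n(d,k) - m|_{p} < \delta$ \emph{uniformly} in $d \in \mathbb{Z}^{+}$, so
\[
\bigl| a\bigl(m + dp^{\varphi(k)}\bigr) - L \bigr|_{p} < \varepsilon \quad \text{for all } k > k_{\varepsilon} \text{ and all } d \in \mathbb{Z}^{+}.
\]

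Taking $\varphi_{\varepsilon} = \varphi$ and $L_{\Phi}(d) = L$ (a function constant in $d$), this is exactly Definition \ref{D:Phiurc}, so $a(n)$ is $\Phi$-urc at $m$ with $L_{\Phi}(d) = L$ for every $d \in \mathbb{Z}^{+}$. There is no real obstacle here; the only subtlety is the uniformity in $d$, which comes for free because the factor $|d|_{p} \leq 1$ only improves the estimate $|n(d,k) - m|_{p} \leq p^{-\varphi(k)}$, so a single choice of $k_{\varepsilon}$ works for all $d$ simultaneously.
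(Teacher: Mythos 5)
Your proof is correct and follows essentially the same route as the paper: fix a single $\varphi \in \Phi$, use $|dp^{\varphi(k)}|_{p} \leq p^{-\varphi(k)}$ to get uniformity in $d$, and choose $k_{\varepsilon}$ so that $p^{-\varphi(k_{\varepsilon})} < \delta$. The only cosmetic difference is that the paper concludes by citing Theorem \ref{T:varphi in Phi} (passing from $\varphi$-urc to $\Phi$-urc), whereas you verify Definition \ref{D:Phiurc} directly with the constant choice $\varphi_{\varepsilon} = \varphi$.
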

\begin{proof}
Let $\varphi \in \Phi$. Given $\varepsilon > 0$ there is a $\delta > 0$ such that
\[
	|a(n) - L|_{p} < \varepsilon \qquad \textup{if}\ 0 <|n - m|_{p} < \delta.
\]
Choose $k_{\varepsilon}$ so that
\[
	p^{-\varphi(k_{\varepsilon})} < \delta.
\]
Then if $k > k_{\varepsilon}$ we have
\[
	|a(m + dp^{\varphi(k)}) - L|_{p} < \varepsilon.
\]
Reference to Theorem \ref{T:varphi in Phi} completes the proof.
\end{proof}
\begin{corollary}
If $a(n) = \tau (n)$, for  $n\in \mathbb{Z}^{+}$, and $\tau \colon \mathbb{Z}_{p} \to \mathbb{C}_{p}$ is continuous, then $a(n)$ is $\Phi$-urc at $m$ for all $\Phi$ and for all $m \in \mathbb{Z}$ and
\[
	L_{\Phi}(d) = \tau (m)\qquad \text{for all} \ d \in \mathbb{Z}^{+}.
\]
\end{corollary}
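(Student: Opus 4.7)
The plan is to reduce the corollary directly to the preceding theorem, which already does all the work: it shows that if $\lim_{n \uparrow m} a(n) = L$ then $a(n)$ is $\Phi$-urc at $m$ for every interlocked family $\Phi$ with constant ray limits $L_\Phi(d) = L$. Thus it suffices to verify that continuity of $\tau$ on $\mathbb{Z}_p$ at the point $m \in \mathbb{Z}$ gives exactly the hypothesis $\lim_{n \uparrow m} a(n) = \tau(m)$.

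To make this precise, I would first unwind definitions. By continuity of $\tau$ at $m \in \mathbb{Z} \subset \mathbb{Z}_p$, given $\varepsilon > 0$ there is a $\delta > 0$ such that $|\tau(y) - \tau(m)|_p < \varepsilon$ whenever $y \in \mathbb{Z}_p$ satisfies $|y - m|_p < \delta$. Specializing to $y = n \in \mathbb{Z}^+$ and noting that $a(n) = \tau(n)$ for such $n$, we obtain
\[
	|a(n) - \tau(m)|_p < \varepsilon \qquad \text{whenever } 0 < |n - m|_p < \delta,
\]
which is exactly the statement $\lim_{n \uparrow m} a(n) = \tau(m)$ from the symbol introduced just before the previous theorem.

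With this in hand, the previous theorem applies with $L = \tau(m)$ and yields that $a(n)$ is $\Phi$-urc at $m$ for every interlocked family $\Phi$, with $L_\Phi(d) = \tau(m)$ for all $d \in \mathbb{Z}^+$. Since $m \in \mathbb{Z}$ was arbitrary, this covers all the cases required by the corollary.

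There is no real obstacle here; the only point to watch is that the previous theorem is stated at a fixed $m$, so the quantification over $m$ in the corollary is handled simply by noting that continuity holds at every $m \in \mathbb{Z} \subset \mathbb{Z}_p$. The proof is therefore a two-line deduction: verify the hypothesis of the previous theorem from continuity, then invoke it.
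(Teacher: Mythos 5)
Your proof is correct and follows the same route the paper intends: continuity of $\tau$ at $m$ gives $\lim_{n \uparrow m} a(n) = \tau(m)$, and the preceding theorem then yields the $\Phi$-urc conclusion with constant ray limits $\tau(m)$. This matches the paper's treatment, which presents the statement as an immediate corollary of that theorem.
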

\begin{theorem}\label{T:raylim=L}
If for some $\Phi$, $a(n)$ is $\Phi$-urc at $m$ and $L_{\Phi}(d) = L$ for all $d \in \mathbb{Z}^{+}$, then
\[
		\lim_{n \uparrow m} a(n) = L.
\]
\end{theorem}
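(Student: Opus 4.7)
The plan is to unwind Definition~\ref{D:Phiurc}, produce a single $\varphi \in \Phi$ whose associated rays carry a uniform error bound of $\varepsilon$, and then exhibit every $n \in \mathbb{Z}^+$ sufficiently close to $m$ as a point $m + dp^{\varphi(k)}$ on one of those rays.

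First, given $\varepsilon > 0$, I will invoke the $\Phi$-urc hypothesis to obtain $\varphi = \varphi_\varepsilon \in \Phi$ and $k_\varepsilon \in \mathbb{Z}^+$ with
\[
|a(m + dp^{\varphi(k)}) - L|_p < \varepsilon
\]
for every $k > k_\varepsilon$ and every $d \in \mathbb{Z}^+$, where I have used the hypothesis $L_\Phi(d) = L$ to absorb the $d$-dependence in the ray limits. I will then set $\delta = p^{-\varphi(k_\varepsilon + 1)}$ and take any $n \in \mathbb{Z}^+$ with $0 < |n - m|_p < \delta$; in the intended settings (for instance $m = -1$ in Theorem~\ref{T:int,lim=L}) one automatically has $n > m$.

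The inequality $|n - m|_p < p^{-\varphi(k_\varepsilon + 1)}$ forces $\nu_p(n - m) > \varphi(k_\varepsilon + 1)$, so the largest integer $k^*$ with $\varphi(k^*) \leq \nu_p(n - m)$ satisfies $k^* \geq k_\varepsilon + 1 > k_\varepsilon$. Setting $d = (n - m)/p^{\varphi(k^*)}$ yields a positive integer, and $n = m + dp^{\varphi(k^*)}$. The uniform estimate then produces $|a(n) - L|_p < \varepsilon$, which establishes $\lim_{n \uparrow m} a(n) = L$.

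The main obstacle is lining up the quantifiers: the $\Phi$-urc hypothesis gives just one $\varphi$ that works uniformly in $d$, and I must verify that every $n$ near $m$ can be written as $m + dp^{\varphi(k)}$ with $d \in \mathbb{Z}^+$ and $k > k_\varepsilon$ for this \emph{same} $\varphi$. The trick is that $\varphi$ takes positive integer values and $\nu_p(n - m)$ can be made as large as we please by shrinking $\delta$, so the maximal index $k^*$ with $\varphi(k^*) \leq \nu_p(n - m)$ is automatically as large as required. This is exactly the point at which uniformity in $d$ (rather than mere pointwise ray convergence) is used: we cannot choose the threshold $k_\varepsilon$ after seeing $d$, since $d$ is determined only once $n$ is given.
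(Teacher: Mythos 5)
Your proof is correct and follows essentially the same route as the paper: extract a single $\varphi_\varepsilon$ with the uniform-in-$d$ bound, choose $\delta$ as a power of $p$, and represent every nearby $n$ as $m + dp^{\varphi_\varepsilon(k)}$ with $k > k_\varepsilon$. Your choice $\delta = p^{-\varphi(k_\varepsilon+1)}$ is in fact slightly more careful than the paper's $\delta = p^{-\varphi(k_\varepsilon)}$ (which could fail to guarantee $\varphi(k) \le \nu_p(n-m)$ for some $k > k_\varepsilon$ when $\varphi$ has jumps larger than $1$), and your explicit remark about needing $n > m$ flags a caveat the paper leaves implicit.
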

\begin{proof}
Given $\varepsilon > 0$ there is a $\varphi_{\varepsilon}$ and a $k_{\varepsilon} > m$ such that if $k > k_{\varepsilon}$ and $d \in \mathbb{Z}^{+}$
\[
	|a(m + dp^{\varphi_{\varepsilon}(k)}) - L|_{p} < \varepsilon.
\]
Let $\delta = p^{-\varphi_{\varepsilon}(k_{\varepsilon})}$. If $|n - m|_{p} < \delta$, then
\[
	n = m + dp^{\varphi_{\varepsilon}(k)}
\]
with $k > k_{\varepsilon}$ and $d \in \mathbb{Z}^{+}$. The proof follows.
\end{proof}

\begin{corollary}
If $a(n)$ is $\Phi_{1}$-urc at $m$ for some $\Phi_{1}$ and some $m$, and $L_{\Phi_{1}}(d) = L$ for all $d \in \mathbb{Z}^{+}$, then $a(n)$ is $\Phi$-urc at $m$ for all $\Phi$ and $L_{\Phi}(d) = L$ for all $d$.
\end{corollary}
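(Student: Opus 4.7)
The plan is to chain the two preceding results. The hypothesis gives us a sequence that is $\Phi_{1}$-urc at $m$ with all ray limits equal to a common constant $L$. Theorem \ref{T:raylim=L} was stated precisely for this situation: it asserts that when the ray limits collapse to a single value $L$, the sequence actually satisfies the stronger $\mathbb{Z}_{p}$-limit condition
\[
	\lim_{n \uparrow m} a(n) = L.
\]
So the first step is simply to invoke Theorem \ref{T:raylim=L}, which hands us this $\mathbb{Z}_{p}$-continuous-style limit as a free consequence of the hypothesis.

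Once we know $\lim_{n \uparrow m} a(n) = L$, the hypothesis about the specific family $\Phi_{1}$ has been used up and replaced by an intrinsic statement about the sequence $a(n)$ itself. The second step is to feed this into the immediately preceding theorem, which says: if $\lim_{n \uparrow m} a(n) = L$, then for \emph{any} interlocked family $\Phi$ of path sequences, $a(n)$ is $\Phi$-urc at $m$ with $L_{\Phi}(d) = L$ for every $d \in \mathbb{Z}^{+}$. Applying that theorem to an arbitrary $\Phi$ yields exactly the conclusion of the corollary.

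There is no real obstacle here; the corollary is essentially a syllogism of the form (Theorem \ref{T:raylim=L}) $+$ (the previous theorem), with $L_{\Phi_{1}}(d)\equiv L$ acting as the bridge hypothesis that makes Theorem \ref{T:raylim=L} applicable. The only thing to verify when writing it out is that the quantifiers match: Theorem \ref{T:raylim=L} requires the ray limits to equal $L$ for \emph{all} $d \in \mathbb{Z}^{+}$, which is exactly what is assumed, and the preceding theorem produces $\Phi$-urc for \emph{every} interlocked family $\Phi$, which is exactly what is claimed. Thus the proof reduces to the single line: apply Theorem \ref{T:raylim=L}, then apply the preceding theorem.
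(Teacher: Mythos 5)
Your proposal is correct and is exactly the paper's intended argument: the paper's proof reads ``Combine the two previous theorems,'' meaning precisely the chain you describe --- Theorem \ref{T:raylim=L} converts the constant ray limits for $\Phi_{1}$ into $\lim_{n \uparrow m} a(n) = L$, and the theorem immediately preceding it converts that back into $\Phi$-urc with ray limits $L$ for every interlocked family $\Phi$. Nothing further is needed.
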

\begin{proof}
Combine the two previous theorems.
\end{proof}

\begin{theorem}\label{T:seq mult}
Suppose $a(n) \to 0$ as $n \to \infty$, $|b(n)|_{p} $ is bounded and $b(n)$ is $\varphi$-urc for all $m \leq m_{0},\ m_{0} \in \mathbb{Z}$. Then
\[
	c(n) = \sum_{j = 0}^{n}a_{j}b_{n - j}
\]
is  $\varphi$-urc for all $m \leq m_{0}$.
\end{theorem}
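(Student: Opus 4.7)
The plan is to use the convolution structure together with the decay $a(n) \to 0$ to reduce the sum defining $c(n)$ to a finite head plus a negligible tail, then apply the $\varphi$-urc hypothesis on $b$ at each shifted base point to handle the head. Fix $m \leq m_0$, set $M = \sup_n |b(n)|_p$ and $A = \sup_n |a(n)|_p$ (finite since $a(n) \to 0$), and given $\varepsilon > 0$ choose $J$ large enough that $|a_j|_p < \varepsilon / M$ for all $j > J$. For $N = m + dp^{\varphi(k)}$ with $k$ large enough that $N \geq J$, write
\[
c(N) = \sum_{j=0}^{J} a_j b_{N-j} + \sum_{j=J+1}^{N} a_j b_{N-j};
\]
the non-Archimedean bound makes the second sum have $p$-adic norm $< \varepsilon$ uniformly in $k$ and $d$.

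For the head, each $j \in \{0,\ldots,J\}$ satisfies $m - j \leq m \leq m_0$, so by hypothesis $b(n)$ is $\varphi$-urc at $m - j$ with ray limits $L^{(m-j)}(d)$, uniform in $d$. Since only finitely many $j$'s appear, I would take the maximum of the individual $d$-uniform thresholds to obtain a single $k_\varepsilon$ such that $|b_{(m-j)+dp^{\varphi(k)}} - L^{(m-j)}(d)|_p < \varepsilon / A$ for every $j \leq J$, every $d$, and every $k > k_\varepsilon$. A non-Archimedean estimate then gives
\[
\biggl| \sum_{j=0}^{J} a_j b_{(m-j)+dp^{\varphi(k)}} - \sum_{j=0}^{J} a_j L^{(m-j)}(d) \biggr|_p < \varepsilon.
\]

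The natural candidate for the ray limit of $c$ at $m$ is $L_c(d) = \sum_{j \geq 0} a_j L^{(m-j)}(d)$; this converges because $|L^{(m-j)}(d)|_p \leq M$ while $|a_j|_p \to 0$, and its tail past $j = J$ is again $< \varepsilon$ uniformly in $d$. Combining the three estimates yields $|c(m + dp^{\varphi(k)}) - L_c(d)|_p < \varepsilon$ for $k > k_\varepsilon$ and all $d \in \mathbb{Z}^+$, which is exactly $\varphi$-urc at $m$. No step looks genuinely hard; the only conceptual wrinkle is that the head appeals to $b$'s ray behavior at several base points $m - j$ simultaneously, so the uniform $d$-threshold is obtained by taking a maximum over the finite set $\{0,\ldots,J\}$. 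Once $J$ is fixed this is automatic.
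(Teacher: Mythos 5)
Your proof is correct and follows essentially the same route as the paper's: truncate the convolution at an index $J$ beyond which $|a_j|_p < \varepsilon/M$, and use the uniform $\varphi$-ray convergence of $b$ at the finitely many shifted base points $m-j$, $0 \le j \le J$. The only cosmetic difference is that the paper verifies the $p$-adic Cauchy criterion on consecutive differences $c(m+dp^{\varphi(k+1)}) - c(m+dp^{\varphi(k)})$, whereas you exhibit the ray limit $\sum_{j\ge 0} a_j L^{(m-j)}(d)$ explicitly; both are valid.
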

\begin{proof}
For a given $m$, $m \leq m_{0}$, and $\varepsilon> 0$, we have
\begin{multline*}
	c(m + dp^{\varphi(k + 1)}) - c(m + dp^{\varphi(k)})\\ 
= \sum_{j = 0}^{m + dp^{\varphi(k + 1)}}a_{j}b_{m + dp^{\varphi(k + 1)} - j} - \sum_{j = 0}^{m + dp^{\varphi(k)}}a_{j}b_{m + dp^{\varphi(k)} - j}\\
= \eta + \sum_{j = 0}^{j_{\varepsilon}}a_{j}\left( b_{m + dp^{\varphi(k + 1)} - j} - b_{m + dp^{\varphi(k)} - j}\right),
\end{multline*}
where $j_{\varepsilon}$ was chosen so $|a_{j}|_{p} < \varepsilon/M$ for $j > j_{\varepsilon}$. Hence, $|\eta|_{p} < \varepsilon$. $k$ will be chosen large enough so $m + dp^{\varphi(k)} > j_{\varepsilon}$. 
If $k$ is large enough, the terms in the sum will also be smaller than $\varepsilon$.
Since $\varepsilon$ was arbitrary, the ray limits exist and convergence is uniform over $d$.
\end{proof}
	
Now let's look at functions $f \in H(\mathcal{A}(a$, $b)$ and transfer some results on $a(n)$ to results about $f$. There are complications when $m \neq -1$, but for applications to line integrals we can stick to $m = -1$.\\
\begin{definition}\label{D:furc} We will say that
\[
		f(x) = \sum_{n \geq 0}c_{n}(x - b)^{n}
\]
is \emph{$\varphi$-urc} if the sequence
\[
	a(n) = c_{n}(a - b)^{n + 1}
\]
is $\varphi$-urc at $-1$ and \emph{$\Phi$-urc} if $a(n)$ is $\Phi$-urc at $-1$.
\end{definition}
Before we look at the dependence of this definition on the choice of $a$ and $b$, let's restate Corollary \ref{C:suff} in this new language.\\
\begin{theorem}[= Corollary \ref{C:suff}]
Suppose $\varphi(k) = \alpha + \lambda k$, $ \alpha \in \mathbb{Z}$, $ \lambda\in \mathbb{Z}^{+}$. If
\[
		f(x) = \sum_{n \geq 0}c_{n}(x - b)^{n},
\]
$f$ has controlled coefficients of order $\beta < 1$ and $f$ is $\varphi$ -urc, then
\[
	\int_{\mathcal{A}(a,b),\varphi}f(x)\,dx \qquad \text{exists}.
\]
\end{theorem}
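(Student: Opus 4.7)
The plan is to verify the three hypotheses of Theorem \ref{T:sufficiency} and invoke it; this is exactly the route indicated by the last line of the proof of that theorem, which disposes of the linear $\varphi$ case in a single sentence. The choice I would make is $\theta(k) = \varphi\bigl([\log_p k]\bigr) = \alpha + \lambda [\log_p k]$. Since $\nabla\varphi(k) = \lambda$ is constant, this gives $\theta(k) + \nabla\varphi(k) = \alpha + \lambda\bigl([\log_p k]+1\bigr) = \varphi\bigl([\log_p k]+1\bigr)$, so both $\theta(k)$ and $\theta(k)+\nabla\varphi(k)$ are values of $\varphi$ at consecutive integers. That coincidence is what will let me recycle the $\varphi$-urc hypothesis verbatim.

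Next I would check the two growth conditions. Since $\nabla\varphi(k)=\lambda$ and $\log_p \varphi(k) = O(\log k)$, the quantity $\varphi(k) - (\beta+1)\nabla\varphi(k) - \beta\log_p\varphi(k) = \lambda k + O(\log k)$ tends to $\infty$. For $\theta$, the expression $\varphi(k) - \beta\varphi(k+1) - \beta\log_p\varphi(k) - \theta(k) = (1-\beta)\lambda k - \lambda[\log_p k] + O(1)$ tends to $\infty$ precisely because $\beta<1$. So both hypotheses of Theorem \ref{T:sufficiency} on $\varphi$ and $\theta$ are met.

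The core step is translating the $\varphi$-urc hypothesis into the third requirement of Theorem \ref{T:sufficiency}. Setting $a(n) = c_n(a-b)^{n+1}$, Definitions \ref{D:furc} and \ref{D:urc} say that $\lim_{K\to\infty} a(-1 + dp^{\varphi(K)})$ exists and the convergence is uniform in $d \in \mathbb{Z}^+$; in particular the sequence $K\mapsto a(-1+dp^{\varphi(K)})$ is uniformly Cauchy in $d$. Specializing to $K = [\log_p k]$ and using that $\theta(k) = \varphi([\log_p k])$ and $\theta(k) + \nabla\varphi(k) = \varphi([\log_p k] + 1)$, the required difference
\[
\bigl|c_{dp^{\theta(k) + \nabla\varphi(k)} - 1}(a-b)^{dp^{\theta(k)+\nabla\varphi(k)}} - c_{dp^{\theta(k)} - 1}(a-b)^{dp^{\theta(k)}}\bigr|_p
\]
is exactly $\bigl|a\bigl(-1 + dp^{\varphi([\log_p k]+1)}\bigr) - a\bigl(-1 + dp^{\varphi([\log_p k])}\bigr)\bigr|_p$, which is uniformly $<\varepsilon$ in $d$ for $k$ large. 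With all three hypotheses of Theorem \ref{T:sufficiency} in hand, the integral exists.

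The main obstacle is purely bookkeeping: lining up the index shifts so that a single step of $\theta$ by $\nabla\varphi$ corresponds to a one-step shift in the $\varphi$-evaluation of the ray sequence. Once the choice $\theta(k)=\varphi([\log_p k])$ is in place, the two growth estimates are routine (using only $\beta<1$ to beat the term $\beta\lambda k$) and the uniform-Cauchy consequence of $\varphi$-urc supplies the third condition of Theorem \ref{T:sufficiency} without further work.
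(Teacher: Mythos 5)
Your proposal is correct and follows exactly the route the paper takes: its proof of Corollary \ref{C:suff} (of which this theorem is a restatement via Definition \ref{D:furc}) is precisely "let $\theta(k) = \varphi([\log_p k])$, note $\nabla\varphi(k) = \lambda$, and the Corollary follows," and you have supplied the same choice of $\theta$ together with the bookkeeping — the two growth conditions and the translation of uniform $\varphi$-ray convergence into the uniform Cauchy condition — that the paper leaves implicit.
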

\begin{theorem}
Suppose $f \in H(\mathcal{A}(a$, $b)$, $f$ is bounded and 
\[
	f(x) = \sum_{n \geq 0}c_{n}(x - b)^{n}.
\]
If $f$ is $\varphi$-urc, $|a - a'|_{p} < R = |a - b|_{p}$, and  $|b - b'|_{p} < R$ and
\[
	f(x) = \sum_{n \geq 0}c'_{n}(x - b')^{n}, \qquad \text{then}
\]
\[
	\lim_{k \to \infty}c'_{dp^{\varphi(k)} - 1}(a' - b')^{dp^{\varphi(k)}} = \lim_{k \to \infty}c_{dp^{\varphi(k)} - 1}(a - b)^{dp^{\varphi(k)}}.
\]
\end{theorem}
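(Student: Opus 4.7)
My plan is to split the change of centers into two one-argument steps: first move $a\mapsto a'$ keeping $b$ (so the $c_n$ are unchanged), then move $b\mapsto b'$ working from the first argument $a'$. Throughout, write $L(d) = \lim_{k\to\infty} c_{dp^{\varphi(k)}-1}(a-b)^{dp^{\varphi(k)}}$; by the $\varphi$-urc hypothesis these ray limits exist uniformly in $d$, and the boundedness of $f$ supplies $|c_n|_p R^n \leq M$ for all $n$.

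For the first step, set $t_a = (a'-a)/(a-b)$, so $|t_a|_p < 1$ and $(a'-b)/(a-b) = 1 + t_a$. The identity
\[
c_n(a'-b)^{n+1} - c_n(a-b)^{n+1} = c_n(a-b)^{n+1}\bigl[(1+t_a)^{n+1} - 1\bigr],
\]
specialized to $n+1 = dp^{\varphi(k)}$, combines with the lemma preceding Theorem~\ref{T:change a,b} (Robert's Second Fundamental Inequality) to give $|(1+t_a)^{dp^{\varphi(k)}} - 1|_p \leq (\max(|t_a|_p,|p|_p))^{\varphi(k)} \to 0$ uniformly in $d$. Since $|c_n(a-b)^{n+1}|_p \leq MR$, this forces $c_{dp^{\varphi(k)}-1}(a'-b)^{dp^{\varphi(k)}} \to L(d)$ uniformly in $d$.

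For the second step I use equation~\eqref{E:changeb} in the form $c'_j = \sum_{m\geq 0}c_{j+m}\binom{j+m}{m}(b'-b)^m$ and multiply by $(a'-b')^{j+1}$:
\[
c'_j(a'-b')^{j+1} = c_j(a'-b')^{j+1} + \sum_{m\geq 1}c_{j+m}\binom{j+m}{m}(b'-b)^m(a'-b')^{j+1}.
\]
The $m=0$ summand equals $c_j(a'-b)^{j+1}\cdot\bigl((a'-b')/(a'-b)\bigr)^{j+1}$; since $|(a'-b')/(a'-b) - 1|_p = |(b-b')/(a'-b)|_p < 1$, the same Robert-inequality argument, together with the first step, makes it tend to $L(d)$ uniformly in $d$. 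Writing $s = (b'-b)/(a-b)$ with $|s|_p < 1$, each $m\geq 1$ summand is bounded by $MR\cdot|\binom{j+m}{m}|_p\cdot|s|_p^m$.

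The hard part will be controlling this $m\geq 1$ tail uniformly in $d$, since $m$ runs to infinity. I would split it: given $\varepsilon > 0$, first pick $m_0$ with $MR\cdot|s|_p^{m_0+1}/(1-|s|_p) < \varepsilon$, which bounds the contribution from $m > m_0$ independently of $j$ and $d$. For $1\leq m\leq m_0$ and $j+1 = dp^{\varphi(k)}$, a direct valuation count on $\binom{j+m}{m} = \prod_{i=1}^m(j+i)/m!$, using $\nu_p(j+1)\geq\varphi(k)$ and $\nu_p(j+i) = \nu_p(i-1)$ for $2\leq i\leq m$ once $\varphi(k) > \log_p m_0$, yields $\nu_p\bigl(\binom{j+m}{m}\bigr) \geq \varphi(k) - \log_p m_0$; so the short head is also $<\varepsilon$ for large $k$, uniformly in $d$. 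Combining everything, $c'_{dp^{\varphi(k)}-1}(a'-b')^{dp^{\varphi(k)}} \to L(d)$ uniformly in $d$, which gives the claimed equality of ray limits.
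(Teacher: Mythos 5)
Your proof is correct and follows essentially the same route as the paper's: both split the move into a change of $a$ (handled by Robert's inequality applied to $((a'-b)/(a-b))^{dp^{\varphi(k)}}$) and a change of $b$ (handled by the coefficient-transformation formula \eqref{E:changeb} together with $p$-adic smallness of the binomial coefficients $\binom{j+m}{m}$ along the ray $j+1=dp^{\varphi(k)}$). The only differences are cosmetic: you change $a$ first rather than $b$, and you split the $m$-tail at a fixed $m_0$ determined by $|s|_p$ instead of at a threshold growing like $\varphi(k)$, with a direct valuation count on $\binom{j+m}{m}$ in place of the paper's appeal to Lemma \ref{L:bin remove n}; both versions of the estimate are valid.
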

Thus for bounded, $\varphi$-urc holomorphic functions on $\mathcal{A}(a,b)$ we can define
\[
	\mathcal{L}_{f,\varphi}(d) = \lim_{k \to \infty}c_{dp^{\varphi(k)} - 1}(a - b)^{dp^{\varphi(k)}}.
\]
without regard to the choice of center for the circle or choice of $b$ on the arc.
\begin{proof}
This is essentially a simpler version of the proof of Theorem \ref{T:change a,b} with $\beta = 0$. First only $b$ will be changed.
Equation \eqref{E:changeb}, where $b' - b = t(a - b)$, gives us
\[
	c_{n}'(a - b)^{n + 1} = c_{n}(a - b)^{n + 1} + \sum_{m \geq 1}c_{m + n}\binom{m + n}{n}(a - b)^{m + n  + 1}t^{m}.
\]
Let $n = dp^{\varphi(k)} - 1$.
On the right side, the large sum will go to $0$ as $k \to \infty$ because, as in the proof of Theorem \ref{T:change a,b}, we have
\[
	\left|\binom{m + n }{n}\right|_{p} \leq \varphi(k)p^{-\varphi(k)} \qquad \text{if}\ m \leq \varphi(k)
\]
and
\[
	|t^{m}|_{p} < (\max(|t|_{p},|p|_{p}))^{-\varphi(k)} \qquad \text{if}\ m > \varphi(k).
\]
On the left side,
\[
	c_{n}'(a - b)^{n  + 1} = c_{n}'(a - b')^{n  + 1}\left(\frac{a - b}{a - b'}\right)^{n  + 1}.
\]
Since the large fraction goes to $1$ as $k \to \infty$, we have
\[
	\lim_{k \to \infty}c'_{dp^{\varphi(k)} - 1}(a - b')^{dp^{\varphi(k)}} = \lim_{k \to \infty}c_{dp^{\varphi(k)} - 1}(a - b)^{dp^{\varphi(k) }}.
\]
The proof for changing $a$ is even simpler. Since the $c_{n}$ don't change, the result follows from the identity
\[
	(a - b)^{n  + 1} = (a' - b)^{n  + 1}\left(\frac{a - b}{a' - b}\right)^{n  + 1} \qquad \text{with}\ n = dp^{\varphi(k)} - 1.
\]
\end{proof}
From Theorems \ref{T:Phiseqadd} and \ref{T:Phiunifconv}, with $m = -1$, we have
\begin{theorem}
$H_{\Phi}(\mathcal{A}) = \{\,f\mid f \in H(\mathcal{A})$, $f$ is $\Phi$-urc\,$\}$ is a vector space and is closed under uniform convergence.
\end{theorem}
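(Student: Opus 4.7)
The plan is to unpack the definition of $\Phi$-urc for holomorphic functions (Definition \ref{D:furc}) so that every claim about $H_{\Phi}(\mathcal{A})$ reduces to the corresponding claim about the associated coefficient sequences $a(n) = c_n(a-b)^{n+1}$ at $m = -1$, and then invoke the already-established Theorems \ref{T:Phiseqadd} and \ref{T:Phiunifconv}.

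First I would verify the vector space axioms. Given $f_1, f_2 \in H_{\Phi}(\mathcal{A})$ with expansions $f_i(x) = \sum_{n \geq 0} c_{i,n}(x-b)^n$, the sum $f_1 + f_2$ is holomorphic on $\mathcal{A}$ with coefficients $c_{1,n} + c_{2,n}$, so its associated sequence is $a_1(n) + a_2(n)$ where $a_i(n) = c_{i,n}(a-b)^{n+1}$. By hypothesis each $a_i(n)$ is $\Phi$-urc at $-1$, and Theorem \ref{T:Phiseqadd} shows their sum is $\Phi$-urc at $-1$. Thus $f_1 + f_2 \in H_{\Phi}(\mathcal{A})$. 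Scalar multiplication is handled identically, using the ``$ca(n)$'' half of Theorem \ref{T:Phiseqadd}.

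Next I would handle closure under uniform convergence. Suppose $f_i \in H_{\Phi}(\mathcal{A})$ and $f_i \xrightarrow{\text{unif}} f$ on $\mathcal{A}$. The function $f$ lies in $H(\mathcal{A})$ because holomorphy is preserved under uniform convergence on an open disc. Writing $f_i(x) = \sum_{n \geq 0} c_{i,n}(x-b)^n$ and $f(x) = \sum_{n \geq 0} c_n(x-b)^n$, the standard estimate $\sup_{n \geq 0} |c_{i,n} - c_n|_p R^n = \sup_{x \in \mathcal{A}} |f_i(x) - f(x)|_p$ (the same $\sup$-norm identity used in the proof of Theorem \ref{T:max int}) shows that the sequences $a_i(n) = c_{i,n}(a-b)^{n+1}$ converge to $a(n) = c_n(a-b)^{n+1}$ uniformly in $n$. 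Theorem \ref{T:Phiunifconv} then gives that $a(n)$ is $\Phi$-urc at $-1$, so $f$ is $\Phi$-urc and lies in $H_{\Phi}(\mathcal{A})$.

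I do not expect any substantial obstacle here; the work was absorbed into the general machinery of $\Phi$-urc sequences. The only point that requires a brief justification is the uniform-in-$n$ transfer from $|f_i - f|_p \to 0$ to $|a_i(n) - a(n)|_p \to 0$, which comes directly from the $\sup$-norm identity for coefficients of a holomorphic function on an open disc cited from \cite{wS1984}.
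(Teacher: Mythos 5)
Your proof is correct and takes essentially the same route as the paper, which simply cites Theorems \ref{T:Phiseqadd} and \ref{T:Phiunifconv} with $m=-1$; you have merely filled in the (worthwhile) detail that the sup-norm identity for coefficients transfers uniform convergence of the $f_i$ to uniform-in-$n$ convergence of the sequences $c_{i,n}(a-b)^{n+1}$.
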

\begin{theorem}
If $f$ is holomorphic on the closed disc $|x - a|_{p} < |a - b|_{p}$, then $f$ is $\Phi$-urc\ for all $\Phi$ and all $\Phi$-ray limits are $0$.
\end{theorem}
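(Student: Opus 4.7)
The plan is to reduce the claim to the standard coefficient estimate for holomorphic functions on a closed disc: if $f \in H(D^+(a,R))$ with $R = |a-b|_p$ and $f(x) = \sum_{n \geq 0} c_n(x-b)^n$, then by the result of Schikhof cited earlier in the paper, $|c_n|_p R^n \to 0$ as $n \to \infty$. Consequently the sequence $a(n) = c_n(a-b)^{n+1}$ satisfies $|a(n)|_p = R \cdot |c_n|_p R^n \to 0$.

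Now fix any interlocked family $\Phi$ and pick any $\varphi \in \Phi$ (which exists since $\Phi$ is nonempty by definition). Given $\varepsilon > 0$, choose $N$ with $|a(n)|_p < \varepsilon$ for all $n > N$, and then choose $k_\varepsilon$ large enough that $p^{\varphi(k)} - 1 > N$ for $k > k_\varepsilon$. For any $d \in \mathbb{Z}^+$ and any $k > k_\varepsilon$ we have
\[
    dp^{\varphi(k)} - 1 \ \geq\ p^{\varphi(k)} - 1 \ > \ N,
\]
so $|c_{dp^{\varphi(k)} - 1}(a-b)^{dp^{\varphi(k)}}|_p < \varepsilon$. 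The bound is independent of $d$, so the limit $\lim_{k \to \infty} a(-1 + dp^{\varphi(k)}) = 0$ holds uniformly in $d$. Per Definition \ref{D:urc} and Definition \ref{D:furc}, this says $f$ is $\varphi$-urc with $L_\varphi(d) = 0$ for every $d$.

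Finally, Theorem \ref{T:varphi in Phi} transfers this conclusion to $\Phi$: since $\varphi \in \Phi$ and $f$ is $\varphi$-urc, $f$ is $\Phi$-urc with $L_\Phi(d) = L_\varphi(d) = 0$ for all $d \in \mathbb{Z}^+$. Because $\Phi$ was arbitrary, this completes the proof. There is no real obstacle here; the argument is a direct unwinding of the definition of $\Phi$-urc together with the coefficient estimate, and the uniformity in $d$ comes for free from the crude bound $dp^{\varphi(k)} - 1 \geq p^{\varphi(k)} - 1$.
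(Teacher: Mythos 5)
Your proof is correct and is exactly the argument the paper intends: the paper's own proof is the single line ``this follows from $\lim_{n\to\infty}|c_n|_pR^n = 0$,'' and your write-up is just a careful unwinding of that, with the uniformity in $d$ obtained from the bound $dp^{\varphi(k)} - 1 \geq p^{\varphi(k)} - 1$ and the transfer to $\Phi$ via Theorem \ref{T:varphi in Phi}. No differences of substance.
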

\begin{proof}
This follows from $lim_{n \to \infty}|c_{n}|_{p}R^{n} = 0$.
\end{proof}
From Theorem \ref{T:seq mult} we have
\begin{theorem}\label{T:f(x)g(x)}
If $f(x) = \sum_{n \geq 0}a_{n}(x - b)^{n}$ converges for $|x - b|_{p} \leq R$ and $g(x) = \sum_{n \geq 0}b_{n}(x - b)^{n}$ is bounded and $\varphi$-urc on $\mathcal{A}$ and the sequence
\[
	n\longmapsto b_{n}(a - b)^{n + 1}
\]
is $\varphi$-urc at every negative integer, then
$f(x)g(x)$ is bounded and $\varphi$ -urc on $ \mathcal{A}$.
\end{theorem}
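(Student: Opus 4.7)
The plan is to reduce the statement to a direct application of Theorem~\ref{T:seq mult} by writing the product $f(x)g(x)$ as a power series in $(x-b)$ and recognizing its weighted coefficient sequence as a Cauchy convolution of two well-understood sequences. Write $f(x)g(x) = \sum_{n \geq 0} c_n (x-b)^n$ with the Cauchy product $c_n = \sum_{j=0}^n a_j b_{n-j}$, and set
\[
\alpha(j) = a_j (a-b)^j, \qquad \beta(j) = b_j (a-b)^{j+1}.
\]
Factoring $(a-b)^{n+1} = (a-b)^j \cdot (a-b)^{n-j+1}$ inside each summand gives the identity
\[
c_n (a-b)^{n+1} = \sum_{j=0}^n \alpha(j)\, \beta(n-j),
\]
which is exactly the form to which Theorem~\ref{T:seq mult} applies.

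Next I would verify the three hypotheses of Theorem~\ref{T:seq mult}. First, $\alpha(j) \to 0$: since $f$ converges on the closed disc $|x-b|_p \leq R$, Schikhof's result (cited earlier via \cite{wS1984}) gives $|a_j|_p R^j \to 0$, and with $R = |a-b|_p$ this is exactly $|\alpha(j)|_p \to 0$. Second, $|\beta(j)|_p$ is bounded: $g$ bounded on $\mathcal{A}$ gives $\sup_j |b_j|_p R^j < \infty$, so $|\beta(j)|_p = R \cdot |b_j|_p R^j$ is bounded. Third, $\beta(j)$ must be $\varphi$-urc for all $m \leq m_0$ for some $m_0 \in \mathbb{Z}$; this is precisely the extra hypothesis that $n \mapsto b_n(a-b)^{n+1}$ is $\varphi$-urc at every negative integer, so we take $m_0 = -1$.

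Theorem~\ref{T:seq mult} then yields that the sequence $n \mapsto c_n(a-b)^{n+1}$ is $\varphi$-urc at every $m \leq -1$, and in particular at $m = -1$. By Definition~\ref{D:furc} this is exactly the statement that $fg$ is $\varphi$-urc on $\mathcal{A}$. Boundedness of $fg$ on $\mathcal{A}$ is immediate: $f$ is holomorphic on the closed disc $|x-b|_p \leq R$ and hence bounded there (its coefficients satisfy $|a_n|_p R^n \to 0$), $g$ is bounded on $\mathcal{A}$ by hypothesis, and the product of two bounded functions is bounded.

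There is no real obstacle here — the only care needed is bookkeeping the extra factor of $(a-b)$ that $\beta$ carries relative to $\alpha$, so that the weighted product sequence for $fg$ truly matches a Cauchy convolution of the form covered by Theorem~\ref{T:seq mult}. Once that identification is in place, all three hypotheses of that theorem are handed to us either by the convergence of $f$ on the closed disc or directly by the assumptions on $g$.
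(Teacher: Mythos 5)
Your proposal is correct and is essentially the paper's own proof: both reduce the weighted coefficient sequence $c_{n}(a-b)^{n+1}$ of $fg$ to the convolution of $A(n)=a_{n}(a-b)^{n}$ (which tends to $0$ by convergence of $f$ on the closed disc) with $B(n)=b_{n}(a-b)^{n+1}$ (bounded and $\varphi$-urc at all negative integers by hypothesis), and then invoke Theorem~\ref{T:seq mult}. Your write-up simply makes explicit the factorization $(a-b)^{n+1}=(a-b)^{j}(a-b)^{n-j+1}$ and the verification of the hypotheses, which the paper leaves to the reader.
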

\begin{proof}
The coefficients of $f(x)g(x)$, expanded around $b$, are
\[
	c_{n} = \sum_{j = 0}^{n}a_{j}b_{n - j}.
\]
To see that $f(x)g(x)$ is $\varphi$-urc we need to examine
\[
	c_{n}(a - b)^{n + 1}.
\]
If we let $A(n) = a_{n}(a - b)^{n}$ and $B(n) = b_{n}(a - b)^{n + 1}$, then Theorem \ref{T:seq mult} applies to give us Theorem \ref{T:f(x)g(x)}.
\end{proof}
 \begin{theorem}
 If $f(x) = \sum_{n \geq 0}c_{n}(x - b)^{n}$ is bounded on $\mathcal{A}$, then $f'(x)$ is $\Phi$-urc for all $\Phi$ and all $\Phi$-ray limits of $f'(x)$ are $0$.
\end{theorem}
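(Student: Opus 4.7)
The plan is to unpack Definition \ref{D:furc} and reduce the claim to a direct estimate. Writing $f'(x) = \sum_{n \geq 0}(n+1)c_{n+1}(x-b)^{n}$, the relevant sequence to test for $\Phi$-urc at $-1$ is
\[
	a(n) = (n+1)c_{n+1}(a-b)^{n+1}.
\]
Evaluating at $n = -1 + dp^{\varphi(k)}$ for a path sequence $\varphi$ and $d \in \mathbb{Z}^{+}$ gives
\[
	a\bigl(-1 + dp^{\varphi(k)}\bigr) = dp^{\varphi(k)}\,c_{dp^{\varphi(k)}}(a-b)^{dp^{\varphi(k)}}.
\]

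The key step is to use boundedness. With $R = |a-b|_{p}$, the result cited from \cite{wS1984} in the proof of Theorem \ref{T:max int} gives $|c_{m}|_{p}R^{m} \leq M := \sup_{x \in \mathcal{A}}|f(x)|_{p}$ for every $m \geq 0$. Applying this with $m = dp^{\varphi(k)}$ yields
\[
	\bigl|a\bigl(-1 + dp^{\varphi(k)}\bigr)\bigr|_{p} \leq |dp^{\varphi(k)}|_{p}\cdot M \leq p^{-\varphi(k)}\,M,
\]
and the right side tends to $0$ as $k \to \infty$, uniformly in $d \in \mathbb{Z}^{+}$.

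Hence $a(n)$ is $\varphi$-urc at $-1$ with $L_{\varphi}(d) = 0$ for every path sequence $\varphi$ and every $d$. Given any interlocked family $\Phi$, pick any $\varphi \in \Phi$ (which exists since $\Phi$ is nonempty) and invoke Theorem \ref{T:varphi in Phi} to conclude that $f'$ is $\Phi$-urc with $L_{\Phi}(d) = 0$ for all $d$. There is no real obstacle: the factor $p^{\varphi(k)}$ coming from $n+1 = dp^{\varphi(k)}$ supplies the required uniform decay, and the argument never uses anything beyond the supremum norm estimate on Taylor coefficients.
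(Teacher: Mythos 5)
Your proof is correct and follows essentially the same route as the paper: the paper likewise writes $f'(x) = \sum_{n \geq 0}(n+1)c_{n+1}(x-b)^{n}$, invokes the bound $|c_{n}(a-b)^{n}|_{p} < M$, and leaves the remaining computation to the reader, which is exactly the estimate $|dp^{\varphi(k)}c_{dp^{\varphi(k)}}(a-b)^{dp^{\varphi(k)}}|_{p} \leq Mp^{-\varphi(k)}$ that you carry out. Your version simply makes explicit the uniform-in-$d$ decay and the appeal to Theorem \ref{T:varphi in Phi}.
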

\begin{proof}
\[
	f'(x) = \sum_{n \geq 0}(n + 1)c_{n + 1}(x - b)^{n}.
\]
Because $f(x)$ is bounded on $\mathcal{A}$, we have
\[
	 \left|c_{n}(a - b)^{n}\right|_{p} < M,\qquad \text{for all} \ n.
\]
The theorem now follows easily from the definition of uniform $\Phi$-ray convergence. 
\end{proof}

\begin{theorem}
Suppose $f(x) \in H(\mathcal{A})$. $f(x)$ has controlled coefficients with $\beta < 1$. $f$ is $\Phi$-urc and all $\Phi$-ray limits are $L$. Then,
\[
	\int_{\mathcal{A}, \Phi}f(x)\,dx = -L.
\]
\end{theorem}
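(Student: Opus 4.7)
The plan is to chain together Theorems \ref{T:raylim=L} and \ref{T:int,lim=L}, which together already encode essentially the entire content of the claim. First I would unpack what the hypothesis ``$f$ is $\Phi$-urc with all $\Phi$-ray limits equal to $L$'' means in terms of coefficients. By Definition \ref{D:furc}, the sequence
\[
	a(n) = c_{n}(a - b)^{n + 1}
\]
is $\Phi$-urc at $-1$, and the hypothesis on the ray limits says $L_{\Phi}(d) = L$ for every $d \in \mathbb{Z}^{+}$.

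Next I would invoke Theorem \ref{T:raylim=L}: since $a(n)$ is $\Phi$-urc at $-1$ with constant ray limits equal to $L$, we conclude
\[
	\lim_{n \uparrow -1} a(n) = L,
\]
that is, $c_{n}(a - b)^{n + 1} \to L$ as $n \to -1$ in $\mathbb{Z}_{p}$.

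Finally, this is exactly the hypothesis of Theorem \ref{T:int,lim=L} (together with the assumption that $f$ has controlled coefficients of order $\beta < 1$, which we are given). Applying that theorem immediately yields
\[
	\int_{\mathcal{A}, \Phi} f(x)\,dx = -L,
\]
as required. There is no real obstacle here; the only thing to check is that the notion of $\Phi$-urc used in Definition \ref{D:furc} matches the input required by Theorem \ref{T:raylim=L} (the sequence-level definition in Definition \ref{D:Phiurc}), and that the $\beta < 1$ hypothesis indeed carries over so Theorem \ref{T:int,lim=L} applies. Both of these are immediate from the statements, so the argument reduces to citing the two prior theorems in sequence.
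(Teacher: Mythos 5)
Your proposal is correct and is exactly the paper's own argument: the paper's proof reads ``Theorem \ref{T:int,lim=L} and Theorem \ref{T:raylim=L} with $m = -1$ do the job,'' which is precisely the two-step chain you describe. You have simply written out the citation in more detail, including the translation through Definition \ref{D:furc}, and there is nothing further to check.
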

\begin{proof}
Theorem \ref{T:int,lim=L} and Theorem \ref{T:raylim=L} with $m = -1$ do the job.
\end{proof}
The last theorem immediately shows that ray limits determine the value of an integral in certain circumstances .
\begin{theorem}\label{T:ints same raylims}
If $f$, $g \in H(\mathcal{A})$, and have controlled coefficients with $\beta < 1$ and $f$ and $g$ are $\Phi$-urc  and
\[
	\mathcal{L}_{f,\Phi}(d) = \mathcal{L}_{g,\Phi}(d) \qquad \text{for all}\ d \in \mathbb{Z^{+}},	
\]
then
\[
	\int_{\mathcal{A}, \Phi}f(x)\,dx = \int_{\mathcal{A}, \Phi}g(x)\,dx.	
\]
\end{theorem}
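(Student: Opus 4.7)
The plan is to reduce the desired equality $\int f = \int g$ to the single vanishing statement $\int (f-g) = 0$, and then apply the theorem immediately preceding this one, which equates the integral of a function to $-L$ whenever that function has controlled coefficients of order $<1$ and is $\Phi$-urc with all $\Phi$-ray limits equal to a common constant $L$.

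Set $h(x) = f(x) - g(x)$. I would first check that $h$ fits the hypotheses of the preceding theorem with $L = 0$. Holomorphy is immediate, since $H(\mathcal{A})$ is a vector space. Controlled coefficients of some order $\beta < 1$ follow from the ultrametric inequality applied termwise: expanding $f$, $g$, and $h$ around a common $b$ with coefficients $c_{n}^{f}$, $c_{n}^{g}$, $c_{n}^{h} = c_{n}^{f} - c_{n}^{g}$, one has
\[
|c_{n}^{h}|_{p}R^{n} \leq \max\bigl(|c_{n}^{f}|_{p}R^{n},\, |c_{n}^{g}|_{p}R^{n}\bigr) \leq \max(M_{f}, M_{g})\, n^{\max(\beta_{f}, \beta_{g})}
\]
for $n$ large, and $\max(\beta_{f}, \beta_{g}) < 1$ by hypothesis. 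Finally, $h$ is $\Phi$-urc at $-1$ with all ray limits zero: by Theorem \ref{T:Phiseqadd} the $\Phi$-urc sequences form a vector space on which the ray-limit map is linear, so the difference sequence $n \mapsto (c_{n}^{f} - c_{n}^{g})(a - b)^{n + 1}$, which is exactly the defining sequence for $h$ in Definition \ref{D:furc}, is $\Phi$-urc with ray limits $\mathcal{L}_{f,\Phi}(d) - \mathcal{L}_{g,\Phi}(d) = 0$ for every $d \in \mathbb{Z}^{+}$.

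Applying the preceding theorem with $L = 0$ then gives $\int_{\mathcal{A},\Phi} h(x)\,dx = 0$, and in particular $h \in \mathcal{I}'_{\mathcal{A},\Phi}$. Linearity of the integral (Theorem \ref{T:all f}) then converts this into $\int_{\mathcal{A},\Phi} f(x)\,dx = \int_{\mathcal{A},\Phi} g(x)\,dx$, with the understanding that the two integrals exist simultaneously (one existing if and only if the other does, since their difference is $\int h$). There is no substantial obstacle in this argument; the only real content is the observation that both crucial hypotheses — controlled coefficients of order $<1$, and $\Phi$-urc with ray limits depending linearly on the sequence — are preserved under subtraction, which is exactly what collapses the hypothesis $\mathcal{L}_{f,\Phi} = \mathcal{L}_{g,\Phi}$ into the constant-ray-limit setting demanded by the preceding theorem.
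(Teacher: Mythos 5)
Your argument is exactly the paper's: the published proof is the single line ``Look at $f(x) - g(x)$,'' and your write-up simply fills in the details of that reduction — verifying that $h = f - g$ inherits controlled coefficients of order $<1$ and is $\Phi$-urc with all ray limits zero, then invoking the preceding theorem with $L = 0$ and linearity of the integral. Correct, same route; your explicit remark that the two integrals exist simultaneously is a sensible reading of what the terse statement leaves implicit.
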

\begin{proof}
Look at $f(x) - g(x)$.
\end{proof}

\noindent The last result raises two questions:\\
\indent Given the set of ray limits of a function, is there another function with the same ray limits that's easier to integrate?\\
\indent Is there a way to express the value of an integral in terms of the ray limits of the function?\\

The following two theorems give answers in a special case. This will allow us to calculate some intergrals whose values involve the Kubota - Leopoldt L-functions. These theorems will be stated in terms of ray limits at any $m$, but only $m = -1$ will be used when they are applied to integrals.

\begin{theorem}\label{T:raylims same}
	Suppose $\varphi(k) = \alpha + \lambda k$, $\alpha$, $\lambda \in \mathbb{Z}$, $\alpha \geq 0$ and $\lambda \geq 1$. Suppose a holomorphic function on the arc $\mathcal{A}(1,0)$ has $\varphi$-ray limits at $m$ we denote by $\mathcal{L}(d,m)$. Suppose $n \in \mathbb{Z}^{+}$ satisfies $ p^{\lambda} \equiv 1 \pmod{n}$ and $\alpha'$ satisfies $\alpha' p^{\alpha} \equiv 1 \pmod{n}$. Suppose
\[
\mathcal{L}(d,m) = \mathcal{L}(d',m) \qquad \text{if}\ d \equiv d' \pmod{n}
\]
and that the definition of $\mathcal{L}(d,m)$ is extended to all $d \in \mathbb{Z}$ by
\[
	\mathcal{L}(d,m) = \mathcal{L}(d',m), \qquad \text{where} \ d' > 0 \ \text{and} \ d \equiv d' \pmod{n}.
\]
Then, the function
\[
	f(x) = \frac{P(x)}{1 - x^{n}},
\]
with $P(x) = \sum_{i = 0}^{n - 1}\mathcal{L}(\alpha'(i - m), m)x^{i}$, has ray limits
\[
	\mathcal{L}_{f, \varphi}(d,m) = \mathcal{L}(d,m).
\]
\end{theorem}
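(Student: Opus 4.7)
The plan is to show the sequence $a(j) = c_j (a-b)^{j+1}$ (with $a = 1$, $b = 0$, so $(a-b)^{j+1} = 1$ and $a(j) = c_j$) associated to $f(x) = P(x)/(1-x^n)$ takes the constant value $\mathcal{L}(d,m)$ along the $\varphi$-ray $\{m + dp^{\varphi(k)}\}$ once $k$ is large enough, so that uniform convergence in $d$ is automatic.

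First I would expand $f$ as a power series around $b = 0$. On the arc $\mathcal{A}(1,0)$ we have $|x|_p < 1$, so the geometric expansion $1/(1 - x^n) = \sum_{k \geq 0} x^{nk}$ converges, and multiplying by $P(x) = \sum_{i = 0}^{n-1} \mathcal{L}(\alpha'(i - m), m) x^i$ gives
\[
f(x) = \sum_{j \geq 0} c_j\, x^j, \qquad c_j = \mathcal{L}(\alpha'(i_j - m), m),
\]
where $i_j$ is the unique residue of $j$ modulo $n$ lying in $\{0, 1, \dots, n-1\}$.

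Next, I would compute the residue of $m + d p^{\varphi(k)}$ modulo $n$. Since $\varphi(k) = \alpha + \lambda k$ and $p^{\lambda} \equiv 1 \pmod{n}$, we have $p^{\varphi(k)} = p^{\alpha}(p^{\lambda})^{k} \equiv p^{\alpha} \pmod{n}$, hence
\[
m + d p^{\varphi(k)} \equiv m + d p^{\alpha} \pmod{n}.
\]
Letting $i$ be the representative of $m + d p^{\alpha}$ in $\{0, \dots, n-1\}$, the formula for $c_j$ gives
\[
a\bigl(m + d p^{\varphi(k)}\bigr) = c_{m + d p^{\varphi(k)}} = \mathcal{L}\bigl(\alpha'(i - m),\, m\bigr).
\]
Now $i - m \equiv d p^{\alpha} \pmod{n}$, and since $\alpha' p^{\alpha} \equiv 1 \pmod{n}$ we obtain $\alpha'(i - m) \equiv d \pmod{n}$. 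Invoking the periodicity extension of $\mathcal{L}(\cdot, m)$ assumed in the statement, this forces $\mathcal{L}(\alpha'(i - m), m) = \mathcal{L}(d, m)$.

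Therefore $a(m + d p^{\varphi(k)}) = \mathcal{L}(d, m)$ exactly, for every $k$ large enough that $m + d p^{\varphi(k)} \geq 0$, and uniform convergence in $d$ is trivial. This yields $\mathcal{L}_{f,\varphi}(d, m) = \mathcal{L}(d, m)$ as claimed. There is no real analytic obstacle here; the only delicate point is the bookkeeping with residues modulo $n$ and the consistent use of $p^{\lambda} \equiv 1$ and $\alpha' p^{\alpha} \equiv 1 \pmod{n}$ to reduce $\alpha'(i - m)$ back to $d$.
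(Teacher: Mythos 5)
Your proof is correct and follows essentially the same route as the paper: expand $f$ via the geometric series to read off $c_j = \mathcal{L}(\alpha'(j-m),m)$, then use $p^{\lambda}\equiv 1$ and $\alpha'p^{\alpha}\equiv 1 \pmod{n}$ to reduce the index along the ray back to $d$, making the sequence along each ray eventually constant so that uniformity in $d$ is automatic. The extra bookkeeping with the residue $i_j$ is harmless since the periodicity of $\mathcal{L}(\cdot,m)$ makes it equivalent to the paper's direct substitution.
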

\begin{proof}
\[
	f(x) = P(x)\sum_{r \geq 0}x^{nr} = \sum_{i \geq 0}c_{i}x^{i},
\]
with $c_{i} = \mathcal{L}(\alpha'(i - m), m)$.
Hence,
\[
	\mathcal{L}_{f, \varphi}(d,m) = \lim_{k \to \infty}c_{dp^{\alpha + \lambda k} + m} = \lim_{k \to \infty}\mathcal{L}(\alpha' dp^{\alpha + \lambda k},m) = \mathcal{L}(d,m).
\]
Since these limits are of constant sequences, the convergence is uniform over $d$.
\end{proof}
We can note that $f$ is bounded on $\mathcal{A}(1,0)$.
\begin{theorem}\label{T:intP/1-x^n}
	Suppose $\varphi(k) = \alpha + \lambda k$, $\alpha$, $\lambda \in \mathbb{Z}$, $\alpha \geq 0$ and $\lambda \geq 1$. Suppose $n \in \mathbb{Z}^{+}$ satisfies $ p^{\lambda} \equiv 1 \pmod{n}$. Let
\[
	f(x) = \frac{P(x)}{1 - x^{n}}
\]
with $P(x) = \sum_{i = 0}^{n - 1}e_{i}x^{i}$, $e_{i} \in \mathbb{C}_{p}$.
Then,
\[
	\int_{\mathcal{A}(1,0),\varphi}f(x)\,dx = -\frac{1}{n}\sum_{\zeta^{n} = 1}\frac{\zeta P(\zeta)}{1 - \omega^{p^{\alpha}}(1 - \zeta)} =  -\frac{1}{n}\sum_{i = 0}^{n - 1}e_{i}\sum_{\zeta^{n} = 1}\frac{\zeta^{i + 1}}{1 - \omega^{p^{\alpha}}(1 - \zeta)}.
\]
\end{theorem}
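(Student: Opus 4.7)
The plan is to reduce the problem to a finite sum of integrals of simple fractions, each of which is handled by Theorem \ref{T:ratfunc int}. Since $\deg P < n$ and the denominator $1 - x^{n}$ has simple zeros precisely at the $n$-th roots of unity, with $(1 - x^{n})'|_{x = \zeta} = -n\zeta^{n-1} = -n/\zeta$, the residue at $\zeta$ is $-\zeta P(\zeta)/n$, giving the partial-fraction decomposition
\[
	f(x) \;=\; -\frac{1}{n}\sum_{\zeta^{n} = 1}\frac{\zeta P(\zeta)}{x - \zeta}.
\]
By the linearity statement of Theorem \ref{T:all f}, it suffices to evaluate each integral $\int_{\mathcal{A}(1,0),\varphi}(x - \zeta)^{-1}\,dx$ and then sum.

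For each $n$-th root of unity $\zeta$ we have $|\zeta|_{p} = 1$ (the hypothesis $p^{\lambda} \equiv 1 \pmod n$ forces $(n,p) = 1$), so $\zeta \notin \mathcal{A}(1,0)$, and $|1 - \zeta|_{p} \leq 1 = R$, so Theorem \ref{T:ratfunc int} is applicable. When $\zeta = 1$, $|1 - \zeta|_{p} = 0 < R$, and Theorem \ref{T:ratfunc int}(1) gives the integral the value $1$, which equals $1/(1 - \omega^{p^{\alpha}}(0))$ by the convention $\omega(0) = 0$. When $\zeta \neq 1$, $|1 - \zeta|_{p} = 1 = R$, and Theorem \ref{T:ratfunc int}(2) produces a minimal admissible $\lambda_{0}(\zeta)$ such that, along the path sequence $\alpha + \lambda_{0}(\zeta) k$, the integral equals $1/(1 - \omega^{p^{\alpha}}(1 - \zeta))$. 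The specific $\lambda_{0}(\zeta)$ is the multiplicative order of $p$ modulo $m_{\zeta} := \operatorname{ord}(\omega(1 - \zeta))$.

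The main technical point — and the only nonroutine step — is verifying that the hypothesis $p^{\lambda} \equiv 1 \pmod n$ ensures $\lambda_{0}(\zeta) \mid \lambda$ uniformly in $\zeta$. For this, note that $\zeta$ lies in the unramified extension $\mathbb{Q}_{p}(\zeta_{n})$, whose degree $\lambda'$ equals the order of $p$ modulo $n$ and hence divides $\lambda$. The Teichmüller representative $\omega(1 - \zeta)$ therefore lies in the residue field $\mathbb{F}_{p^{\lambda'}}^{\times}$, a group of order $p^{\lambda'} - 1$, so $m_{\zeta} \mid p^{\lambda'} - 1 \mid p^{\lambda} - 1$, whence $\lambda_{0}(\zeta) \mid \lambda$. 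Theorem \ref{T:subseq1} then allows me to pass from $\alpha + \lambda_{0}(\zeta) k$ to its subsequence $\alpha + \lambda k$ without altering the integral. Assembling the pieces through the partial-fraction formula yields the first displayed equality of the theorem; substituting $P(\zeta) = \sum_{i = 0}^{n-1} e_{i}\zeta^{i}$ and interchanging the two finite sums gives the second form.
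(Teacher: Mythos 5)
Your proposal is correct and follows essentially the same route as the paper: partial-fraction decomposition of $P(x)/(1-x^{n})$ over the $n$-th roots of unity, term-by-term integration via Theorem \ref{T:ratfunc int}, and substitution of $P(\zeta)=\sum_{i}e_{i}\zeta^{i}$. You simply make explicit what the paper leaves as ``straightforward integration,'' namely that the hypothesis $p^{\lambda}\equiv 1\pmod{n}$ forces each $\lambda_{0}(\zeta)$ to divide $\lambda$, so the single path sequence $\alpha+\lambda k$ works uniformly for every $\zeta$.
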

\begin{proof}
The partial fraction decomposition of $f(x)$ is
\[
	 -\frac{1}{n}\sum_{\zeta^{n} = 1}\frac{\zeta P(\zeta)}{x - \zeta}.
\]
The theorem is a straightforward integration followed by the substitution of the expression for $P(x)$.
\end{proof}
Theorems \ref{T:ints same raylims}, \ref{T:raylims same} and \ref{T:intP/1-x^n} can be combined to yield:
\begin{theorem}\label{T:intwith raylims}
	Suppose $f\in H(\mathcal{A}(1,0))$ is bounded and $\varphi$-urc with $\varphi(k) = \alpha + \lambda k$. Suppose there's an $n \in \mathbb{Z}^{+}$, such that $p^{\lambda} \equiv 1 \pmod{n}$ and the ray limits $\mathcal{L}_{f,\varphi}(d)$ are constant on classes of $d \  \text{modulo}$\ $n$. Let $\alpha'$ be such that $\alpha'p^{\alpha} \equiv 1 \pmod{n}$.
Then,
\[
	\int_{\mathcal{A}(1,0), \varphi}f(x)\,dx = -\frac{1}{n}\sum_{i = 0}^{n - 1}\delta_{i}\mathcal{L}_{f,\varphi}(\alpha'(i + 1)),
\]
where,
\[
	\delta_{i} = \sum_{\zeta^{n} = 1}\frac{\zeta^{i + 1}}{1 - \omega^{p^\alpha}(1 - \zeta)}.
\]
The sum for $\delta_{i}$ is over the $n$-th roots of unity.
\end{theorem}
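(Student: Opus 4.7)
\medskip
\noindent\textbf{Proof proposal.} The plan is to realize $f$, from the integration point of view, as if it were the rational function $g(x) = P(x)/(1-x^n)$, where $P$ is the degree $<n$ polynomial produced by Theorem \ref{T:raylims same}, and then compute the integral of $g$ directly by Theorem \ref{T:intP/1-x^n}.

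First, I would verify that Theorem \ref{T:raylims same} applies at $m = -1$. The ray-limit function $\mathcal{L}_{f,\varphi}(d)$, $d \in \mathbb{Z}^{+}$, is constant on classes mod $n$ by hypothesis, so we extend it to $\mathbb{Z}$ by the natural mod-$n$ rule and set
\[
P(x) = \sum_{i=0}^{n-1}\mathcal{L}_{f,\varphi}\bigl(\alpha'(i+1)\bigr)\, x^{i},\qquad g(x) = \frac{P(x)}{1-x^{n}}.
\]
Because the $n$-th roots of unity all have $|\zeta|_p = 1$, they lie on the boundary circle of $\mathcal{A}(1,0) = \{x:|x|_p<1\}$ but not inside it, so $g \in H(\mathcal{A}(1,0))$. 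Theorem \ref{T:raylims same} (and the remark following it) then gives that $g$ is bounded on $\mathcal{A}(1,0)$, is $\varphi$-urc, and satisfies $\mathcal{L}_{g,\varphi}(d) = \mathcal{L}_{f,\varphi}(d)$ for every $d \in \mathbb{Z}^{+}$.

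Next, both $f$ and $g$ are bounded on $\mathcal{A}(1,0)$ and hence have controlled coefficients of order $\beta = 0 < 1$; both lie in $H(\mathcal{A})$; both are $\varphi$-urc with identical ray limits. Taking the single-element interlocked family $\Phi = \{\varphi\}$, Theorem \ref{T:ints same raylims} yields
\[
\int_{\mathcal{A}(1,0),\varphi} f(x)\,dx \;=\; \int_{\mathcal{A}(1,0),\varphi} g(x)\,dx.
\]

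Finally, I would compute the right-hand side by applying Theorem \ref{T:intP/1-x^n} with coefficients $e_i = \mathcal{L}_{f,\varphi}(\alpha'(i+1))$, obtaining
\[
\int_{\mathcal{A}(1,0),\varphi} g(x)\,dx \;=\; -\frac{1}{n}\sum_{i=0}^{n-1} \mathcal{L}_{f,\varphi}\bigl(\alpha'(i+1)\bigr)\, \delta_{i},
\]
which is exactly the claimed formula. The main thing to be careful with is simply bookkeeping: checking that the shift $m = -1$ is carried through correctly in the index $\alpha'(i - m) = \alpha'(i+1)$ of Theorem \ref{T:raylims same}, and that the boundedness of $f$ really does allow us to use $\beta = 0$ everywhere so the hypotheses of Theorem \ref{T:ints same raylims} are met without additional work. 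No genuinely new estimates are needed; the whole theorem is an assembly of the three prior results.
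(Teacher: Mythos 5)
Your proposal is correct and follows exactly the route the paper intends: the paper's own proof is precisely the observation that Theorems \ref{T:ints same raylims}, \ref{T:raylims same} (at $m=-1$) and \ref{T:intP/1-x^n} combine to give the formula. Your bookkeeping of the index $\alpha'(i-m)=\alpha'(i+1)$ and the use of boundedness to get $\beta=0<1$ are the right checks, and nothing further is needed.
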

\begin{corollary}\label{C:intwith raylims}
	If $\alpha = 0$, then $\alpha' = 1$ and
\[
	\int_{\mathcal{A}(1,0), \varphi}f(x)\,dx = -\frac{1}{n}\sum_{i = 0}^{n - 1}\delta_{i}\mathcal{L}_{f,\varphi}(i + 1).
\]
\end{corollary}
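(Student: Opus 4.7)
The plan is to derive Corollary \ref{C:intwith raylims} as a direct specialization of Theorem \ref{T:intwith raylims} by evaluating the parameters at $\alpha = 0$. First I would observe that the congruence $\alpha' p^{\alpha} \equiv 1 \pmod{n}$ in the hypothesis of Theorem \ref{T:intwith raylims}, when applied with $\alpha = 0$, reduces to $\alpha' \equiv 1 \pmod{n}$. Since the value of $\alpha'$ is only used modulo $n$ through the ray limits (which are constant on residue classes modulo $n$ by hypothesis), we may take $\alpha' = 1$ without loss of generality.

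Next I would plug $\alpha = 0$ and $\alpha' = 1$ directly into the formula of Theorem \ref{T:intwith raylims}. The factor $\omega^{p^{\alpha}}$ appearing inside $\delta_{i}$ becomes $\omega^{p^{0}} = \omega$, but since $\delta_{i}$ in both the theorem and the corollary is defined with the same exponent $p^{\alpha}$ on $\omega$, this is a consistent substitution and no change is needed in the notation. The argument $\alpha'(i + 1)$ of $\mathcal{L}_{f,\varphi}$ simplifies to $i + 1$, yielding
\[
	\int_{\mathcal{A}(1,0), \varphi}f(x)\,dx = -\frac{1}{n}\sum_{i = 0}^{n - 1}\delta_{i}\mathcal{L}_{f,\varphi}(i + 1),
\]
which is exactly the statement of the corollary.

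There is no real obstacle here; the corollary is a one-line specialization. The only minor bookkeeping point is confirming that choosing $\alpha' = 1$ (rather than any other representative of the class $1 \pmod n$) is legitimate, which follows immediately from the assumed periodicity of $\mathcal{L}_{f,\varphi}(d)$ in $d$ modulo $n$.
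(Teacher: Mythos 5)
Your proposal is correct and matches the paper's treatment: the corollary is stated as an immediate specialization of Theorem \ref{T:intwith raylims} at $\alpha = 0$, with $\alpha' \equiv 1 \pmod{n}$ and the periodicity of $\mathcal{L}_{f,\varphi}(d)$ modulo $n$ justifying the choice $\alpha' = 1$, exactly as you argue.
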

Corollary \ref{C:intwith raylims} can be used to obtain
\begin{theorem}
	Let $j \in \mathbb{Z}^{+}$, $j \geq 2$. Let $\varphi(k) = k$ and $\psi_{p}(x)$ be the derivative of the p-adic log gamma function on $\mathbb{C}_{p}\setminus\mathbb{Z}_{p}$. \textup{(See Theorem \ref{T:intwithL*}.)} Then
\[
	\int_{\mathcal{A}(1,0), \varphi}x^{j - 3}\psi_{p}'(\frac{1}{x})\,dx = \frac{1 - j}{p - 1}\sum_{i = 0}^{p - 2}\delta_{i}L_{p}(j,\omega^{i - j + 2}),
\]
where
\[
	\delta_{i} = \sum_{\zeta^{p - 1 } = 1}\frac{\zeta^{i + 1}}{1 - \omega(1 - \zeta)}.
\]

\end{theorem}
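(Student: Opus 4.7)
The strategy is to apply Corollary \ref{C:intwith raylims} to $f(x) = x^{j-3}\psi_p'(1/x)$ on $\mathcal{A}(1,0)$ with the path sequence $\varphi(k) = k$. Here $\alpha = 0$, $\lambda = 1$, $\alpha' = 1$, and $n = p-1$ is permissible since $p \equiv 1 \pmod{p-1}$. The corollary then produces
\[
\int_{\mathcal{A}(1,0),\varphi} f(x)\,dx = -\frac{1}{p-1}\sum_{i=0}^{p-2}\delta_i\,\mathcal{L}_{f,\varphi}(i+1),
\]
so comparison with the claimed formula reduces the proof to identifying $\mathcal{L}_{f,\varphi}(i+1) = (j-1)L_p(j,\omega^{i-j+2})$, or equivalently, with $d = i+1$, $\mathcal{L}_{f,\varphi}(d) = (j-1)L_p(j,\omega^{d-j+1})$ for $d \in \{1,\dots,p-1\}$.

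To set up the hypotheses of the corollary, I first expand $f$ as a power series $\sum c_n x^n$ around $b = 0$ convergent on $\mathcal{A}(1,0)$. On the exterior $\{|y|_p > 1\}$ the function $\psi_p'$ admits a Laurent expansion at infinity $\psi_p'(y) = \sum_{k \geq 1}b_k\,y^{-k}$, obtainable either by reorganizing the expansion around $1/p$ given in the setup of Theorem \ref{T:intwithL*}, or intrinsically as moments $b_k = \int_{\mathbb{Z}_p^{\times}}t^{k-1}\,d\mu(t)$ of the $p$-adic measure $\mu$ associated with $\log\Gamma_p$. Substituting $y = 1/x$ gives $\psi_p'(1/x) = \sum_{k\geq1}b_k\,x^k$ on $\mathcal{A}(1,0)$, and thus $f(x) = \sum_{k\geq1}b_k\,x^{k+j-3}$, so $c_n = b_{n-j+3}$ for $n \geq j - 2$. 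Boundedness of the Kubota-Leopoldt L-values forces the $|b_k|_p$ to be uniformly bounded, so $f$ has controlled coefficients of order $\beta = 0$.

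Next I compute the ray limits $\mathcal{L}_{f,\varphi}(d) = \lim_{k\to\infty}c_{dp^k - 1} = \lim_{k\to\infty}b_{dp^k - j + 2}$. The Teichm\"uller estimate $|t^{p^k} - \omega(t)|_p \leq p^{-k}$ for $t \in \mathbb{Z}_p^{\times}$, combined with the telescoping identity $t^{dp^k} - \omega(t)^d = \sum_{i=0}^{d-1}\omega(t)^{i}\,t^{(d-1-i)p^k}(t^{p^k} - \omega(t))$, gives the uniform-in-$d$ bound $|t^{dp^k} - \omega(t)^d|_p \leq p^{-k}$. This both justifies exchanging limit and integral and supplies the uniform $\varphi$-ray convergence property, yielding
\[
\mathcal{L}_{f,\varphi}(d) = \int_{\mathbb{Z}_p^{\times}}\omega(t)^{d}\,t^{1-j}\,d\mu(t).
\]
Writing $t = \omega(t)\langle t\rangle$ separates off the character and the $\langle t\rangle$-component, and the Kubota-Leopoldt interpolation identity expresses the right side as $(j-1)L_p(j,\omega^{d-j+1})$. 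Since $\omega^{p-1} = 1$, this depends only on $d \pmod{p-1}$, the hypothesis of Corollary \ref{C:intwith raylims} is satisfied, and substitution of $d = i+1$ produces the claimed identity.

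The main obstacle is the final step: matching the $\psi_p$-moment integral $\int\omega(t)^{d}t^{1-j}\,d\mu(t)$ with the Kubota-Leopoldt L-value $L_p(j,\omega^{d-j+1})$. This requires correctly identifying the measure $\mu$ implicit in the Laurent expansion of $\psi_p'$ with the standard Kubota-Leopoldt measure, carefully tracking the Teichm\"uller twist $t^{1-j} = \omega(t)^{1-j}\langle t\rangle^{1-j}$ to produce exactly the character $\omega^{d-j+1}$, and isolating the precise numerical factor $(j-1)$. This last factor can be cross-verified against the calculation in Theorem \ref{T:intwithL*}, which carries the analogous normalization for integration on $\mathcal{A}(0,1/p)$.
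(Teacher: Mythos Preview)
Your plan is correct and follows the same overall architecture as the paper: apply Corollary~\ref{C:intwith raylims} with $n=p-1$, $\alpha=0$, compute the $\varphi$-ray limits of the coefficient sequence, identify them with $L_p$-values, and substitute. The difference lies in how the ray limits are computed. The paper simply quotes the explicit Laurent expansion $\psi_p'(x)=\sum_{n\ge 1}B_{n-1}x^{-n}$ in Bernoulli numbers (from \cite{hC2007}, \cite{jD1977}, \cite{nK1980}), so that $c_n=B_{n-j+2}$, and then invokes the standard interpolation identity $B_n=-(1-p^{n-1})^{-1}nL_p(1-n,\omega^n)$; the ray limit $\lim_k B_{dp^k-j+1}=(j-1)L_p(j,\omega^{d-j+1})$ then drops out from continuity of $L_p(s,\chi)$ in $s$, and the uniformity over $d$ is immediate because there are only $p-1$ residue classes. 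Your route repackages the same computation through a measure $\mu$ on $\mathbb{Z}_p^\times$ and a dominated-convergence-type estimate $|t^{dp^k}-\omega(t)^d|_p\le p^{-k}$. That is perfectly valid, but it trades one citation (the Bernoulli Laurent expansion) for another (the measure-theoretic description of $\psi_p'$ and the Kubota--Leopoldt interpolation at $s=j$), and the ``main obstacle'' you flag---pinning down $\mu$ and extracting the exact factor $(j-1)$---is precisely what the Bernoulli formula resolves in one line. In short: same proof, with the paper choosing the more concrete and self-contained normalization.
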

\begin{proof}
When $|x|_{p} > 1$, the Laurent series for $\psi_{p}'(x)$ is (See \cite{hC2007}, \cite{jD1977} or \cite{nK1980}.)
\[
	\psi_{p}'(x) = \sum_{n \geq 1}\frac{B_{n - 1}}{x^{n}},
\]
where the $B_{n}$ are the Bernoulli numbers from $te^{t}/(e^{t} - 1)$.
Let
\[
	f(x) = x^{j - 3}\psi_{p}'(1/x) = \sum_{n \geq j - 2}B_{n - j + 2}x^{n}.
\]
From
\[
	B_{n} = -\left(1 - p^{n - 1}\right)^{-1}nL_{p}\left(1 - n, \omega^{n}\right)
\]
it follows that the ray limits at $-1$ are given by
\[
	\mathcal{L}_{f,\varphi}(d) = \lim_{k \to \infty}B_{dp^{k} - j + 1} = -(1 - j)L_{p}\left(j, \omega^{d - j + 1}\right).
\]
This limit is constant on classes $\pmod{(p - 1)}$. In a given class, the limit is uniform with respect to $d$. There are only $p - 1$ classes, so the limit is uniform with respect to $d \in \mathbb{Z}^{+}$. Hence $f(x)$ is $\varphi$-urc. Now Corollary \ref{C:intwith raylims} can be applied to obtain the integral.

\end{proof}

 \section{Cauchy's Theorems}
	L. G. Shnirelman, \cite{lS1938} and \cite{nK1980}, used his p-adic line integral to provide a p-adic version of Cauchy's Integral Formula. This result is similar to the complex version in that if $f$ is holomorphic on and inside a circle and $z$ is inside the circle, the theorem expresses the value of $f(z)$ as a line integral around the circle.\\
	
	In the complex plane we think of the boundary circle of a closed disc as a small part of the disc and the interior as the major part of the disc. In $\mathbb{C}_{p}$,  a boundary circle of a disc of radius $R$ is made up of countably many open discs of radius $R$ and the interior is just one such disc. \\
	
	In this sense a complex disc is mostly its interior and a p-adic disc is mostly on a boundary circle. From this point of view , the p-adic analog of Cauchy's Integral Formula is one that expresses the value of a function $f$ at points on a boundary circle in terms of the values interior to the circle or, equivalently, the value of $f$ at any point not on a particular arc in terms of the function values on that arc.\\
	
	Using a line integral on a small part of the circle, an arc, this section will have p-adic versions of the Residue Theorem, the Cauchy-Goursat Theorem  and Cauchy's Integral Formula.\\
	
	The domains considered will be closed discs and closed discs with a finite number of open discs removed. The functions will be Krasner analytic functions on these domains.\\

	Holes are permitted on a boundary circle and, depending on the theorem, open holes of radius $R$ in a disc of radius $R$ may be permitted.\\
	
 The first result is a version of the Residue Theorem for Laurent series converging on an annulus. It is a little broader than usual because the center point used to define the Laurent series and the annulus is not necessarily the same as the center point for the circle giving us an arc for integration.
\begin{theorem}[Residue Theorem for Laurent Series]\label{T:res Laur}
Let $\mathcal{C}$ be a circle with center $x_{0}$ and radius $R$. Suppose
\[
	f(x) = \sum_{n = -\infty}^{\infty}c_{n}(x - x_{0})^{n}\qquad \text{for}\ |x - x_{0}|_{p} = R.
\]
Let $b$ be a point on $\mathcal{C}$, $a$ be such that $|a - b|_{p} = R$ and $\alpha \in \mathbb{Z}_{\geq 0}$.
Then, 
\[
	\int_{\mathcal{A}(a,b),\Phi_{\alpha}}f(x)\,dx = \frac{c_{-1}}{1 - \omega^{p^{\alpha}}\left( \frac{a - x_{0}}{a - b}\right)}.
\]
\end{theorem}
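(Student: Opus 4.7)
The plan is to split the Laurent series as $f = g + h$ with $g(x) = \sum_{n \geq 0} c_n(x - x_0)^n$ and $h(x) = \sum_{m \geq 1} c_{-m}(x - x_0)^{-m}$, integrate each piece using results already established, and add.

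First I observe that $|b - x_0|_p = R$ together with $|a - b|_p = R$ forces $|a - x_0|_p \leq R$ by the ultrametric inequality applied to $a - x_0 = (a - b) + (b - x_0)$, so $D^{+}(a, R) = D^{+}(x_0, R)$. Convergence of the Laurent series on the circle $|x - x_0|_p = R$ yields $|c_n|_p R^n \to 0$ as $n \to \infty$, so $g$ is holomorphic on $D^{+}(x_0, R) = D^{+}(a, R)$; Theorem \ref{T:holo=0} then supplies $\int_{\mathcal{A}(a, b), \Phi_\alpha} g(x)\,dx = 0$.

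For the principal part, the same convergence hypothesis gives $|c_{-m}|_p R^{-m} \to 0$, and because $|x - x_0|_p = R$ holds on $\mathcal{A}(a, b)$ (since $|x - b|_p < R = |b - x_0|_p$), the series for $h$ converges uniformly on the arc. Each summand $(x - x_0)^{-m}$ is rational with only pole at $x_0 \notin \mathcal{A}(a, b)$, hence Krasner analytic, and so lies in $\mathcal{I}_{\mathcal{A}, \Phi_\alpha}$ by Theorem \ref{T:Krasner}. Closure under uniform convergence (Theorem \ref{T:unif conv}) then permits term-by-term integration. For $m \geq 2$, the identity $(x - x_0)^{-m} = -\frac{1}{m - 1}\,\frac{d}{dx}(x - x_0)^{-(m - 1)}$ together with boundedness of $(x - x_0)^{-(m-1)}$ on $\mathcal{A}(a, b)$ (controlled coefficients of order $0 < \tfrac{1}{2}$) lets Theorem \ref{T:intf'=0} conclude that the corresponding integral vanishes. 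For $m = 1$, Theorem \ref{T:ratfunc int} supplies
\[
\int_{\mathcal{A}(a, b), \Phi_\alpha} \frac{1}{x - x_0}\,dx = \frac{1}{1 - \omega^{p^\alpha}\!\left(\frac{a - x_0}{a - b}\right)},
\]
where part (1) covers the case $|a - x_0|_p < R$ (in which $\omega$ of the quotient vanishes and the formula collapses to $1$) and part (2) covers $|a - x_0|_p = R$ (with $x_0 \notin \mathcal{A}(a, b)$ automatic since $|x_0 - b|_p = R$). Multiplying by $c_{-1}$ and adding the vanishing contributions yields the stated value.

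The one place requiring genuine care is the interchange of the integral and the infinite sum in the principal part; this is handled cleanly by the uniform bound $|c_{-m}|_p R^{-m} \to 0$ combined with Theorem \ref{T:unif conv}. The remainder is purely assembly of previously proved integration formulas, so no substantive technical obstacle remains.
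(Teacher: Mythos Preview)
Your proof is correct and follows essentially the same route as the paper: approximate $f$ uniformly on the arc by rational partial sums, observe that in each partial sum only the $c_{-1}(x-x_0)^{-1}$ term contributes (the nonnegative powers being holomorphic on $D^{+}(a,R)$ and the higher negative powers being derivatives of bounded functions), and pass to the limit via Theorem~\ref{T:unif conv}. The paper compresses this into a two-line appeal to ``symmetric sums'' and Theorem~\ref{T:unif conv}; you have simply written out the same computation in full.
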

\noindent $\omega$ is the Teichm\"{u}ller character extended so $\omega(x) = 0$ if $|x|_{p} < 1$.
\begin{proof}
$f(x)$ is the uniform limit of symmetric sums on the arc $\mathcal{A}(a,b)$. These sums are rational functions whose integrals are each 
\[
	\frac{c_{-1}}{1 - \omega^{p^{\alpha}}\left( \frac{a - x_{0}}{a - b}\right)}.
\]
Hence, by Theorem \ref{T:unif conv}, the theorem is proved.
\end{proof}
Notice that if a different $x_{0}$ in the interior of $\mathcal{C}$ is used for the expansion of $f(x)$, the value of $\omega((a - x_{0})/(a - b))$ is unchanged and hence $c_{-1}$, the \emph{residue of $f(x)$ at $x_{0}$} is unchanged.

Now let's look at broader domains than closed disks. The concept that will link values of a function is that of a Krasner analytic function, see~\cite{hC2007} and \cite{aR2000}. The domains to be considered will be closed discs with a finite number of open discs removed. A function is a Krasner analytic function on these domains if it is the uniform limit of rational functions whose poles are outside the domain.\\

\begin{theorem}[Residue Theorem]\label{T:residueKr}
Let
\[
	\mathcal{D} = D^{+}(a,R)\setminus\cup_{i = 1}^{M}D^{-}(x_{i},r_{i}),
\]
with $r_{i} < R$ for all $i$ and the $D^{+}(x_{i},r_{i})$ mutually disjoint. The $x_{i}$ do not have to be interior to the circle $|x - a|_{p} = R$. Choose $b$ so that $|b - a|_{p} = |b - x_{i}|_{p} = R$. Let $\alpha \in \mathbb{Z}_{\geq 0}$.\\
Suppose $f(x)$ is a Krasner analytic function  on $\mathcal{D}$.
Then
\[
	\int_{\mathcal{A}(a,b),\Phi_{\alpha}}f(x)\,dx = \sum_{i = 1}^{M}\frac{1}{1 - \omega^{p^{\alpha}}\left( \frac{a - x_{i}}{a - b}\right)}\underset{x = x_{i}}{Res}f(x)
\].
\end{theorem}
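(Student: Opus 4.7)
The plan is to reduce to the previous Residue Theorem for Laurent series by approximating $f$ with rational functions and using linearity of the integral. Since $f$ is Krasner analytic on $\mathcal{D}$, there is a sequence of rational functions $f_m \to f$ uniformly on $\mathcal{D}$, with each $f_m$ having all its poles in $\mathbb{C}_p \setminus \mathcal{D} = (\mathbb{C}_p \setminus D^{+}(a,R)) \cup \bigcup_i D^{-}(x_i,r_i)$. For each $m$, write the partial fraction decomposition of $f_m$ as a polynomial plus principal parts at each pole. The polynomial part and the principal parts centered at poles outside $D^{+}(a,R)$ are holomorphic on $D^{+}(a,R)$, so by Theorem \ref{T:holo=0} they contribute nothing to the integral.

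For a principal part $c(x - y)^{-k}$ coming from a pole $y \in D^{-}(x_i,r_i)$, use Theorem \ref{T:intf'=0} when $k \geq 2$ (each such term is the derivative of a bounded holomorphic function on $\mathcal{A}(a,b)$, giving integral $0$), and use Theorem \ref{T:ratfunc int} when $k = 1$. In the latter case, the key point is that $|y - x_i|_p < r_i \leq R$ forces $\omega\bigl((a-y)/(a-b)\bigr) = \omega\bigl((a-x_i)/(a-b)\bigr)$ (treating the case $|a - x_i|_p < R$ separately, where both sides are $0$ and the integral equals $1$). Summing the contributions of all poles of $f_m$ lying in $D^{-}(x_i,r_i)$ gives
\[
	\int_{\mathcal{A}(a,b),\Phi_{\alpha}} f_m(x)\,dx \;=\; \sum_{i=1}^{M} \frac{\underset{x=x_i}{\mathrm{Res}}\, f_m(x)}{1 - \omega^{p^{\alpha}}\bigl(\tfrac{a-x_i}{a-b}\bigr)},
\]
where $\mathrm{Res}_{x=x_i}f_m$ is the sum of the ordinary residues of $f_m$ at its poles inside $D^{-}(x_i,r_i)$, which is also the coefficient $c_{-1}$ of the Laurent expansion of $f_m$ on any annulus $r_i \leq |x-x_i|_p \leq r_i'$ contained in $\mathcal{D}$.

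The final step is to pass to the limit $m \to \infty$. The left-hand side converges to $\int_{\mathcal{A}(a,b),\Phi_{\alpha}} f(x)\,dx$ by Theorem \ref{T:unif conv}. For the right-hand side, each coefficient functional extracting $c_{-1}$ from a Laurent series on the circle $|x - x_i|_p = r_i'$ (for a chosen $r_i' \in (r_i, R)$ with the circle lying inside $\mathcal{D}$) is continuous under uniform convergence on that circle, since the Laurent coefficients are given by the standard formula in terms of symmetric sums and are bounded by the supremum norm on the circle times $(r_i')^{-n-1}$. Hence $\mathrm{Res}_{x=x_i} f_m \to \mathrm{Res}_{x=x_i} f$, and the formula follows.

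The main obstacle is matching the notion of residue used in the statement (the Laurent coefficient $c_{-1}$ of $f$ around $x_i$) with the sum of ordinary residues of the approximating rational functions at their poles in $D^{-}(x_i,r_i)$; this is really an assertion that the $c_{-1}$ functional is continuous on the space of functions Krasner analytic near the circle $|x - x_i|_p = r_i'$, which is handled as indicated above. A minor technical issue is arranging $r_i'$ so that the circle $|x - x_i|_p = r_i'$ lies in $\mathcal{D}$ (i.e., avoids the other holes and the complement of $D^{+}(a,R)$); for interior $x_i$ we take $r_i'$ less than the distance from $x_i$ to the other $D^{+}(x_j,r_j)$ and less than $R$, and for boundary $x_i$ (where $|x_i - a|_p = R$) we take $r_i' < R$, which automatically keeps the circle on the boundary circle of $D^{+}(a,R)$ and within $\mathcal{D}$.
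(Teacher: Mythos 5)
Your argument is correct, but it is organized differently from the paper's. The paper first invokes the structure theorem for Krasner analytic functions on a disc with holes (Mittag--Leffler decomposition, cited from Robert): $f = f_{0} + \sum_{i=1}^{M} f_{i}$ with $f_{0} \in H(D^{+}(a,R))$ and each $f_{i}$ a principal part $\sum_{n \geq 1} c_{n,i}(x-x_{i})^{-n}$ convergent on $|x - x_{i}|_{p} \geq r_{i}$; it then integrates termwise using Theorem \ref{T:holo=0} for $f_{0}$ and the Laurent-series Residue Theorem \ref{T:res Laur} for each $f_{i}$, and notes that disjointness of the $D^{+}(x_{i},r_{i})$ identifies $\operatorname{Res}_{x=x_{i}} f_{i}$ with $\operatorname{Res}_{x=x_{i}} f$. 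You instead approximate first and decompose second: you take the rational approximants $f_{m}$ furnished by the definition of Krasner analyticity, split each by partial fractions, integrate each piece with Theorems \ref{T:holo=0}, \ref{T:intf'=0} and \ref{T:ratfunc int} (your observation that $\omega\bigl(\tfrac{a-y}{a-b}\bigr) = \omega\bigl(\tfrac{a-x_{i}}{a-b}\bigr)$ for $y \in D^{-}(x_{i},r_{i})$ is exactly the point), and then pass to the limit. The price of avoiding the decomposition theorem is that you must prove continuity of the residue functional, i.e.\ $\operatorname{Res}_{x=x_{i}} f_{m} \to \operatorname{Res}_{x=x_{i}} f$; your bound $|c_{-1}|_{p} \leq r_{i}'\,\sup_{|x-x_{i}|_{p}=r_{i}'}|f|_{p}$ (note the exponent should give $|c_{n}|_{p} \leq \|f\|\,(r_{i}')^{-n}$, a harmless slip) together with your choice of $r_{i}'$ strictly between $r_{i}$ and $\min_{j \neq i}|x_{i}-x_{j}|_{p}$ handles this. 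The paper's route is shorter given the structure theorem as a black box; yours is more self-contained and mirrors how the paper itself proves Theorem \ref{T:res Laur}. Both rely on the same underlying integration results, so either is acceptable.
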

\begin{proof}
A Krasner analytic function on $\mathcal{D}$, see \cite{aR2000}, has the form
\[
	f(x) = \sum_{i = 0}^{M}f_{i}(x)
\]
with $f_{0}(x) \in H(D^{+}(a,R))$ and
\[
	f_{i}(x) = \sum_{n=1}^{\infty}\frac{c_{n,i}}{(x - x_{i})^{n}}, \qquad |x - x_{i}|_{p} 
	\geq r_{i}.
\]
Clearly,
\begin{equation}\label{E:resints}
	\int_{\mathcal{A}(a,b),\Phi_{\alpha}}f(x)\,dx = \sum_{i = 0}^{M}\int_{\mathcal{A}(a,b),\Phi_{\alpha}}f_{i}(x)\,dx.
\end{equation}
By Theorem \ref{T:holo=0},
\[
	\int_{\mathcal{A}(a,b),\Phi_{\alpha}}f_{0}(x)\,dx = 0,
\]
and by Theorem \ref{T:res Laur} we have
\[
	\int_{\mathcal{A}(a,b),\Phi_{\alpha}}f_{i}(x)\,dx = \frac{c_{1,i}}{1 - \omega^{p^{\alpha}}\left( \frac{a - x_{i}}{a - b}\right)} =\frac{\underset{x = x_{i}}{Res}f_{i}(x)}{1 - \omega^{p^{\alpha}}\left( \frac{a - x_{i}}{a - b}\right)}.
\]
The condition that the $D_{i}^{+}(x_{i},r_{i})$ are disjoint implies that $\underset{x = x_{i}}{Res}f_{i}(x) = \underset{x = x _{i}}{Res}f(x)$.
Substitution into Equation \eqref{E:resints} completes the proof of the Residue Theorem.
\end{proof}

In the complex case, the Cauchy-Goursat Theorem for analytic functions on a multiply connected  region says, roughly, that if a function is analytic on disc with holes, the integral of the function around the perimeter of the disc is equal to the sum of the integrals taken around the holes. In the complex case, the holes must have smaller radii than the disc. This isn't so in the p-adic case, so two theorems will be given.

\begin{theorem}[Cauchy-Goursat Theorem - small holes]\label{T:Cauchy-G1}
Let $\mathcal{C}$ be a circle with center $a$ and radius $R$. Let $\mathcal{D} = D^{+}(a,R)\setminus\cup_{i = 1}^{M}D^{-}(x_{i},\rho_{i})$, with all $\rho_{i} < R$. Let the $\mathcal{C}_{i}$ be circles with centers $x_{i}$ and radii $r_{i}$ with $\rho_{i} \leq r_{i} < R$. Let $b$ be a point on $\mathcal{C}$ with $|b - x_{i}|_{p} = R$ for all $i$ and $b_{i}$ be a point on $\mathcal{C}_{i}$. Suppose that the $D^{+}(x_{i},r_{i})$ are mutually disjoint. Let $\alpha \in \mathbb{Z}_{\geq 0}$.\\
Then, if $f(x)$ is a Krasner analytic function on  $\mathcal{D}$, 
\[
	\int_{\mathcal{A}(a,b),\Phi_{\alpha}}f(x)\,dx = \sum_{i = 1}^{M}\frac{1}{1 - \omega^{p^{\alpha}}\left( \frac{a - x_{i}}{a - b}\right)}\int_{\mathcal{A}(x_{i},b_{i}),\Phi_{\alpha}}f(x)\,dx.
\]
The constant coefficients depend only on $\alpha$, the arc $\mathcal{A}(a,b)$, $\mathcal{D}$ and the choice of boundary circle $\mathcal{C}$.
\end{theorem}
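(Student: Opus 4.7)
The plan is to evaluate both sides via the Krasner decomposition and match residues. As in the proof of Theorem~\ref{T:residueKr}, I first write
\[
	f(x) = f_{0}(x) + \sum_{i = 1}^{M} f_{i}(x),
\]
where $f_{0} \in H(D^{+}(a,R))$ and each $f_{i}(x) = \sum_{n \geq 1} c_{n,i}(x - x_{i})^{-n}$ converges for $|x - x_{i}|_{p} \geq \rho_{i}$. The left-hand side is then handled immediately by Theorem~\ref{T:residueKr} (applied with the holes $D^{-}(x_{i},\rho_{i})$), giving
\[
	\int_{\mathcal{A}(a,b),\Phi_{\alpha}} f(x)\,dx \;=\; \sum_{i=1}^{M} \frac{c_{1,i}}{1 - \omega^{p^{\alpha}}\!\left(\frac{a - x_{i}}{a - b}\right)}.
\]

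For the right-hand side I evaluate $\int_{\mathcal{A}(x_{i},b_{i}),\Phi_{\alpha}} f(x)\,dx$ by showing only $f_{i}$ contributes. The ultrametric disjointness of $D^{+}(x_{i},r_{i})$ and $D^{+}(x_{j},r_{j})$ forces $|x_{i} - x_{j}|_{p} > \max(r_{i},r_{j})$, so for $x \in D^{+}(x_{i},r_{i})$ we have $|x - x_{j}|_{p} = |x_{i} - x_{j}|_{p} > r_{j} \geq \rho_{j}$; expanding $(x - x_{j})^{-n}$ as a geometric series in $(x - x_{i})/(x_{j} - x_{i})$ then exhibits $f_{j}$ as a convergent power series on $D^{+}(x_{i},r_{i})$, i.e.\ $f_{j} \in H(D^{+}(x_{i},r_{i}))$. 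Similarly, $D^{+}(x_{i},r_{i}) \subseteq D^{+}(a,R)$ (holes with $x_{i} \notin D^{+}(a,R)$ may be dropped without affecting $\mathcal{D}$), so $f_{0} \in H(D^{+}(x_{i},r_{i}))$. Theorem~\ref{T:holo=0} then kills every term except $f_{i}$:
\[
	\int_{\mathcal{A}(x_{i},b_{i}),\Phi_{\alpha}} f(x)\,dx \;=\; \int_{\mathcal{A}(x_{i},b_{i}),\Phi_{\alpha}} f_{i}(x)\,dx.
\]

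The remaining integral is exactly what Theorem~\ref{T:res Laur} evaluates: $f_{i}$ is a Laurent series centered at $x_{i}$ converging on the circle $|x - x_{i}|_{p} = r_{i}$, the arc $\mathcal{A}(x_{i},b_{i})$ has its $a$-parameter equal to $x_{i}$, so
\[
	\omega^{p^{\alpha}}\!\left(\frac{x_{i} - x_{i}}{x_{i} - b_{i}}\right) = \omega(0) = 0,
\]
and Theorem~\ref{T:res Laur} gives $\int_{\mathcal{A}(x_{i},b_{i}),\Phi_{\alpha}} f_{i}(x)\,dx = c_{1,i}/(1 - 0) = c_{1,i}$. Multiplying each side of the claimed identity through and comparing with the first display finishes the proof.

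The main obstacle is really just the bookkeeping in the middle paragraph, i.e.\ verifying that the pairwise-disjointness hypothesis on the $D^{+}(x_{i},r_{i})$ is exactly what is needed to make every $f_{j}$ ($j \neq i$) holomorphic on the closed disc $D^{+}(x_{i},r_{i})$; once that ultrametric step is in hand, everything else is a direct application of Theorems~\ref{T:holo=0}, \ref{T:res Laur} and \ref{T:residueKr}.
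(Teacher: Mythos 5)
Your proposal is correct and follows essentially the same route as the paper: decompose $f$ via the Krasner (Mittag--Leffler) form $f = f_0 + \sum_i f_i$, evaluate the left side with the Residue Theorem, show via the ultrametric disjointness of the $D^{+}(x_i,r_i)$ that only $f_i$ contributes to $\int_{\mathcal{A}(x_i,b_i)} f$, and compute that contribution as $c_{1,i}$ from Theorem~\ref{T:res Laur} since $\omega(0)=0$. The only difference is cosmetic --- you evaluate both sides and match, while the paper substitutes the hole-integrals back into the Residue Theorem --- and you supply the geometric-series justification that $f_j\in H(D^{+}(x_i,r_i))$ for $j\neq i$, which the paper merely asserts.
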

\begin{proof}
We can write
\[
	f(x) = \sum_{i = 0}^{M}f_{i}(x)
\]
with $f_{0}(x) \in H(D^{+}(a,R))$ and
\[
	f_{i}(x) = \sum_{n=1}^{\infty}\frac{c_{n,i}}{(x - x_{i})^{n}}, \qquad |x - x_{i}|_{p} 
	\geq r_{i}.
\]
First, let's integrate $f_{i}(x)$ over the arc $\mathcal{A}(x_{i},b_{i})$. By Theorem \ref{T:res Laur} we have
\[
	\int_{\mathcal{A}(x_{i},b_{i}),\Phi_{\alpha}}f_{i}(x)\,dx = \frac{c_{1,i}}{1 - \omega^{p^{\alpha}}\left( \frac{x_{i} - x_{i}}{x_{i} - b_{i}}\right)} = c_{1,i}.
\]
Substitution into the Residue Theorem, Theorem \ref{T:residueKr}, yields
\begin{equation}\label{E:CauchyG1}
	\int_{\mathcal{A}(a,b),\Phi_{\alpha}}f(x)\,dx = \sum_{i = 1}^{M}\frac{1}{1 - \omega^{p^{\alpha}}\left( \frac{a - x_{i}}{a - b}\right)}\int_{\mathcal{A}(x_{i},b_{i}),\Phi_{\alpha}}f_{i}(x)\,dx.
\end{equation}
Now let's integrate $f_{j}(x)$, $j \neq i$, over the arc $\mathcal{A}(x_{i},b_{i})$
The assumption that the $D_{i}^{+}(x_{i},r_{i})$ are mutually disjoint implies that $f_{j}(x)$ is holomorphic on $|x - x_{i}|_{p} \leq r_{i}$. It follows that
\[
	\int_{\mathcal{A}(x_{i},b_{i}),\Phi_{\alpha}}f_{j}(x)\,dx = 0, \qquad j \neq i.
\]
Hence
\begin{equation}\label{E:CauchyG2}
	\int_{\mathcal{A}(x_{i},b_{i}),\Phi_{\alpha}}f(x)\,dx = \int_{\mathcal{A}(x_{i},b_{i}),\Phi_{\alpha}}f_{i}(x)\,dx.
\end{equation}

Substituting equation \eqref{E:CauchyG2} into equation \eqref{E:CauchyG1} yields this version of the Cauchy-Goursat Theorem.
\end{proof}
If all of  the $x_{i}$, are interior to the circle, then the coefficients in this theorem become $1$ and, as in the complex case, we have
\[
	\int_{\mathcal{A}(a,b),\Phi_{\alpha}}f(x)\,dx = \sum_{i = 1}^{M}\int_{\mathcal{A}(x_{i},b_{i}),\Phi_{\alpha}}f(x)\,dx.
\]
It's helpful to establish the following lemma and theorem before considering domains with holes of radius $R$.
\begin{lemma}\label{L:D id}
Let $x_{i}$ and $b_{i}$, $1 \leq i\leq n$, be formal variables. Then 
\[
	det\left(\frac{x_{i} - b_{i}}{x_{j} - b_{i}}\right)_{1 \leq i,j \leq n} = \prod_{\substack{i,j = 1\\ i \neq j}} ^{n} \left(x_{j} - b_{i}\right)^{-1}\prod_{\substack{i,j = 1\\i < j}} ^{n}\left(b_{j} - b_{i}\right)\left(x_{i} - x_{j}\right).
\] 
\end{lemma}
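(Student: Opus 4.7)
The plan is to reduce the identity to the classical Cauchy determinant formula. The key observation is that the numerator $x_i - b_i$ in the $(i,j)$ entry depends only on the row index $i$. So first I would factor $x_i - b_i$ out of row $i$ for each $i$, obtaining
\[
	\det\left(\frac{x_i - b_i}{x_j - b_i}\right)_{1 \leq i,j \leq n} = \prod_{i=1}^{n}(x_i - b_i)\cdot \det\left(\frac{1}{x_j - b_i}\right)_{1 \leq i,j \leq n}.
\]

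Next I would invoke (or re-derive) the Cauchy determinant identity. With the substitution $a_i = -b_i$ for the row variable and $c_j = x_j$ for the column variable, the entry $\frac{1}{x_j - b_i}$ has the standard Cauchy form $\frac{1}{a_i + c_j}$, and the classical identity gives
\[
	\det\left(\frac{1}{x_j - b_i}\right)_{1 \leq i,j \leq n}\  =\  \frac{\prod_{1 \leq i < j \leq n}(b_j - b_i)(x_i - x_j)}{\prod_{1 \leq i,j \leq n}(x_j - b_i)},
\]
after accounting for the sign change $(b_i - b_j)(x_j - x_i) = (b_j - b_i)(x_i - x_j)$. If one prefers a self-contained derivation, the standard argument works: regard both sides as rational functions in the $2n$ variables $\{x_i, b_i\}$, verify that the determinant vanishes when $x_i = x_j$ or $b_i = b_j$ (two equal columns or rows, respectively) and has exactly the indicated simple poles along $x_j = b_i$, then match a single specialization (or the leading behavior as one variable tends to infinity) to pin down the constant.

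Finally, combining the two displays and splitting the full product $\prod_{i,j}(x_j - b_i)$ into its diagonal and off-diagonal pieces,
\[
	\prod_{i,j = 1}^{n}(x_j - b_i)\  =\  \prod_{i = 1}^{n}(x_i - b_i)\cdot \prod_{\substack{i,j = 1\\ i\neq j}}^{n}(x_j - b_i),
\]
the factor $\prod_{i}(x_i - b_i)$ cancels the one pulled out at the beginning, leaving exactly the right-hand side of the lemma. There is no real obstacle here beyond the Cauchy determinant itself, which is standard; the whole argument is essentially a book-keeping exercise once that identity is in hand.
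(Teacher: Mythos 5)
Your proof is correct, and it reaches the identity by a slightly different route than the paper. The paper multiplies the determinant by $\prod_{i\neq j}(x_{j}-b_{i})$ (a row scaling by $\prod_{k\neq i}(x_{k}-b_{i})$ on row $i$), which turns every entry into the polynomial $\prod_{k\neq j}(x_{k}-b_{i})$, and then identifies the resulting polynomial determinant directly: the factors $(b_{j}-b_{i})$ and $(x_{i}-x_{j})$ divide it, the degrees match, and the specialization $x_{i}=b_{i}$ fixes the constant to be $1$. You instead factor the row-constant numerators $x_{i}-b_{i}$ out of each row and quote the classical Cauchy determinant for $\det\left(\frac{1}{x_{j}-b_{i}}\right)$; your sign bookkeeping $(b_{i}-b_{j})(x_{j}-x_{i})=(b_{j}-b_{i})(x_{i}-x_{j})$ and the cancellation of $\prod_{i}(x_{i}-b_{i})$ against the diagonal part of $\prod_{i,j}(x_{j}-b_{i})$ both check out. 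The two arguments are close relatives: your fallback self-contained derivation of the Cauchy determinant (vanishing on $x_{i}=x_{j}$ and $b_{i}=b_{j}$, pole structure, one normalization) is essentially the same divisibility-and-degree argument the paper applies to its cleared-denominator determinant. What your version buys is brevity and a pointer to a standard named identity; what the paper's version buys is self-containment, since it never needs the Cauchy formula as an external input.
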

\begin{proof}
\begin{align*}
	 det\left(\frac{x_{i} - b_{i}}{x_{j} - b_{i}}\right)_{1 \leq i,j \leq n} \cdot \prod_{\substack{i,j = 1\\ i \neq j}} ^{n} \left(x_{j} - b_{i}\right) &= det\left(\frac{\prod_{k = 1}^{n}(x_{k} - b_{i})}{x_{j} - b_{i}}\right)_{1 \leq i,j \leq n}\\
& = \prod_{\substack{i,j = 1\\i < j}} ^{n}\left(b_{j} - b_{i}\right)\left(x_{i} - x_{j}\right).
\end{align*}
The last step is derived from the facts that both sides of the last equality are polynomials, the linear factors on the right are factors on the left, the degrees of the variables on the left don't exceed their degrees on the right (This equates the two sides up to a multiplicative constant.) and, finally, that letting $x_{i} = b_{i}$, for all $i$, shows the multiplicative constant is $1$. 
\end{proof}
\begin{theorem}\label{T:D=1}
Suppose $x_{i}$, $b_{i} \in D^{+}(a,R)$, $1 \leq i \leq n$, $|x_{i} - x_{j}|_{p} = |b_{i} - b_{j}|_{p} = R$, if $i \neq j$, and $|x_{i} - b_{j}|_{p} = R$ for all $i$, $j$. Let $\alpha \in \mathbb{Z}_{\geq 0}$. Define the matrix $\textbf{D}_{\alpha}$ by

\[
	\textbf{D}_{\alpha} = \left(\frac{1}{1 - \omega^{p^{\alpha}}\left(\frac{x_{i} - x_{j}}{x_{i} - b_{i}}\right)}\right)_{1 \leq i,j \leq n}.
\]
Then,
\[
	\left|det(\textbf{D}_{\alpha})\right|_{p} = 1.
\]
\end{theorem}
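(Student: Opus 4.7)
The plan is to show every entry of $\mathbf{D}_\alpha$ lies in $\mathcal{O}_{\mathbb{C}_p}$ (so that $|\det(\mathbf{D}_\alpha)|_p \leq 1$), and then to identify the reduction of $\det(\mathbf{D}_\alpha)$ modulo the maximal ideal $\mathfrak{m}$ of $\mathcal{O}_{\mathbb{C}_p}$ with a nonzero element of the residue field by applying Lemma \ref{L:D id} after a Frobenius twist.

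First, I would check entrywise norms. For $i \neq j$, set $z_{ij} = (x_i - x_j)/(x_i - b_i)$; the hypotheses $|x_i - x_j|_p = |x_i - b_i|_p = R$ give $|z_{ij}|_p = 1$, and the identity $x_j - b_i = (x_i - b_i) - (x_i - x_j)$ together with $|x_j - b_i|_p = R$ forces $|1 - z_{ij}|_p = 1$. Hence $\omega(z_{ij})$ is a root of unity of order coprime to $p$ that is not equal to $1$, so $\omega^{p^\alpha}(z_{ij}) \neq 1$ and $|1 - \omega^{p^\alpha}(z_{ij})|_p = 1$. The diagonal entries equal $1$, so all entries of $\mathbf{D}_\alpha$ lie in $\mathcal{O}_{\mathbb{C}_p}^*$ and $|\det(\mathbf{D}_\alpha)|_p \leq 1$.

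Next, I would reduce mod $\mathfrak{m}$. Pick $\pi \in \mathbb{C}_p$ with $|\pi|_p = R$ and normalize $\tilde{x}_i = (x_i - a)/\pi$, $\tilde{b}_i = (b_i - a)/\pi$, so that $\tilde{x}_i, \tilde{b}_i \in \mathcal{O}_{\mathbb{C}_p}$. The pairwise distance conditions translate into the statement that the residues $\bar{\tilde{x}}_1, \ldots, \bar{\tilde{x}}_n$ and $\bar{\tilde{b}}_1, \ldots, \bar{\tilde{b}}_n$ are pairwise distinct within each family and between families in the residue field $\overline{\mathbb{F}}_p$. Using $\omega(u) \equiv u \pmod{\mathfrak{m}}$, the reduction of $\omega^{p^\alpha}(z_{ij})$ equals $\bar{z}_{ij}^{p^\alpha}$, and because Frobenius is a ring homomorphism in characteristic $p$,
\[
\bar{z}_{ij}^{p^\alpha} = \frac{\bar{\tilde{x}}_i^{p^\alpha} - \bar{\tilde{x}}_j^{p^\alpha}}{\bar{\tilde{x}}_i^{p^\alpha} - \bar{\tilde{b}}_i^{p^\alpha}}.
\]
Writing $X_i = \bar{\tilde{x}}_i^{p^\alpha}$ and $B_i = \bar{\tilde{b}}_i^{p^\alpha}$, a short computation shows the $(i,j)$ entry of $\mathbf{D}_\alpha$ reduces to $(X_i - B_i)/(X_j - B_i)$, uniformly for $i = j$ and $i \neq j$.

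Finally, Lemma \ref{L:D id} applied in $\overline{\mathbb{F}}_p$ yields
\[
\overline{\det(\mathbf{D}_\alpha)} = \prod_{\substack{i,j = 1 \\ i \neq j}}^{n}(X_j - B_i)^{-1}\prod_{\substack{i,j=1 \\ i < j}}^{n}(B_j - B_i)(X_i - X_j).
\]
Injectivity of Frobenius on $\overline{\mathbb{F}}_p$ preserves all the required separations, so every factor is nonzero; hence the reduction of $\det(\mathbf{D}_\alpha)$ is a unit in the residue field. Combined with $|\det(\mathbf{D}_\alpha)|_p \leq 1$, this gives $|\det(\mathbf{D}_\alpha)|_p = 1$. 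The only delicate point is the passage $\omega^{p^\alpha}(z_{ij}) \leadsto \bar{z}_{ij}^{p^\alpha}$ together with the Frobenius rewriting that brings the matrix into the exact form of Lemma \ref{L:D id}; once that is in place, the rest is bookkeeping.
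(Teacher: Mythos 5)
Your proof is correct, and it rests on the same key fact as the paper's --- Lemma \ref{L:D id} applied to a matrix with entries $(x_i-b_i)/(x_j-b_i)$ --- but the route to that matrix is genuinely different in one respect. The paper treats $\alpha=0$ by approximating each entry of $\mathbf{D}_0$ in $\mathbb{C}_p$ by $(x_i-b_i)/(x_j-b_i)$ up to an error of norm $<1$, invokes the lemma to see the approximating determinant has norm $1$, and then handles $\alpha>0$ by induction using $(x+y)^p=x^p+y^p+\varepsilon$ with $|\varepsilon|_p\leq 1/p$. You instead pass once and for all to the residue field: reduction mod $\mathfrak{m}$ sends $\omega^{p^\alpha}(z_{ij})$ to $\bar z_{ij}^{\,p^\alpha}$, and since Frobenius is a field endomorphism the reduced matrix is \emph{exactly} of the form in Lemma \ref{L:D id} with the twisted points $X_i=\bar{\tilde x}_i^{\,p^\alpha}$, $B_i=\bar{\tilde b}_i^{\,p^\alpha}$; injectivity of Frobenius preserves the distinctness hypotheses, so the reduced determinant is a nonzero product and $|\det(\mathbf{D}_\alpha)|_p=1$. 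This buys you a uniform, induction-free argument in which the role of $\alpha$ is completely transparent (the paper's induction step is only sketched), at the mild cost of setting up the normalization $\tilde x_i=(x_i-a)/\pi$ and checking that reduction commutes with forming the determinant --- both of which you do correctly. All the supporting details check out: the entries are units because $|z_{ij}|_p=|1-z_{ij}|_p=1$ forces $\omega^{p^\alpha}(z_{ij})$ to be a nontrivial prime-to-$p$ root of unity, and Lemma \ref{L:D id} is a polynomial identity so it is valid over $\overline{\mathbb{F}}_p$.
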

\begin{proof}
The proof is by induction on $\alpha$, starting with $\alpha = 0$.
Since we can multiply all the $x_{i}$ and $b_{i}$ by a single non-zero constant with no change in $\omega((x_{i} - x_{j})/(x_{i} - b_{i}))$, we can take $R = 1$ with no loss.
Since $|\omega(x) - x|_{p} < 1$ for any $x$ with $|x|_{p} \leq 1$, it follows that
\[
	\frac{1}{1 - \omega\left(\frac{x_{i} - x_{j}}{x_{i} - b_{i}}\right)} = \frac{x_{i} - b_{i}}{x_{j} - b_{i}} + \varepsilon_{i,j}, \qquad \text{with}\  |\varepsilon_{i,j}|_{p} < 1.
\]
Each of the terms in $\textbf{D}_{\alpha}$ has absolute value $1$, so we can say
\[
	\left|det(\textbf{D}_{\alpha}) \right|_{p} = \left| det\left(\frac{x_{i} - b_{i}}{x_{j} - b_{i}}\right)_{1 \leq i,j \leq n} + \varepsilon \right|_{p}, \qquad \text{with}\  |\varepsilon|_{p} < 1.
\]
The conditions of this theorem, together with Lemma \ref{L:D id}, show that
\[
	 \left|det(\textbf{D}_{\alpha}) \right|_{p} = 1
\]
in the case $\alpha = 0$.\\
The induction is straightforward using 
\[
	(x + y)^{p} = x^{p} + y^{p} + \varepsilon,
\]
where $|\varepsilon|_{p}| \leq 1/p$ and $|x|_{p} = |y|_{p} = 1$.
\end{proof}

Now we're ready to establish a p-adic version of the Cauchy-Goursat Theorem that will allow for holes of radius $R$ in a closed disc of radius $R$.
\begin{theorem}[Cauchy-Goursat Theorem]\label{T:Cauchy-G2}
Let $\mathcal{C}$ be a circle with center $a$ and radius $R$. Let $\mathcal{D} = D^{+}(a,R)\setminus\cup_{i = 1}^{M}D^{-}(x_{i},\rho_{i})$, with $\rho_{i} = R$ for $i = 1,\dots,m$ and $\rho_{i} < R$ for $m <i \leq M$. Suppose that $|a - x_{i}|_{p} = R$ for $i \leq m$ and that the $D^{-}(x_{i},\rho_{i})$ are mutually disjoint. Let $b$ be a point on $\mathcal{C}$ with $|b - x_{i}|_{p} = R$ for all $i$. Let the $\mathcal{C}_{i}$ be circles having centers $x_{i}$ and radii $r_{i}$ with $\rho_{i} \leq r_{i} \leq R$. Suppose the $D^{+}(x_{i},r_{i})$ are mutually disjoint for $i > m$.  For each $i$, let $b_{i}$ be a point on $\mathcal{C}_{i}$ with the conditions, for $1 \leq i \leq m$, that $|b_{i} - x_{j}|_{p} = R$ for any $j$ and $|b_{i} - b_{j}|_{p} = R$ if $i \neq j$. Let $\alpha \in \mathbb{Z}_{\geq 0}$.\\
 
Then there are constants $\mu_{1}, \dotsc, \mu_{M}$, depending only on $\alpha$, the region $\mathcal{D}$ with its choice of boundary circle and the arcs $\mathcal{A}(a,b)$ and $\mathcal{A}(x_{i},b_{i})$, such that for any function $f(x)$ that is Krasner analytic on $\mathcal{D}$, we have
\[
	\int_{\mathcal{A}(a,b),\Phi_{\alpha}}f(x)\,dx = \sum_{i = 1}^{M}\mu_{i} \int_{\mathcal{A}(x_{i},b_{i}),\Phi_{\alpha}}f(x)\,dx.
\]
\end{theorem}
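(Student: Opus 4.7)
The plan is to follow the template of Theorem \ref{T:Cauchy-G1}: decompose a Krasner analytic $f$ into a regular piece plus a principal piece at each hole, evaluate each piece on the outer and inner arcs using the Residue Theorem, and then invert an $m\times m$ linear system whose nondegeneracy is exactly the content of Theorem \ref{T:D=1}. The genuinely new feature, compared with the small-holes version, is that a principal part $f_j$ associated to a hole of radius $R$ is not holomorphic on any disc $D^+(x_i,R)$, so the inner arc integrals around one large hole depend on the residues at all the other large holes; Theorem \ref{T:D=1} is precisely what decouples them.

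Concretely, first write $f=f_0+\sum_{j=1}^M f_j$, with $f_0\in H(D^+(a,R))$ and $f_j(x)=\sum_{n\ge 1}c_{n,j}(x-x_j)^{-n}$ convergent on $\{|x-x_j|_p\ge \rho_j\}$, as in the proof of Theorem \ref{T:residueKr}. Since every $D^+(x_i,r_i)$ lies inside $D^+(a,R)$, Theorem \ref{T:holo=0} kills $f_0$ from every arc integral. Setting $\gamma_j:=c_{1,j}$ and applying Theorem \ref{T:res Laur} to each Laurent tail gives
\[
\int_{\mathcal{A}(a,b),\Phi_\alpha}f_j\,dx = B_j\,\gamma_j, \qquad B_j:=\frac{1}{1-\omega^{p^\alpha}\!\left(\frac{a-x_j}{a-b}\right)},
\]
and $\int_{\mathcal{A}(x_i,b_i),\Phi_\alpha}f_j\,dx = A_{ij}\,\gamma_j$, where: (i) $A_{ii}=1$ (taking Laurent center $x_j=x_i$); (ii) for $i>m$ and $j\ne i$, the disjointness of $D^+(x_i,r_i)$ with $D^-(x_j,\rho_j)$ together with $r_i<R$ makes $f_j$ holomorphic on $D^+(x_i,r_i)$, so $A_{ij}=0$; (iii) for $i\le m$ and $j\ne i$, the hypothesis $|b_i-x_j|_p=R$ forces $|y-x_j|_p=R\ge\rho_j$ for every $y\in\mathcal{A}(x_i,b_i)$, so $f_j$ converges on the arc, and Theorem \ref{T:res Laur} (now with Laurent center $x_j$) delivers
\[
A_{ij}=\frac{1}{1-\omega^{p^\alpha}\!\left(\frac{x_i-x_j}{x_i-b_i}\right)}.
\]

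Summing, $I:=\int_{\mathcal{A}(a,b),\Phi_\alpha}f\,dx=\sum_j B_j\gamma_j$ and $J_i:=\int_{\mathcal{A}(x_i,b_i),\Phi_\alpha}f\,dx=\sum_j A_{ij}\gamma_j$. We seek $\mu_1,\dots,\mu_M$ with $B_j=\sum_i \mu_i A_{ij}$ for every $j$, because then $\sum_i \mu_i J_i=\sum_j\left(\sum_i \mu_i A_{ij}\right)\gamma_j=\sum_j B_j\gamma_j=I$ holds for every admissible $f$ regardless of its residues. The equations with $j>m$ collapse to $\mu_j=B_j-\sum_{i\le m}\mu_i A_{ij}$, so $\mu_{m+1},\dots,\mu_M$ are determined explicitly once $\mu_1,\dots,\mu_m$ are. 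The remaining $m$ equations $B_j=\sum_{i\le m}\mu_i A_{ij}$ for $j\le m$ form a linear system whose coefficient matrix is (the transpose of) the matrix $\mathbf{D}_\alpha$ of Theorem \ref{T:D=1}; the geometric constraints $|x_i-x_j|_p=|b_i-b_j|_p=|b_i-x_j|_p=R$ required there are built into the standing hypotheses on the large holes. By Theorem \ref{T:D=1}, $|\det \mathbf{D}_\alpha|_p=1$, so the system has a unique solution, clearly depending only on $\alpha$ and on the points $a,b$ and $x_i,b_i$ for $i\le m$; the remaining $\mu_j$ inherit only the dependencies already listed in the statement.

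The main obstacle is case (iii): one must confirm that the Laurent expansion of $f_j$ around $x_j$ actually converges on the arc $\mathcal{A}(x_i,b_i)$ when $r_i=R$, so that Theorem \ref{T:res Laur} is legitimately applied with Laurent center $x_0:=x_j$ and not $x_0:=x_i$. The verification amounts to checking $|y-x_j|_p=R$ for every $y\in\mathcal{A}(x_i,b_i)$, which follows from $|b_i-x_j|_p=R$ and $|y-b_i|_p<R$ by the ultrametric inequality. Once this is in hand, the conclusion is a direct inversion of the nondegenerate system furnished by Theorem \ref{T:D=1}.
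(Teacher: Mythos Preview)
Your argument is correct and follows essentially the same route as the paper: decompose $f$ via the Mittag--Leffler form for Krasner analytic functions, reduce all arc integrals to linear expressions in the residues $c_{1,j}$ via Theorem~\ref{T:res Laur}, and invoke Theorem~\ref{T:D=1} to guarantee invertibility of the resulting $m\times m$ block. The only cosmetic difference is that the paper solves the system $J=Ac$ for the residues $c_{1,j}$ by Cramer's rule and then substitutes into the outer integral, whereas you solve the transposed system $A^{T}\mu=B$ directly for the coefficients $\mu_i$; these are the same linear-algebraic inversion.
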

\begin{proof}
We can write
\[
	f(x) = \sum_{i = 0}^{M}f_{i}(x)
\]
with $f_{0}(x) \in H(D^{+}(a,R))$ and
\[
	f_{i}(x) = \sum_{n=1}^{\infty}\frac{c_{n,i}}{(x - x_{i})^{n}}, \qquad |x - x_{i}|_{p} 
	\geq r_{i}.
\]
Hence,
\begin{equation}\label{E:Cauchy-G3}
	\int_{\mathcal{A}(a,b),\Phi_{\alpha}}f(x)\,dx = \sum_{i = 0}^{M}\int_{\mathcal{A}(a,b),\Phi_{\alpha}}f_{i}(x)\,dx = \sum_{i = 1}^{M}\frac{c_{1,i}}{1 - \omega^{p^{\alpha}}\left( \frac{a - x_{i}}{a - b}\right)}.
\end{equation}
For $1 \leq i \leq m$, we have
\[
\int_{\mathcal{A}(x_{i},b_{i}),\Phi_{\alpha}}f(x)\,dx = \sum_{j = 0}^{M}\int_{\mathcal{A}(x_{i},b_{i}),\Phi_{\alpha}}f_{j}(x)\,dx = \sum_{j = 1}^{M}\frac{c_{1,j}}{1 - \omega^{p^{\alpha}}\left( \frac{x_{i} - x_{j}}{x_{i} - b_{i}}\right)}.
\]
These are $m$ linear equations in the $M$ variables $c_{1,j}$, $j = 1,\dotsc, M$.\\
For $m + 1 \leq i \leq M$ we have, as in the proof of the preceding theorem,
\[
	\int_{\mathcal{A}(x_{i},b_{i}),\Phi_{\alpha}}f(x)\,dx = \int_{\mathcal{A}(x_{i},b_{i}),\Phi_{\alpha}}f_{i}(x)\,dx = c_{1,i}.
\]
These are $M - m$ additional simple linear equations in $c_{1,j}$, $j = 1,\dotsc,M$.
Theorem \ref{T:D=1} and the diagonal nature of the last $M - m$ equations show that the $M$ equations are linearly independent. By Cramer's rule for solving linear equations, the $c_{1,j}$ are each linear combinations of the integrals
\[
\int_{\mathcal{A}(x_{i},b_{i}),\Phi_{\alpha}}f(x)\,dx. 
\]
Substitution of these expressions for the $c_{1,j}$ into equation \eqref{E:Cauchy-G3} completes the proof.
\end{proof}
Three versions of Cauchy's Integral Formula, corresponding to increasingly general domains, will be given. A fourth version, with a slightly different approach in a special case, will follow.\\

The first version, for functions holomorphic on a closed disc, shows how knowledge of a function on a single arc determines the function values anywhere else on the disc. 
\begin{theorem}[Cauchy's Integral Formula on closed discs]\label{T:Cauchy disc}
Let $D = D^{+}(a,R)$. Let $f \in H(D)$. Choose $b$ with $|b - a|_{p} = R$ and $z$ with $z \in D\setminus\mathcal{A}(a$, $b)$. Let $\alpha \in \mathbb{Z}_{\geq 0}$.
Then,
\[
	f(z) = \left(1 - \omega^{p^{\alpha}}\left( \frac{a - z}{a - b}\right)\right)\int_{\mathcal{A}(a,b),\Phi_{\alpha}}\frac{f(x)}{x - z}\,dx.
\]
\end{theorem}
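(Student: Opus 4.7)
The plan is to mimic the classical complex-analytic proof of Cauchy's integral formula: subtract a constant to produce a holomorphic (removable) integrand whose integral vanishes, and then read off $f(z)$ from the residue-style calculation of $\int 1/(x-z)\,dx$ already available from Theorem \ref{T:ratfunc int}.

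Concretely, I would first verify that the two integrals in play are legitimate. Since $z \in D^{+}(a,R)\setminus \mathcal{A}(a,b)$, for every $x \in \mathcal{A}(a,b)$ one has $|x-a|_p = R$ and either $|z-a|_p < R$ or $|z-b|_p = R$; in either case the ultrametric inequality gives $|x-z|_p = R$. Hence $f(x)/(x-z)$ is a uniform limit of rational functions on $\mathcal{A}(a,b)$ with no poles there, so by Theorem \ref{T:Krasner} it lies in $\mathcal{I}_{\mathcal{A},\Phi_{\alpha}}$. Similarly, writing $f(x)=\sum_{n\ge 0}c_n(x-a)^n$, the telescoping identity
\[
\frac{f(x)-f(z)}{x-z}=\sum_{n\ge 1}c_n\sum_{k=0}^{n-1}(x-a)^{k}(z-a)^{n-1-k}
\]
shows that $g(x):=(f(x)-f(z))/(x-z)$ extends to an element of $H(D^{+}(a,R))$.

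Next, I would apply Theorem \ref{T:holo=0} to $g$ on the arc $\mathcal{A}(a,b)$ to obtain
\[
0=\int_{\mathcal{A}(a,b),\Phi_{\alpha}}\frac{f(x)-f(z)}{x-z}\,dx
=\int_{\mathcal{A}(a,b),\Phi_{\alpha}}\frac{f(x)}{x-z}\,dx
-f(z)\int_{\mathcal{A}(a,b),\Phi_{\alpha}}\frac{1}{x-z}\,dx,
\]
using the linearity of the integral on $\mathcal{I}_{\mathcal{A},\Phi_{\alpha}}$ (Theorem \ref{T:all f}). Now invoke Theorem \ref{T:ratfunc int}: if $|z-a|_p<R$ then $\int_{\mathcal{A},\Phi_{\alpha}}(x-z)^{-1}\,dx=1$, while if $|z-a|_p=R$ and $z\notin\mathcal{A}$ then the value is $1/(1-\omega^{p^{\alpha}}((a-z)/(a-b)))$. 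Because the convention $\omega(u)=0$ for $|u|_p<1$ sets the first case's reciprocal factor to $1$, the two cases combine into the single expression $1/(1-\omega^{p^{\alpha}}((a-z)/(a-b)))$ valid throughout $D\setminus\mathcal{A}(a,b)$. Substituting this back yields
\[
f(z)=\left(1-\omega^{p^{\alpha}}\!\left(\frac{a-z}{a-b}\right)\right)\int_{\mathcal{A}(a,b),\Phi_{\alpha}}\frac{f(x)}{x-z}\,dx,
\]
which is the desired formula.

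The only real subtlety is making sure the interior and boundary-circle cases are treated uniformly; everything else is bookkeeping. There is no delicate estimation needed here, because the heavy analytic lifting (integrability of Krasner analytic functions, vanishing of integrals of holomorphic functions on $D^{+}$, and the explicit value of $\int 1/(x-z)\,dx$) has already been done in the cited theorems. Hence the main obstacle is merely to notice that the Teichmüller extension $\omega(u)=0$ on $|u|_p<1$ unifies the interior and boundary formulas into one clean statement.
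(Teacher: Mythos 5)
Your proof is correct, and in substance it rests on the same decomposition the paper uses, but it reaches the conclusion by a different chain of citations. The paper's proof is a one-liner: expand $f(x)/(x-z)$ as a Laurent series about $z$ (principal part $f(z)/(x-z)$, regular part $(f(x)-f(z))/(x-z)$) and apply the Residue Theorem for Laurent series, Theorem \ref{T:res Laur}, which already packages the convergence issues and the evaluation of $\int (x-z)^{-1}\,dx$ into one statement. You instead carry out the two pieces by hand: you check that $(f(x)-f(z))/(x-z)$ extends to $H(D^{+}(a,R))$ (your rearranged double series does converge, since the terms are bounded by $|c_n|_pR^{n-1}\to 0$), kill its integral with Theorem \ref{T:holo=0}, justify integrability of $f(x)/(x-z)$ via Theorem \ref{T:Krasner}, use linearity (Theorem \ref{T:all f}), and then quote Theorem \ref{T:ratfunc int} for $\int (x-z)^{-1}\,dx$, observing that the convention $\omega(u)=0$ for $|u|_p<1$ merges the interior and boundary cases. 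This buys a more self-contained, classical-looking argument that does not depend on Theorem \ref{T:res Laur}; the paper's route is shorter because that theorem has already been established. The only point worth making explicit in your write-up is that $1-\omega^{p^{\alpha}}\bigl((a-z)/(a-b)\bigr)\neq 0$, which follows from $z\notin\mathcal{A}(a,b)$ (so that $|b-z|_p=R$ and hence $(a-z)/(a-b)\not\equiv 1$ modulo the maximal ideal); otherwise the final multiplication through by this factor would be vacuous.
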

The expression in front of the integral sign is constant for $z$ within an open disc of radius $R$.
 \begin{proof}
 The Laurent series for $f(x)/(x - z)$, expanded around $z$, and Theorem \ref{T:res Laur} give the result immediately.
 \end{proof}
The same method provides Cauchy's Formula for the derivatives of a holomorphic function on a closed disc.
\begin{theorem}
Let $D = D^{+}(a,R)$. Let $f \in H(D)$. Choose $b$ with $|b - a|_{p} = R$ and $z$ with $z \in D\setminus\mathcal{A}(a$, $b)$. Let $\alpha \in \mathbb{Z}_{\geq 0}$.
Then,
\[
	\frac{f^{(n)}(z)}{n!} = \left(1 - \omega^{p^{\alpha}}\left( \frac{a - z}{a - b}\right)\right)\int_{\mathcal{A}(a,b),\Phi_{\alpha}}\frac{f(x)}{(x - z)^{(n + 1)}}\,dx.
\].
\end{theorem}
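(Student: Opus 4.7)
The plan is to mimic the proof of Theorem~\ref{T:Cauchy disc} and apply Theorem~\ref{T:res Laur} to the Laurent expansion of $f(x)/(x - z)^{n + 1}$ around $z$, extracting its residue at $z$.

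First I would re-expand $f$ around $z$. Since $f \in H(D^{+}(a,R))$ and $|z - a|_{p} \leq R$, one has $D^{+}(a,R) = D^{+}(z,R)$ by the ultrametric inequality, and the usual re-expansion gives
\[
	f(x) = \sum_{k \geq 0}\frac{f^{(k)}(z)}{k!}(x - z)^{k}, \qquad |x - z|_{p} \leq R.
\]
Dividing by $(x - z)^{n + 1}$ yields the Laurent expansion
\[
	\frac{f(x)}{(x - z)^{n + 1}} = \sum_{k \geq 0}\frac{f^{(k)}(z)}{k!}(x - z)^{k - n - 1},
\]
whose coefficient of $(x - z)^{-1}$ is $f^{(n)}(z)/n!$.

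Next I would verify that this Laurent expansion is valid on the arc of integration and that Theorem~\ref{T:res Laur} may be applied with expansion center $x_{0} = z$. Because $z \notin \mathcal{A}(a,b)$, one has $|z - b|_{p} \geq R$; combined with $|z - a|_{p} \leq R = |a - b|_{p}$, the ultrametric inequality forces $|z - b|_{p} = R$. Any $x \in \mathcal{A}(a,b)$ satisfies $|x - b|_{p} < R$, hence $|x - z|_{p} = R$, so $x$ lies on the circle of radius $R$ centered at $z$ and the Laurent series above converges on this circle. Theorem~\ref{T:res Laur} then gives
\[
	\int_{\mathcal{A}(a,b),\Phi_{\alpha}}\frac{f(x)}{(x - z)^{n + 1}}\,dx = \frac{f^{(n)}(z)/n!}{1 - \omega^{p^{\alpha}}\left(\frac{a - z}{a - b}\right)},
\]
which on rearrangement is the stated formula. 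I do not anticipate a real obstacle: the argument is the same structural move as in Theorem~\ref{T:Cauchy disc}, the only new ingredient being the identification of the $(x - z)^{-1}$-coefficient after dividing by $(x - z)^{n + 1}$, which mirrors the standard calculation from complex variables.
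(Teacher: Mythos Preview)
Your proof is correct and is exactly the approach the paper intends: it states that ``the same method'' as in Theorem~\ref{T:Cauchy disc} gives this result, namely applying Theorem~\ref{T:res Laur} to the Laurent expansion of $f(x)/(x-z)^{n+1}$ about $x_0=z$. Your verification that $|b-z|_p=R$ so that $b$ lies on the relevant circle is a useful detail the paper leaves implicit.
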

The expression in front of the integral sign is constant for $z$ within an open disc of radius $R$.\\
The following known result follows easily from Theorem \ref{T:Cauchy disc}.
\begin{theorem}\label{T:maxmod}
Let $D = D^{+}(a,R)$.  If $f \in H(D)$, then 
\[
\sup_{x \in \mathcal{A}}|f(x)|_{p} = \max_{x \in D}|f(x|_{p}
\]
for every open disc $\mathcal{A}$ of radius $R$.
\end{theorem}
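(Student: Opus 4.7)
The plan is to use Cauchy's Integral Formula on closed discs (Theorem \ref{T:Cauchy disc}) together with the basic size estimate for line integrals (Theorem \ref{T:max int}). One direction is free: since $\mathcal{A} \subset D$ we automatically have $\sup_{x \in \mathcal{A}}|f(x)|_p \leq \max_{x \in D}|f(x)|_p$. All the work goes into the reverse inequality, namely showing $|f(z)|_p \leq \sup_{x \in \mathcal{A}}|f(x)|_p$ for every $z \in D$.

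Fix $z \in D = D^{+}(a,R)$. If $z \in \mathcal{A}$ there is nothing to prove, so assume $z \notin \mathcal{A}$. Write $\mathcal{A} = \mathcal{A}(a,b)$ for some $b$ on the boundary circle. My first step would be to pin down the key metric fact: since $z \in D^+(a,R)$ and $z \notin D^-(b,R)$, the ultrametric inequality forces $|z - b|_p = R$, and then for any $x \in \mathcal{A}$ (so $|x - b|_p < R$) we get
\[
|x - z|_p = |(x - b) - (z - b)|_p = |z - b|_p = R.
\]
This is the geometric input that makes everything go through cleanly.

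Next I would invoke Theorem \ref{T:Cauchy disc} with any $\alpha \in \mathbb{Z}_{\geq 0}$, which gives
\[
f(z) = \left(1 - \omega^{p^{\alpha}}\!\left(\tfrac{a-z}{a-b}\right)\right)\int_{\mathcal{A}(a,b),\Phi_{\alpha}}\frac{f(x)}{x-z}\,dx.
\]
The prefactor has p-adic absolute value $\leq 1$ because the Teichm\"uller character $\omega$ takes values in $\{0\} \cup \mu_{p'}$, all of p-adic norm $\leq 1$. For the integrand, the computation in the previous paragraph shows $|f(x)/(x-z)|_p = |f(x)|_p/R$ uniformly for $x \in \mathcal{A}$, so in particular the integrand is bounded on $\mathcal{A}$ by $M := R^{-1}\sup_{x \in \mathcal{A}}|f(x)|_p$. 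Applying Theorem \ref{T:max int} to the integral yields
\[
\left|\int_{\mathcal{A}(a,b),\Phi_{\alpha}}\frac{f(x)}{x-z}\,dx\right|_p \leq M\cdot R = \sup_{x \in \mathcal{A}}|f(x)|_p,
\]
and multiplying by the (norm $\leq 1$) prefactor gives $|f(z)|_p \leq \sup_{x \in \mathcal{A}}|f(x)|_p$, as desired. Taking the supremum over $z \in D$ closes the argument.

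I do not anticipate any real obstacle here; the only subtlety is verifying that $|z - b|_p = R$ when $z \in D \setminus \mathcal{A}$, which is a one-line ultrametric argument but is essential for pulling the factor $1/R$ out of the integrand cleanly so that Theorem \ref{T:max int} yields precisely $\sup_{\mathcal{A}}|f|_p$ rather than something larger. Everything else is a direct application of previously established tools.
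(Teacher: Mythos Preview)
Your proposal is correct and follows essentially the same approach as the paper: apply Theorem~\ref{T:max int} to the integral appearing in Cauchy's Integral Formula (Theorem~\ref{T:Cauchy disc}) to bound $|f(z)|_p$ by $\sup_{x\in\mathcal{A}}|f(x)|_p$ for $z\in D\setminus\mathcal{A}$. The paper's proof is extremely terse (two lines), and you have simply filled in the details it leaves implicit---the ultrametric verification that $|x-z|_p=R$ for $x\in\mathcal{A}$, the fact that the Teichm\"uller prefactor has norm $\leq 1$, and the explicit invocation of the bound from Theorem~\ref{T:max int}.
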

\begin{proof}
Theorem \ref{T:max int} applied to the integral in Cauchy's Theorem yields
\[
	\max_{x \in D\backslash \mathcal{A}}|f(x|_{p} \leq \sup_{x \in \mathcal{A}}|f(x)|_{p}
\]
for every arc $\mathcal{A}$. Hence,
\[
	\sup_{x \in \mathcal{A}}|f(x)|_{p} = \max_{x \in D}|f(x|_{p}.
\]
\end{proof}

This next version of Cauchy's Intergral Theorem is for a disc with a finite number of holes, all of whose radii are less than the radius of the disc. This result differs from the complex variables theorem in that the holes can be on the boundary circle and $z$ can be on the boundary circle. Any circle of radius $R$ can be used as the boundary circle of the disc. As in the complex case we use integrals on the boundary circle of the disc and the boundary circles of the holes. These integrals will be on arcs of these circles.
\begin{theorem}[Cauchy's Integral Formula - small holes]\label{T:Cauchy intK small}
Let $\mathcal{C}$ be a circle with center $a$ and radius $R$. Let $\mathcal{D} = D^{+}(a,R)\setminus \cup_{i = 1}^{M}D^{-}(x_{i},r_{i})$ with $r_{i} < R$ and the $D^{+}(x_{i},r_{i})$ mutually disjoint. Let $b$ be a point on $\mathcal{C}$ with $|b - x_{i}|_{p} = R$ for all $i$. Choose $b_{i}$, $i = 1, \dots, M$, so that $|x_{i} - b_{i}|_{p} = r_{i}$. Let $x_{0} \in \mathcal{D}$ be such that $|x_{0} - x_{i}|_{p} > r_{i}$ and $|x_{0} - b|_{p} = R$.
Let $r^{*} = \min_{1 \leq i \leq M}|x_{0} - x_{i}|_{p}$ and $\mathcal{D}^{*} = D^{-}(x_{0}, r^{*})$.
Let $\alpha \in \mathbb{Z}_{\geq 0}$.\\
Then, if $z \in \mathcal{D}^{*}$ and $f(x)$ is a Krasner analytic function on $\mathcal{D}$,
\[
	\frac{f(z)}{1 - \omega^{p^{\alpha}}\left( \frac{a - x_{0}}{a - b}\right)}  = \int_{\mathcal{A}(a,b),\Phi_{\alpha}}\frac{f(x)}{x - z}\,dx - \sum_{i = 1}^{M}\frac{1}{1 - \omega^{p^{\alpha}}\left( \frac{a - x_{i}}{a - b}\right)}\int_{\mathcal{A}(x_{i},b_{i}),\Phi_{\alpha}}\frac{f(x)}{x - z}\,dx.
\]
The constant coefficients depend only on $\alpha$, the arc $\mathcal{A}(a,b)$, $\mathcal{D}^{*}$, $\mathcal{D}$ and the choice of boundary circle $\mathcal{C}$.
\end{theorem}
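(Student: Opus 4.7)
The plan is to apply the Residue Theorem (Theorem \ref{T:residueKr}) to the auxiliary function $g(x) = f(x)/(x - z)$ on the enlarged domain $\widetilde{\mathcal{D}} = \mathcal{D} \setminus D^-(x_0, r_0)$, where $r_0$ is chosen so that $|z - x_0|_p < r_0 < r^*$. With this choice $z$ lies inside the newly inserted hole, $D^+(x_0, r_0)$ is disjoint from every $D^+(x_i, r_i)$ (since $|x_0 - x_i|_p \geq r^* > r_0$ and $|x_0 - x_i|_p > r_i$), and $|x - z|_p \geq r_0$ for every $x \in \widetilde{\mathcal{D}}$. This last bound lets me divide the rational approximants of $f$ by $(x - z)$ without destroying uniform convergence, so $g$ is Krasner analytic on $\widetilde{\mathcal{D}}$.

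Next I would use the Krasner decomposition $f = f_0 + \sum_{i=1}^{M} f_i$, with $f_0 \in H(D^+(a,R))$ and $f_i(x) = \sum_{n \geq 1} c_{n,i}/(x - x_i)^{n}$ converging for $|x - x_i|_p \geq r_i$, and compute the residue of $g$ at each of the $M+1$ hole centers. The function $f$ is holomorphic on $D^-(x_0, r^{*})$ (each $f_i$ converges there because $|x - x_i|_p = |x_0 - x_i|_p > r_i$), so the splitting
\[
g(x) = \frac{f(z)}{x-z} + \frac{f(x) - f(z)}{x - z}
\]
exhibits the second summand as holomorphic across the new hole; by uniqueness of the Krasner principal-part decomposition, the part of $g$ captured by the new hole is exactly the Laurent expansion of $f(z)/(x-z)$ in powers of $1/(x - x_0)$, whose residue is $f(z)$. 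At each original center $x_i$ only $f_i(x)/(x - z)$ contributes singular behavior, and multiplying the Laurent series $f_i$ by the Taylor expansion of $1/(x - z)$ at $x_i$ (valid since $|z - x_i|_p = |x_0 - x_i|_p > r_i$) yields $\mathrm{Res}_{x_i} g = -f_i(z)$.

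Applying Theorem \ref{T:residueKr} on $\widetilde{\mathcal{D}}$ then gives
\[
\int_{\mathcal{A}(a,b),\Phi_{\alpha}} g(x)\,dx
= \frac{f(z)}{1 - \omega^{p^{\alpha}}\!\left(\frac{a - x_{0}}{a - b}\right)}
\;-\; \sum_{i=1}^{M}\frac{f_{i}(z)}{1 - \omega^{p^{\alpha}}\!\left(\frac{a - x_{i}}{a - b}\right)}.
\]
It remains to reinterpret each $-f_i(z)$ as the small-arc integral $\int_{\mathcal{A}(x_i, b_i),\Phi_{\alpha}} g(x)\,dx$. The contributions from $f_0/(x-z)$ and $f_j/(x-z)$ with $j \neq i$ vanish by Theorem \ref{T:holo=0}, being holomorphic on $D^+(x_i, r_i)$ (neither $z$ nor any $x_j$, $j \neq i$, lies in that closed disc), while the $f_i/(x-z)$ term reduces via Theorem \ref{T:res Laur} applied at expansion center $x_i$ to exactly its residue, since $(x_i - x_i)/(x_i - b_i) = 0$ makes the Teichm\"{u}ller prefactor equal to $1$. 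Substituting $-f_i(z) = \int_{\mathcal{A}(x_i, b_i),\Phi_{\alpha}} g\,dx$ into the displayed identity and rearranging isolates $f(z)/(1 - \omega^{p^{\alpha}}((a - x_0)/(a - b)))$ on the left, which is the claim.

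I expect the main obstacle to be the clean identification $\mathrm{Res}_{x_0} g = f(z)$: a direct coefficient calculation mixes the singular piece $f(z)/(x-z)$ with higher-order Taylor data of $f$ at $x_0$, and I need uniqueness of the Krasner decomposition to absorb all that extra data into $g_0$ and into the principal parts at the other $x_i$. A minor bookkeeping point — that the prefactor involving $x_0$ in the statement matches the one produced by the Residue Theorem — is immediate, since $|z - x_0|_p < r^{*} \leq R$ forces $(a - z)/(a - b)$ and $(a - x_0)/(a - b)$ to have the same Teichm\"{u}ller representative.
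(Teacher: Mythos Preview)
Your proposal is correct and follows essentially the same route as the paper: insert an auxiliary hole $D^-(x_0,r_0)$ with $|z-x_0|_p<r_0<r^*$, observe that $f(x)/(x-z)$ is Krasner analytic on the punctured domain, and identify the contribution of the new hole with $f(z)$. The only cosmetic difference is packaging: the paper invokes the already--proved Cauchy--Goursat Theorem (small holes) together with the closed--disc Cauchy formula to handle the $x_0$--term, whereas you apply the Residue Theorem directly and then re-derive the identification of each residue with the corresponding small--arc integral; the underlying computations coincide.
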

\begin{proof}
Choose $r_{0}$ so $|z - x_{0}|_{p} < r_{0} < r^{*}$. Now select $b_{0}$ so $|x_{0} - b_{0}|_{p} = r_{0}$. Let $\mathcal{D}' = \mathcal{D} \setminus D^{-}(x_{0},r_{0})$.
The function $f(x)/(x - z)$ on $\mathcal{D}'$ satisfies the conditions of Theorem \ref{T:Cauchy-G1}, the \emph{Cauchy-Goursat Theorem - small holes}, so we have
\begin{equation}\label{E:Cauchy intK small}
	\int_{\mathcal{A}(a,b),\Phi_{\alpha}}\frac{f(x)}{x - z}\,dx = \sum_{i = 0}^{M}\frac{1}{1 - \omega^{p^{\alpha}}\left( \frac{a - x_{i}}{a - b}\right)}\int_{\mathcal{A}(x_{i},b_{i}),\Phi_{\alpha}}\frac{f(x)}{x - z}\,dx.
\end{equation}
Cauchy's Theorem for functions holomorphic on a closed disc, Theorem \ref{T:Cauchy disc}, yields
\[
	f(z) = \int_{\mathcal{A}(x_{0},b_{0}),\Phi_{\alpha}}\frac{f(x)}{x - z}\,dx.
\]
Substitution of $f(z)$ for this integral in equation \eqref{E:Cauchy intK small} and rearranging terms yields
\[
	\frac{f(z)}{1 - \omega^{p^{\alpha}}\left( \frac{a - x_{0}}{a - b}\right)}  = \int_{\mathcal{A}(a,b),\Phi_{\alpha}}\frac{f(x)}{x - z}\,dx - \sum_{i = 1}^{M}\frac{1}{1 - \omega^{p^{\alpha}}\left( \frac{a - x_{i}}{a - b}\right)}\int_{\mathcal{A}(x_{i},b_{i}),\Phi_{\alpha}}\frac{f(x)}{x - z}\,dx. 
\]
\end{proof}
We can observe that the value of $\omega\left( \frac{a - x_{i}}{a - b}\right)$  is constant for values of $x_{i}$ within an open disc of radius $R$.\\

This next result, the most general, is Cauchy's Integral Formula for Krasner analytic functions on a disc with a finite number of open holes of any size. It will be assumed that at least one hole has radius $R$. $z$ can be anywhere in the domain except on the boundary circle of a small hole. This situation does not have a complex analog. The boundary circles of holes of radius $R$ are themselves boundary circles for the closed disc. Hence we distinguish a boundary circle for the disc separate from the boundary circles of the holes.
 \begin{theorem}[Cauchy's Integral Formula]\label{T:Cauchy intK}
Let $\mathcal{C}$ be a circle with center $a$ and radius $R$. Let $\mathcal{D} = D^{+}(a,R)\setminus\cup_{i = 1}^{M}D^{-}(x_{i},r_{i})$, with $r_{i} = R$ for $i = 1,\dots,m$ and $r_{i} < R$ for $m <i \leq M$. Suppose that $|a - x_{i}|_{p} = R$ for $i \leq m$, the $D^{-}(x_{i},r_{i})$ are mutually disjoint and the $D^{+}(x_{i},r_{i})$ are mutually disjoint for $i > m$. Let $b$ be a point on $\mathcal{C}$ with $|b - x_{i}|_{p} = R$ for all $i$. Let the $\mathcal{C}_{i}$ be circles having centers $x_{i}$ and radii $r_{i}$. For each $i$, let $b_{i}$ be a point on $\mathcal{C}_{i}$ with the condition, for $1 \leq i \leq m$, $|b_{i} - x_{j}|_{p} = R$ for any $j$ and $|b_{i} - b_{j}|_{p} = R$ if $i \neq j$. Let $\alpha \in \mathbb{Z}_{\geq 0}$.\\
Let $\bar{x} \in \mathcal{D}$ be such that
\[
	|\bar{x} - b_{i}|_{p} = R, \  \textup{for} \ 1 \leq i \leq m, \quad \textup{and}\quad |\bar{x} - x_{i}|_{p} > r_{i}, \  \textup{for} \ i > m.
\]
Let
\[r^{*} = \min_{1 \leq i \leq M}|\bar{x} - x_{i}|_{p} \quad \textup{and} \quad \mathcal{D}^{*} = D^{-}(\bar{x}, r^{*}).
\]
Then, if $z \in \mathcal{D}^{*}$, there are constants $\mu_{0}$, $\mu_{1}, \dotsc, \mu_{M}$, depending only on $\alpha$, $\mathcal{D}^{*}$, the region $\mathcal{D}$ with its choice of boundary circle, the arcs $\mathcal{A}(a, b)$ and $\mathcal{A}(x_{i}, b_{i})$, so that
\[
	f(z) = \mu_{0}\int_{\mathcal{A}(a,b),\Phi_{\alpha}}\frac{f(x)}{x - z}\,dx +  \sum_{i =1}^{M}\mu_{i}\int_{\mathcal{A}(x_{i},b_{i}),\Phi_{\alpha}}\frac{f(x)}{x - z}\,dx.
\]
\end{theorem}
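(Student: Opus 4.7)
The strategy is to insert an auxiliary small hole at $\bar x$ isolating $z$ and then combine the general Cauchy--Goursat Theorem (Theorem \ref{T:Cauchy-G2}) with the closed-disc Cauchy Integral Formula (Theorem \ref{T:Cauchy disc}). This parallels the proof of Theorem \ref{T:Cauchy intK small}, but must now accommodate the possibility that some holes of $\mathcal{D}$ have radius $R$.

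First I would pick an $r_0$ with $|z - \bar x|_p < r_0 < r^*$ and a point $b_0$ on the circle $|x - \bar x|_p = r_0$; then set $\mathcal{D}' = \mathcal{D} \setminus D^-(\bar x, r_0)$. The function $g(x) = f(x)/(x-z)$ is Krasner analytic on $\mathcal{D}'$, since its pole at $z$ lies inside the excised disc $D^-(\bar x, r_0)$. The conditions $r_0 < r^*$ and $|\bar x - b_i|_p = R$ for $1 \le i \le m$, together with the existing hypotheses on $\mathcal{D}$, ensure that $\mathcal{D}'$ satisfies the hypotheses of Theorem \ref{T:Cauchy-G2} when $D^-(\bar x, r_0)$ is adjoined as a new small hole and $\mathcal{A}(\bar x, b_0)$ as its arc.

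Applying Theorem \ref{T:Cauchy-G2} to $g$ on $\mathcal{D}'$ then produces constants $\mu_0^{\ast}, \mu_1^{\ast}, \dotsc, \mu_M^{\ast}$ satisfying
\[
\int_{\mathcal{A}(a,b),\Phi_\alpha} g\,dx \;=\; \mu_0^{\ast}\int_{\mathcal{A}(\bar x, b_0),\Phi_\alpha} g\,dx \;+\; \sum_{i=1}^M \mu_i^{\ast}\int_{\mathcal{A}(x_i,b_i),\Phi_\alpha} g\,dx.
\]
Simultaneously, because $\mathcal{D}^{*}$ meets none of the original holes, $f$ is holomorphic on the closed sub-disc $D^+(\bar x, r_0) \subset \mathcal{D}^{*}$, and Theorem \ref{T:Cauchy disc} gives
\[
f(z) \;=\; \Bigl(1 - \omega^{p^\alpha}\bigl(\tfrac{\bar x - z}{\bar x - b_0}\bigr)\Bigr) \int_{\mathcal{A}(\bar x, b_0),\Phi_\alpha} g(x)\,dx.
\]
Since $|z - \bar x|_p < r_0 = |b_0 - \bar x|_p$, the argument of $\omega^{p^\alpha}$ lies in the maximal ideal, the Teichm\"uller value is $0$, the prefactor equals $1$, and so $\int_{\mathcal{A}(\bar x, b_0),\Phi_\alpha} g\,dx = f(z)$ outright. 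Substituting and solving---which needs $\mu_0^{\ast}\neq 0$, guaranteed by Theorem \ref{T:D=1} applied to the linear system from which the $\mu_i^{\ast}$ arise by Cramer's rule---produces the claimed identity with $\mu_0 = 1/\mu_0^{\ast}$ and $\mu_i = -\mu_i^{\ast}/\mu_0^{\ast}$.

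The main obstacle I foresee is confirming that the $\mu_i$ depend only on the advertised data, and not on the auxiliary choices $r_0$ and $b_0$. My plan is to trace through the Cramer's-rule solution of the Cauchy--Goursat linear system for $g$ and observe that the row and column indexed by the new small hole enter only through (i) the single diagonal equation $c_{1,\bar x} = \int_{\mathcal{A}(\bar x, b_0),\Phi_\alpha} g\,dx$ and (ii) Teichm\"uller values of the form $\omega^{p^\alpha}((a - \bar x)/(a - b))$ and $\omega^{p^\alpha}((x_i - \bar x)/(x_i - b_i))$ for $i \le m$. None of these quantities involves $r_0$ or $b_0$; each depends only on the open disc of radius $R$ containing $\bar x$, which is the datum recorded by $\mathcal{D}^{*}$. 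This should deliver the required invariance and complete the proof.
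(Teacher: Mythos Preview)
Your overall plan coincides with the paper's: puncture $\mathcal{D}$ near $\bar x$, observe that $g(x)=f(x)/(x-z)$ is Krasner analytic on the punctured domain, express the arc integrals linearly in the residues, and solve for $f(z)$. The paper carries this out by writing down directly the $(M+1)\times(M+1)$ linear system in the unknowns $f(z),c_{1,1},\dotsc,c_{1,M}$ coming from the arcs $\mathcal{A}(a,b),\mathcal{A}(x_1,b_1),\dotsc,\mathcal{A}(x_M,b_M)$, and then proving the coefficient matrix is nonsingular. Your version packages the same computation by invoking Theorem~\ref{T:Cauchy-G2} as a black box and then inverting one scalar, $\mu_0^{\ast}$.

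The gap is exactly at that scalar. Your sentence ``guaranteed by Theorem~\ref{T:D=1} applied to the linear system from which the $\mu_i^{\ast}$ arise by Cramer's rule'' does not do what you need. In the proof of Theorem~\ref{T:Cauchy-G2}, Theorem~\ref{T:D=1} is applied to the $m\times m$ block $A=(\,1/(1-\omega^{p^{\alpha}}((x_i-x_j)/(x_i-b_i)))\,)_{1\le i,j\le m}$ indexed by the \emph{large} holes; this shows the Cauchy--Goursat system is solvable and hence that the $\mu_i^{\ast}$ exist. It says nothing about any individual $\mu_i^{\ast}$ being nonzero. Tracing the Cramer solution, one finds that $\mu_0^{\ast}=\det B/\det A$, where $B$ is the $(m+1)\times(m+1)$ matrix obtained from $A$ by adjoining the row indexed by $(a,b)$ and the column indexed by $\bar x$. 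Theorem~\ref{T:D=1} does \emph{not} apply to $B$ as stated, because its row label $a$ and its new column label $\bar x$ differ, so the matrix is not of the symmetric form $\bigl(1/(1-\omega^{p^{\alpha}}((x_i-x_j)/(x_i-b_i)))\bigr)$ with a common index set.

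The paper closes precisely this gap: it multiplies the top row of the coefficient matrix by $((\bar x - b)/(a-b))^{p^{\alpha}}$ and checks that, modulo terms of norm $<1$, the modified row becomes the row one would get with center $\bar x$ instead of $a$; then Theorem~\ref{T:D=1} applies to the modified $(m+1)\times(m+1)$ block (now indexed by $\bar x,x_1,\dotsc,x_m$) and yields $|\det B|_{p}=1$, hence $\mu_0^{\ast}\neq 0$. You will need this row-manipulation step (or an equivalent argument) to finish; once you insert it, your proof and the paper's are the same computation, just organized differently. Your closing remarks on why the $\mu_i$ are independent of $r_0$ and $b_0$ are correct.
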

\begin{proof}
It's convenient to write $x_{o} = a$ and $b_{0} = b$.
Let $z \in \mathcal{D}^{*}$. Let $\mathcal{D}' = \mathcal{D}\setminus{D^{*}}$.
The function $f(x)/(x - z)$ is a Krasner analytic function on  $\mathcal{D}'$. Hence it can be written
\[
	\frac{f(x)}{x - z}  = \frac{f(z)}{x - z} + \sum_{j = 0}^{M}f_{j}(x)
\]
with $f_{0}(x) \in H(D^{+}(a,R))$ and
\[
	f_{j}(x) = \sum_{n=1}^{\infty}\frac{c_{n,j}}{(x - x_{j})^{n}}, \qquad \text{for}\ |x - x_{j}|_{p} 
	\geq r_{j}\ \text{and}\  j \geq 1.
\]

Now let's integrate  $f(x)/(x - z)$ on each of the arcs $\mathcal{A}(x_{i},b_{i})$.
If $0 \leq i \leq m$ we have
\[
	\int_{\mathcal{A}(x_{i},b_{i}),\Phi_{\alpha}}\frac{f(x)}{x - z}\,dx = \frac{f(z)}{1 - \omega^{p^{\alpha}}\left( \frac{x_{i} - z}{x_{i} - b_{i}}\right)} + \sum_{j=1}^{M}\frac{c_{1,j}}{1 - \omega^{p^{\alpha}}\left( \frac{x_{i} - x_{j}}{x_{i} - b_{i}}\right)}.
\]
For $m < i \leq M$ we have, because $|x_{j} - x_{i}|_{p} > |r_{i}|_{p}$ when $j \neq i$.
\[
	\int_{\mathcal{A}(x_{i},b_{i}),\Phi_{\alpha}}\frac{f(x)}{x - z}\,dx = c_{1,i}.
\]
Because $|z - \bar{x}|_{p} < R$,  we can replace $z$ by $\bar{x}$, when $i \leq m$, in the expression
\[
	\omega\left( \frac{x_{i} - z}{x_{i} - b_{i}}\right).
\]
We now have $M + 1$ linear equations, in the $M + 1$ variables $f(z)$, $c_{1,1}, \dotsc, c_{1,M}$, described as follows:
The first $m + 1$ rows of the coefficient matrix consist of a first column 
\[
	\left(\frac{1}{1 - \omega^{p^{\alpha}}\left( \frac{x_{i} - \bar{x}}{x_{i} - b_{i}}\right)}\right)_{0 \leq i \leq m}.
\]
next to the matrix
\[
	\left(\frac{1}{1 - \omega^{p^{\alpha}}\left( \frac{x_{i} - x_{j}}{x_{i} - b_{i}}\right)}\right)_{0 \leq i \leq m, 1 \leq j \leq M}.
\]
The remaining rows have $1$ on the main diagonal and $0$ everywhere else.\\
The separate column of constants is
\[
\left(\int_{\mathcal{A}(x_{i},b_{i}),\Phi_{\alpha}}\frac{f(x)}{x - z}\,dx\right)_{0 \leq i \leq M}.
\]
The determinant of the coefficient matrix of these equations is not zero. To see this, multiply the top row of the matrix by
\[
	\left(\frac{\bar{x} - b_{0}}{x_{0} - b_{0}}\right)^{p^{\alpha}}.
\]
This factor is not zero. To within an error of less than $1$ in each term, the top row becomes, after the first element $1$,
\[
	\left(\frac{1}{1 - \omega^{p^{\alpha}}\left( \frac{\bar{x} - x_{j}}{\bar{x} - b_{0}}\right)}\right)_{1 \leq j \leq M}.
\]	
This gives a determinant to which Theorem \ref{T:D=1} can be applied to conclude the coefficient matrix has a non-zero determinant. Hence Cramer's rule can be used to solve the equations for $f(z)$. The theorem follows.  
\end{proof}

Here's an example with $p = 5$. Let $\mathcal{D} \subset \mathbb{C}_{5}$ be the closed unit disc around $0$ with the open unit disc around $0$ removed. We'll take the circle with center $-1$ and radius $1$ as the boundary circle for the disc. Suppose $f(x)$ is a Krasner analytic function on $\mathcal{D}$. We'll find an expression for $f(z)$ when $z$ is in the interior of the boundary circle,  the open unit disc around $-1$. We'll assume that the values of $f(x)$ are known on the arcs $\mathcal{A}(-1,1)$ and $\mathcal{A}(0,2)$. Let $\bar{x} = -1$. The first arc is on the boundary circle of the disc and the second on the boundary circle of the hole.

Take $\alpha = 0$, $x_{0} = -1$, $x_{1} = 0$, $b_{0} = 1$ and $b_{1} = 2$. In $\mathbb{C}_{5}$ we can write $\omega(2) = i$. Putting these numbers into the proof of Theorem \ref{T:Cauchy intK} yields:\\
If $|z + 1|_{p} < 1$,
\[
	f(z) = 2\int_{\mathcal{A}(-1,1),\Phi_{0}}\frac{f(x)}{x - z}\,dx -(1 - i)\int_{\mathcal{A}(-0,2),\Phi_{0}}\frac{f(x)}{x - z}\,dx.
\]\\

The next version of Cauchy's Integral Formula applies to a closed disc of radius $R$ with one open hole of radius $R$. It's different from Theorem \ref{T:Cauchy intK} in that the two circles used for the integrals use the same base point for an arc. That is,  $b_{0} = b_{1}$, which is impossible in Theorem \ref{T:Cauchy intK}. This means that while there will still be two integrals, knowledge of the values of $f(x)$ on only one arc is needed to obtain values of $f(z)$ for $z$ elsewhere in the domain. This approach also requires that a determinant $\textbf{D} \neq 0$. This is trickier to establish than the similar requirement in Theorem \ref{T:Cauchy intK}.

 \begin{theorem}[Cauchy's Integral Formula -one large hole ]\label{T:Cauchy intK1hole}
Let $\mathcal{D} = D^{+}(x_{1}, R)\backslash D^{-}(x_{1}, R)$. Select $b \in \mathcal{D}$ so that $|x_{1} - b|_{p} = R$. Let $x_{0} \in \mathcal{D}$ with $|x_{0} - b|_{p} = R$. Choose $a_{0} \in D^{+}(x_{1},R)$ so that $|a_{0} - b|_{p} = |a_{0} - x_{1}|_{p} = R$. For ease of notation, let $a_{1} = x_{1}$. Let $f$ be a Krasner analytic function on $\mathcal{D}$.\\
Then, if $|z - x_{0}|_{p} < R$ and $\textbf{D} \neq 0$,
\[
	f(z) = \frac{1}{\textbf{D}}\left(\int_{\mathcal{A}(a_{0},b),\Phi_{\alpha}}\frac{f(x)}{x - z}\,dx -  \frac{1}{1 - \omega^{p^{\alpha}}\left( \frac{a_{0} - x_{1}}{a_{0} - b}\right)}\int_{\mathcal{A}(x_{1},b),\Phi_{\alpha}}\frac{f(x)}{x - z}\,dx\right),
\]
where
\[
\textbf{D} = \det \left(\frac{1}{1 - \omega^{p^{\alpha}}\left( \frac{a_{i} - x_{j}}{a_{i} - b}\right)}\right)_{0 \leq i,j \leq 1}.
\]
Furthermore, $a_{0}$ can always be chosen so that $\textbf{D} \neq 0$.
\end{theorem}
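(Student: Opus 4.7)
The plan is to mirror the strategy of Theorem \ref{T:Cauchy intK}, adapted to this one-hole setting in which the two integrating arcs share the base point $b$. First, fix $r_{0}$ with $|z - x_{0}|_{p} < r_{0} < R$ and set $\mathcal{D}' = \mathcal{D} \setminus D^{-}(x_{0}, r_{0})$. On $\mathcal{D}'$ the function $g(x) = f(x)/(x - z)$ is Krasner analytic, so it admits the decomposition
\[
g(x) = g_{0}(x) + \sum_{n \geq 1}\frac{c_{n,1}}{(x - x_{1})^{n}} + \sum_{n \geq 1}\frac{c_{n,0}}{(x - x_{0})^{n}},
\]
with $g_{0} \in H(D^{+}(x_{1}, R))$; the simple pole of $1/(x-z)$ at $z$ is absorbed into the principal part at $x_{0}$.

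Next, identify $c_{1,0} = f(z)$. Pick $b_{0}$ with $|x_{0} - b_{0}|_{p} = r_{0}$. Since $D^{+}(x_{0}, r_{0}) \subset \mathcal{D}$, $f$ is holomorphic on $D^{+}(x_{0}, r_{0})$, and Theorem \ref{T:Cauchy disc} gives $f(z) = \int_{\mathcal{A}(x_{0}, b_{0}),\Phi_{\alpha}} g(x)\,dx$, the prefactor collapsing to $1$ because $|x_{0} - z|_{p} < |x_{0} - b_{0}|_{p}$. On $\mathcal{A}(x_{0}, b_{0}) \subset D^{+}(x_{0}, r_{0})$ the summands $g_{0}$ and $\sum c_{n,1}/(x - x_{1})^{n}$ are both holomorphic (the latter because $D^{+}(x_{0}, r_{0})$ is bounded away from $x_{1}$), so Theorem \ref{T:holo=0} annihilates their integrals. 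Theorem \ref{T:res Laur}, applied with center $x_{0}$, evaluates the integral of the third summand to $c_{1,0}$; hence $c_{1,0} = f(z)$.

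Now integrate $g$ over each of the two arcs $\mathcal{A}(a_{0}, b)$ and $\mathcal{A}(x_{1}, b)$. In each case Theorem \ref{T:holo=0} kills $g_{0}$ while Theorem \ref{T:res Laur} evaluates the two principal parts, yielding
\begin{align*}
\int_{\mathcal{A}(a_{0},b),\Phi_{\alpha}} g\,dx &= \frac{f(z)}{1 - \omega^{p^{\alpha}}\left(\frac{a_{0} - x_{0}}{a_{0} - b}\right)} + \frac{c_{1,1}}{1 - \omega^{p^{\alpha}}\left(\frac{a_{0} - x_{1}}{a_{0} - b}\right)},\\
\int_{\mathcal{A}(x_{1},b),\Phi_{\alpha}} g\,dx &= \frac{f(z)}{1 - \omega^{p^{\alpha}}\left(\frac{x_{1} - x_{0}}{x_{1} - b}\right)} + c_{1,1},
\end{align*}
the second-row diagonal entry collapsing to $1$ because $\omega((x_{1} - x_{1})/(x_{1} - b)) = 0$. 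The coefficient matrix of this $2\times 2$ system in the unknowns $(f(z), c_{1,1})$ is precisely $\textbf{D}$, so when $\textbf{D} \neq 0$ Cramer's rule produces the stated formula for $f(z)$.

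The hard part is the concluding existence statement that $a_{0}$ may always be chosen with $\textbf{D} \neq 0$. Writing $u = (a_{0} - x_{0})/(a_{0} - b)$ and $v = (a_{0} - x_{1})/(a_{0} - b)$, a direct computation yields the exact identity $1 - u = (1 - v)(1 - w)$, so the condition $\textbf{D} = 0$ is equivalent to
\[
\omega^{p^{\alpha}}(u) = \omega^{p^{\alpha}}(v) + \omega^{p^{\alpha}}(w) - \omega^{p^{\alpha}}(v)\,\omega^{p^{\alpha}}(w).
\]
The left-hand side is a root of unity of order prime to $p$, while the right-hand side is a generic element of $\mathbb{Z}_{p}$. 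As $a_{0}$ ranges over the countably many arcs of the boundary circle distinct from those containing $b$ and $x_{1}$, $\omega(v)$ takes infinitely many distinct values, and only finitely many of these can satisfy the above algebraic constraint (since $\omega^{p^{\alpha}}(u)$ is uniquely determined by $\omega(v)$ via the residue-field identity); an admissible choice of $a_{0}$ therefore exists.
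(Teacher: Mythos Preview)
Your derivation of the formula via Cramer's rule is correct and essentially identical to the paper's argument: you decompose $f(x)/(x-z)$ as a Krasner analytic function on $\mathcal{D}\setminus D^{-}(x_{0},r_{0})$, identify the residue at the removed disc with $f(z)$, and integrate over the two arcs $\mathcal{A}(a_{0},b)$ and $\mathcal{A}(x_{1},b)$ to produce a $2\times 2$ linear system whose coefficient matrix has determinant~$\textbf{D}$. The paper does the same, only writing the principal part at $z$ directly as $f(z)/(x-z)$ rather than expanding around $x_{0}$ and then proving $c_{1,0}=f(z)$ as you do; this is a cosmetic difference.

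The gap is in your treatment of the ``Furthermore'' clause. First, you never define $w$; from context it must be $(x_{1}-x_{0})/(x_{1}-b)$, and indeed your identity $1-u=(1-v)(1-w)$ then holds. More seriously, your finiteness claim is not justified. The condition $\textbf{D}=0$ is equivalent to $1-(1-\omega(v)^{p^{\alpha}})(1-\omega(w)^{p^{\alpha}})$ being a root of unity (namely $\omega(u)^{p^{\alpha}}$). You assert this is an ``algebraic constraint'' with finitely many solutions in $\omega(v)$, but the dependence of $\omega(u)$ on $\omega(v)$ runs through the Teichm\"uller lift, which is not polynomial; in the residue field the constraint is \emph{always} satisfied (Frobenius is a ring homomorphism), so the genuine content lies entirely in the lift. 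Your argument does not engage with this.

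The paper proves the existence of a good $a_{0}$ by an explicit case analysis (its Theorem~\ref{T:Cauchya0a1}): set $\zeta=\omega((x_{1}-x_{0})/(x_{1}-b))$ and take $a_{0}=x_{0}$ unless $\zeta$ is a primitive sixth root of unity, in which case take any $a_{0}$ with $\omega((a_{0}-x_{1})/(a_{0}-b))\neq -\zeta$. The key step is the complex-geometric observation that $z\mapsto 1/(1-z)$ carries the unit circle to the line $\Re z=\tfrac12$, which pins down exactly when the $2\times 2$ determinant of such expressions can vanish. Your intuition that the bad set of $a_{0}$ is small is correct, but to make it a proof you need an argument of this concrete kind rather than an appeal to an unspecified finiteness principle.
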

\begin{proof}
$f(x)/(x - z)$ is Krasner analytic on the arcs $\mathcal{A}(a_{i},b)$, so the integrals over these arcs will exist. Also, $f(x)/(x - z)$ is Krasner analytic on the domain
$\mathcal{D}\backslash D^{-}(z,R) = \mathcal{D}\backslash D^{-}(x_{0},R)$.
Hence,
\[
	\frac{f(x)}{x - z} = f_{0}(x) + f_{1}(x) + f_{2}(x),
\]
with $f_{0}(x) \in H(D^{+}(x_{1},R))$,
\[
	f_{1}(x) = \frac{f(z)}{x - z} \qquad \text{and} \quad f_{2}(x) = \sum_{n \geq 1}\frac{c_{n}}{(x - x_{1})^{n}}.
\]
Since
\[\omega\left(\frac{a_{i} - z}{a_{i} - b}\right) = \omega\left( \frac{a_{i} - x_{0}}{a_{i} - b}\right),
\]
we have the two equations
\[
	\int_{\mathcal{A}(a_{i},b),\Phi_{\alpha}}\frac{f(x)}{x - z}\,dx = \frac{f(z)}{1 - \omega^{p^{\alpha}}\left( \frac{a_{i} - x_{0}}{a_{i} - b}\right)} + \frac{c_{1}}{1 - \omega^{p^{\alpha}}\left( \frac{a_{i} - x_{1}}{a_{i} - b}\right)}\quad i = 1,2.
\]
These are linear equation in $f(z)$ and $c_{1}$, so Cramer's Rule applies and, if $\textbf{D} \neq 0$, we can solve for $f(z)$ as in the statement of the theorem.
\end{proof}
The following result shows that for any $f$ and $z$ that satisfy the conditions of Theorem \ref{T:Cauchy intK1hole} we can select $a_{0}$ so that $\textbf{D} \neq 0$.
\begin{theorem}\label{T:Cauchya0a1}
Let $\zeta = \omega\left(\frac{x_{1} - x_{0}}{x_{1} - b}\right)$. If $\zeta$ is not a primitive  $6$-th root of unity, then if $a_{0} = x_{0}$ we have $\textbf{D} \neq 0$. If  $\zeta$ is a primitive  $6$-th root of unity, then any $a_{0}$ such that $\omega\left(\frac{a_{0} - x_{1}}{a_{0} - b}\right) \neq -\zeta$ will have $\textbf{D} \neq 0$.
\end{theorem}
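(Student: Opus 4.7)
The plan is to expand the $2\times 2$ determinant, reduce the condition $\textbf{D}=0$ to a single identity among roots of unity in $\mathbb{C}_p$, and then classify its solutions.

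First I would observe that since $a_1 = x_1$ the $(1,1)$-entry of the matrix equals $1/(1 - \omega^{p^\alpha}(0)) = 1$. Writing $\rho := \omega\bigl(\tfrac{a_0 - x_0}{a_0 - b}\bigr)$ and $\eta := \omega\bigl(\tfrac{a_0 - x_1}{a_0 - b}\bigr)$, a direct expansion yields
\[
\textbf{D} = \frac{1}{1 - \rho^{p^\alpha}} - \frac{1}{(1 - \eta^{p^\alpha})(1 - \zeta^{p^\alpha})},
\]
so $\textbf{D} = 0$ is equivalent to $(1 - \eta^{p^\alpha})(1 - \zeta^{p^\alpha}) = 1 - \rho^{p^\alpha}$ in $\mathbb{C}_p$. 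The literal ratios obey $\tfrac{x_1 - b}{a_0 - b} \cdot \tfrac{x_0 - b}{x_1 - b} = \tfrac{x_0 - b}{a_0 - b}$; combined with $1 - \tfrac{a_0 - x_j}{a_0 - b} = \tfrac{x_j - b}{a_0 - b}$, this gives $(1 - \bar\eta)(1 - \bar\zeta) = 1 - \bar\rho$ in $\overline{\mathbb{F}_p}$ after reducing mod the maximal ideal. Hence both sides of the $\mathbb{C}_p$-identity automatically agree mod $p$; the question is whether they agree on the nose.

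The crux is a classification lemma: for roots of unity $\mu, \xi \in \mathbb{C}_p$ of order prime to $p$, the element $\nu := \mu + \xi - \mu\xi$ is either $0$ or a root of unity of order prime to $p$ exactly in the cases $\mu = 1$ with $\nu = 1$; $\xi = 1$ with $\nu = 1$; $\mu = -\xi$ with $\nu = \mu^2$; or $\mu,\xi$ both primitive $6$th roots of unity with $\mu\xi = 1$ and $\nu = 0$. I would prove this by fixing an archimedean embedding of $\overline{\mathbb{Q}}$, writing $\mu = e^{i\alpha}$, $\xi = e^{i\beta}$, and using $|\nu|^2 = 3 - 2\cos\alpha - 2\cos\beta + 2\cos(\alpha - \beta)$ together with the argument identity, reducing via half-angle formulas to $\sin(\alpha/2)\sin(\beta/2)\cos\tfrac{\alpha-\beta}{2}=0$ (with a separate treatment of $\nu=0$ giving the $6$th-root case); Kronecker's theorem bridges the gap between $|\nu|\in\{0,1\}$ and $\nu$ being a root of unity. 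The main obstacle is keeping the trigonometric case analysis clean and not missing degenerate solutions.

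To finish, I would apply the classification with $\mu = \eta^{p^\alpha}$, $\xi = \zeta^{p^\alpha}$, $\nu = \rho^{p^\alpha}$. The geometric conditions $|x_1 - b|_p = |a_0 - b|_p = R$ and $|x_1 - x_0|_p = |x_1 - b|_p = R$ force $\bar\eta \ne 1$ and $\bar\zeta \ne 1$, ruling out the first two cases of the lemma. Since $\omega(-1) = -1$ for odd $p$ and Teichm\"uller lifts are determined by their residues, $\eta^{p^\alpha} = -\zeta^{p^\alpha}$ is equivalent to $\eta = -\zeta$. Thus $\textbf{D} = 0$ precisely when either $\eta = -\zeta$ (with $\rho = \zeta^2$ then forced by the residue-field identity), or $\rho = 0$ together with $\zeta$ a primitive $6$th root. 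If $\zeta$ is not a primitive $6$th root, the choice $a_0 = x_0$ makes $\rho = 0$ and excludes both disjuncts (the first because $\zeta\ne 0$ so $\zeta^2\ne\rho$), giving $\textbf{D} \ne 0$. If $\zeta$ is a primitive $6$th root, pick $a_0$ with $|a_0 - x_0|_p = R$ (so $\rho \ne 0$, killing the second disjunct) and $\omega\bigl(\tfrac{a_0 - x_1}{a_0 - b}\bigr) \ne -\zeta$ (killing the first); this is possible because $\eta$ takes essentially every nonzero residue $\ne 1$ as $a_0$ varies over admissible points.
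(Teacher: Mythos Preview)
Your approach and the paper's are similar in spirit---both pass to an archimedean embedding and classify when certain combinations of roots of unity can satisfy the determinant identity---but the executions differ. The paper, after reducing $\textbf{D}=0$ to $(1-\zeta_2)(1-\zeta_3)=1-\zeta_1$, argues geometrically in the complex plane: the map $z\mapsto 1/(1-z)$ sends the unit circle to the line $\Re z = 1/2$, so (with $\zeta_3$ fixed) the set $\{(1-\zeta_2)(1-\zeta_3)\}$ and the set $\{1-\zeta_1\}$ are two circles through the origin, hence meet in at most one further point, which the paper checks is $(\zeta_1,\zeta_2)=(\zeta_3^2,-\zeta_3)$. Your route via the trigonometric identity $|\nu|^2=3-2\cos\alpha-2\cos\beta+2\cos(\alpha-\beta)$ and the factorisation $\sin(\alpha/2)\sin(\beta/2)\cos\tfrac{\alpha-\beta}{2}=0$ gives the same classification more directly and symmetrically, and your use of the exact residue-field identity $(1-\bar\eta)(1-\bar\zeta)=1-\bar\rho$ to pin down $\rho$ once $\eta$ is known is a nice structural observation that the paper never makes explicit.

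More importantly, your analysis is actually more complete than the paper's. The paper's ``at most one solution'' argument tacitly assumes $\zeta_1=\rho^{p^\alpha}$ lies on the unit circle, i.e.\ $\rho\neq 0$. But $\rho=0$ precisely when $|a_0-x_0|_p<R$, and in that case your classification correctly picks out the additional solution $\eta=\zeta^{-1}$ (the other primitive $6$th root), which indeed gives $(1-\zeta^{-p^\alpha})(1-\zeta^{p^\alpha})=1$ and $\textbf{D}=0$. Since $\zeta^{-1}\neq -\zeta$ for a primitive $6$th root, the second clause of the theorem as literally stated is slightly too strong: the choice $a_0=x_0$ satisfies $\eta=\zeta^{-1}\neq-\zeta$ yet has $\textbf{D}=0$. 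Your added hypothesis $|a_0-x_0|_p=R$ (equivalently $\eta\neq\zeta^{-1}$) is exactly the missing condition; with it, the existence of a good $a_0$---which is what the preceding theorem actually needs---follows as you say.
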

\begin{proof}
If $a_{0} = x_{0}$ and $\omega\left(\frac{x_{1} - x_{0}}{x_{1} - b}\right)$ is not a primitive $6$-th root of unity, then
\begin{equation*}
\textbf{D} =
\left|
\begin{matrix}
	 1&\frac{1}{1 - \omega^{p^{\alpha}}\left( \frac{x_{0} - x_{1}}{x_{0} - b}\right)}\\\frac{1}{1 - \omega^{p^{\alpha}}\left( \frac{x_{1} - x_{0}}{x_{1} - b}\right)}&1
\end{matrix}
\right|.
\end{equation*}
Let $\zeta_{1}$ and $\zeta_{2}$ be the two roots of unity expressed by $\omega^{p^{\alpha}}$. If $\textbf{D} = 0$, then we have
\[
	(1 - \zeta_{1})(1 - \zeta_{2}) = 1.
\]
This is equivalent to 
\[
	1 + \frac{\zeta_{1}}{\zeta_{2}} = \zeta_{1}.
\]
A look at the complex circle shows $\zeta_{1}$ and $\zeta_{2}$ are the two primitive $6$-th roots of unity.\\
Now for the case when $\zeta = \omega\left(\frac{x_{1} - x_{0}}{x_{1} - b}\right)$ is a primitive $6$-th root of unity. We have
\begin{equation*}
\textbf{D} =
\left|
\begin{matrix}
	 \frac{1}{1 - \omega^{p^{\alpha}}\left( \frac{a_{0} - x_{0}}{a_{0} - b}\right)}&\frac{1}{1 - \omega^{p^{\alpha}}\left( \frac{a_{0} - x_{1}}{a_{0} - b}\right)}\\\frac{1}{1 - \zeta^{(p^{\alpha}}}&1
\end{matrix}
\right|.
\end{equation*}
Let $\zeta_{1}$, $\zeta_{2}$, $\zeta_{3}$ be the three roots of unity expressed by $\omega^{p^{\alpha}}$ as below. Then
\begin{equation*}
\textbf{D} =
\left|
\begin{matrix}
	 \frac{1}{1 - \zeta_{1}}&\frac{1}{1 - \zeta_{2}}\\ \frac{1}{1 - \zeta_{3}}&1
\end{matrix}
\right|.
\end{equation*}
In the complex plane, the mapping $z \to 1/(1 - z)$ sends the unit circle onto the extended line $\Re z = 1/2$. It follows that there is at most one solution, $(\zeta_{1}$, $\zeta_{2})$, to the equation $\textbf{D} = 0$. The pair $(\zeta_{1}$, $\zeta_{2}) = (\zeta_{3}^{2}$, $-\zeta_{3})$ is that solution. From $\zeta_{2} = - \zeta_{3}$ we get
\[
	\omega\left(\frac{a_{0} - x_{1}}{a_{0} - b}\right) = -\zeta.
\]
\end{proof}
Let's look again at the example that followed Theorem \ref{T:Cauchy intK}. 
$f(x)$ is a Krasner analytic function on $D^{+}(0,1)\setminus D^{-}(0,1)$ in $\mathbb{C}_{5}$. We'll assume we know the values of $f(x)$ when $|x - 1|_{p} < 1$. Then for $|z + 1|_{p} < 1$, using Theorem \ref{T:Cauchy intK1hole} with $\alpha = 0$, $x_{0} = -1$, $x_{1} = 0$, $b = 1$, we have
\[
	f(z) = \frac{6 - 2i}{5}\int_{\mathcal{A}(-1,1),\Phi_{0}}\frac{f(x)}{x - z}\,dx - \frac{2 - 4i}{5}\int_{\mathcal{A}(0,1),\Phi_{0}}\frac{f(x)}{x - z}\,dx. 
\]

The fact that $|\textbf{D}|_{5} < 1$ in this example isn't an accident. In this situation, even with more than one hole, it is easy to show that $|\textbf{D}|_{p} < 1$.
\bibliographystyle{amsplain}

\end{document}